\documentclass[12pt]{amsart}
\usepackage{mathptmx}
\usepackage{helvet}
\usepackage{courier}
\usepackage[T1]{fontenc}
\usepackage[latin9]{inputenc}
\usepackage{geometry}
\geometry{verbose}
\setcounter{secnumdepth}{2}
\setcounter{tocdepth}{2}
\setlength{\parskip}{\medskipamount}
\setlength{\parindent}{0pt}
\usepackage{graphicx}
\usepackage{amssymb}

\makeatletter


 \theoremstyle{definition}
 \newtheorem{defn}{Definition}
  \theoremstyle{remark}
  \newtheorem{rem}{Remark}
\newenvironment{lyxlist}[1]
{\begin{list}{}
{\settowidth{\labelwidth}{#1}
 \setlength{\leftmargin}{\labelwidth}
 \addtolength{\leftmargin}{\labelsep}
 }}
{\end{list}}
  \theoremstyle{plain}
  \newtheorem{lem}{Lemma}
  \theoremstyle{plain}
  \newtheorem{cor}{Corollary}
  \theoremstyle{plain}
  \newtheorem{prop}{Proposition}

\usepackage{hyperref}


\usepackage{stmaryrd}
\renewcommand{\subsection}{\@startsection%
 {subsection}%
 {2}%
 {0mm}%
 {0.5\baselineskip}%
 {\fontdimen4\font}%
 {\normalfont\bfseries}}
\newcommand{\Arg}{\textrm{Arg}}
\newcommand{\Tr}{\textrm{Tr}}

\newcommand{\PSLR}{\text{PSL}_{2}(\ensuremath{\mathbb{R}})}
\newcommand{\PGLR}{\text{PGL}_{2}(\ensuremath{\mathbb{R}})}
\newcommand{\PSLZ}{\text{PSL}_{2}(\ensuremath{\mathbb{Z}})}
\newcommand{\SLR}{\text{SL}_{2}(\ensuremath{\mathbb{R}})}
\newcommand{\SM}{\ensuremath{\mathcal{T}^{1}\mathcal{M}}}
\newcommand{\SH}{\ensuremath{\mathcal{T}^{1}\H}}

\newcommand{\sign}{\textrm{sign}}
\newcommand{\sgn}{\textrm{sign}}
\newcommand{\Fq}{F_{q}}
\newcommand{\Fqd}{F_{q}^{*}}
\newcommand{\Fqx}{\tilde{F}_{q}}
\newcommand{\Sx}{\tilde{S}}
\newcommand{\np}{\kappa}
\newcommand{\bi}{\ensuremath{\mathfrak{C}}}
\newcommand{\Bi}{\ensuremath{\tilde{\mathfrak{C}}}}
\newcommand{\Cq}{\ensuremath{\mathfrak{c}_{q}}}
\newcommand{\Cd}{\ensuremath{\mathfrak{c}_{q}^{*}}}

\renewcommand{\H}{\ensuremath{\mathcal{H}}}
\usepackage[all]{xy}
\newcommand{\FS}[1]{\ensuremath{\llbracket #1 \rrbracket}}

\newcommand{\RS}[1]{\ensuremath{\left[ \smash{#1}\vphantom{1^{j}} \right] }}

\newcommand{\DS}[1]{\ensuremath{\left[ \smash{#1}\vphantom{1^{j}} \right]^{*} }}
\newcommand{\RNCF}{\ensuremath{{\mathcal{A}}_{q} }}
\newcommand{\DRNCF}{\ensuremath{{\mathcal{A}}_{q}^{*} }}
\newcommand{\ZRNCF}{\ensuremath{{\mathcal{A}}_{0,q} }}
\newcommand{\ZDRNCF}{\ensuremath{\smash{{\mathcal{A}}_{0,q}^{*}} }}

\newcommand{\BIRNCF}{\ensuremath{{\mathcal{B}}_{q} }}
\newcommand{\BIRED}{\ensuremath{{\mathcal{B}}_{q}^{\circ} }}
\newcommand{\SBIRED}{\ensuremath{ \mathcal{B}_{q,s}^{\circ} } } 

\newcommand{\PT}{\ensuremath{\tilde{\mathcal{P}}}}
\newcommand{\PP}{\ensuremath{\mathcal{P}}}
\newcommand{\RET}{\ensuremath{\mathcal{T}}}
\newcommand{\RETx}{\ensuremath{\tilde{\mathcal{T}}}}
\newcommand{\Sred}{\ensuremath{\Omega^{*}_{s}}}
\newcommand{\Sredx}{\ensuremath{\Omega^{\infty}_{s}}}
\newcommand{\Sredo}{\ensuremath{\Omega_{s}}}
\newcommand{\Sgeo}{\ensuremath{\Upsilon_{s}}}
\newcommand{\Rgeo}{\ensuremath{{\Upsilon}}}
\newcommand{\nlm}[1]{\ensuremath{ \left\{ #1 \right\} _{\lambda}  } }
\newcommand{\nlms}[1]{\ensuremath{ \left\{ #1 \right\} _{\lambda} ^{*} } }
\newcommand{\K}{\ensuremath{\mathcal{K}}}
\newcommand{\n}{\ensuremath{\mathfrak{n}}}
\author[D. Mayer]{Dieter Mayer}
\address{Institut f\"ur Theor. Physik, TU Clausthal,
Abt. Statistische Physik und Nichtlineare Dynamik,
Arnold Sommerfeld Stra{\ss}e 6,
38678 Clausthal-Zellerfeld, Germany}
\email{dieter.mayer@tu-clausthal.de}

\author[F. Str{\"o}mberg]{Fredrik Str{\"o}mberg}
\address{Institut f\"ur Theor. Physik, TU Clausthal,
Abt. Statistische Physik und Nichtlineare Dynamik,
Arnold Sommerfeld Stra{\ss}e 6,
38678 Clausthal-Zellerfeld, Germany}
\email{fredrik.stroemberg@tu-clausthal.de}

\subjclass{Primary: 37D40; Secondary: 37E05,11A55,11K50}
\keywords{Symbolic dynamics, Geodesic flow, Hecke triangle groups, $\lambda$-fractions}

\makeatother

\begin{document}

\title{Symbolic dynamics for the geodesic flow on Hecke surfaces}

\begin{abstract}
In this paper we discuss a coding and the associated symbolic dynamics
for the geodesic flow on Hecke triangle surfaces. We construct an
explicit cross section for which the first return map factors through
a simple (explicit) map given in terms of the generating map of a
particular continued fraction expansion closely related to the Hecke
triangle groups. We also obtain explicit expressions for the associated
first return times. 

\tableofcontents{}
\end{abstract}
\maketitle

\section{Introduction}

Surfaces of negative curvature and their geodesics have been studied
since the 1898 work of Hadamard \cite{29.0522.01} (see in particular
the remark at the end of \S58). Inspired by the work of Hadamard
and Birkhoff \cite{MR1504693} Morse \cite{MR1506428} introduced
a coding of geodesics essentially corresponding to what is now known
as ,,cutting sequences{}`` and used this coding to show the existence
of a certain type of recurrent geodesics \cite{MR1501161}. 

Further ergodic properties of the geodesic flow on surfaces of constant
negative curvature given by Fuchsian groups were shown by e.g. Artin
\cite{50.0677.11}, Nielsen \cite{51.0549.01}, Koebe \cite{55.0958.03},
L\"obell \cite{MR1545081}, Myrberg \cite{MR1555338}, Hedlund \cite{0006.36503,MR1503197,MR1503297,MR1503464},
Morse and Hedlund \cite{MR1507944} and Hopf \cite{MR1501848,MR0001464}.
In this sequence of papers one can see the subject of symbolic dynamics
emerging. For a more up-to-date account of the ergodic properties
of the geodesic flow on a surface of constant negative curvature formulated
in a modern language see e.g. the introduction in Series \cite{MR594628}.

Artin's \cite{50.0677.11} approach was novel in that he used continued
fractions to code geodesics on the modular surface. After Artin, coding
and symbolic dynamics on the modular surface have been studied by
e.g. Adler and Flatto \cite{MR670077,MR779707,MR737384} and Series
\cite{MR810563}. For a recent review of different aspects of coding
of geodesics on the modular surface see for example the expository
papers by Katok and Ugarcovici \cite{MR2223106,MR2265011}. 

Other important references for the theory of symbolic dynamics and
coding of the geodesic flow on hyperbolic surfaces are e.g. Adler-Flatto
\cite{MR1085823}, Bowen and Series \cite{MR556585} and Series \cite{MR594628}.

In the present paper we study the geodesic flow on a family of hyperbolic
surfaces with one cusp and two marked points, the so-called Hecke
triangle surfaces, generalizing the modular surface. Symbolic dynamics
for a related billiard has also been studied by Fried \cite{MR1400315}.
We now give a summary of the paper. Sections 1 and 2 contain preliminary
facts about hyperbolic geometry and geodesic flows. In Section 3 we
develop the theory of $\lambda$-fractions connected to the coding
of the geodesic flow on the Hecke triangle surfaces. The explicit
discretization of the geodesic flow in terms of a Poincaré section
and Poincaré map is developed in Section 4. As an immediate application
we derive invariant measures for certain interval maps in Section
5. Some rather technical lemmas are confined to the end in Section
6.

\subsection{\label{sub:Hyperbolic-geometry-and}Hyperbolic geometry and Hecke
triangle surfaces}

Recall that any hyperbolic surface of constant negative curvature
$-1$ is given as a quotient (orbifold) $\mathcal{M}=\H/\Gamma$.
Here $\H=\left\{ z=x+iy\,|\, y>0,\, x\in\mathbb{R}\right\} $ together
with the metric $ds=\frac{\left|dz\right|}{y}$ is the \emph{hyperbolic
upper half-plane} and $\Gamma\subseteq\PSLR\cong\SLR/\left\{ \pm I_{2}\right\} $
is a Fuchsian group. Here $\SLR$ is the group of real two-by-two
matrices with determinant $1$, $I_{2}=\left(\begin{smallmatrix}1 & 0\\
0 & 1\end{smallmatrix}\right)$ and $\PSLR$ is the group of orientation preserving isometries of
$\H$. The boundary of $\H$ is $\partial\H=\mathbb{R}^{*}=\mathbb{R}\cup\left\{ \infty\right\} $.
If $g=\left(\begin{smallmatrix}a & b\\
c & d\end{smallmatrix}\right)\in\PSLR$ then $gz=\frac{az+b}{cz+d}\in\H$ for $z\in\H$ and we say that $g$
is \emph{elliptic}, \emph{hyperbolic} or \emph{parabolic} depending
on whether $\left|\Tr\, g\right|=\left|a+d\right|<2,$ $>2$ or $=2$.
The same notation applies for fixed points of $g$. In the following
we identify the elements $g\in\PSLR$ with the map it defines on $\H$.
Note that the type of fixed point is preserved under conjugation $z\rightarrow AzA^{-1}$
by $A\in\PSLR$. A parabolic fixed point is a degenerate fixed point,
belongs to $\partial\H$ and is usually called a cusp. Elliptic points
$z$ appear in pairs, one belongs to $\H$ and the other one is in
the lower half-plane $\overline{\H}$ and its stabilizer subgroup
$\Gamma_{z}$ in $\Gamma$ is cyclic of finite order $m$. Hyperbolic
fixed points appear also in pairs with $x,x^{*}\in\partial\H$, where
$x^{*}$ is said to be the conjugate point of $x$. A geodesics $\gamma$
on $\H$ is either a half-circle orthogonal to $\mathbb{R}$ or a
line parallel to the imaginary axis and the endpoints of $\gamma$
are denoted by $\gamma_{\pm}\in\partial\H$. We identify the set of
geodesics on $\H$ with $\mathcal{G}=\left\{ \left(\xi,\eta\right)\,|\,\xi\ne\eta\in\mathbb{R}^{*}\right\} $
and use $\gamma\left(\xi,\eta\right)$ to denote the oriented geodesic
on $\H$ with $\gamma_{+}=\xi$ and $\gamma_{-}=\eta$. Unless otherwise
stated all geodesics are assumed to be parametrized with hyperbolic
arc length with $\gamma\left(0\right)$ either at height $1$ if $\gamma$
is vertical or the highest point on the half-circle. It is known that
$z\in\H$ and $\theta\in\left[0,2\pi\right)\cong S^{1}$ determine
a unique geodesic (cf.~Lemma \ref{lem:z_theta->gamma_z_theta_unique_cont})
passing through $z$ whose tangent at $z$ makes an angle $\theta$
with the positive $\xi$-axis. This geodesic is denoted by $\gamma_{z,\theta}$.
It is also well known that a geodesic $\gamma\left(\xi,\eta\right)$
is closed if and only if $\xi$ and $\eta=\xi^{*}$ are conjugate
hyperbolic fixed points. 

The unit tangent bundle of $\H$, $\SH=\bigsqcup{}_{z\in\H}\left\{ \vec{v}\in T_{z}\H\,|\,\left|\vec{v}\right|=1\right\} $
is the collection of all unit vectors in the tangent planes of $\H$
with base points $z\in\H$ which we denote by $T_{z}^{1}\H$. By identifying
$\vec{v}$ with its angle $\theta$ with respect to the positive real
axis we can view $\SH$ as the collection of all pairs $\left(z,\theta\right)\in\H\times S^{1}$.
We may also view this as the set of geodesics $\gamma_{z,\theta}$
on $\H$ or equivalently as $\mathcal{G}\subseteq\mathbb{R}^{*2}$.

Let $\pi:\H\rightarrow\mathcal{M}$ be the natural projection map,
i.e. $\pi\left(z\right)=\Gamma z$ and let $\pi^{*}:\SH\rightarrow\SM$
be the extension of $\pi$ to $\SH$. Then $\gamma^{*}=\pi\gamma$
is a closed geodesic on $\mathcal{M}$ if and only if $\gamma_{+}$
and $\gamma_{-}$ are fixed points of the same hyperbolic map $g_{\gamma}\in\Gamma$.
For an introduction to hyperbolic geometry and Fuchsian groups see
e.g. \cite{katok,MR0164033,ratcliffe:Foundations}. 

\begin{defn}
\label{def:G_q_and_fundamental_domain}For an integer $q\ge3$ the
\emph{Hecke triangle group} $G_{q}\subseteq\PSLR$ is the group generated
by the maps $S:z\mapsto-\frac{1}{z}$ and $T:z\mapsto z+\lambda$
where $\lambda=\lambda_{q}=2\cos\left(\frac{\pi}{q}\right)\in\left[1,2\right)$.
The corresponding orbifold (Riemann surface) is $\mathcal{M}_{q}=G_{q}\backslash\H$,
which we sometimes identify with the standard fundamental domain of
$G_{q}$ \[
\mathcal{F}_{q}=\left\{ z\in\H\,\large{|}\,\left|\Re z\right|\le\lambda/2,\,\left|z\right|\ge1\right\} \]
with sides pairwise identified. Let $\rho=\rho_{+}=e^{\frac{\pi i}{q}}$
and $\rho_{-}=-\overline{\rho}$. We define the following oriented
boundary components of $\mathcal{F}_{q}$: $L_{0}$ is the circular
arc from $\rho_{-}$ to $\rho_{+}$. $L_{1}$ is the vertical line
from $\rho_{+}$ to $i\infty$ and $L_{-1}$ is the vertical line
from $i\infty$ to $\rho_{-}$. Thus $\partial\mathcal{F}_{q}=L_{-1}\cup L_{0}\cup L_{1}$
is the positively oriented boundary of $\mathcal{F}_{q}$. 
\end{defn}
\begin{rem}
\label{rem:relations_in_Gq}The group $G_{q}$ is a realization of
the Schwarz triangle group $\left(\frac{\pi}{\infty},\frac{\pi}{q},\frac{\pi}{2}\right)$
and it is not hard to show (see e.g. \cite[VII]{MR0164033}) that
$G_{q}$ for $q\ge3$ is a co-finite Fuchsian group with fundamental
domain $\mathcal{F}_{q}$ and the only relations \begin{equation}
S^{2}=\left(ST\right)^{q}=Id\text{ -- the identity in }\PSLR.\label{eq:G_q_relations}\end{equation}
Hence $G_{q}$ has one cusp, that is the equivalence class of parabolic
points, and two elliptic equivalence classes of orders $2$ and $q$
respectively. Note that $G_{3}=\PSLZ$ --the modular group and $G_{4}$,
$G_{6}$ are conjugate to congruence subgroups of the modular group.
For $q\ne3,4,6$ the group $G_{q}$ is non-arithmetic (cf. \cite[pp.\ 151-152]{katok}),
but in the terminology of \cite{MR1075639,MR1745404} it is semi-arithmetic,
meaning that it is possible to embed $G_{q}$ as a subgroup of a Hilbert
modular group. 
\end{rem}

\section{The Geodesic Flow on $\SM$\label{sec:The-Geodesic-Flow}}

We briefly recall the notion of the geodesic flow on a Riemann surface
$\mathcal{M}=\Gamma\backslash\H$ with $\Gamma\subset\PSLR$ a Fuchsian
group. To any $\left(z,\theta\right)\in\SH\cong\H\times S^{1}$ we
can associate a unique geodesic $\gamma=\gamma_{z,\theta}$ on $\H$
such that $\gamma\left(0\right)=z$ and $\dot{\gamma}\left(0\right)=e^{i\theta}$.
The geodesic flow on $\SH$ can then be viewed as a map $\Phi_{t}:\SH\rightarrow\SH$
with $\Phi_{t}\left(\gamma_{z,\theta}\right)=\Phi_{t}\left(z,\theta\right)=\left(\gamma_{z,\theta}\left(t\right),\dot{\gamma}_{z,\theta}\left(t\right)\right),$
$t\in\mathbb{R}$ satisfying $\Phi_{t+s}=\Phi_{t}\circ\Phi_{s}$.
The geodesic flow $\Phi^{*}$ on $\SM$ is then given by the projection
$\Phi_{t}^{*}=\pi^{*}\left(\Phi_{t}\right)$.

A more abstract and general description of the geodesic flow, which
can be extended to other homogeneous spaces, is obtained by the identification
$\SH\cong\PSLR$. Under this representation the geodesic flow corresponds
to right multiplication by the matrix $a_{t}^{-1}=\left(\begin{smallmatrix}e^{t/2} & 0\\
0 & e^{-t/2}\end{smallmatrix}\right)$ in $\PSLR$ (cf. e.g. \cite[Ch. 13]{einseidler}). 

\begin{defn}
\label{def:poincare_section}Let $\Upsilon$ be a set of geodesics
on $\H$. A hypersurface $\Sigma\subseteq\SH$ is said to be a \emph{Poincaré
section} or cross section for the geodesic flow on $\SH$ for $\Upsilon$
if any $\gamma\in\Upsilon$ intersects $\Sigma$
\begin{lyxlist}{00.00.0000}
\item [{(P1)}] transversally i.e. non-tangentially, and
\item [{(P2)}] infinitely often, i.e. $\Phi_{t_{j}}\left(\gamma\right)\in\Sigma$
for an infinite sequence of $t_{j}\rightarrow\pm\infty$. 
\end{lyxlist}
The corresponding \emph{first return map} is the map $\mathcal{T}:\Sigma\rightarrow\Sigma$
such that $\mathcal{T}\left(z,\theta\right)=\Phi_{t_{0}}\left(z,\theta\right)\in\Sigma$
and $\Phi_{t}\left(z,\theta\right)\notin\Sigma$ for $0<t<t_{0}$.
Here $t_{0}=t_{0}\left(z,\theta\right)>0$ is called the \emph{first
return time}. 

\end{defn}
Poincaré sections were first introduced by Poincaré \cite{14.0666.01}
to show the stability of periodic orbits. For examples of cross section
maps in connection with the geodesic flow on hyperbolic surfaces see
e.g.~\cite{MR1085823,MR670077}. 

The previous definition extend naturally to $\SM$ with $\Upsilon$
and $\Sigma$ replaced by $\Upsilon^{*}=\pi\left(\Upsilon\right)$
and $\Sigma^{*}=\pi^{*}\left(\Sigma\right)$. The first return map
$\mathcal{T}$ is used to obtain a discretization of the geodesic
flow, e.g. we replace $\Phi_{t}\left(z,\theta\right)$ by $\left\{ \Phi_{t_{k}}\left(z,\theta\right)\right\} $
where $t_{k}\left(z,\theta\right)$ is a sequence of consecutive first
returns. Incidentally this provides a reduction of the dynamics from
three to two dimensions and it turns out that in our example the first
return map also has a factor map, which allows us to study the three
dimensional geodesic flow with the help of an interval map (see Sections
\ref{sub:First-return-map} and \ref{sec:Applications}).

\section{$\lambda$-Continued Fraction Expansions}

\subsection{Basic concepts }

Continued fraction expansions connected to the groups $G_{q}$, the
so-called $\lambda$-fractions, were first introduced by Rosen \cite{MR0065632}
and subsequently studied by Rosen and others, cf. e.g. \cite{MR1190349,MR1853543,MR1219337}.
For the purposes of natural extensions (cf.~Section \ref{sub:Symbolic-dynamics})
the results of Burton, Kraaikamp and Schmidt \cite{MR1650073} are
analogous to ours and we occasionally refer to their results. Our
definition of $\lambda$-fractions is equivalent to Rosens definition
(cf. e.g.~\cite[\S 2]{MR0065632}).

To a sequence of integers, $a_{0}\in\mathbb{Z}$ and $a_{j}\in\mathbb{Z}^{*}=\mathbb{Z}\backslash\left\{ 0\right\} ,$
$j\ge1$ (finite or infinite) we associate a \emph{$\lambda$-fraction}
$\underline{x}=\FS{a_{0};a_{1},a_{2},\ldots}$. This $\lambda$-fraction
is identified with the point \[
x=a_{0}\lambda-\frac{1}{a_{1}\lambda-\frac{1}{a_{2}\lambda-\ddots}}=\lim_{n\rightarrow\infty}T^{a_{0}}ST^{a_{1}}\,\cdots\, ST^{a_{n}}\left(0\right)\]
if the right hand side is convergent. When there is no risk of confusion,
we sometimes write $x=\underline{x}$. The \emph{tail} of $\underline{x}$
is defined as $\FS{a_{m+1},a_{m+2},\ldots}$ for any $m\ge1$. Note
that $\underline{-x}=\FS{-a_{0};-a_{1},-a_{2},\ldots}$. If $a_{0}=0$,
we usually omit the leading $\FS{0;}$. Repetitions in a sequence
is denoted by a power, e.g. $\FS{a,a,a}=\FS{a^{3}}$ and an infinite
repetition is denoted by an overline, e.g. $\FS{a_{1},\ldots a_{k},a_{1},\ldots,a_{k},\ldots}=\FS{\overline{a_{1},\ldots,a_{k}}}$.
Such a $\lambda$-fraction is said to be \emph{periodic }with period
$k$, an \emph{eventually periodic} $\lambda$-fraction has a periodic
tail. Two $\lambda$-fractions $\underline{x}$ and $\underline{y}$
are said to be \emph{equivalent} if they have the same tail. In this
case it is easy to see that, if the fractions are convergent, then
$x=Ay$ for some $A\in G_{q}$.

The sole purpose for introducing $\lambda$-fractions is to code geodesics
by identifying the $\lambda$-fractions of their endpoints with elements
of $\mathbb{Z}^{\mathbb{N}}$. For reasons that will be clear later
(Section \ref{sub:Symbolic-dynamics}), we have to consider also bi-infinite
sequences $\mathbb{Z^{\mathbb{Z}}}$ and view $\mathbb{Z^{\mathbb{N}}}$
as embedded in $\mathbb{Z}^{\mathbb{Z}}$ with a zero-sequence to
the left. On $\mathbb{Z^{\mathbb{Z}}}$ we always use the metric $\tilde{h}$
defined by $\tilde{h}\left(\left\{ a_{i}\right\} _{i=-\infty}^{\infty},\left\{ b_{i}\right\} _{i=-\infty}^{\infty}\right)=\frac{1}{1+n}$
where $a_{i}=b_{i}$ for $\left|i\right|<n$ and $a_{n}\ne b_{n}$
or $a_{-n}\ne b_{-n}$. In this metric $\mathbb{Z^{\mathbb{Z}}}$
and $\mathbb{Z^{\mathbb{N}}}$ have the topological structure of a
Cantor set and the left- and right shift maps $\sigma^{\pm}:\mathbb{Z^{\mathbb{Z}}\rightarrow\mathbb{Z}^{\mathbb{Z}}},$
$\sigma^{\pm}\left\{ a_{j}\right\} =\left\{ a_{j\pm1}\right\} $ are
continuous. We also set $\sigma^{+}\FS{a_{1},a_{2},\ldots}=\FS{a_{2},a_{3},\ldots}$.

\subsection{Regular $\lambda$-fractions}

In the set of all $\lambda$-fractions we choose a ,,good{}`` subset,
in which almost all $x\in\mathbb{R}$ have unique $\lambda$-fractions
and in which infinite $\lambda$-fractions are convergent. The first
step is to choose a ,,fundamental region{}`` $I_{q}$ for the action
of $T:\mathbb{R}\rightarrow\mathbb{R}$, namely $I_{q}=\left[-\frac{\lambda}{2},\frac{\lambda}{2}\right]$.
Then it is possible to express one property of our ,,good{}`` subset
as follows: If in the fraction $\underline{x}=\FS{a_{0};a_{1},a_{2},\ldots}$
the first entry $a_{0}=0$, then $x\in I_{q}$. That means, we do
not allow sequences with $a_{0}=0$ correspond to points outside $I_{q}$. 

A shift-invariant extension of this property leads to the following
definition of regular $\lambda$-fractions:

\begin{defn}
\label{def:regular-nakada-exp}Let $\underline{x}=\FS{a_{0};a_{1},a_{2},\ldots}$
be a finite or infinite convergent $\lambda$-fraction and let $\underline{x}_{j}=\sigma^{j}\underline{x}=\FS{0;a_{j},a_{j+1},\ldots},\, j\ge1,$
be the $j$-th shift of $\underline{x}$. Let $x_{j}$ be the corresponding
point. Then $\underline{x}$ is said to be a \emph{regular $\lambda$-fraction}
if and only if \[
x_{j}\in I_{q},\,\text{for all}\,\, j\ge1.\tag{*}\]
A regular $\lambda$-fraction is denoted by $\RS{a_{0};a_{1},\ldots}$,
the space of all regular $\lambda$-fractions is denoted by $\RNCF$
and the subspace of \emph{infinite }regular $\lambda$-fractions with
$a_{0}=0$ is denoted by $\ZRNCF$. 
\end{defn}
For a finite fraction $\underline{x}=\FS{a_{0};a_{1},\ldots,a_{n}}$
we get $\underline{x}_{j}=\FS{0;}$ and $x_{j}=0\in I_{q}$ for $j>n$.

We will see later that regular $\lambda$-fractions can be regarded
as \emph{nearest $\lambda$-multiple continued fractions. }In the
case $q=3$ or $\lambda=1,$ nearest integer continued fractions were
studied already by Hurwitz \cite{hurwitz} in 1889. An account of
Hurwitz reduction theory can be found in Fried \cite{MR2114673} (cf.
also the H-expansions in \cite{MR2223106,MR2265011}). For general
$q$ this particular formulation of Rosens fractions was studied by
Nakada \cite{MR1402490}. 

For the remainder of the paper we let $h=\frac{q-3}{2}$ if $q$ is
odd and $h=\frac{q-2}{2}$ if $q$ is even. The following Lemma is
an immediate consequence of \cite[(4)]{MR1650073}. 

\begin{lem}
\label{lem:lambda_half_finite_expansion}The points $\mp\frac{\lambda}{2}$
have finite regular $\lambda$-fractions given by \begin{align*}
\mp\frac{\lambda}{2}= & \begin{cases}
\RS{\left(\pm1\right)^{h}}, & \mbox{for }q\,\mbox{even,}\\
\RS{\left(\pm1\right)^{h},\pm2,\left(\pm1\right)^{h}}, & \mbox{for }q\,\mbox{odd}.\end{cases}\end{align*}

\end{lem}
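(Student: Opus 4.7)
\emph{Proof plan.} The strategy is to compute the orbit of $-\lambda/2$ under the $\lambda$-fraction algorithm explicitly, exploiting the Hecke relation $(ST)^{q}=\mathrm{Id}$. Set $F = T^{-1}S$, so that $F(x)=-1/x-\lambda$; then one step of the algorithm with digit $1$ is precisely $x\mapsto F(x)$, and since $F^{-1}=ST$, the map $F$ has order $q$ in $\PSLR$. Writing $\theta=\pi/q$ so that $\lambda=2\cos\theta$, the proof reduces to tracking the $F$-orbit of $-\lambda/2$ together with a single detour when the algorithm is forced to use digit $2$ instead of $1$.

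The first step is to prove by induction on $k$ that
\[
F^{k}\!\left(-\tfrac{\lambda}{2}\right)=-\frac{\cos((k+1)\theta)}{\cos(k\theta)},\qquad 0\le k\le h+1,
\]
the inductive step reducing to the product-to-sum identity $\cos((j-1)\theta)+\cos((j+1)\theta)=2\cos\theta\cos(j\theta)$. A short monotonicity check on $[0,\pi/2]$ shows that these iterates lie in $I_{q}=[-\lambda/2,\lambda/2]$ for $k=1,\ldots,h$, so that at each such step the digit $a_{k}=1$ is both valid and uniquely forced.

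For $q$ even we have $(h+1)\theta=q\theta/2=\pi/2$, hence $F^{h}(-\lambda/2)=0$ and the expansion terminates immediately as $\RS{1^{h}}$. For $q$ odd the same formula yields $F^{h}(-\lambda/2)=-\sin(\pi/(2q))/\sin(3\pi/(2q))=-1/(1+\lambda)$, using the identity $\sin(3\alpha)/\sin\alpha=1+2\cos(2\alpha)$. Then $S$ sends this point to $1+\lambda$, which is \emph{not} in $I_{q}$ since $\lambda<2$, so one is forced to take $a_{h+1}=2$, giving $x_{h+1}=1-\lambda\in I_{q}$.

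It then suffices to show that $h$ further applications of $F$ return us to $0$, equivalently that $1-\lambda=F^{-h}(0)=(ST)^{h}(0)$. Cayley--Hamilton applied to $ST$ (which has trace $\lambda$) expresses $(ST)^{k}$ in terms of Chebyshev polynomials $U_{j}(\lambda/2)=\sin((j+1)\theta)/\sin\theta$, giving $(ST)^{k}(0)=-\sin(k\theta)/\sin((k+1)\theta)$; evaluating at $k=h$ and using $\cos(3\alpha)/\cos\alpha=2\cos(2\alpha)-1$ yields exactly $1-\lambda$, and the palindromic expansion $\RS{1^{h},2,1^{h}}$ follows. The opposite-sign case is handled by the symmetry $\underline{-x}=\FS{-a_{0};-a_{1},\ldots}$. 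The only real obstacle is the bookkeeping required to check that every intermediate digit is uniquely forced by $I_{q}$; alternatively, the whole lemma may be read off directly from \cite[(4)]{MR1650073}.
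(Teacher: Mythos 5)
Your proposal is correct, and it takes a genuinely different route from the paper: the paper supplies no argument for this lemma at all, simply declaring it an immediate consequence of identity (4) in Burton--Kraaikamp--Schmidt \cite{MR1650073} --- which is precisely the alternative you offer in your closing sentence --- whereas you give a self-contained computation. Your closed form $F^{k}(-\lambda/2)=-\cos((k+1)\theta)/\cos(k\theta)$ (with $\theta=\pi/q$) is correct and the product-to-sum identity does carry the induction; the two key evaluations, $F^{h}(-\lambda/2)=0$ for even $q$ since $(h+1)\theta=\pi/2$, and $F^{h}(-\lambda/2)=-1/(1+\lambda)$ followed by the forced digit $2$ and the Chebyshev computation $(ST)^{h}(0)=1-\lambda$ for odd $q$, are exactly what is needed, and the sign symmetry disposes of $+\lambda/2$. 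Two points of hygiene. First, the stated range $0\le k\le h+1$ of the closed form breaks at $k=h+1$ for even $q$ (zero denominator); you only ever use $k\le h$, so just say so. Second, the inference from \emph{the iterates lie in $I_{q}$} to \emph{the digit is $1$} needs the supplementary remark that for $x\in[-\lambda/2,0)$ one has $-1/x\ge 2/\lambda>\lambda/2$, so $\nlm{-1/x}\ge 1$, and therefore the algorithm's digit equals $1$ exactly when $-1/x-\lambda$ lands back in $I_{q}$; the same check must be run for the $h$ steps after the digit $2$ in the odd case, where $F^{j}(1-\lambda)=(ST)^{h-j}(0)=-\sin((h-j)\theta)/\sin((h-j+1)\theta)$ lies in $[-\lambda/2,0)$ for $j<h$ by the analogous sum-to-product estimate. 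With that bookkeeping written out, your argument is complete and has the advantage of making the paper independent of \cite{MR1650073} at this point; the paper's citation buys only brevity.
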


\begin{lem}
\label{lem:expansion_of_1_odd_q}If $q$ is odd, the point $x=1$
has the finite regular $\lambda$-fraction \[
1=\RS{1;1^{h}}.\]

\end{lem}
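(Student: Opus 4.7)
My plan is to reduce the identity to a short trigonometric computation. For $k\ge 1$ let $f_k$ denote the value of the truncated continued fraction $\lambda-1/(\lambda-1/(\cdots-1/\lambda))$ with $k$ occurrences of $\lambda$, so that (assuming the $\lambda$-fraction is regular) the value of $\RS{1;1^h}$ equals $f_{h+1}$. The proof then reduces to showing $f_{h+1}=1$ together with verifying the regularity condition of Definition~\ref{def:regular-nakada-exp}.

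I would first establish by induction on $k$ the closed form
\[
 f_k = \frac{\sin((k+1)\pi/q)}{\sin(k\pi/q)}.
\]
The base case reads $f_1=\lambda=2\cos(\pi/q)=\sin(2\pi/q)/\sin(\pi/q)$, while the induction step follows from the recurrence $f_{k+1}=\lambda-1/f_k$ together with the standard identity $2\cos(\pi/q)\sin((k+1)\pi/q)=\sin((k+2)\pi/q)+\sin(k\pi/q)$. Setting $k=h+1=(q-1)/2$ (which is an integer because $q$ is odd), the numerator becomes $\sin(\pi/2+\pi/(2q))=\cos(\pi/(2q))$ and the denominator becomes $\sin(\pi/2-\pi/(2q))=\cos(\pi/(2q))$, hence $f_{h+1}=1$.

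It remains to verify regularity, i.e.\ that every shift $x_j$, $j\ge 1$, lies in $I_q=[-\lambda/2,\lambda/2]$. For $j>h$ the shift is $\FS{0;}=0\in I_q$, while for $1\le j\le h$ one computes $x_j=\FS{0;1^{h-j+1}}=-1/f_m$ with $m=h-j+1\in\{1,\ldots,h\}$. Since $f_m>0$, the requirement $|x_j|\le\cos(\pi/q)$ becomes $\cos(\pi/q)\sin((m+1)\pi/q)\ge\sin(m\pi/q)$, which, after applying the same product-to-sum identity once more, reduces to $\sin((m+2)\pi/q)\ge\sin(m\pi/q)$. This is immediate from monotonicity of $\sin$ on $[0,\pi/2]$ whenever $(m+2)\pi/q\le\pi/2$; the only remaining possibility, since $q$ is odd and $m\le h=(q-3)/2$, is $m=h$, in which case the inequality reads $\cos(\pi/(2q))\ge\cos(3\pi/(2q))$ and holds because $\cos$ is decreasing on $[0,\pi/2]$.

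The only (mild) obstacle I anticipate is precisely this borderline case $m=h$, where the two arguments of $\sin$ straddle $\pi/2$; it is here that the hypothesis that $q$ is odd enters essentially, through the symmetry $\sin(\pi/2\pm\theta)=\cos(\theta)$.
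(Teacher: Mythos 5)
Your proof is correct, but it takes a different route from the paper. The paper's argument is group-theoretic: writing $a=\RS{1;1^{h}}=T(ST)^{h}(0)$, it uses the relation $(ST)^{q}=Id$ to show $Sa=(ST)^{h+1}(0)=T^{-1}(ST^{-1})^{h}(0)=-a$, whence $a^{2}=1$ and $a=1$ since $a>0$. You instead derive the closed form $f_{k}=\sin((k+1)\pi/q)/\sin(k\pi/q)$ for the convergents by induction (this is in effect the same information as the explicit matrix formula for $(TS)^{n}$ in terms of $B_{n}=\sin(n\pi/q)$ that the paper records as equation (\ref{eq:TS-explicit}) and uses elsewhere), and then evaluate at $k=h+1=(q-1)/2$, where the oddness of $q$ enters through $\sin(\pi/2\pm\pi/(2q))=\cos(\pi/(2q))$. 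The paper's proof is shorter and exhibits the identity as a direct consequence of the elliptic relation; yours is more computational but also buys something the paper's proof omits: an explicit verification of the regularity condition (*) for every shift $x_{j}=-1/f_{m}$, including the borderline case $m=h$. (In the paper that verification is implicitly delegated to the forbidden-block characterization, since the tail $\FS{1^{h}}$ does not contain the forbidden block $\FS{1^{h+1}}$.) Both arguments are sound.
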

\begin{proof}
Since $a=\RS{1;1^{h}}=T\left(ST\right)^{h}\left(0\right),$ one has
also $-a=T^{-1}\left(ST^{-1}\right)^{h}\left(0\right)$. From identity
(\ref{eq:G_q_relations}) we get \[
Sa=\left(ST\right)^{h+1}\left(0\right)=\left(T^{-1}S\right)^{h+2}\left(0\right)=T^{-1}\left(ST^{-1}\right)^{h}ST^{-1}S\left(0\right)=T^{-1}\left(ST^{-1}\right)^{h}\left(0\right),\]
and hence $-1/a=-a$. Since $a>0$, this implies that $a=1$. 
\end{proof}
\begin{defn}
\label{def:Fq}Let $\left\lfloor x\right\rfloor $ be the floor function
defined by $\left\lfloor x\right\rfloor =n\Leftrightarrow n<x\le n+1$
for $x>0$, respectively $n\le x<n+1$ for $x\le0$, and let $\nlm{x}=\left\lfloor \frac{x}{\lambda}+\frac{1}{2}\right\rfloor $
be the corresponding nearest $\lambda$-multiple function. Then define
$\Fq:I_{q}\rightarrow I_{q}$ by\[
\Fq x=\begin{cases}
-\frac{1}{x}-\nlm{-\frac{1}{x}}\lambda, & x\in I_{q}\backslash\left\{ 0\right\} ,\\
0, & x=0.\end{cases}\]

\end{defn}
\begin{lem}
\label{lem:generating_map_Fq_gives_regular_expansion}For $x\in\mathbb{R}$
the following algorithm gives a finite or infinite regular $\lambda$-fraction
$\Cq\left(x\right)=\RS{a_{0};a_{1},\ldots}$ corresponding to $x$: 
\begin{lyxlist}{00.00.0000}
\item [{(i)}] Set $a_{0}:=\nlm{x}$ and $x_{1}=x-a_{0}\lambda$. 
\item [{(ii)}] Set $x_{j+1}:=\Fq x_{j}=-\frac{1}{x_{j}}-a_{j}\lambda$,
$j\ge1$, with $a_{j}=\nlm{\frac{-1}{x_{j}}},\, j\ge1.$ 
\end{lyxlist}
If $x_{j}=0$ for some $j$, the algorithm stops and gives a finite
regular $\lambda$-fraction. 

\end{lem}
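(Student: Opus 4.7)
The plan is to verify three assertions contained in the statement: (i) the algorithm produces a well-defined sequence with $a_0 \in \mathbb{Z}$ and $a_j \in \mathbb{Z}^{*}$ for $j \ge 1$; (ii) the shifted points $\{x_j\}_{j \ge 1}$ all lie in $I_q$, giving the regularity condition $(*)$; and (iii) when the algorithm does not terminate, the $\lambda$-fraction $\RS{a_0;a_1,\ldots}$ converges to $x$.

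Claims (i) and (ii) are short. A direct case analysis on the sign of $y/\lambda + 1/2$, using Definition~\ref{def:Fq}, shows that $y - \nlm{y}\lambda \in I_q$ for every $y \in \mathbb{R}$; applying this with $y = x$ puts $x_1 \in I_q$, and applying it inductively with $y = -1/x_j$ gives $x_{j+1} = \Fq(x_j) \in I_q$, which is exactly $(*)$. To see that $a_j \ne 0$ for $j \ge 1$, note that $x_j \in I_q \setminus \{0\}$ forces $|-1/x_j| \ge 2/\lambda$; since $q \ge 3$ gives $\lambda < 2$, we have $2/\lambda > \lambda/2$, and therefore $a_j = \nlm{-1/x_j}$ is a nonzero integer.

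For (iii) I would first prove, by induction on $n$, the key identity
\[
x = T^{a_0} S T^{a_1} \cdots S T^{a_n}(x_{n+1}) =: P_n(x_{n+1}).
\]
The base case $x = T^{a_0}(x_1)$ is just the first step of the algorithm, while the induction step rewrites $-1/x_j = a_j \lambda + x_{j+1}$ as $x_j = S T^{a_j}(x_{j+1})$. If the algorithm terminates at step $n+1$ with $x_{n+1} = 0$, the identity already gives $x = P_n(0) = \FS{a_0; a_1, \ldots, a_n}$, settling the finite case.

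The main obstacle is the convergence in the infinite case. Writing the bottom row of $P_n$ as $(c_n, d_n)$, the identity above together with the standard M\"obius distortion formula yields
\[
|P_n(0) - x| = |P_n(0) - P_n(x_{n+1})| = \frac{|x_{n+1}|}{|d_n|\,|c_n x_{n+1} + d_n|} \le \frac{\lambda/2}{|d_n|\,|c_n x_{n+1} + d_n|},
\]
so it is enough to show $|d_n| \to \infty$ (with $|c_n x_{n+1} + d_n|$ bounded away from $0$). Reading off $P_n = P_{n-1} S T^{a_n}$ gives the three-term recursion $d_n = a_n \lambda\, d_{n-1} - d_{n-2}$, and the regularity condition $(*)$ is exactly what is needed to exclude the digit patterns that would allow successive denominators to collapse. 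Since our $\lambda$-fractions are equivalent to Rosen's, this denominator growth -- and hence the convergence -- is the content of Rosen's classical result \cite{MR0065632}, which I would invoke rather than reprove.
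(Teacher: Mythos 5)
Your proposal is correct and follows essentially the same route as the paper: the key points are the identity $x=T^{a_{0}}ST^{a_{1}}\cdots ST^{a_{n}}x_{n}$ and the observation that $\Fq$ maps $I_{q}$ into itself, which is exactly the paper's two-line argument. The convergence issue you address in part (iii) is handled by the paper in a separate lemma (Lemma \ref{lem:An-infinite-regular-converges}), and there too it is settled by citing Rosen's theorem, just as you do.
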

\begin{proof}
By definition we see that $x_{j+1}=T^{-a_{j}}Sx_{j}$, $j\ge1$, and
hence $x=T^{a_{0}}ST^{a_{1}}\,\cdots\, ST^{a_{n}}x_{n}$ for any $n\ge1$.
If $\underline{x}=\RS{a_{0};a_{1},.\ldots},$ then for $j\ge1$ $\underline{x}_{j}=\sigma^{j}\underline{x}=\RS{0;a_{j},\ldots}$
corresponds to the point $x_{j}$ and condition ({*}) of Definition
\ref{def:regular-nakada-exp} is fulfilled, since $\Fq$ maps $I_{q}$
to itself and $x_{1}\in I_{q}$. 
\end{proof}
\begin{rem}
We say that $\Fq$ is a \emph{generating map} for the regular $\lambda$-fractions.
It is also clear from Lemma \ref{lem:generating_map_Fq_gives_regular_expansion}
that $\Fq$ acts as a shift map on the space $\ZRNCF$, i.e. $\Cq\left(\Fq x\right)=\sigma\Cq\left(x\right)$.
\end{rem}
An immediate consequence of Lemma \ref{lem:generating_map_Fq_gives_regular_expansion}
is the following corollary: 

\begin{cor}
\label{cor:infinite-regular-are-unique}If $x$ has an infinite regular
$\lambda$-fraction, then it is unique and equal to $\Cq\left(x\right)$
as given by Lemma \ref{lem:generating_map_Fq_gives_regular_expansion}.
\end{cor}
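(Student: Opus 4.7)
The plan is direct: given an infinite regular $\lambda$-fraction $\underline{x}=\RS{a_0;a_1,a_2,\ldots}$ of $x$, I will argue that the coefficients $a_j$ are forced, one by one, to coincide with those produced by the deterministic algorithm $\Cq$ of Lemma~\ref{lem:generating_map_Fq_gives_regular_expansion}. This yields both assertions of the corollary simultaneously.

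First I would read off the basic recursion from the definition of a $\lambda$-fraction. Let $x_j$ denote the point represented by the shifted tail $\underline{x}_j=\sigma^{j}\underline{x}=\FS{0;a_j,a_{j+1},\ldots}$; the regularity condition $(*)$ of Definition~\ref{def:regular-nakada-exp} gives $x_j\in I_q=[-\lambda/2,\lambda/2]$ for all $j\ge 1$, and infiniteness forces $x_j\ne 0$ (otherwise the fraction would terminate at position $j$). Unwinding the defining expression yields
$$x=a_0\lambda+x_1,\qquad -\tfrac{1}{x_j}=a_j\lambda+x_{j+1}\quad(j\ge 1).$$

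The crucial step is to upgrade $x_j\in I_q$ to the \emph{strict} inclusion $x_j\in(-\lambda/2,\lambda/2)$. Suppose instead that $x_j=\pm\lambda/2$ at some $j\ge 1$. Then the tail $\underline{x}_j$ would be a regular $\lambda$-fraction for $\pm\lambda/2$; but Lemma~\ref{lem:lambda_half_finite_expansion} furnishes an explicit \emph{finite} regular fraction for $\pm\lambda/2$, and a short direct computation along the orbit (using the recursion above and verifying that $-1/x_k$ never lands on an ambiguous half-integer multiple of $\lambda$) shows that regularity leaves no room for any other continuation. So the tail $\underline{x}_j$ would have to terminate, contradicting the infiniteness of $\underline{x}$. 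With $x_{j+1}\in(-\lambda/2,\lambda/2)$ strictly for every $j$, the relation $-1/x_j-a_j\lambda=x_{j+1}$ pins down $a_j$ as the unique integer satisfying $|{-1/x_j-a_j\lambda}|<\lambda/2$, which is exactly $\nlm{-1/x_j}$; the asymmetric tie-breaking of the floor function in Definition~\ref{def:Fq} plays no role since we are strictly off the tie. The same argument applied to $x$ itself gives $a_0=\nlm{x}$ and $x_1=x-a_0\lambda$. These identifications match steps (i) and (ii) of Lemma~\ref{lem:generating_map_Fq_gives_regular_expansion} exactly, so $\underline{x}=\Cq(x)$. The bulk of the work lies in the boundary argument just given, namely ruling out the orbit $(x_j)$ from ever hitting $\pm\lambda/2$.
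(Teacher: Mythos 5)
Your argument is correct and is essentially the paper's own: the corollary is presented there as an immediate consequence of Lemma \ref{lem:generating_map_Fq_gives_regular_expansion} together with the observation, made in the paragraph following the corollary, that the only ambiguity in the choice of digits occurs for points whose $\Fq$-orbit hits $\pm\frac{\lambda}{2}$, which by Lemma \ref{lem:lambda_half_finite_expansion} have finite expansions. You merely spell out the same digit-by-digit forcing and the same boundary exclusion in more detail.
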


The above choice of floor function implies that $\Fq$ is an odd function
and that $\nlm{\pm\frac{\lambda}{2}}=0$ in agreement with Lemma \ref{lem:lambda_half_finite_expansion}.
The ambiguity connected to the choice of floor function at integers
affects only the points $x=\frac{2}{\lambda\left(1-2k\right)}$ where
$\frac{-1}{x\lambda}+\frac{1}{2}=k\in\mathbb{Z}$ and $\Fq x=\left(k-\left\lfloor k\right\rfloor \right)\lambda-\frac{\lambda}{2}\in=\pm\frac{\lambda}{2}$.
By Lemma \ref{lem:lambda_half_finite_expansion} we conclude, that
any point, which has more than one regular $\lambda$-fraction, is
$\Fq$-equivalent to $\pm\frac{\lambda}{2}$ and hence has a finite
$\lambda$-fraction. 

We can produce in this way a regular $\lambda$-fraction $\Cq\left(x\right)$
as a code for any $x\in\mathbb{R}$. For the purpose of symbolic dynamics
we prefer to have an intrinsic description of the members of the space
$\ZRNCF$ formulated in terms of so-called \emph{forbidden blocks},
i.e. certain subsequences which are not allowed. From Definition \ref{def:regular-nakada-exp}
it is clear, which subsequences are forbidden and how to remove them
by rewriting the sequence, using the fraction with a leading $a_{0}\ne0$
for any point outside $I_{q}$ instead of $a_{0}=0$. 

\begin{lem}
\label{lem:forbidden-block-even}Let $q$ be even and set $h=\frac{q-2}{2}$.
Since by Lemma \ref{lem:lambda_half_finite_expansion} one has $\mp\frac{\lambda}{2}=\RS{\left(\pm1\right)^{h}}$
the following blocks are forbidden: $\FS{\left(\pm1\right)^{h},\pm m}$
with $m\ge1$. Using $\left(ST\right)^{2h+2}=1$ such blocks can be
rewritten as \begin{align*}
\FS{a,\left(\pm1\right)^{h},\pm m,b} & \rightarrow\begin{cases}
\RS{a\mp1,\left(\mp1\right)^{h},\pm m\mp1,b}, & m\ge2,\\
\RS{a\mp1,\left(\mp1\right)^{h-1},b\mp1}, & m=1,\end{cases}a,b\in\mathbb{Z}.\end{align*}

\end{lem}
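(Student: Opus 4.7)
The plan is to derive both claims---the forbiddenness of the block and the rewriting rule---from the group relations \eqref{eq:G_q_relations}, namely $S^{2}=(ST)^{2h+2}=I$. I treat only the top-sign case in detail; the bottom-sign case follows verbatim from the negation symmetry $\FS{a_{0};a_{1},\ldots}\mapsto\FS{-a_{0};-a_{1},\ldots}$ together with the oddness of $\Fq$ noted after Definition \ref{def:Fq}.

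For forbiddenness I lean on Lemma \ref{lem:lambda_half_finite_expansion}. Suppose a regular tail of the form $\FS{0;1^{h},m,\ldots}$ with $m\ge 1$ existed. Then the corresponding iterate $x_{1}\in I_{q}$ satisfies $a_{j}=1$ for $1\le j\le h$, and a short direct analysis of the domain $\{x\in I_{q}:\nlm{-1/x}=+1\}$ shows that $h$ consecutive $+1$ digits drive the iterate $x_{h+1}$ into the non-negative half of $I_{q}$---consistent with the terminating value $\Fq^{h}(-\lambda/2)=0$ guaranteed by Lemma \ref{lem:lambda_half_finite_expansion}. Consequently the next digit $a_{h+1}=\nlm{-1/x_{h+1}}$ cannot be positive, contradicting $m\ge 1$.

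For the rewriting I use the identity
\[
(ST)^{h+1}=\bigl((ST)^{h+1}\bigr)^{-1}=(T^{-1}S)^{h+1}=T^{-1}(ST^{-1})^{h}S,
\]
which is immediate from $(ST)^{2h+2}=I$ and $S^{2}=I$. The block $\FS{a,1^{h},m,b}$ corresponds to the M\"obius element $T^{a}(ST)^{h}ST^{m}ST^{b}$; regrouping as $T^{a}(ST)^{h+1}T^{m-1}ST^{b}$ (using $(ST)^{h}\cdot ST^{m}=(ST)^{h+1}T^{m-1}$) and substituting the identity yields
\[
T^{a-1}(ST^{-1})^{h}\,S\,T^{m-1}S\,T^{b}.
\]
For $m\ge 2$ this reads off directly as the block $\RS{a-1,(-1)^{h},m-1,b}$. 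For $m=1$ the factor $T^{m-1}$ is trivial and the two adjacent $S$'s cancel via $S^{2}=I$; a further absorption $(ST^{-1})^{h}T^{b}=(ST^{-1})^{h-1}ST^{b-1}$ then reduces the word to $T^{a-1}(ST^{-1})^{h-1}ST^{b-1}$, giving the block $\RS{a-1,(-1)^{h-1},b-1}$.

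Each individual manipulation is elementary group theory, so the main obstacle is not any single step but rather the sign and index bookkeeping needed to cover all four sub-cases---top/bottom sign, $m=1$ vs.\ $m\ge 2$---with the single identity above, together with the forbiddenness step, which ultimately reduces to a direct examination of where $\Fq$ sends points close to the boundary of $I_{q}$. A small additional check, that the rewritten block is itself regular in its immediate vicinity (further forbidden sub-blocks being removable by subsequent applications of the same rule), is automatic from the equality of M\"obius transformations established above.
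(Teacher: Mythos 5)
Your proposal is correct and follows essentially the route the paper itself indicates (the paper states this lemma without a separate proof, the argument being embedded in the statement): forbiddenness via the finite expansion of $-\frac{\lambda}{2}$ from Lemma \ref{lem:lambda_half_finite_expansion}, and the rewriting via the relation $\left(ST\right)^{2h+2}=Id$, which your word manipulation $T^{a}(ST)^{h+1}T^{m-1}ST^{b}=T^{a-1}(ST^{-1})^{h}ST^{m-1}ST^{b}$ carries out correctly in both sub-cases.
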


\begin{lem}
\label{lem:forbidden-block-odd}Let $q$ be odd and set $h=\frac{q-3}{2}$.
Since by Lemma \ref{lem:lambda_half_finite_expansion} one knows that
$\mp\frac{\lambda}{2}=\RS{\left(\pm1\right)^{h},\pm2,\left(\pm1\right)^{h}}$
the following two different types of blocks are forbidden: $\FS{\left(\pm1\right)^{h+1}}$
and $\FS{\left(\pm1\right)^{h},\pm2,\left(\pm1\right)^{h},\pm m},$
with $m\ge1$. Using $\left(ST\right)^{2h+3}=1$ for $q\ge5$ these
blocks can be rewritten as \begin{align*}
\FS{a,\left(\pm1\right)^{h+1},b} & \rightarrow\FS{a\mp1,\left(\mp1\right)^{h},b\mp1},\\
\FS{a,\left(\pm1\right)^{h},\pm2,\left(\pm1\right)^{h},\pm m,b} & \rightarrow\begin{cases}
\FS{a\mp1,\left(\mp1\right)^{h},\mp2,\left(\mp1\right)^{h},\pm m\mp1,b}, & m\ge2,\\
\FS{a\mp1,\left(\mp1\right)^{h},\mp2,\left(\mp1\right)^{h-1},b\mp1}, & m=1\end{cases}\end{align*}
and for $q=3$ as \begin{align*}
\FS{a,\pm1,b} & \rightarrow\FS{a\mp1,b\mp1},\\
\FS{a,\pm2,\pm m,b} & \rightarrow\begin{cases}
\FS{a\mp1,\mp2,\pm m\mp1,b}, & m\ge2,\\
\FS{a\mp1,b\mp2,}, & m=1.\end{cases}\end{align*}

\end{lem}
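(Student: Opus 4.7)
My plan is to reduce the lemma to direct calculations inside $G_q$ using only the defining relations $S^2 = (ST)^{2h+3} = Id$ from Remark \ref{rem:relations_in_Gq} (since for odd $q$ we have $q = 2h+3$). The key observation is that each $\lambda$-fraction block $\FS{a_0, a_1, \ldots, a_n}$ corresponds to the group word $T^{a_0} S T^{a_1} \cdots S T^{a_n}$, so any word identity in $G_q$ translates directly into an identity of $\lambda$-fractions. I will repeatedly use the derived identity $(ST)^{h+1} = (T^{-1}S)^{h+2}$, an immediate consequence of $(ST)^{2h+3} = Id$ together with $S^{-1} = S$.

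For the first rule I would unfold $\FS{a, (+1)^{h+1}, b}$ as $T^a (ST)^{h+1} S T^b$, substitute $(ST)^{h+1} = (T^{-1}S)^{h+2}$, and collapse the terminal $S \cdot S$ via $S^2 = Id$; reading off the resulting word $T^{a-1}(ST^{-1})^h S T^{b-1}$ gives the claimed block $\FS{a-1, (-1)^h, b-1}$. The $-$ sign variant follows from the symmetry $\underline{-x} = \FS{-a_0; -a_1, \ldots}$ noted in Section 3.

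For the second rule, I would first consolidate consecutive powers of $T$ in $\FS{a, (+1)^h, 2, (+1)^h, m, b}$, rewriting the corresponding word as $T^a (ST)^{h+1} T (ST)^{h+1} T^{m-1} S T^b$, and then apply $(ST)^{h+1} = (T^{-1}S)^{h+2}$ to the first $(ST)^{h+1}$ factor and regroup, so that the middle becomes $(ST^{-1})^{h+1}(T^{-1}S)^{h+1} T^{m-1}$. Using the juxtaposition identity $(ST^{-1})^{h+1}(T^{-1}S)^{h+1} = (ST^{-1})^h S T^{-2} (ST^{-1})^h S$ then produces the word corresponding to $\FS{a-1, (-1)^h, -2, (-1)^h, m-1, b}$ when $m \geq 2$. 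The subcase $m=1$ is special: the factor $T^{m-1} = Id$ vanishes, so an additional $S^2$-cancellation shortens the trailing $(-1)^h$ block by one entry and merges the last coefficient with $b$, producing $\FS{a-1, (-1)^h, -2, (-1)^{h-1}, b-1}$. The $q=3$ statement is the formal specialization $h=0$: the $(\pm 1)^h$ factors are empty and the formulas collapse to the two-line statement, with $m=1$ triggering the extra cancellation that produces $\FS{a\mp 1, b \mp 2}$. The $-$ sign variants again follow from the above symmetry.

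The hardest part will be keeping careful track of the interlaced powers of $S$ and $T^{\pm 1}$ in the second rule, specifically verifying the juxtaposition identity $(ST^{-1})^{h+1} (T^{-1}S)^{h+1} = (ST^{-1})^h S T^{-2} (ST^{-1})^h S$ that is needed to read the regrouped product off as a block. The subcase $m=1$ compounds this, requiring an additional $S^2$-cancellation that shortens the tail by exactly one $-1$ entry (and, in the $q=3, h=0$ case, triggers $(ST^{-1})^{-1} = TS$ plus a further cancellation). Apart from these bookkeeping subtleties, the proof is routine algebraic manipulation in $G_q$.
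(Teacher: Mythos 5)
Your proposal is correct and follows exactly the route the paper indicates (the paper states this lemma without a written proof, appealing only to the relation $(ST)^{2h+3}=Id$): translating blocks into $G_q$-words, substituting $(ST)^{h+1}=(T^{-1}S)^{h+2}$, and regrouping/cancelling $S^2$, and your word computations, including the juxtaposition identity and the $m=1$ and $q=3$ degenerations, all check out. The only part you leave implicit is why the listed blocks are forbidden in the first place (namely that a tail beginning with such a block gives $x_j\notin I_q$ by comparison with the expansion of $\mp\frac{\lambda}{2}$), but the paper likewise treats that as immediate from Definition \ref{def:regular-nakada-exp} and Lemma \ref{lem:lambda_half_finite_expansion}.
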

\begin{rem}
\label{rem:connection_with_Rosen_fractions}It is easy to see that
Rosens $\lambda$-fractions \cite{MR0065632} can be expressed as
words in the generators $T,$ $S$ and $JS$ of the group $G_{q}^{*}=<G_{q},J>\subseteq\PGLR,$
where $J:z\mapsto-\overline{z}$ is the reflection in the imaginary
axis. Since $J$ is an involution of $G_{q}$, e.g. $JTJ=T^{-1}$
and $JSJ=S$, it is easy to see that Rosen's and our notions of $\lambda$-fractions
are equivalent: e.g. in the $G_{q}$-word identified with our $\lambda$-fraction
we replace any $T^{-a}$ by $JT^{a}J$, $a\ge1$. Algorithmically
this means for a $\lambda$-fraction with entries $a_{j}$ that the
corresponding Rosen fraction has entries $(\epsilon_{j},|a_{j}|)$
where $\epsilon_{1}=-\sign\left(a_{1}\right)$ and $\epsilon_{j}=-\sgn(a_{j-1}a_{j})$
for $j\ge2$.

From the definition of regular $\lambda$-fractions it is clear that
Rosen's reduced $\lambda$-fractions \cite[Def.\ 1]{MR0065632} correspond
to a fundamental interval $\RS{0,\frac{\lambda}{2}}$ for the action
of the group $\left\langle T,J\right\rangle $ together with the choices
made for finite fractions in \cite[Def.\ 1 (4)-(5)]{MR0065632}. It
is easy to verify, for example using the forbidden blocks, that a
finite fraction not equivalent to $\pm\frac{\lambda}{2}$ or an infinite
regular $\lambda$-fraction correspond to a reduced $\lambda$-fraction
of Rosen. The main difference between our regular and Rosens reduced
$\lambda$-fractions is that any $\lambda$-fraction equivalent to
$\pm\frac{\lambda}{2}$ has two valid regular $\lambda$-fractions.
The root of this non-uniqueness is our choice of a closed interval
$I_{q}$ which is in turn motivated by our Markov partitions in Section
\ref{sub:Markov-Partitions-for}. 

It is then clear, that those results of \cite{MR0065632} and \cite{MR1650073}
pertaining to infinite reduced $\lambda$-fractions can be applied
directly to our regular $\lambda$-fractions. 
\end{rem}
\begin{lem}
\label{lem:An-infinite-regular-converges}An infinite $\lambda$-fraction
without forbidden blocks is convergent.
\end{lem}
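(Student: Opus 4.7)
The plan is to show that the convergents
\[
\frac{p_n}{q_n} = T^{a_0}ST^{a_1}\cdots ST^{a_n}(0)
\]
form a Cauchy sequence in $\mathbb{R}$. Writing the composition as a matrix $M_n = T^{a_0}ST^{a_1}\cdots ST^{a_n} \in \SLR$ (with a fixed lift) and setting $M_n = \left(\begin{smallmatrix} p_n & p_{n-1}' \\ q_n & q_{n-1}' \end{smallmatrix}\right)$, a direct computation from $M_n = M_{n-1} S T^{a_n}$ yields the two-term recurrences
\[
p_n = a_n\lambda p_{n-1} - p_{n-2},\qquad q_n = a_n\lambda q_{n-1} - q_{n-2},
\]
with initial data $p_{-1}=1$, $p_0 = a_0\lambda$, $q_{-1}=0$, $q_0 = 1$. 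Since $\det M_n = \pm 1$ one obtains $p_n q_{n-1} - p_{n-1} q_n = \pm 1$ and therefore
\[
\frac{p_n}{q_n} - \frac{p_{n-1}}{q_{n-1}} = \frac{\pm 1}{q_n q_{n-1}}.
\]
Consequently the sequence $p_n/q_n$ is a telescoping sum whose convergence follows once one knows that $\sum_n 1/|q_n q_{n-1}|$ converges; in particular it suffices to prove that $|q_n|$ grows at least geometrically.

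The key step, where the forbidden-block hypothesis enters, is this lower bound on $|q_n|$. The recurrence allows near-cancellation $|a_n\lambda q_{n-1} - q_{n-2}| \ll |q_{n-1}|$ only when a long run of $a_j = \pm 1$ of a fixed sign drives the ratio $q_{n-1}/q_{n-2}$ close to the value $1/\lambda$ which is the fixed point of the one-step map with $a = \pm 1$. But this is precisely ruled out: the relation $(ST)^q = I$ in $G_q$ means that exactly those runs listed in Lemmas \ref{lem:forbidden-block-even} and \ref{lem:forbidden-block-odd} produce the degenerate growth; banning them forces every window of length $L$ (with $L = h+1$ for $q$ even, $L = 2h+3$ for $q$ odd) to contain an entry that prevents accumulation, and a block analysis then gives $|q_{n+L}|/|q_n| \ge \mu$ for some constant $\mu > 1$ independent of $n$. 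Hence $|q_n| \ge C \mu^{n/L}$ and the Cauchy property follows.

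I expect the main obstacle to be this uniform block-level growth estimate, since one must simultaneously handle every admissible configuration of sign and length within a window. The cleanest way to discharge it, rather than grinding through all cases, is the translation described in Remark \ref{rem:connection_with_Rosen_fractions}: a regular $\lambda$-fraction with no forbidden blocks corresponds (up to the endpoint ambiguity at $\pm\lambda/2$, which is irrelevant in the infinite case) to a reduced $\lambda$-fraction of Rosen, and convergence of infinite reduced $\lambda$-fractions is established in \cite{MR0065632}; the required quantitative tail estimates are also contained in Burton--Kraaikamp--Schmidt \cite{MR1650073}. Thus the theorem can be proved either by the direct block argument outlined above, or by quoting those earlier results via the dictionary of Remark \ref{rem:connection_with_Rosen_fractions}.
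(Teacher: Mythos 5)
Your recommended route --- translating to Rosen's reduced $\lambda$-fractions via Remark \ref{rem:connection_with_Rosen_fractions} and quoting \cite[Thm.\ 5]{MR0065632} --- is exactly the paper's proof, which consists of precisely that citation (plus a pointer to \cite{cf_main} for details). One caveat on your alternative direct sketch: the one-step map governing the ratio $q_{n}/q_{n-1}$ under a run of $a_j=1$ is $t\mapsto\lambda-1/t$, which for $\lambda<2$ is \emph{elliptic} (its fixed points are complex; $1/\lambda$ is not a fixed point), so the danger of long $\pm1$-runs is not slow drift toward a real fixed point but a finite-order rotation coming from $(ST)^{q}=Id$ that can make $q_n$ arbitrarily small or flip its sign --- the forbidden blocks cut such runs off before the cycle closes, and a correct block-level growth estimate would have to track this rotation rather than a fixed-point approach.
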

\begin{proof}
This follows from \cite[Thm.\ 5]{MR0065632} and Remark \ref{rem:connection_with_Rosen_fractions}.
For more details see also \cite{cf_main}.
\end{proof}
An immediate consequence of Definition \ref{def:regular-nakada-exp}
and Lemmas \ref{lem:forbidden-block-even}, \ref{lem:forbidden-block-odd}
and \ref{lem:An-infinite-regular-converges} is the following 

\begin{cor}
A $\lambda$-fraction is regular if and only if it does not contain
any forbidden block. 
\end{cor}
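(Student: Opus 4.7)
The plan is to prove each direction by combining the algebraic rewriting identities of Lemmas \ref{lem:forbidden-block-even}--\ref{lem:forbidden-block-odd} with the observation, immediate from Definition \ref{def:Fq}, that $\nlm{y}=0$ if and only if $y\in I_{q}$, together with the uniqueness of regular expansions from Corollary \ref{cor:infinite-regular-are-unique}.

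For the direction \emph{regular $\Rightarrow$ no forbidden block}, I argue by contradiction. Suppose $\underline{x}$ is regular but contains a forbidden block at some position $j\ge 1$. In the generic even-$q$, positive-sign case the tail $\underline{x}_{j}=\FS{0;1^{h},m,b,\ldots}$ represents $x_{j}$, and the rewriting identity of Lemma \ref{lem:forbidden-block-even} with left neighbour $a=0$ yields the algebraic equality
$$x_{j}\;=\;\FS{0;1^{h},m,b,\ldots}\;=\;\RS{-1;(-1)^{h},m-1,b,\ldots}$$
(or its $m=1$ variant). The right-hand side presents $x_{j}$ as a regular $\lambda$-fraction with leading entry $-1$, so by Lemma \ref{lem:generating_map_Fq_gives_regular_expansion} we have $\nlm{x_{j}}=-1$, i.e.~$x_{j}\notin I_{q}$, contradicting regularity. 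The opposite sign and the odd-$q$ case follow identically via Lemma \ref{lem:forbidden-block-odd}.

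For the reverse direction, assume $\underline{x}$ has no forbidden block. Lemma \ref{lem:An-infinite-regular-converges} gives convergence of each shift $\underline{x}_{j}$ to a point $x_{j}$. The canonical expansion $\Cq(x_{j})$ produced by Lemma \ref{lem:generating_map_Fq_gives_regular_expansion} has leading entry $\nlm{x_{j}}$ and, by the direction just proved, contains no forbidden block. Any two $\lambda$-fraction representations of the same value that are related by the inverse of one of the rewriting moves differ precisely by whether they contain the corresponding forbidden block; hence the hypothesis that $\underline{x}_{j}$ has no forbidden block, combined with Corollary \ref{cor:infinite-regular-are-unique}, forces $\underline{x}_{j}=\Cq(x_{j})$. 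In particular the leading $0$ of $\underline{x}_{j}$ equals $\nlm{x_{j}}$, so $x_{j}\in I_{q}$ for every $j\ge 1$ and $\underline{x}$ is regular.

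The hardest step is the confluence-type statement underlying the reverse direction, namely that two distinct $\lambda$-fraction representations of the same real number are related by a finite composition of the rewriting rules of Lemmas \ref{lem:forbidden-block-even}--\ref{lem:forbidden-block-odd}. This ultimately rests on the Hecke-group relations $(ST)^{2h+2}=1$ (even $q$) or $(ST)^{2h+3}=1$ (odd $q$) together with Lemma \ref{lem:lambda_half_finite_expansion}, and is precisely what lets the corollary be labelled an immediate consequence of the preceding material.
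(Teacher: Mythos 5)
Your forward direction is workable but takes a detour: rather than invoking the rewriting identity and then reading off $\nlm{x_{j}}$ from the rewritten expansion, it is more direct (and is what Lemmas \ref{lem:forbidden-block-even}--\ref{lem:forbidden-block-odd} are really encoding) to use the lexicographic ordering from the proof of Lemma \ref{lem:R-is-given-by-lambda-fraction}: a tail beginning with $\FS{\left(\pm1\right)^{h},\pm m}$, $m\ge1$, lies strictly beyond $\mp\frac{\lambda}{2}=\RS{\left(\pm1\right)^{h}}$, hence outside $I_{q}$. Note also that your step ``regular with leading entry $-1$ implies $\nlm{x_{j}}=-1$'' is not covered by Corollary \ref{cor:infinite-regular-are-unique} at the boundary: points $\Fq$-equivalent to $\pm\frac{\lambda}{2}$ have two regular expansions with different leading entries, so you need the strict inequality $x_{j}<-\frac{\lambda}{2}$ anyway.

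The genuine gap is in the reverse direction. Your argument rests entirely on the ``confluence'' claim that any two $\lambda$-fraction representations of the same real number are connected by a finite composition of the rewriting moves, and you do not prove it. As stated it is false: for even $q$ the point $-\frac{\lambda}{2}$ has the two regular expansions $\FS{0;1^{h}}$ and $\FS{-1;\left(-1\right)^{h}}$ (for $q=3$, $-\frac{1}{2}=\FS{0;2}=\FS{-1;-2}$), neither of which contains a forbidden block, so they cannot be related by the listed moves, which only fire on forbidden blocks; and for infinite fractions one would in general need infinitely many moves, so the finiteness is also problematic. Deducing $\underline{x}_{j}=\Cq\left(x_{j}\right)$ from ``both contain no forbidden block'' is therefore assuming the uniqueness you are trying to establish. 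The intended argument (which is why the paper states the corollary without proof, as immediate from Definition \ref{def:regular-nakada-exp} and Lemmas \ref{lem:forbidden-block-even}, \ref{lem:forbidden-block-odd} and \ref{lem:An-infinite-regular-converges}) is again the ordering: Lemma \ref{lem:An-infinite-regular-converges} guarantees each tail $\underline{x}_{j}=\FS{0;a_{j+1},\ldots}$ converges to some $x_{j}$, and among all zero-led sequences avoiding the forbidden blocks the lexicographically extremal ones are exactly the expansions $\RS{\left(\pm1\right)^{h}}$ (resp.\ $\RS{\left(\pm1\right)^{h},\pm2,\left(\pm1\right)^{h}}$) of $\mp\frac{\lambda}{2}$ from Lemma \ref{lem:lambda_half_finite_expansion}; monotonicity of the value in this ordering then gives $x_{j}\in\left[-\frac{\lambda}{2},\frac{\lambda}{2}\right]=I_{q}$ for all $j\ge1$, which is condition ({*}).
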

It is important that rewriting forbidden blocks can be done sequentially.

\begin{lem}
\label{lem:rewrite_not_affect_left}If the $\lambda$-fraction $\FS{a_{0};a_{1},\ldots}$
has a forbidden block beginning at $a_{n}$, then the subsequence
$\FS{a_{0};a_{1},\ldots,a_{n-2}}$ is not affected by rewriting this
forbidden block.
\end{lem}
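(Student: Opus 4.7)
The plan is to prove the claim by direct inspection of the rewrite rules enumerated in Lemmas \ref{lem:forbidden-block-even} and \ref{lem:forbidden-block-odd}. In every case these rules have the schematic form
\[
\FS{\ldots,a,\,W,\,b,\ldots}\;\longrightarrow\;\FS{\ldots,a',\,W',\,b',\ldots},
\]
where $W$ is the forbidden block, $a$ is the entry immediately preceding $W$, and $b$ is the entry immediately following $W$. Matching this against the hypothesis, if the forbidden block begins at position $n$ then $a=a_{n-1}$ occupies position $n-1$, the entries of $W$ occupy positions $n,\ldots,n+k-1$ (with $k=h$ or $k=2h+1$ depending on the type of block), and $b$ sits at position $n+k$.

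The remaining step is routine: I would walk through the list — even $q$ with $m\ge 2$; even $q$ with $m=1$; odd $q\ge 5$ with forbidden block $\FS{(\pm1)^{h+1}}$; odd $q\ge 5$ with $\FS{(\pm1)^{h},\pm2,(\pm1)^{h},\pm m}$ for $m\ge 2$ and for $m=1$; and the two rules for $q=3$ — and observe in each instance that only the entries in positions $n-1,n,\ldots,n+k$ appear on either side of the arrow. Consequently, none of the entries $a_0,a_1,\ldots,a_{n-2}$ is referenced by the rewrite rule, so $\FS{a_0;a_1,\ldots,a_{n-2}}$ is identically preserved.

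The main thing to double-check, rather than a genuine obstacle, is a potential boundary subtlety: the new entry $a'=a\mp 1$ at position $n-1$ may happen to vanish (if $a=\pm 1$), or it may combine with $a_{n-2}$ to create a new forbidden block. Both phenomena would force further rewrites, and one might be tempted to view them as indirect modifications of $a_{n-2}$. However, the lemma concerns a single rewrite step applied to the given forbidden block, and the explicit formulas in Lemmas \ref{lem:forbidden-block-even} and \ref{lem:forbidden-block-odd} make it manifest that this single step leaves positions $0,\ldots,n-2$ untouched. Any subsequent reduction of new forbidden blocks or absorption of zero entries is a separate rewrite and lies outside the scope of the present statement.
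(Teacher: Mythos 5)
There is a gap, and it sits exactly where you chose to declare the issue ``outside the scope of the present statement.'' The trivial part of the lemma is indeed that the substitution rules in Lemmas \ref{lem:forbidden-block-even} and \ref{lem:forbidden-block-odd} only mention positions $n-1,\dots,n+k$; you verify that correctly. But the one piece of genuine content is ruling out the possibility that the new entry $a_{n-1}\mp1$ at position $n-1$ equals $0$. A $\lambda$-fraction has entries in $\mathbb{Z}^{*}$ for $j\ge1$; a zero at position $n-1$ corresponds to an $ST^{0}S=\mathrm{Id}$ in the group word, so the rewrite is not complete (its output is not a $\lambda$-fraction) until that zero is absorbed, and the absorption merges $a_{n-2}$ with the first entry of the rewritten block --- i.e.\ it \emph{does} affect $a_{n-2}$. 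This is not a ``separate rewrite'': it is part of making sense of the right-hand sides $\RS{a\mp1,\ldots}$ as written. Moreover, the lemma is invoked in the proof of Lemma \ref{lem:reduce_uv_to_Omega*} precisely to guarantee that rewriting does not propagate leftward, so reading it as a statement about a purely formal symbol substitution empties it of the content it is needed for.

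The missing observation is the one the paper's proof consists of: every forbidden block begins with the entry $\pm1$ of the block's own sign (for $q=3$ the block $\FS{\pm2,\pm m}$ begins with $\pm2$, but there $\FS{\pm1}$ is itself forbidden). Hence if $a_{n-1}$ were equal to that same $\pm1$, a forbidden block would already begin at $a_{n-1}$ rather than at $a_{n}$, contradicting the hypothesis. Therefore $a_{n-1}\ne\pm1$ (with the relevant sign), so $a_{n-1}\mp1\ne0$, no cancellation occurs, and positions $0,\dots,n-2$ are genuinely untouched. Adding this two-line argument closes the gap; without it, your proof establishes only the formal locality of the substitution, not the lemma.
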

\begin{proof}
This is easy to verify in the different cases arising in Lemmas \ref{lem:forbidden-block-even}
and \ref{lem:forbidden-block-odd} by simply noting that $a_{n}=\pm1$
so $a_{n-1}\ne\pm1$. For the complete details see \cite{cf_main}.
\end{proof}

\subsection{Dual regular $\lambda$-fractions}

To encode the orbits of the geodesic flow in terms of a discrete invertible
dynamical system it turns out that we still need another kind of $\lambda$-fraction,
the so-called \emph{dual regular} $\lambda$-fraction. In the case
$q=3$ this was already introduced by Hurwitz \cite{hurwitz}, see
also \cite[p.\ 15]{MR2265011}. 

Consider the set of $\lambda$-fractions $\underline{y}=\FS{0;b_{1},\ldots}$
which do not contain any reversed forbidden block, i.e. a forbidden
block given in Definitions \ref{lem:forbidden-block-even} or \ref{lem:forbidden-block-odd}
read in reversed order. 

Let $R$ be the largest number in this set and define $r=R-\lambda$
and $I_{R}=\left[-R,R\right]$. 

\begin{lem}
\label{lem:R-is-given-by-lambda-fraction}The number $R$ is given
by the following regular $\lambda$-fraction \[
R=\begin{cases}
\RS{1;\overline{1^{h-1},2}}, & \mbox{for }q\,\mbox{even},\\
\RS{1;\overline{1^{h},2,1^{h-1},2}}, & \text{for }q\ge5\,\mbox{odd},\\
\RS{1;\overline{3}}, & \mbox{for }q=3.\end{cases}\]

\end{lem}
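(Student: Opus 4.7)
I would proceed in three steps. First, verify that the expression $\RS{1;\overline{\ldots}}$ given in each case is a bona fide regular $\lambda$-fraction, i.e., that its tail contains no forbidden block from Lemmas~\ref{lem:forbidden-block-even} or~\ref{lem:forbidden-block-odd}; and compute its numerical value from the quadratic fixed-point equation of the periodic tail. This yields $R$ as an explicit algebraic number in each case (e.g., $R=(\sqrt{5}-1)/2$ for $q=3$, $R=1$ for $q=4$).

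Second, construct an alternative representation $R=\FS{0;b_{1},b_{2},\ldots}$ with leading $a_{0}=0$ and no reversed forbidden block. The key is the relation $\left(ST\right)^{q}=Id$ from~(\ref{eq:G_q_relations}), which allows one to rewrite the leading $T(ST)\cdots$ block of the regular expansion as an ``inverse'' block starting with $T^{-1}$. Concretely, $q=3$ yields $R=\FS{0;-2,\overline{-3}}$ and even $q\ge4$ yields $R=\FS{0;-1,\overline{\left(-1\right)^{h-1},-2}}$; an analogous identity handles odd $q\ge5$. Checking the reversed versions of the blocks of Lemmas~\ref{lem:forbidden-block-even},~\ref{lem:forbidden-block-odd} shows that no reversed forbidden sub-block appears -- the crucial point is that the isolated $-2$'s in the periodic tail break up the runs of $-1$'s before they reach length $h+1$. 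This establishes $R$ as an element (or sup) of the set in question.

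Third, show that no admissible $\FS{0;b_{1},b_{2},\ldots}$ has value strictly greater than $R$. Writing $y=-1/\alpha_{1}$ with $\alpha_{j}=b_{j}\lambda-1/\alpha_{j+1}$, maximising $y>0$ amounts to pushing $\alpha_{1}$ as close to $0^{-}$ as possible. I would argue by greedy case analysis on $b_{1}$: the reversed forbidden blocks narrow the admissible choices to a unique maximiser ($b_{1}=-1$ for $q\ge4$, $b_{1}=-2$ for $q=3$), which pins $\alpha_{2}$ into a small interval. Induction on the level then forces the entire sequence $\{b_{j}\}$ to coincide with the dual expansion constructed in Step~2, and equality of the limits follows from Corollary~\ref{cor:infinite-regular-are-unique} applied to the reversed (regular) $\lambda$-fraction.

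The main obstacle is the bookkeeping in Step~3 for odd $q\ge5$, where two interacting families of forbidden blocks force a detailed sub-case analysis across windows of length up to $2h+2$. A cleaner alternative is to invoke the natural-extension construction for Rosen's fractions in Burton--Kraaikamp--Schmidt~\cite{MR1650073}: in their framework $[-R,R]$ is precisely the dual fibre of the Nakada natural extension of $F_{q}$, and the right endpoint $R$ has exactly the stated $\lambda$-fraction.
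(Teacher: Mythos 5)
Your proposal is correct and follows essentially the same route as the paper: the paper identifies the greedy lexicographically maximal sequence avoiding reversed forbidden blocks, namely $\FS{0;\left(-1\right)^{h},\overline{-2,\left(-1\right)^{h-1}}}$ for even $q$ (and its analogues for odd $q$ and $q=3$), and then rewrites it into the stated regular form via Lemmas~\ref{lem:forbidden-block-even} and~\ref{lem:forbidden-block-odd} — exactly your Steps 2 and 3 in the opposite order. Your fallback via \cite{MR1650073} is also consistent with the paper, which cites that reference for the closely related Lemma~\ref{lem:properties_of_R_odd}.
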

\begin{proof}
To obtain the largest number we use the lexicographic ordering on
$\ZRNCF$ implied by the following sequence of easily verifiable inequalities:
\[
\RS{1}<\RS{2}<\cdots<0<\cdots<\RS{-2}<\RS{-1}.\]
For the details on how this ordering extends to infinite sequences
see \cite{cf_main}. Thus we want to take as many digits as possible
close to $-1$ in the $\lambda$-fraction of $R$. The constraints
set by the reversed forbidden blocks in Lemmas \ref{lem:forbidden-block-even}
and \ref{lem:forbidden-block-odd} clearly give the following expression
for $R$: $R=\FS{0;\left(-1\right)^{h},\overline{-2,\left(-1\right)^{h-1}}}$
for even $q$, $R=\FS{0;\left(-1\right)^{h},\overline{-2,\left(-1\right)^{h},-2,\left(-1\right)^{h-1}}}$
for odd $q\ge5$ and $R=\FS{0;-2,\overline{-3}}$ for $q=3$. The
Lemma is then clear by rewriting these numbers recursively into regular
$\lambda$-fractions using Lemmas \ref{lem:forbidden-block-even}
and \ref{lem:forbidden-block-odd}.
\end{proof}
\begin{lem}
\label{lem:properties_of_R_even}For even $q$ we have the identity
$R=1.$ 
\end{lem}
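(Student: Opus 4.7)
The plan is to exhibit $R$ as a fixed point of an explicit M\"obius transformation, compute those fixed points (which will turn out to be exactly $\pm 1$ thanks to an identity specific to even $q$), and then select $R=+1$ by a short sign argument.

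First I would set up the functional equation. Write $R=\RS{1;\overline{1^{h-1},2}}$ and let $R_{1}=\RS{a_{1};a_{2},\ldots}$ denote its tail, so that $R=TS\cdot R_{1}$ since $a_{0}=1$. The tail is purely periodic of period $h$ with repeating block $(1,\ldots,1,2)$, so unrolling one full period gives $R_{1}=A\cdot R_{1}$ with $A=(TS)^{h-1}T^{2}S$. Combining the two, $R$ is a fixed point of $TS\cdot A\cdot(TS)^{-1}$, and since $T^{2}S=T\cdot(TS)$ this conjugate collapses to $M:=(TS)^{h}T$, giving $R=M\cdot R$.

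Next I would identify the fixed points of $M$. Using the $\lambda$-Chebyshev polynomials $p_{n}=\sin(n\pi/q)/\sin(\pi/q)$, which satisfy $p_{0}=0$, $p_{1}=1$, $p_{n+1}=\lambda p_{n}-p_{n-1}$, a routine induction gives $(TS)^{n}=\left(\begin{smallmatrix}p_{n+1}&-p_{n}\\ p_{n}&-p_{n-1}\end{smallmatrix}\right)$, hence
\[ M=(TS)^{h}T=\begin{pmatrix}p_{h+1}&p_{h+2}\\ p_{h}&p_{h+1}\end{pmatrix}. \]
The crucial input from the evenness of $q=2h+2$ is the identity $p_{h+2}=p_{h}$, which follows from $\sin((h+2)\pi/q)=\sin(\pi-(h+2)\pi/q)=\sin(h\pi/q)$. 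With this symmetry, the fixed-point equation $M\cdot z=z$ reduces to $p_{h}(z^{2}-1)=0$, and since $p_{h}\neq 0$ for $h\geq 1$, the only real fixed points are $z=\pm 1$.

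Finally I would rule out $R=-1$: by the definition of $R$ preceding Lemma \ref{lem:R-is-given-by-lambda-fraction}, $R$ is the largest value of a $\lambda$-fraction $\FS{0;b_{1},b_{2},\ldots}$ with no reversed forbidden blocks, and this set already contains the elementary positive value $\FS{0;-1}=1/\lambda$. Hence $R\geq 1/\lambda>0$, forcing $R=1$. The conceptual crux is the identity $p_{h+2}=p_{h}$, which is precisely where evenness is used and which makes the fixed-point quadratic factor symmetrically; I do not foresee other serious obstacles.
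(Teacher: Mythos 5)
Your proof is correct, but it takes a genuinely different route from the paper's. The paper argues entirely on the symbolic level: applying $S$ to $R=\RS{1;\overline{1^{h-1},2}}$ gives the non-regular fraction $\FS{0;1^{h},\overline{2,1^{h-1}}}$, which after rewriting its forbidden block (Lemma \ref{lem:forbidden-block-even}) becomes $\RS{-1;\overline{\left(-1\right)^{h-1},-2}}=-R$; thus $-1/R=-R$ and $R>0$ forces $R=1$. You instead bypass the forbidden-block calculus entirely: you unroll the periodicity to exhibit $R$ as a fixed point of $M=(TS)^{h}T$, compute $M$ via the Chebyshev-type recursion (essentially the paper's formula (\ref{eq:TS-explicit}), with a cleaner determinant-one normalization), and isolate the evenness of $q$ in the single identity $p_{h+2}=p_{h}$, which makes the fixed-point quadratic degenerate to $z^{2}=1$. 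All the intermediate algebra checks out ($A=(TS)^{h-1}T^{2}S$, the conjugation collapsing to $(TS)^{h}T$, and the matrix entries), and your positivity argument for selecting $R=+1$ is if anything more explicit than the paper's bare assertion $R>0$. What each approach buys: the paper's is shorter and exposes the symmetry $SR=-R$ directly from the palindromic structure of the expansion, while yours is more mechanical, needs no rewriting rules, and is structurally parallel to the paper's own treatment of the odd case, where part b) of Lemma \ref{lem:properties_of_R_odd} is likewise derived by feeding $(TS)^{h+1}R=-R$ into the explicit matrix formula to obtain the quadratic for $R$. One could even say your argument unifies the even and odd cases under a single method.
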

\begin{proof}
Consider the action of $S$ on $R$: $SR=\RS{0;1^{h},\overline{2,1^{h-1}}}=\RS{-1;\overline{\left(-1\right)^{h-1},-2}}=-R$.
Hence $-1/R=-R$ and since $R>0$ we must have  $R=1$. 
\end{proof}
\begin{lem}
\label{lem:properties_of_R_odd}For odd $q$ we have $\frac{\lambda}{2}<R<1$
and 
\end{lem}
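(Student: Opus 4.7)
The plan is to prove the two inequalities separately, each by a monotonicity argument on the convergents of a suitable $\lambda$-fraction. The upper bound uses the regular $\lambda$-fraction of $R$ together with $1=\RS{1;1^{h}}$ from Lemma \ref{lem:expansion_of_1_odd_q}; the lower bound uses the pre-rewrite form of $R$ exhibited in the proof of Lemma \ref{lem:R-is-given-by-lambda-fraction} together with $\lambda/2$'s expansion from Lemma \ref{lem:lambda_half_finite_expansion}.

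For $R<1$: the digits of $1=\RS{1;1^{h}}$ coincide with the first $h+1$ digits of $R=\RS{1;\overline{1^{h},2,1^{h-1},2}}$. Let $R_{n}=T^{a_{0}}ST^{a_{1}}\cdots ST^{a_{n}}(0)$ be the $n$-th convergent, so that $R_{h}=1$ and $R_{n}\to R$. I claim $R_{n+1}<R_{n}$ strictly for every $n$. Define truncated tails $y_{n}^{(n)}=a_{n}\lambda$ and $y_{k}^{(n)}=a_{k}\lambda-1/y_{k+1}^{(n)}$ for $k<n$; since each $a_{k}\geq 1$ and $\lambda\geq 1$ (with the degenerate case $\lambda=1$ occurring only for $q=3$, where all $a_{k}=3$), an induction shows $y_{k}^{(n)}>0$. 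The inequality $y_{n}^{(n+1)}=a_{n}\lambda-1/(a_{n+1}\lambda)<a_{n}\lambda=y_{n}^{(n)}$ propagates through the recursion $y_{k-1}=a_{k-1}\lambda-1/y_{k}$ to give $y_{k}^{(n+1)}<y_{k}^{(n)}$ for every $k\leq n$, hence $R_{n+1}=\lambda-1/y_{1}^{(n+1)}<R_{n}$. Taking the limit, $R\leq R_{h+1}<R_{h}=1$.

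For $R>\lambda/2$: by construction, $R$ is the largest value among $\lambda$-fractions $\FS{0;b_{1},b_{2},\ldots}$ containing no reversed forbidden block. The finite expansion $\lambda/2=\FS{0;(-1)^{h},-2,(-1)^{h}}$ from Lemma \ref{lem:lambda_half_finite_expansion} is exactly the length-$(2h+1)$ initial segment of the pre-rewrite form of $R$, which the proof of Lemma \ref{lem:R-is-given-by-lambda-fraction} identifies as $\FS{0;(-1)^{h},\overline{-2,(-1)^{h},-2,(-1)^{h-1}}}$ for $q\geq 5$ and $\FS{0;-2,\overline{-3}}$ for $q=3$. Let $\tilde{R}$ be the $(2h+2)$-nd convergent of this pre-rewrite form, obtained from $\lambda/2$'s expansion by appending the single additional negative digit ($-2$ for $q\geq 5$, $-3$ for $q=3$). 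Being a truncation of a sequence without reversed forbidden blocks, $\tilde{R}$ inherits that property, so $\tilde{R}\leq R$ by maximality. The sign-mirror of the upper-bound monotonicity, now with every $a_{k}<0$ and every $y_{k}^{(n)}<0$, shows that appending a negative digit strictly increases the innermost tail $y_{2h+1}$ (from $-\lambda$ to $-\lambda+1/(|a_{2h+2}|\lambda)$), and this increase propagates through the recursion to $y_{1}$, giving $\tilde{R}=-1/y_{1}^{(2h+2)}>-1/y_{1}^{(2h+1)}=\lambda/2$.

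The main technical point is bookkeeping the sign invariants: truncated tails must stay positive in the upper bound and negative in the lower bound, for otherwise the direction of the strict inequality propagated through $y_{k-1}=a_{k-1}\lambda-1/y_{k}$ would flip. Once these sign invariants are verified --- with a small separate check for $q=3$ where the pre-rewrite form has period $-3$ instead of the generic block $-2,(-1)^{h},-2,(-1)^{h-1}$ --- both monotonicity arguments are routine.
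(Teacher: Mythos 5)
Your overall strategy --- strictly decreasing convergents for $R<1$, strictly increasing convergents (in the pre-rewrite, all-negative-digit form) for $\frac{\lambda}{2}<R$ --- is a workable, more hands-on substitute for the paper's one-line appeal to the lexicographic ordering on $\ZRNCF$, and your propagation step is sound: if $y_{k+1}^{(n+1)}<y_{k+1}^{(n)}$ and both are positive, then indeed $y_{k}^{(n+1)}<y_{k}^{(n)}$. The problem is the sign invariant itself. You assert that ``since each $a_{k}\ge1$ and $\lambda\ge1$, an induction shows $y_{k}^{(n)}>0$,'' but no such induction exists: for $\lambda<2$ the map $y\mapsto\lambda-1/y$ governing a run of $1$'s has no real fixed point (it is the elliptic element $TS$ with $\left(ST\right)^{q}=Id$), so iterating it from any positive value eventually produces a negative one. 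Concretely, for $q=5$ the hypotheses $a_{k}\ge1$, $\lambda\ge1$ are satisfied by the block $1,1,1,1$, yet $\lambda\mapsto\lambda-\frac{1}{\lambda}=1\mapsto\lambda-1\mapsto\lambda-\frac{1}{\lambda-1}=0$, and one more step diverges. So positivity of the truncated tails is \emph{not} a consequence of the hypotheses you state; it holds for the digit string of $R$ only because that string contains no run of $1$'s longer than $h$ (no forbidden block), and proving $y_{k}^{(n)}=a_{k}\lambda+\FS{0;a_{k+1},\ldots,a_{n}}\ge\frac{\lambda}{2}>0$ requires knowing that regular truncations evaluate into $I_{q}$ --- i.e. Lemma \ref{lem:lambda_half_finite_expansion} together with the equivalence between ``no forbidden block'' and ``all shifted tails lie in $I_{q}$'', or equivalently the explicit formula (\ref{eq:TS-explicit}) for the orbit points $\phi_{j}$. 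The identical issue afflicts the negativity invariant in your lower-bound argument.

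Since you yourself flag the sign invariants as the main technical point and then justify them with an implication that is false in general, this is a genuine gap rather than a cosmetic omission. It is repairable: replace the claimed induction by the observation that every truncation of the relevant digit strings is a regular (respectively, reverse-regular) finite $\lambda$-fraction, hence its value lies in $I_{q}$ (respectively, the corresponding dual range), which pins the tails $y_{k}^{(n)}$ to the correct side of zero. That, however, is exactly the information the paper's lexicographic ordering packages, so the repaired argument ends up leaning on the same inputs as the paper's proof, just unwound into explicit convergent estimates.
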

\begin{lyxlist}{00.00.0000}
\item [{a)}] $-R=\left(TS\right)^{h+1}R$,
\item [{b)}] $R^{2}+\left(2-\lambda\right)R-1=0$
\end{lyxlist}
\begin{proof}
From the explicit expansions of $\frac{\lambda}{2}$ and $1$ in Lemmas
\ref{lem:lambda_half_finite_expansion} and \ref{lem:expansion_of_1_odd_q}
together with the lexicographic ordering mentioned in the proof of
Lemma \ref{lem:R-is-given-by-lambda-fraction} it is clear that $\frac{\lambda}{2}<R<1$
(or see the proof of Lemma 3.3 in \cite{MR1650073}). By rewriting
as in Lemma \ref{lem:forbidden-block-odd} for $q\ge5$ we get 

$SR=\FS{0;1^{h+1},\overline{2,1^{h-1},2,1^{h}}}=\RS{-1;\left(-1\right)^{h-1},-2,\overline{\left(-1\right)^{h},-2,\left(-1\right)^{h-1},-2}}$
and deduce that $R=ST^{-1}\left(ST^{-1}\right)^{h-1}ST^{-2}T\left(-R\right)=\left(ST^{-1}\right)^{h+1}\left(-R\right)$
and hence $-R=\left(TS\right)^{h+1}R$, which is identity a). A similar
rewriting works for $q=3$. Using the following explicit formula for
the matrix $\left(TS\right)^{n}$ (cf. e.g. \cite[p.\ 1279]{MR1650073})
\begin{equation}
\left(TS\right)^{n}=\frac{1}{\sin^{2}\frac{\pi}{q}}\left(\begin{smallmatrix}B_{n+1} & -B_{n}\\
B_{n} & -B_{n-1}\end{smallmatrix}\right),\,\mbox{where}\, B_{n}=\sin\frac{n\pi}{q}\label{eq:TS-explicit}\end{equation}
and some elementary trigonometry gives $\left(TS\right)^{h+1}R=\frac{-R+1}{-R+\lambda-1}=-R$
which implies identity b).
\end{proof}
\begin{rem}
Using the representation (\ref{eq:TS-explicit}) one can also show
that the map $A_{r}$ fixing $r=R-1$ is given by $A_{r}=\left(ST\right)^{h+1}T\left(ST\right)^{h}T=\left(\begin{smallmatrix}2-2\lambda & \lambda-2\lambda^{2}\\
\lambda & 2+\lambda^{2}\end{smallmatrix}\right)$ for odd $q$ and $A_{r}=\left(ST\right)^{h-1}ST^{2}=\frac{1}{4\sin^{2}\frac{\pi}{q}}\left(\begin{smallmatrix}2-\lambda^{2} & 7\lambda-3\lambda^{3}\\
\lambda & 2+\lambda^{2}\end{smallmatrix}\right)$ for even $q$.
\end{rem}
\begin{defn}
\label{def:dual_regular_lambda-fraction}Let $\underline{y}=\FS{b_{0};b_{1},\ldots}$
be a finite or infinite $\lambda$-fraction. Set $\underline{y}_{0}=\FS{0;b_{1},\ldots}$,
$\underline{y}_{j}=\sigma^{j-1}\underline{y}_{0}=\FS{0;b_{j},\ldots}$
and $y_{j}$, $j\ge0$, the corresponding point in $\mathbb{R}$.
Then $\underline{y}$ is said to be a \emph{dual regular $\lambda$-fraction
}if and only if it has the following properties:\begin{align}
\mbox{if}\,\,\, b_{0}=0 & \Rightarrow y\in I_{R},\,\tag{D1}\\
\mbox{if}\,\,\, b_{0}\ne0 & \Rightarrow y_{0}\in\sign\left(b_{0}\right)\,\left[r,R\right],\,\text{and\tag{D2}}\\
y_{j+1} & \in\sign\left(-y_{j}\right)\left[r,R\right]\,\text{for all }\, j\ge1.\tag{D3}\end{align}
A dual regular $\lambda$-fraction is denoted by $\DS{b_{0};b_{1},\ldots}$,
the space of all dual regular $\lambda$-fractions by $\DRNCF$ and
the subspace of all \emph{infinite} sequences in $\DRNCF$ with leading
$0$ by $\ZDRNCF$. 
\end{defn}
Uniqueness of a subset of dual regular $\lambda$-fractions is again
asserted using a generating map. 

\begin{defn}
\label{def:Fqd_dual_generating_function}Let $\left\lfloor \cdot\right\rfloor $
be the floor function from Definition \ref{def:Fq} and consider the
shifted nearest $\lambda$-multiple function $\nlms{y}=\left\lfloor \frac{y}{\lambda}+\frac{R}{\lambda}\right\rfloor $
if $y\le0$ and $\nlms{y}=\left\lfloor \frac{y}{\lambda}-\frac{r}{\lambda}\right\rfloor $
if $y>0$. For $I_{R}=\left[-R,R\right]$ we define the map $\Fqd:I_{R}\rightarrow I_{R}$
by \begin{eqnarray*}
\Fqd y & = & \begin{cases}
-\frac{1}{y}-\nlms{-\frac{1}{y}}\lambda, & y\in I_{R}\backslash\left\{ 0\right\} ,\\
0, & y=0.\end{cases}\end{eqnarray*}

\end{defn}
\begin{lem}
\label{lem:generate_dual}For $y\in\mathbb{R}$ the following algorithm
produces a finite or infinite dual regular $\lambda$-fraction $\Cd\left(y\right)=\DS{b_{0};b_{1},\ldots}$
corresponding to $y$:
\begin{lyxlist}{00.00.0000}
\item [{(i)}] Let $b_{0}=\nlms{y}$ and $y_{1}=y-b_{0}\lambda$. 
\item [{(ii)}] Set $y_{j+1}=\Fqd y_{j}=-\frac{1}{y_{j}}-b_{j}\lambda$,
i.e. $b_{j}=\nlms{-\frac{1}{y_{j}}},$ $j\ge1$. 
\end{lyxlist}
If $y_{j}=0$ for some $j$ the algorithm stops and one obtains a
finite dual regular $\lambda$-fraction. 

\end{lem}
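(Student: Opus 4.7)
The plan is to mimic the proof of Lemma \ref{lem:generating_map_Fq_gives_regular_expansion}, systematically replacing the regular ingredients $\Fq$ and $\nlm{\cdot}$ by their dual counterparts $\Fqd$ and $\nlms{\cdot}$, and verifying properties (D1)--(D3) of Definition \ref{def:dual_regular_lambda-fraction} in place of ($\ast$). The core observation is that the algebraic recursion in the algorithm is identical to the regular one: step (i) gives $y_{1}=y-b_{0}\lambda=T^{-b_{0}}y$, while for $j\ge 1$ step (ii) gives $y_{j+1}=-1/y_{j}-b_{j}\lambda=T^{-b_{j}}Sy_{j}$. Iterating,
\[
y=T^{b_{0}}ST^{b_{1}}ST^{b_{2}}\cdots ST^{b_{n-1}}y_{n}
\]
for any $n$ for which the algorithm has not yet terminated, and if $y_{j}=0$ for some $j$, substituting produces $y=T^{b_{0}}ST^{b_{1}}\cdots ST^{b_{j-1}}0$, exactly the value of the finite fraction $\DS{b_{0};b_{1},\ldots,b_{j-1}}$.

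Next I would verify (D1)--(D3) by unwinding the definition of $\nlms{\cdot}$. For $w>0$, the relation $b=\nlms{w}=\lfloor w/\lambda-r/\lambda\rfloor$ is equivalent to $w-b\lambda\in[r,r+\lambda)=[r,R)$, whereas for $w\le 0$, $b=\lfloor w/\lambda+R/\lambda\rfloor$ is equivalent to $w-b\lambda\in[-R,-R+\lambda)=[-R,-r)$. Applying this with $w=y$ gives (D1) when $b_{0}=0$, since then $y\in[-R,R)\subset I_{R}$, and gives (D2) when $b_{0}\ne 0$, since then $y_{0}=y_{1}=y-b_{0}\lambda$ lies in $[r,R)$ for $b_{0}>0$ and in $[-R,-r)$ for $b_{0}<0$, both of which are subsets of $\sign(b_{0})[r,R]$. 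Applying the same observation with $w=-1/y_{j}$ for $j\ge 1$ gives $y_{j+1}\in[r,R)$ when $-1/y_{j}>0$, i.e.\ when $y_{j}<0$, and symmetrically $y_{j+1}\in[-R,-r)$ when $y_{j}>0$; in both cases $y_{j+1}\in\sign(-y_{j})[r,R]$, which is (D3). Note in particular that $\Fqd$ maps $I_{R}$ to itself, so the iteration is well-defined.

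For a terminating algorithm nothing more is needed. For an infinite output one must still justify that $\Cd(y)$, viewed as a formal $\lambda$-fraction, converges to $y$. Since every $y_{n}$ stays in $I_{R}=[-R,R]$ with $R<\lambda$ (Lemmas \ref{lem:properties_of_R_even} and \ref{lem:properties_of_R_odd}), the denominators $b_{j}\lambda+y_{j+1}$ in $y_{j}=-1/(b_{j}\lambda+y_{j+1})$ are uniformly bounded away from $0$, and convergence follows by the standard continued-fraction argument (the natural dual analog of Lemma \ref{lem:An-infinite-regular-converges}, which, in view of Remark \ref{rem:connection_with_Rosen_fractions}, can also be traced back to results on Rosen fractions in \cite{MR0065632,MR1650073}).

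The only subtlety worth flagging is the bookkeeping of signs in (D2) and (D3): the asymmetric definition of $\nlms{\cdot}$ on $(-\infty,0]$ versus $(0,\infty)$ is precisely what forces $y_{j+1}$ into the \emph{opposite} half of $[r,R]\cup[-R,-r]$ from $y_{j}$, and one has to check the four sign combinations carefully to confirm that the inclusions fall into the correct half-intervals prescribed by (D3). This is the main point at which a proof that looks cosmetically identical to that of Lemma \ref{lem:generating_map_Fq_gives_regular_expansion} acquires genuinely new content.
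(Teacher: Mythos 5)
Your proposal is correct and follows essentially the same route as the paper: both proofs reduce to the observation that $w-\nlms{w}\lambda$ lands in $\left[r,R\right]$ for $w>0$ and in $\left[-R,-r\right]$ for $w\le0$, from which (D1)--(D3) follow, with convergence of the infinite case deferred (as the paper also does) to the separate convergence lemma for fractions without reversed forbidden blocks. The only quibble is that with the paper's nonstandard floor function the remainder interval for positive arguments is half-open on the left or on the right depending on the sign of the argument of $\left\lfloor \cdot\right\rfloor$, so it is safer to state the inclusions with the closed interval $\left[r,R\right]$ as the paper does; this does not affect your conclusion.
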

\begin{proof}
It is easy to verify that $\nlms{y}=0$ $\Leftrightarrow$ $y\in I_{R}$
and that in general $x-\nlms{x}\in\left[r,R\right]$ for $x\ge R$
and $x-\nlms{x}\in\left[-R,-r\right]$ for $x\le-R$. It is thus clear
that (D1) and (D2) are automatically fulfilled and it follows that
$\Fqd$ maps $\left[-R,0\right]$ into $\left[r,R\right]$ and $\left[0,R\right]$
into $\left[-R,-r\right]$. Hence condition (D3) is also satisfied.
\end{proof}
\begin{rem}
\label{rem:Fqd_is_generating_map_for_dual_expansion}We say that $\Fqd$
is a \emph{generating map} for the dual regular $\lambda$-fractions
and it is easily verified that $\Fqd$ acts as a left shift map on
$\ZDRNCF$. 
\end{rem}
It is easily seen that the points affected by the choice of floor
function appearing in $\nlms{\cdot}$ (cf. $\pm\frac{\lambda}{2}$
in the regular case) are exactly those that are equivalent to $\pm r$.
Hence we obtain the following corollary. 

\begin{cor}
\label{cor:dual_regular_unique}If $y$ has an infinite dual regular
$\lambda$-fraction expansions which is not equivalent to the expansion
of $\pm r$ then it is unique and is equal to $\Cd\left(y\right)$. 
\end{cor}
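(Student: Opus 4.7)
The approach is to mirror the proof of Corollary \ref{cor:infinite-regular-are-unique} for regular $\lambda$-fractions, exploiting the generating map $\Fqd$ of Lemma \ref{lem:generate_dual}.

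Suppose $\underline{y} = \DS{b_0; b_1, b_2, \ldots}$ is an infinite dual regular $\lambda$-fraction for $y$ that is not equivalent to the expansion of $\pm r$, and write $\Cd(y) = \DS{b'_0; b'_1, \ldots}$ for the expansion produced by the algorithm in Lemma \ref{lem:generate_dual}. My plan is to establish $b_j = b'_j$ for every $j$ by induction. Since $b_0\lambda + y_0 = y = b'_0\lambda + y'_0$, agreement of the first $j$ digits forces the remainders $y_j$ (defined via $y_{k+1} = -1/y_k - b_k\lambda$) to coincide with their primed counterparts, so the inductive step reduces to showing that $b_j$ is uniquely determined by the preceding remainder together with the dual regularity conditions (D1)--(D3).

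For this I would compare the two admissible choices directly. The digit $b'_j = \nlms{-1/y_{j-1}}$ (or $b'_0 = \nlms{y}$ in the base case) is produced by $\Fqd$, while $b_j$ must be any integer making $y_j = -1/y_{j-1} - b_j\lambda$ lie in the appropriate interval $\sign(-y_{j-1})[r,R]$ prescribed by (D3); the analogous statement holds at $j=0$ using (D1), (D2). Because this interval has length $R - r = \lambda$, at most one integer $b_j$ satisfies the constraint, except in the degenerate case where $y_j$ lands exactly on an endpoint $\pm r$ or $\pm R$. This is precisely the set of arguments on which the floor function inside $\nlms{\cdot}$ is ambiguous, as noted in the remark immediately preceding the Corollary.

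Hence if $b_j \ne b'_j$ at some step, then necessarily $y_j \in \{\pm r, \pm R\}$. In this case the sub-fraction $\FS{0; b_j, b_{j+1}, \ldots}$ representing $y_j$ is an expansion of $\pm r$ or $\pm R$, and since $R = \lambda + r$ an expansion of $\pm R$ differs from one of $\pm r$ only in its leading digit while sharing its further tail; thus $\underline{y}$ would be equivalent to the expansion of $\pm r$, contradicting the hypothesis. We conclude $b_j = b'_j$ for all $j$, giving both $\underline{y} = \Cd(y)$ and uniqueness, since any second infinite dual regular expansion of $y$ is subject to the same argument and must also equal $\Cd(y)$. The main technical point is the careful identification of the ambiguous arguments of $\nlms{\cdot}$ with the orbit $\{\pm r, \pm R\}$, which the paragraph preceding the Corollary sketches but whose case distinction (positive vs.\ negative $y_{j-1}$, and the convention in the floor function at integer arguments) I would write out in full.
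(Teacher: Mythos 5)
Your proposal is correct and follows essentially the same route as the paper, which only sketches the argument in the remark preceding the corollary: the closed interval $\sign\left(-y_{j}\right)\left[r,R\right]$ has length exactly $\lambda$, so a digit is ambiguous precisely when the remainder lands on an endpoint, and such points are tail-equivalent to $\pm r$ (using that the expansions of $-R$ and $r$ share a tail). The only blemish is a harmless off-by-one in your indexing of the recursion $y_{j+1}=-1/y_{j}-b_{j}\lambda$ relative to Definition \ref{def:dual_regular_lambda-fraction}, which does not affect the argument.
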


\begin{lem}
A $\lambda$-fraction $\underline{y}=\FS{b_{0};b_{1},\ldots}$ is
dual regular if and only if the sequence $\underline{y}_{0}$ does
not contain any reversed forbidden blocks. Thereby $\underline{y}_{0}=\underline{y}$
if $b_{0}=0$ and $\underline{y}_{0}=S\underline{y}=\FS{0;b_{0},b_{1},\ldots}$
if $b_{0}\ne0$.
\end{lem}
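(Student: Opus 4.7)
The proof plan is to establish the equivalence using Lemma \ref{lem:R-is-given-by-lambda-fraction} as the bridge between the combinatorial condition (no reversed forbidden block) and the analytic condition (value in $I_R$). That lemma gives $R$ as the maximum of values of $\lambda$-fractions $\FS{0;b_1,b_2,\ldots}$ avoiding reversed forbidden blocks; the symmetric statement $-R=\min\{\cdot\}$ follows from $\underline{-y}=\FS{0;-b_1,-b_2,\ldots}$, so the set of values represented by such sequences fills out $I_R$.

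For the direction ``no reversed forbidden block $\Rightarrow$ dual regular'': every shifted tail $\sigma^{k-1}\underline{y}_0=\FS{0;b_k,b_{k+1},\ldots}$ inherits the no-reversed-forbidden-block property, so $y_k\in I_R$ for all $k\ge 0$, immediately yielding (D1) when $b_0=0$ (and (D2) when $b_0\ne 0$ via $\underline{y}_0=S\underline{y}$). To refine $y_{k+1}\in I_R$ to the full (D3) condition $y_{k+1}\in\sgn(-y_k)[r,R]$, I apply the recursion $y_k=-1/(b_k\lambda+y_{k+1})$: since $b_k\in\mathbb{Z}^{*}$ and $|y_{k+1}|\le R<\lambda\le|b_k|\lambda$, the denominator has sign $\sgn(b_k)$, so $\sgn(y_k)=-\sgn(b_k)$, giving the sign alternation. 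Direct estimation of $|y_k|$ from the recursion, intersected with the constraint $y_k\in I_R$, produces the interval bounds $[r,R]$ or $[-R,-r]$ demanded by (D3).

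For the reverse direction ``dual regular $\Rightarrow$ no reversed forbidden block'', I argue by contradiction. If $\underline{y}_0$ contained a reversed forbidden block starting at some position $k$, the explicit forms in Lemmas \ref{lem:forbidden-block-even} and \ref{lem:forbidden-block-odd} would force the initial segment of $\underline{y}_k$ to coincide with a truncation of the $\lambda$-fraction of $\pm R$ from Lemma \ref{lem:R-is-given-by-lambda-fraction}. A direct computation then shows that $y_k$ falls outside the interval $\sgn(-y_{k-1})[r,R]$ mandated by (D3) at position $k-1$, yielding the contradiction.

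The main obstacle is the case-by-case verification in this forward direction: one case for even $q$ (using $R=1$ from Lemma \ref{lem:properties_of_R_even}) and two subcases for odd $q$ (using $R^2+(2-\lambda)R-1=0$ from Lemma \ref{lem:properties_of_R_odd}), most likely requiring the explicit matrix formula \eqref{eq:TS-explicit} for $(TS)^n$ to handle the longer reversed block $\FS{\pm m,(\pm 1)^h,\pm 2,(\pm 1)^h}$ in the odd case.
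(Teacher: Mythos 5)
Your reverse direction and your treatment of (D1) and of the sign alternation are in substance the paper's argument (the paper also reduces everything to comparing tails with the expansions of $r$ and $R$ via the lexicographic ordering). The genuine gap is in your forward direction, at the step where you upgrade $y_{k+1}\in I_R$ to the full condition (D3), $y_{k+1}\in\sgn\left(-y_k\right)\left[r,R\right]$. You attribute this to ``direct estimation of $\left|y_k\right|$ from the recursion, intersected with the constraint $y_k\in I_R$,'' and that mechanism is insufficient: membership of \emph{all} tails in $I_R$ together with $y_k=-1/\left(b_k\lambda+y_{k+1}\right)$ does not force $y_{k+1}\ge r$ when $b_k\ge1$. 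Concretely, for $q=4$ (so $\lambda=\sqrt2$, $R=1$, $r=1-\sqrt2\approx-0.414$, $h=1$) take $b_k=2$ and $y_{k+1}=-0.9$: then $y_k=-1/\left(2\sqrt2-0.9\right)\approx-0.52\in I_R$ and $y_{k+1}\in I_R$, yet $y_{k+1}<r$, so (D3) fails. Of course this $y_{k+1}$ has dual expansion beginning with $1$, so $\FS{2,1}$ is a reversed forbidden block --- which is exactly the point: the bound $y_{k+1}\ge r$ can only come from the combinatorial hypothesis at the junction (that $\FS{b_k,b_{k+1},\ldots}$ contains no reversed forbidden block, i.e.\ that $\underline{y}_{k+1}$ does not begin with $1^{h}$, resp.\ the odd-$q$ prefixes), not from the analytic constraint $y_k\in I_R$. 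The same remark applies to the interval part of (D2).

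What is needed at that step is the extremal statement of Lemma \ref{lem:R-is-given-by-lambda-fraction} applied one level down, as in Lemma \ref{lem:r-is-the-smallest_prependablee_nr}: among sequences without reversed forbidden blocks which do \emph{not} begin with $1^{h}$ (even $q$), resp.\ with $1^{h+1}$ or $1^{h},2,1^{h}$ (odd $q$), the lexicographically extremal one is $\FS{\overline{1^{h-1},2}}$, resp.\ $\FS{\overline{1^{h},2,1^{h-1},2}}$, whose value is exactly $r$; hence $y_{k+1}\ge r$ whenever $b_k\ge1$ and no reversed forbidden block starts at position $k$. This is an infinite-depth comparison via the ordering on $\ZRNCF$, not a one- or two-step estimate from the recursion. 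With your step replaced by this argument (and its mirror image for $b_k\le-1$), the rest of your plan goes through and coincides with the paper's proof.
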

\begin{proof}
Consider $q$ even and a reversed forbidden block of the form $\FS{m,1^{h}}$
with $m\ge1$. Suppose $b_{0}\ne0$. If $\underline{y}$ contains
such a forbidden block, we have $\underline{y}_{j}=\FS{m,1^{h},b_{h+j+1},\ldots}<0$
for some $j\ge0$ and therefore $\underline{y}_{j+1}=\FS{1^{h},b_{h+j+1},\ldots}<\RS{\overline{1^{h-1},2}}=r$,
i.e. $\underline{y}$ is not dual regular since $\sign\left(-y_{j}\right)=\sign\left(b_{j}\right)$
so we violate either (D2) or (D3) in Definition \ref{def:dual_regular_lambda-fraction}).
In the other direction, if $b_{j}\ge1$ and $\underline{y}_{j}=\FS{b_{j},b_{j+1},\ldots}$
for some $j\ge0$ does not contain a reversed forbidden block, then
$R\ge\underline{y}_{j+1}>\RS{\overline{1^{h-1},2}}=r$, i.e. (D2)
or (D3) is satisfied. Analogous arguments work for $b_{0}=0$, for
forbidden blocks involving $-1$s and for the case of odd $q$. For
more details see \cite{cf_main}.
\end{proof}
\begin{lem}
An infinite $\lambda$-fraction without reversed forbidden blocks
converges.
\end{lem}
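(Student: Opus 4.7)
The plan is to prove convergence of the truncations $y^{(n)}:=\FS{0;b_{1},\ldots,b_{n}}$ of the $\lambda$-fraction via a reversal-duality argument (a possibly nonzero $b_{0}$ would add only a single Möbius transformation, which does not affect convergence, so I can assume $b_{0}=0$). First I would set $M_{n}:=ST^{b_{1}}\cdots ST^{b_{n}}=\left(\begin{smallmatrix}p_{n-1} & p_{n}\\ q_{n-1} & q_{n}\end{smallmatrix}\right)$, whose columns satisfy the three-term recurrence $p_{n}=b_{n}\lambda p_{n-1}-p_{n-2}$ (and similarly for $q_{n}$) with initial data $p_{-1}=q_{0}=1$, $p_{0}=q_{-1}=0$. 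Then $y^{(n)}=M_{n}(0)=p_{n}/q_{n}$, and since $\det M_{n}=1$ the usual identity
\[
y^{(n)}-y^{(n-1)}=-\frac{1}{q_{n}q_{n-1}}
\]
reduces convergence to a geometric lower bound on $|q_{n}|$.

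For that lower bound I set $\alpha_{n}:=q_{n-1}/q_{n}\in\mathbb{R}\cup\{\infty\}$. A direct rewriting of the recurrence gives $\alpha_{n}=-1/(\alpha_{n-1}-b_{n}\lambda)=ST^{-b_{n}}(\alpha_{n-1})$, and since $\alpha_{0}=0$, iteration yields
\[
\alpha_{n}=ST^{-b_{n}}\cdots ST^{-b_{1}}(0)=\FS{0;-b_{n},-b_{n-1},\ldots,-b_{1}}.
\]
The key claim is that the tuple $(-b_{n},\ldots,-b_{1})$ contains no forbidden block in the sense of Lemmas \ref{lem:forbidden-block-even}, \ref{lem:forbidden-block-odd}. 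This rests on two easy observations: inspection of those lemmas shows that the collection of forbidden blocks is invariant under the global sign-flip $a_{i}\mapsto -a_{i}$ (every listed block appears with both choices of sign), and, by the very definition of ``reversed forbidden block,'' the reversed tuple $(b_{n},\ldots,b_{1})$ contains a forbidden block if and only if $(b_{1},\ldots,b_{n})$ contains a reversed forbidden block, which by hypothesis it does not. The Corollary following Lemmas \ref{lem:forbidden-block-even}/\ref{lem:forbidden-block-odd} then identifies $\FS{0;-b_{n},\ldots,-b_{1}}$ as a finite regular $\lambda$-fraction, and Definition \ref{def:regular-nakada-exp} (applied at the first shift) forces its value to lie in $I_{q}=[-\lambda/2,\lambda/2]$. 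In particular $\alpha_{n}$ is finite, so $q_{n}\ne 0$, and $|\alpha_{n}|\le\lambda/2<1$ for every $n\ge 1$.

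A short induction based on $q_{0}=1$ and the bound $|q_{n-1}|\le(\lambda/2)|q_{n}|$ now gives $|q_{n}|\ge(2/\lambda)^{n}$, whence
\[
|y^{(n)}-y^{(n-1)}|\le(\lambda/2)^{2n-1},
\]
which is summable. Hence $(y^{(n)})$ is Cauchy in $\mathbb{R}$ and the $\lambda$-fraction converges. The main obstacle is the bookkeeping in the middle step: identifying $\alpha_{n}$ with a sign-flipped reversed finite $\lambda$-fraction via the recurrence, and verifying that neither the reversal nor the sign-flip can create a forbidden block where none existed. Once those two closure properties are in hand, the rest is a clean geometric estimate.
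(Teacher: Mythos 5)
Your proof is correct, but it takes a genuinely different route from the paper, which in fact offers no argument of its own here: the paper's proof of this lemma just says it is ``similar to the regular case'' and defers the details to the companion paper, and the regular case (Lemma \ref{lem:An-infinite-regular-converges}) is in turn settled by translating into Rosen's reduced fractions and citing Rosen's convergence theorem. Your argument stays entirely inside the paper's own formalism and exploits precisely the regular/dual duality the section is built on: the classical continuant identity $q_{n-1}/q_{n}=ST^{-b_{n}}\cdots ST^{-b_{1}}(0)$ converts the hypothesis ``no reversed forbidden blocks in $(b_{1},\ldots,b_{n})$'' into ``no forbidden blocks in $(-b_{n},\ldots,-b_{1})$'' --- the sign-flip symmetry you need is indeed visible in Lemmas \ref{lem:forbidden-block-even} and \ref{lem:forbidden-block-odd}, where every forbidden block occurs with both sign choices --- and hence, via the first shift in Definition \ref{def:regular-nakada-exp}, into $\left|q_{n-1}/q_{n}\right|\le\lambda/2<1$. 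All the intermediate computations (the recurrence, $\det M_{n}=1$, the difference formula, the induction $\left|q_{n}\right|\ge(2/\lambda)^{n}$) check out, and as a bonus you obtain an explicit geometric rate $O\left((\lambda/2)^{2n}\right)$ for the convergents. The one dependency worth flagging is the implication ``no forbidden block $\Rightarrow$ regular,'' which you apply to the finite words $(-b_{n},\ldots,-b_{1})$: the paper states this Corollary but also defers its verification to the companion paper, so your proof inherits that reliance; since you only invoke it for finite fractions, whose values are unproblematically defined, there is no circularity with either convergence lemma.
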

\begin{proof}
The proof is similar as for the the regular $\lambda$-fractions.
For details see e.g.~\cite{cf_main}.
\end{proof}
\begin{rem}
Just as the regular $\lambda$-fractions are equivalent to the reduced
Rosen $\lambda$-fractions, one can show that the dual regular $\lambda$-fractions
are essentially equivalent to a particular instance of so-called $\alpha$-Rosen
$\lambda$-fractions, see \cite{metrical-alpha-rosen} and \cite{MR646050}
(in the case $q=3$). Note that $\nlms{y}=\left\lfloor \frac{y}{\lambda}+1-\frac{R}{\lambda}\right\rfloor $
for $y>0$. Hence $\Fqd x=T_{\alpha}\left(x\right)$ with $\alpha=\frac{R}{\lambda}$
for $x<0$ where $T_{\alpha}$ is the generating map of the $\alpha$-Rosen
fractions of \cite{metrical-alpha-rosen}. 
\end{rem}

\subsection{\label{sub:Symbolic-dynamics}Symbolic dynamics and natural extensions}

An introduction to symbolic dynamics and coding can be found in e.g.
\cite{MR1369092}. See also \cite{MR1085823,MR594628} or \cite[Appendix C]{MR1085823}.
Our underlying alphabet is infinite, $\mathcal{N}=\mathbb{Z}^{*}=\mathbb{Z}\backslash\left\{ 0\right\} $.
The dynamical system $\left(\mathcal{N}^{\mathbb{Z}_{+}},\sigma^{+}\right)$
is called the \emph{one-sided} \emph{full $\mathcal{N}-$shift. }Since
the forbidden blocks (cf. Definitions \ref{lem:forbidden-block-even}
and \ref{lem:forbidden-block-odd}) imposing the restrictions on $\ZRNCF$
and $\ZDRNCF$ all have finite length it follows that $\left(\ZRNCF,\sigma^{+}\right)$
and $\left(\ZDRNCF,\sigma^{+}\right)$ are both \emph{one-sided subshifts
of finite type} (cf. \cite[Thm.\ C7]{MR1085823}).

One can show that $\Cq:I_{q}\rightarrow\RNCF$ and $\Cd:I_{R}\rightarrow\DRNCF$
as given by Lemmas \ref{lem:generating_map_Fq_gives_regular_expansion}
and \ref{lem:generate_dual} are continuous and we call these the
regular and dual regular coding map respectively. Let $\mathbb{R}^{\infty}=\left\{ x\in\mathbb{R}\,|\,\Cd\left(x\right)\,\,\mbox{infinite}\right\} =\mathbb{R}\backslash G_{q}\left(\infty\right)$
be the set of ,,$G_{q}$-irrational points{}`` and set $I_{\alpha}^{\infty}=I_{\alpha}\cap\mathbb{R}^{\infty}$
for $\alpha=q,R$. Since the set $G_{q}\left(\infty\right)$ of cusps
of $G_{q}$ is countable it is clear, that the Lebesgue measure of
$I_{\alpha}^{\infty}$ is equal to that of $I_{\alpha}$, $\alpha=q,R$.
From \cite{cf_main} we see, that the restrictions $\Cq:I_{q}^{\infty}$$\rightarrow\ZRNCF$
and $\Cd:I_{R}^{\infty}\rightarrow\ZDRNCF$ are homeomorphisms. Since
$\sigma^{+}=\Cq\circ\Fq\circ\Cq^{-1}$ on $\ZRNCF$ and $\sigma^{+}=\Cd\circ\Fqd\circ\Cd\,^{-1}$
on $\ZDRNCF$ it follows that the one-sided subshifts $\left(\ZRNCF,\sigma^{+}\right)$
and $\left(\ZDRNCF,\sigma^{+}\right)$ are topologically conjugate
to the abstract dynamical systems $\left(\Fq,I_{q}^{\infty}\right)$
and $\left(\Fqd,I_{R}^{\infty}\right)$ respectively (see \cite[p.\ 319]{MR1085823}).

Consider the set of regular bi-infinite sequences $\BIRNCF\subset\ZDRNCF\times\ZRNCF\subset\mathbb{Z}^{\mathbb{Z}}$
consisting of precisely those $\RS{\ldots,b_{2},b_{1}\centerdot a_{1},a_{2},\ldots}$
which do not contain any forbidden block. Then $\left(\BIRNCF,\sigma\right)$
is a two-sided subshift of finite type extending the one-sided subshift
$\left(\ZRNCF,\sigma^{+}\right),$ where $\sigma=\sigma^{+}$ and
$\sigma^{-1}=\sigma^{-}$. If $\Cd\left(y\right)=\DS{b_{1},b_{2},\ldots}\in\ZDRNCF$
and $\Cq\left(x\right)=\RS{a_{1},a_{2},\ldots}\in\ZRNCF$ we define
the coding map $\bi:I_{q}\times I_{R}\rightarrow\mathbb{Z}^{2}$ by
$\bi\left(x,y\right)=\Cd\left(y\right).\Cq\left(x\right)=\RS{\ldots,b_{2},b_{1}\centerdot a_{1},a_{2},\ldots}$.
In the next section we will see that there exists a domain $\Omega\subset I_{q}\times I_{R}$
such that $\bi_{|\Omega^{\infty}}:\Omega^{\infty}\rightarrow\BIRNCF$
is one-to-one and continuous (here $\Omega^{\infty}=\Omega\cap I_{q}^{\infty}\times I_{R}^{\infty}$,
i.e. we neglect points $\left(x,y\right)$ where either $x$ or $y$
has a finite $\lambda$-fraction). The \emph{natural extension}, $\Fqx$,
of $\Fq$ to $\Omega^{\infty}$ is defined by the condition that $\left(\BIRNCF,\sigma\right)$
is topologically conjugate to $\left(\Fqx,\Omega^{\infty}\right)$,
i.e. by the relations $\sigma^{+}=\bi\circ\Fqx\circ\bi^{-1}$ and
$\sigma^{-}=\bi\circ\Fqx^{-1}\circ\bi^{-1}$, meaning that $\Fqx\left(x,y\right)=\left(\Fq x,\frac{-1}{y+a_{1}\lambda}\right)$
with $a_{1}=\nlm{\frac{-1}{x}}$ and $\Fqx^{-1}\left(x,y\right)=\left(\frac{-1}{x+b_{1}\lambda},\Fqd y\right)$
with $b_{1}=\nlms{\frac{-1}{y}}$.

\subsection{\label{sub:Markov-Partitions-for}Markov Partitions for the generating
map $\Fq$}

To construct a Markov partition of the interval $I_{q}$ with respect
to $\Fq$ we consider the orbits of the endpoints $\pm\frac{\lambda}{2}$.
For $x\in I_{q}$ or $y\in I_{R}$ we define the $\Fq$-\emph{orbit}
and $\Fqd$-\emph{orbit }of $x$ and $y$ respectively as \[
\mathcal{O}\left(x\right)=\left\{ \Fq^{j}x\,|\, j\in\mathbb{Z}_{+}\right\} ,\quad\mathcal{O}^{*}\left(y\right)=\left\{ \Fqd\,^{j}y\,|\, j\in\mathbb{Z}_{+}\right\} .\]
By Lemma \ref{lem:lambda_half_finite_expansion} it is clear that
$\mathcal{O}\left(\pm\frac{\lambda}{2}\right)$ is a finite set. Define
$\np=\#\left\{ \mathcal{O}\left(\frac{\lambda}{2}\right)\right\} -1=\frac{q-2}{2}=h$
for even $q$ and $\np=q-2=2h+1$ for odd $q$. Let $-\frac{\lambda}{2}=\phi_{0}<\phi_{1}<\cdots<\phi_{\np}=0$
be an ordering of $\mathcal{O}\left(-\frac{\lambda}{2}\right)$, set
$\mathcal{I}_{j}=\left[\phi_{j-1},\phi_{j}\right)$ and $\mathcal{I}_{-j}=-\mathcal{I}_{j}$
for $1\le j\le\np$. It is easy to verify that the closure of the
intervals form a \emph{Markov partition} of $I_{q}$ for $\Fq$. I.e.
$\left\{ \overline{\mathcal{I}}_{j}\right\} $ covers $I_{q}$, overlaps
only at endpoints and $\Fq$ maps endpoints to endpoints. Since the
alphabet $\mathcal{N}$ is infinite there exist also another Markov
partition of the form $J_{n}=\left[\frac{-2}{\lambda\left(2n-1\right)},\frac{-2}{\lambda\left(2n+1\right)}\right]\cap\left[-\frac{\lambda}{2},0\right]=-J_{-n},$
$n=1,2,\ldots$ $q>4,$ $n=2,3,\ldots$ $q=3$, where $\Fq\,_{|J_{n}}=\frac{-1}{x}-n\lambda$
and hence expanding and bijective unless $n=1$ for $q\ge3$ or $n=2$
for $q=3$. 

From the explicit formula of $\Fqx\,^{-1}$ it is clear that we also
need to consider the orbits of the endpoints of $\pm\left[r,R\right]$.
From Lemmas \ref{lem:properties_of_R_even} and \ref{lem:properties_of_R_odd}
we see that $\#\left\{ \mathcal{O}^{*}\left(-R\right)\right\} =\np+1$.
Set $r_{0}=-R$ and let $0>r_{1}>r_{2}>\cdots>r_{\np}=r>-R=r_{0}$
be an ordering of $\mathcal{O}^{*}\left(-R\right)=\left\{ r_{j}\right\} $.
One can verify that $r_{\np+1-j}\in\mathcal{I}_{j},$ $1\le j\le\np$.
Define the intervals $\mathcal{R}_{j}=\left[r_{j},R\right]=-\mathcal{R}_{-j},\,1\le j\le\np$,
the rectangles $\Omega_{j}=\mathcal{I}_{j}\times\mathcal{R}_{j},\,1\le\left|j\right|\le\np$
and finally the domain $\Omega=\cup_{\left|j\right|\le\np}\Omega_{j}$.
We also set $\Omega^{\infty}$$=\Omega\cap I_{q}^{\infty}\cap I_{R}^{\infty}$. 

\begin{rem}
\label{rem:For-even-q-definitions}For even $q$ we have $\phi_{0}=-\frac{\lambda}{2}=\RS{1^{h}},$
 $r=\RS{\overline{1^{h-1},2}}$ and $\np=h$ where $h=\frac{q-2}{2}$
(see Lemma \ref{lem:lambda_half_finite_expansion} and \ref{lem:R-is-given-by-lambda-fraction}).
It is then easy to verify that\begin{align*}
\phi_{j} & =F_{q}^{j}\left(\phi_{0}\right)=-\phi_{-j}=\RS{1^{h-j}},\,0\le j\le h,\\
r_{j} & =F_{q}^{h-j}\left(r\right)=\RS{1^{j-1},\overline{2,1^{h-1}}},\,1\le j\le h,\\
\mathcal{I}_{j} & =\left[\phi_{j-1},\phi_{j}\right)=\left[\left[1^{h+1-j}\right],\left[1^{h-j}\right]\right)=-\mathcal{I}_{-j},\,1\le j\le h,\\
\mathcal{R}_{j} & =\left[r_{j},R\right]=-\mathcal{R}_{-j},\,1\le j\le h.\end{align*}

\end{rem}

\begin{rem}
\label{rem:For-odd-q-definitions}For odd $q\ge5$ we have $\phi_{0}=-\frac{\lambda}{2}=\RS{1^{h},2,1^{h}}$,
$r=\RS{\overline{1^{h},2,1^{h-1},2}}$ and $\np=2h+1$ where $h=\frac{q-3}{2}$
(see Lemma \ref{lem:lambda_half_finite_expansion} and \ref{lem:R-is-given-by-lambda-fraction}).
It is then easy to verify that \begin{align*}
\phi_{2j} & =F_{q}^{j}\left(\phi_{0}\right)=\RS{1^{h-j},2,1^{h}},\,0\le j\le h,\\
\phi_{2j-1} & =F_{q}^{h+j}\left(\phi_{0}\right)=\RS{1^{h+1-j}},\,1\le j\le h+1,\\
r_{2j+1} & =\RS{1^{j},2,\overline{1^{h-1},2,1^{h},2}},\,0\le j\le h\quad\mbox{and}\\
r_{2j} & =\RS{1^{j-1},2,\overline{1^{h},2,1^{h-1},2}},\,1\le j\le h.\end{align*}
Hence \begin{align*}
\mathcal{I}_{2j+1} & =\left[\phi_{2j},\phi_{2j+1}\right)=\left[\RS{1^{h-j},2,1^{h}},\RS{1^{h-j}}\right),\,0\le j\le h,\\
\mathcal{I}_{2j} & =\left[\phi_{2j-1},\phi_{2j}\right)=\left[\RS{1^{h+1-j}},\RS{1^{h-j},2,1^{h}}\right),\,1\le j\le h,\\
\mathcal{R}_{k} & =\left[r_{k},R\right]=-\mathcal{R}_{-k},\,1\le k\le2h+1.\end{align*}
For $q=3$ we have $\np=1,$ $\phi_{0}=-\frac{1}{2}=\RS{2},$ $\phi_{1}=0$
and $r_{1}=r=\RS{\overline{3}}.$ Hence $\mathcal{I}_{1}=\left[-\frac{1}{2},0\right)=-\mathcal{I}_{-1}$
and $\mathcal{R}_{1}=\left[r,R\right]=-\mathcal{R}_{-1}$. 
\end{rem}
To establish the sought correspondence between the domain $\Omega^{\infty}$
and $\BIRED$ we first need a Lemma. 

\begin{lem}
\label{lem:r-is-the-smallest_prependablee_nr}$r$ is the smallest
number $y$ in $I_{R}$ such that $\bi\left(x,y\right)\in\BIRNCF$
for all $x\in\mathcal{I}_{\np}$.
\end{lem}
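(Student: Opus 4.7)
My plan is to verify two assertions separately: (a) for every $x\in\mathcal{I}_{\np}$, $\bi(x,r)\in\BIRNCF$; and (b) for every $y\in[-R,r)$ there exists some $x\in\mathcal{I}_{\np}$ with $\bi(x,y)\notin\BIRNCF$. Together these give that $r$ is the smallest such $y$. The unifying observation is that, since $\Cq(x)$ is regular and $\Cd(y)$ is dual regular, any forbidden block in the bi-infinite sequence $\bi(x,y)=\RS{\ldots,b_{2},b_{1}\centerdot a_{1},a_{2},\ldots}$ must straddle the center dot. Labelling positions so that $b_i$ sits at position $1-i$ and $a_j$ at position $j$, a length-$\ell$ forbidden block straddles exactly when it occupies positions $n,n+1,\ldots,n+\ell-1$ for some $2-\ell\le n\le 0$, giving $\ell-1$ straddling cases to check per forbidden pattern from Lemmas \ref{lem:forbidden-block-even} and \ref{lem:forbidden-block-odd}.

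For (a), I would first make $\Cd(r)$ explicit. Since $r$ is a boundary point where the shifted floor in $\Fqd$ is ambiguous (see the remark following Corollary \ref{cor:dual_regular_unique}), $r$ admits two dual $\lambda$-fractions; I take the infinite periodic representative. For even $q$, iterating $\Fqd$ together with Lemma \ref{lem:properties_of_R_even} gives $\Cd(r)=\DS{0;\overline{1^{h-1},2}}$; for $q=3$, $\Cd(r)=\DS{0;\overline{3}}$; and for odd $q\ge 5$ the fixed-point identities in Lemma \ref{lem:properties_of_R_odd}(a)-(b) produce an analogous periodic expansion. The $\pm 2$ entries of $\Cd(r)$ are located precisely so as to interrupt any straddling configuration in which most of the would-be $\pm 1$-run lies on the $b$-side of the dot. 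The remaining straddles, in which the $\pm 1$-run lies mostly on the $a$-side, require $\Cq(x)$ to begin with a long $+1$-run; these are ruled out by the following structural fact: every $x\in\mathcal{I}_{\np}$ has $a_1(x)\ge 1$, and when $a_1(x)=1$ (possible only for even $q$) a direct computation gives $\Fq x\in[0,\lambda/2]$, hence $a_2(x)\le -1$, so the sign flips at the very next digit. For odd $q\ge 5$ and for $q=3$ one in addition has $a_1(x)\ge 2$ throughout $\mathcal{I}_{\np}$, which alone rules out straddles by the shorter forbidden block $\FS{(\pm 1)^{h+1}}$.

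For (b), fix $y\in[-R,r)$ and compare the orbits $(\Fqd^{j}y)_{j\ge 0}$ and $(\Fqd^{j}r)_{j\ge 0}$. A monotonicity argument for $\nlms{-1/\cdot}$ shows that the "extra" $\pm 2$ digit arising from the floor ambiguity at $y=r$ is lost as soon as $y$ slips below $r$, leaving the first $h$ digits of $\Cd(y)$ all equal to $+1$. Then at the straddling position $n=1-h$ the block $(b_h,\ldots,b_1,a_1)=(+1,\ldots,+1,a_1)$ realizes the forbidden pattern $\FS{(+1)^{h},+m}$ with $m=a_1\ge 1$; any $x\in\mathcal{I}_{\np}$ (for instance $x=\phi_{\np-1}$) furnishes the required witness. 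For odd $q$ the analogous argument completes the longer forbidden block by using the initial digits $\RS{2,1^{h},\ldots}$ of $\Cq(x)$ for $x$ near the left endpoint of $\mathcal{I}_{\np}$, as recorded in Remark \ref{rem:For-odd-q-definitions}.

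The hardest part will be the odd $q\ge 5$ case: the forbidden block $\FS{(\pm 1)^{h},\pm 2,(\pm 1)^{h},\pm m}$ of length $2h+2$ contributes $2h+1$ straddling positions, and $\Cd(r)$ has a correspondingly longer period. Aligning every $\pm 2$ entry of $\Cd(r)$ with a $\pm 2$ position in the forbidden pattern, and verifying via Remark \ref{rem:For-odd-q-definitions} that the Markov-partition structure of $\mathcal{I}_{\np}$ excludes every remaining unsafe prefix of $\Cq(x)$, is where the main bookkeeping lies.
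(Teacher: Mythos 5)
Your proposal follows essentially the same route as the paper: compute the dual expansions $\Cd(r)=\DS{\overline{1^{h-1},2}}$ (resp.\ the odd-$q$ and $q=3$ analogues) and $\Cd(y)=\DS{1^{h},\ldots}$ resp.\ $\DS{1^{h},2,1^{h},\ldots}$ for $y<r$, observe that a forbidden block must straddle the center dot, and use the first digits of $\Cq(x)$ on $\mathcal{I}_{\np}$ (in particular $a_{1}\ge1$, with $a_{1}\ge2$ for odd $q$ since $\phi_{\np-1}=-1/(\lambda+1)>-2/(3\lambda)$) to produce or exclude such a block. The structural facts you invoke are correct, and the residual bookkeeping you flag for odd $q\ge5$ is exactly the part the paper also dispatches with ``it is easy to show.''
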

\begin{proof}
Let $q$ be even. We know from Lemma \ref{lem:R-is-given-by-lambda-fraction}
and its proof that $r=\DS{\overline{1^{h-1},2}}$, $-R=\DS{1,\overline{1^{h-1},2}}$
and $\phi_{\np-1}=\phi_{h-1}=\RS{1}$. Hence $\bi\left(\phi_{h-1},r\right)\in\BIRED$
and for $-R\le y<r$ then $\Cd\left(y\right)=\DS{1^{h},b_{h+1},\ldots}$
and $\bi\left(\phi_{h-1},y\right)$ contains the forbidden block $\FS{1^{h+1}}$. 

Let $q\ge5$ be odd. Then $r=\DS{\overline{1^{h},2,1^{h-1},2}}$,
$-R=\DS{1,\overline{1^{h-1},2,1^{h},2}}$ and $\phi_{\np-1}=\phi_{2h}=\RS{2,1^{h}}.$
Hence $\bi\left(\phi_{2h},r\right)\in\BIRNCF$ and if $-R\le y<r$
then $\Cd\left(y\right)=\DS{1^{h},2,1^{h},b_{2h+2},\ldots}$ and $\bi\left(\phi_{2h},y\right)$
contains the forbidden block $\FS{1^{h},2,1^{h},2}$. We have shown
that $r$ is the smallest number such that $\bi\left(\phi_{\np-1},r\right)$
does not contain a forbidden block and it is easy to show that also
$\bi\left(x,r\right)\in\BIRED$ for any $x\in\mathcal{I}_{\np}=\left[\phi_{\np-1},0\right)$.
The same argument applies to $q=3$.
\end{proof}
\begin{lem}
\label{lem:Omega_inf_iff_BIRNCF}$\left(x,y\right)\in\Omega^{\infty}$
$\Leftrightarrow$ $\bi\left(x,y\right)\in\BIRNCF$. 
\end{lem}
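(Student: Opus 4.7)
The strategy is to localize where forbidden blocks can appear in the bi-infinite sequence $\bi(x,y) = \RS{\ldots, b_{2}, b_{1} \centerdot a_{1}, a_{2}, \ldots}$. Since $x \in I_{q}^{\infty}$ forces $\Cq(x) = \RS{a_{1}, a_{2}, \ldots} \in \ZRNCF$ to contain no forbidden block, and $y \in I_{R}^{\infty}$ forces $\Cd(y) \in \ZDRNCF$ so that the reversed half $\ldots, b_{2}, b_{1}$ contains no forbidden block (by definition of dual regular), any forbidden block in $\bi(x,y)$ must straddle the seam between positions $0$ and $1$. All forbidden blocks listed in Lemmas \ref{lem:forbidden-block-even} and \ref{lem:forbidden-block-odd} have length at most $2h+2$, so the seam condition depends only on a bounded neighborhood of $b_{1}, a_{1}$.

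For the forward direction ($\Rightarrow$), assume $(x,y) \in \Omega^{\infty}$, so $(x,y) \in \Omega_{j} = \mathcal{I}_{j} \times \mathcal{R}_{j}$ for some $j$ with $1 \le |j| \le \np$. The explicit $\lambda$-fraction descriptions in Remarks \ref{rem:For-even-q-definitions} and \ref{rem:For-odd-q-definitions} pin down the admissible leading digits of $\Cq(x)$ for $x \in \mathcal{I}_{j}$ and of the reversed $\Cd(y)$ for $y \in \mathcal{R}_{j}$. These are combinatorially matched in such a way as to exclude every seam-straddling pattern from Lemmas \ref{lem:forbidden-block-even} and \ref{lem:forbidden-block-odd}. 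This is a finite case check: for the boundary index $j = \np$, it is exactly the content of Lemma \ref{lem:r-is-the-smallest_prependablee_nr}, and for the remaining indices one reduces to this base case by applying $\Fqx$, which shifts $\bi$ and permutes the rectangles according to the Markov structure.

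For the reverse direction ($\Leftarrow$), assume $\bi(x,y) \in \BIRNCF$. Since $\{\mathcal{I}_{j}\}_{|j| \le \np}$ partitions $I_{q}$, there is a unique $j$ with $x \in \mathcal{I}_{j}$; we must show $y \in \mathcal{R}_{j}$. Arguing by contrapositive, if $y \notin \mathcal{R}_{j}$ then for $j > 0$ one has $y < r_{j}$ (respectively $y > -r_{|j|}$ for $j < 0$), and the explicit $\lambda$-fraction of $r_{j}$, combined with the forced leading digits of $\Cq(x)$ for $x \in \mathcal{I}_{j}$, produces one of the forbidden blocks of Lemma \ref{lem:forbidden-block-even} or \ref{lem:forbidden-block-odd} at the seam. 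Again Lemma \ref{lem:r-is-the-smallest_prependablee_nr} handles the extremal case $j = \np$; the remaining cases propagate to smaller $j$ by iterating $\Fqx$ and using that $\Fqd$ sends $r_{j}$ into $\{r_{k}\}$ compatibly with $F_{q}$ sending $\phi_{j}$ into $\{\phi_{k}\}$.

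The main obstacle is the bookkeeping of this case analysis: one must simultaneously track the parity of $q$ (with the distinct forbidden-block families for $q$ even, $q \ge 5$ odd, and $q=3$), verify the matched count $\#\mathcal{O}(-\lambda/2) - 1 = \np = \#\mathcal{O}^{*}(-R) - 1$, and check that the $\Fqd$-orbit structure of $-R$ mirrors the $F_{q}$-orbit structure of $-\lambda/2$ so that $r_{j}$ is indeed the critical lower endpoint for the rectangle sitting above $\mathcal{I}_{j}$. Once these orbit correspondences are established, each case reduces to a single seam-comparison of the type carried out explicitly in Lemma \ref{lem:r-is-the-smallest_prependablee_nr}.
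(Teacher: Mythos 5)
Your proof is correct and follows essentially the same route as the paper's: the paper likewise reduces the claim to showing that $r_{j}$ is the smallest number in $I_{R}$ whose dual regular expansion can be prepended to the regular expansion of the left endpoint of $\mathcal{I}_{j}$ (and hence of every $x$ in the interval), with the extremal case supplied by Lemma \ref{lem:r-is-the-smallest_prependablee_nr}. Your seam-localization remark and the reduction of the remaining indices to the base case via $\Fqx$ are just a more structured packaging of the paper's ``just as in the proof of Lemma \ref{lem:r-is-the-smallest_prependablee_nr}'' verification.
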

\begin{proof}
Just as in the proof of Lemma \ref{lem:r-is-the-smallest_prependablee_nr}
it is not hard to verify that $r_{j}$ is the smallest number in $I_{R}$
with a dual regular expansion which can be prepended to the regular
expansion of $\phi_{j}$ and hence of all $x\in\left[\phi_{j},0\right)$. 
\end{proof}
\begin{defn}
\label{def:def_omega_star}To determine the first return map we introduce
a ,,conjugate{}`` region $\Omega^{*}=\Sx\left(\Omega^{\infty}\right)$
where $\Sx\left(x,y\right)=\left(Sx,-y\right)$, i.e. setting $\mathcal{I}_{j}^{*}=S\left(\mathcal{I}_{j}\cap I_{j}^{\infty}\right),$
$\mathcal{R}_{j}^{*}=-\mathcal{R}_{j}\cap I_{R}^{\infty}$ and $\Omega_{j}^{*}=\mathcal{I}_{j}^{*}\times\mathcal{R}_{j}^{*}$
we get $\Omega^{*}=\cup\Omega_{j}^{*}.$ 
\end{defn}
Thus $\Omega$ with $\Omega^{\infty}$ as a dense subset is the domain
of the natural extension $\Fqx$ of $\Fq$. An example of $\Omega$
and $\Omega^{*}$ is given in Figure \ref{fig:Domains-Omega-and}.
See \cite{MR1402490} for another choice of a ,,conjugate{}`` $\Omega^{*}$
of $\Omega$ using the maps $\left(x,y^{-1}\right)$ and also \cite{MR1650073}
for the corresponding domain for the reduced Rosen fractions. %
\begin{figure}
\caption{\label{fig:Domains-Omega-and}Domains $\Omega$ and $\Omega^{*}$
for $q=7$}

\includegraphics[scale=0.4]{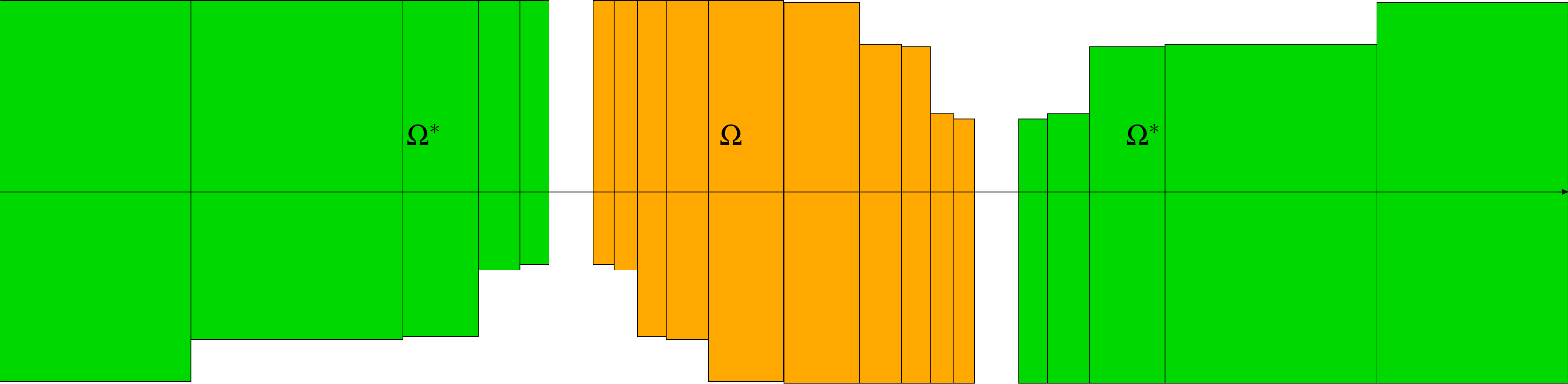}
\end{figure}

\subsection{Reduction of $\lambda$-fractions}

In a first step in our construction of a cross-section for the geodesic
flow we select a set of geodesics on $\H$ which contains at least
one lift of each geodesic on $\mathcal{M}_{q}=\H/G_{q}$, i.e. a set
of ,,representative{}`` or ,,reduced{}`` geodesics modulo $G_{q}$.
For an overview and a discussion of different reduction procedures
in the case of $\PSLZ$ see \cite[Sect.\ 3]{MR2265011}.

\begin{lem}
\label{lem:reduce_uv_to_Omega*}Let $u,v\in\mathbb{R}^{\infty}$ both
have infinite $\lambda$-fractions. Then there exists $B\in G_{q}$
such that $\left(Bu,Bv\right)\in\Omega^{*}$. 
\end{lem}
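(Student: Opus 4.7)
The plan is to construct $B$ from the regular $\lambda$-fraction expansion of $u$. Let $\Cq(u) = \RS{a_0; a_1, a_2, \ldots}$ (infinite by hypothesis), set $A_n = T^{a_0} S T^{a_1} \cdots S T^{a_n} \in G_q$, and $B_n := S A_n^{-1}$. Writing $u_{n+1} := A_n^{-1} u$ and $v_{n+1} := A_n^{-1} v$, Lemma~\ref{lem:generating_map_Fq_gives_regular_expansion} gives $u_{n+1} \in I_q$ for all $n \ge 0$; since $u \in \mathbb{R}^\infty$, each $u_{n+1}$ lies in the interior of a unique Markov interval $\mathcal{I}_{j(n)}$ with $1 \le |j(n)| \le \np$, and hence $B_n u = -1/u_{n+1} \in S\mathcal{I}_{j(n)} = \mathcal{I}_{j(n)}^{*}$ automatically. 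The only task is to find $n$ with $B_n v \in \mathcal{R}_{j(n)}^{*}$.

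Setting $w_n := 1/v_n$, the recursion $v_{n+1} = -1/v_n - a_n\lambda$ gives $w_{n+1} = -1/(w_n + a_n\lambda) = ST^{a_n} w_n$, which combined with $u_{n+1} = T^{-a_n}Su_n = \Fq u_n$ realizes exactly the natural extension step $(u_{n+1}, w_{n+1}) = \Fqx(u_n, w_n)$. Thus $(u_{n+1}, w_{n+1}) = \Fqx^{n}(u_1, w_1)$ for the initial data $(u_1, w_1) = (u - a_0\lambda,\, 1/(v - a_0\lambda)) \in I_q^\infty \times \mathbb{R}$, which is well-defined because $v\in\mathbb{R}^\infty$ rules out $v = a_0\lambda \in G_q(\infty)$. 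Using the involution $\Sx(x,y) = (Sx, -y)$, the desired condition $(B_n u, B_n v) \in \Omega^{*}_{j(n)}$ is equivalent to $(u_{n+1}, w_{n+1}) \in \Omega^{\infty}$, which by Lemma~\ref{lem:Omega_inf_iff_BIRNCF} amounts to $\bi(u_{n+1}, w_{n+1}) \in \BIRNCF$.

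The lemma therefore reduces to showing that the forward $\Fqx$-orbit of $(u_1, w_1)$ eventually enters $\Omega^{\infty}$. The main mechanism is that $\Omega^{\infty}$ acts as an attracting domain for the extended natural extension: the M\"obius action $y \mapsto -1/(y + a\lambda)$ on the $y$-coordinate contracts the complement of $[-R, R]$ under iteration, and the Markov structure of $\{\mathcal{I}_j\}$ and $\{\mathcal{R}_j\}$ then enforces the compatibility $w_n \in \mathcal{R}_{j(n)}$ once $w_n$ is in the bounded regime. Combined with the admissibility of $\Cq(u_{n+1}) = \RS{a_{n+1}, a_{n+2}, \ldots}$ (the forward part of the bi-infinite sequence) and the rewriting rules of Lemmas~\ref{lem:forbidden-block-even} and~\ref{lem:forbidden-block-odd}, localized by Lemma~\ref{lem:rewrite_not_affect_left} so as to clear finitely many junction blocks without disturbing the tails, this yields $\bi(u_{n+1}, w_{n+1}) \in \BIRNCF$ for all sufficiently large $n$. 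The principal obstacle will be the rigorous contraction estimate on $w_n$ together with the verification of Markov compatibility between $\mathcal{I}_{j(n)}$ and the eventual location of $w_{n+1}$; the hypothesis $v \in \mathbb{R}^\infty$ is essential here to exclude degenerate orbits that could remain stuck on the boundary of $\Omega^{\infty}$.
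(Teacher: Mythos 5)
Your reduction is correct, and it is in fact the mirror image of the paper's argument: you drive the iteration by the regular expansion of $u$ (the forward natural extension, with the second coordinate $w_n=1/v_n$ carried along passively), whereas the paper drives it by the dual regular expansion of $v$, iterating $\Fqx^{-1}$ with the first coordinate carried along passively, and then cleans up the junction by rewriting forbidden blocks. The identities $\Sx\left(B_nu,B_nv\right)=\left(u_{n+1},w_{n+1}\right)$ and $\left(u_{n+1},w_{n+1}\right)=\Fqx^{\,n}\left(u_1,w_1\right)$ are right, so the lemma is indeed equivalent to the claim that this extended forward orbit meets $\Omega^{\infty}$.

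The problem is that this claim is the entire content of the lemma, and your last paragraph asserts it rather than proves it; you yourself flag the ``contraction estimate'' and the ``Markov compatibility'' as obstacles still to be overcome. Two things are concretely missing. First, the contraction: writing $M_n=ST^{a_n}\cdots ST^{a_1}=\left(\begin{smallmatrix}*&*\\c_n&d_n\end{smallmatrix}\right)$, one must compare $w_{n+1}=M_n\left(w_1\right)$ with $M_n\left(0\right)=\FS{0;a_n,\ldots,a_1}$, which does lie in $I_R$ because $a_n,\ldots,a_1$ contains no reversed forbidden block. The difference equals $w_1/\bigl(\left(c_nw_1+d_n\right)d_n\bigr)$, and to make it tend to $0$ you must keep $w_1$ away from the poles $-d_n/c_n=M_n^{-1}\left(\infty\right)$, which converge to $1/u_1$; this is exactly where the hypotheses $v\in\mathbb{R}^{\infty}$ and (implicitly) $u\ne v$ do their work, and none of this appears in your write-up. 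Second, even granting $\left|w_{n+1}-M_n\left(0\right)\right|\to0$, proximity to a point of $I_R$ does not yield $\left(u_{n+1},w_{n+1}\right)\in\Omega^{\infty}$: the reference point $\FS{0;a_n,\ldots,a_1}$ may lie arbitrarily close to, or on the wrong side of, the partition point $r_{j(n)}$ of $\mathcal{R}_{j(n)}$, i.e.\ the concatenated bi-infinite sequence may still carry a forbidden block straddling the junction with the expansion of $w_1$. The paper disposes of precisely this by finitely many further shifts together with rewriting, invoking Lemma \ref{lem:rewrite_not_affect_left} to guarantee the rewriting terminates; your appeal to that lemma is in the right spirit, but you never specify what is being rewritten, nor why only finitely many junction blocks can occur. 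As it stands the proposal is a correct reformulation plus a statement of intent, not a proof.
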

\begin{proof}
[Sketch of proof]The complete proof can be found in \cite{cf_main}.
First we find $A\in G_{q}$ such that $\Sx\left(Au,Av\right)\in I_{q}^{\infty}\times I_{R}^{\infty}$
by extending the domain of definition of $\Fqx^{-1}:\left(x,y\right)\mapsto\left(\frac{-1}{x+b_{1}\lambda},\Fqd y\right),$
$b_{1}=\nlms{-\frac{1}{y}}$ to $\mathbb{R}^{\infty}\times I_{R}^{\infty}$
and applying $\Fqx^{-1}$ repeatedly. Once the point is inside the
rectangle we attach to it a bi-infinite sequence $\bi\circ\Sx\left(Au,Av\right)=\FS{\ldots,b_{2},b_{1}\centerdot a_{1},a_{2},\ldots}.$
Using further backward shifts and rewriting of forbidden blocks we
obtain $\bi\circ\Sx\left(Bu,Bv\right)\in\BIRNCF$, i.e. $\left(Bu,Bv\right)\in\Omega^{*}.$
In the last step of rewriting we rely on the fact that rewriting does
not propagate to the left, cf.~Lemma \ref{lem:rewrite_not_affect_left}.
The explicit form of $B$ is given by $\Fqd\,^{n}y$ for some $n$. 
\end{proof}
In fact, one can do slightly better than in the previous Lemma by
using the important property of the number $r$, namely that $r$
and $-r$ are $G_{q}$-equivalent but not orbit-equivalent, i.e. $\mathcal{O}^{*}\left(r\right)\ne\mathcal{O}^{*}\left(-r\right)$
(cf.~Lemmas \ref{lem:properties_of_R_even} and \ref{lem:properties_of_R_odd}).
Using the explicit map identifying $r$ and $-r$ one can show that
it is possible to reduce any geodesic to one with endpoints in $\Omega^{*}$
without the upper horizontal boundary.

\begin{lem}
\label{lem:upper_and_lower_Omega*_equivalent}If $\left(x,y\right)\in\Omega^{*}$
and $y$ has a dual regular expansion with the same tail as $-r$
then there exists $A\in G_{q}$ such that $\left(Ax,Ay\right)\in\Omega^{*}$
and $Ay$ has the same tail as $r$. 
\end{lem}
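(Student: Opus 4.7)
The plan is to produce an explicit $A\in G_q$ and then verify the two required properties in three stages.

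First, I would construct an element $A_0\in G_q$ satisfying $A_0(-r)=r$. Combining the trivial identities $T^{-1}R=r$ and $T^{-1}(-r)=-R$ with the $G_q$-element that swaps $R$ and $-R$---namely $S$ for even $q$ (from $SR=-R$, Lemma~\ref{lem:properties_of_R_even}) and $(ST^{-1})^{h+1}$ for odd $q$ (by inverting the relation $(TS)^{h+1}R=-R$ of Lemma~\ref{lem:properties_of_R_odd}(a))---I would take
\[
A_0 = T^{-1}ST^{-1}\quad(q\text{ even}),\qquad A_0 = T^{-1}(ST^{-1})^{h+1}T^{-1}\quad(q\text{ odd}).
\]
A short direct computation then confirms $A_0(-r)=r$ in each case.

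Second, I would reduce the general case to $y=-r$ by exploiting the dynamics. Since $y$ has the same tail as $-r$, there is a smallest $N\ge 0$ with $\Fqd^N y=-r$. Because $\Fqd$ acts locally as $z\mapsto T^{-b}Sz$ with $b=\nlms{-1/z}$, the $N$-fold iterate along the orbit defines an element $B\in G_q$ with $By=-r$. Setting $A:=A_0B\in G_q$, we obtain $Ay=A_0(-r)=r$, whose dual regular expansion $\Cd(r)$ is purely periodic and in particular has the same tail as $r$; this gives the first conclusion of the lemma.

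Third, I would show $(Ax,Ay)\in\Omega^*$. By $\Omega^*=\Sx(\Omega^\infty)$ and Lemma~\ref{lem:Omega_inf_iff_BIRNCF}, this reduces to checking that the bi-infinite sequence $\bi(SAx,-Ay)=\Cd(-r)\cdot\Cq(SAx)$ lies in $\BIRNCF$, i.e.\ contains no forbidden block. The left half $\Cd(-r)$ is periodic and, by construction, free of reversed forbidden blocks, while the right half $\Cq(SAx)$ is a regular $\lambda$-fraction and hence free of forbidden blocks. The only remaining issue---and the step I expect to be the main obstacle---is the junction at the decimal point, where a forbidden block spanning both halves could in principle appear. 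To rule it out, I would exploit the assumption $(x,y)\in\Omega^*$, which by Lemma~\ref{lem:Omega_inf_iff_BIRNCF} already provides a clean junction for $\bi(Sx,-y)$, and then track how the product $A=A_0B$ modifies the leftmost $b$-digits and correspondingly the leading $a$-digits via the rewriting rules of Lemmas~\ref{lem:forbidden-block-even} and~\ref{lem:forbidden-block-odd}. A short case analysis, paralleling the proof of Lemma~\ref{lem:r-is-the-smallest_prependablee_nr}, should then show that the new junction is again forbidden-block-free.
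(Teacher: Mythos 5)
Your overall strategy (push $y$ along its $\Fqd$-orbit until it equals $-r$, then apply an explicit element of $G_{q}$) is the same as the paper's, and your element $A_{0}$ is exactly the one the paper uses \emph{in one of its cases} (note $T^{-1}(ST^{-1})^{h+1}T^{-1}=T^{-1}(ST^{-1})^{h}ST^{-2}$). The gap is precisely at the step you flag as ``the main obstacle,'' and it is not a technicality that works out: applying $A_{0}$ \emph{unconditionally} does not return the pair to $\Omega^{*}$. The reason is structural. If $Ay=r$, then $-Ay=-r$, and $-r$ lies in $\mathcal{R}_{-j}=[-R,-r_{j}]$ only for $j=\np$ (since $r=r_{\np}$ is the minimal $r_{j}$); so membership in $\Omega^{*}$ forces $SAx\in\mathcal{I}_{-\np}$, which is a genuine constraint on $x$, not something guaranteed by $(x,y)\in\Omega^{*}$. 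Concretely, take $q$ odd and $\Cq\left(Sx\right)=\RS{a_{1},a_{2},\ldots}$ with $a_{1}\ge3$ (allowed, since $\mathcal{I}_{\np}=[\RS{2,1^{h}},0)$). A direct computation gives $SA_{0}x=\FS{0;\left(-1\right)^{h+1},a_{1}-2,a_{2},\ldots}$, which contains the forbidden block $\FS{\left(-1\right)^{h+1}}$; rewriting it via Lemma \ref{lem:forbidden-block-odd} produces a leading entry $a_{0}=1\ne0$, i.e.\ $SA_{0}x\notin I_{q}$, so $\left(A_{0}Bx,A_{0}By\right)\notin\Omega^{*}$. Your proposed ``short case analysis'' therefore cannot close; it would instead reveal that the construction must be changed in these cases.

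The paper's proof handles this by a case distinction on the leading digits of $\Cq\left(Sx\right)$: only when $Sx$ lies in the sub-interval of $\mathcal{I}_{\np}$ compatible with prepending the expansion of $-r$ (e.g.\ $a_{1}=1$, $a_{2}\le-1$ for even $q$, or $\Cq\left(Sx\right)=\RS{2,1^{h},a_{h+2},\ldots}$ with $a_{h+2}\le-1$ for odd $q$) does it apply the element with $A\left(-r\right)=r$; in the remaining cases it takes $A=T^{-1}$, which sends $-r$ to $-R$. This still proves the lemma because $-R$ also has the same tail as $r$ (its dual expansion is $\DS{1,\overline{\cdots}}$ with the periodic part a cyclic shift of that of $r$), and $-\left(-R\right)=R$ lies in every fiber $\mathcal{R}_{j}$, so no constraint is imposed on $SAx$ beyond $SAx\in I_{q}$. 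To repair your argument you would need to (i) add this case distinction, and (ii) note that the target $-R$ is an acceptable alternative to $r$ for the conclusion of the lemma.
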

\begin{proof}
Using $\Fqx$ we may assume that $y=-r$ and $Sx\in\mathcal{I}_{\np}$.
Consider an even $q$. There are two cases for $Sx\in\mathcal{I}_{h}:$
in $\Cq\left(Sx\right)=\RS{a_{1},a_{2},\ldots}$ either $a_{1}\ge2$
or $a_{1}=1,$ $a_{2}\le-1$. In the first case, set $A:=T^{-1}$,
$y':=Ay=-R,$ $x':=Ax$ and hence $Sx'=SAx\in\left(-\frac{\lambda}{2},0\right)$.
In the second case, let $A:=T^{-1}ST^{-1}$ with $A\left(-r\right)=r$
(recall that $-R=SR$). Set $y'=Ay=r$ and $x'=Ax$ with $\Cq\left(Sx'\right)=\RS{a_{2}-1,a_{3},\ldots}$
and $Sx'\in\mathcal{I}_{-h}$. 

For odd $q$ there are three cases to consider if $Sx\in\mathcal{I}_{2h+1}$
(cf.~Remark \ref{rem:For-odd-q-definitions}): We have $\Cq\left(Sx\right)=\RS{a_{1},a_{2},\ldots}$
with either $a_{1}\ge3$ or $\Cq\left(Sx\right)=\RS{2,1^{j},a_{j+2},\ldots}$
for some $0\le j\le h-1$ for $a_{j+2}\ne1$ or $\Cq\left(Sx\right)=\RS{2,1^{h},a_{h+2},\ldots}$
for $a_{h+2}\le-1$. In the first two cases one can use $A=T^{-1}$
and $-y'=R$ such that $Sx'\in\left[-\frac{\lambda}{2},0\right]$
and in the third case one can use $A=T^{-1}\left(ST^{-1}\right)^{h}ST^{-2}$
such that $A\left(-r\right)=r$ and hence $y'=r$ respectively $x'=Sx$
with $\Cq\left(Sx'\right)=\RS{a_{h+2}-1,a_{h+3},\ldots}$ and $SAx\in\mathcal{I}_{-\np}$. 

Hence in all cases $\left(Sx',-y'\right)\in\Omega^{\infty},$ i.e.
$\left(x',y'\right)\in\Omega^{*}$ and $y'=-R$ or $r$.  
\end{proof}

\subsection{Geodesics and geodesic arcs}

If $\gamma\left(\xi,\eta\right)$ is a geodesic in $\H$ oriented
from $\eta$ to $\xi$ (cf.~Section \ref{sub:Hyperbolic-geometry-and})
and $A\in\PSLR$ we define the geodesic $A\gamma$ as $A\gamma=\gamma\left(A\xi,A\eta\right).$
If $\left(\xi,\eta\right)\in\Omega^{*}$ we associate to $\left(\xi,\eta\right)$
a bi-infinite sequence (code) to $\gamma$, $\Bi\left(\gamma\right)=\Bi\circ\Sx\left(\xi,\eta\right)=\Cq\left(-\eta\right).\Cd\left(S\xi\right)\in\BIRNCF$. 

\begin{defn}
\label{def:reduced_geodesic}Let $\gamma$ be a geodesic on the upper
half-plane. We say that $\gamma$ is \emph{reduced }with respect to
$G_{q}$ if $\left(\gamma_{+},\gamma_{-}\right)\in\Omega^{*}$ and
$\gamma_{-}$ does not have the same tail as $-r$ or equivalently
if \[
\Bi\left(\gamma\right)\in\BIRED=\left\{ \underline{\zeta}\in\BIRNCF\:\large{|}\,\Fqx^{-n}\underline{\zeta}\ne\bi\left(\xi,-r\right)\:\forall\xi\in I_{q},\,\forall n\ge0\right\} .\]
We denote the set of reduced geodesics by $\Rgeo$ (or by $\Rgeo_{q}$
when we want to stress the dependence on $q$). 
\end{defn}
\begin{lem}
The coding map $\Bi:\Rgeo\rightarrow\BIRED$ is a homeomorphism. 
\end{lem}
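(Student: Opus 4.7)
The plan is to recognize $\Bi$ as the composition $\bi \circ \Sx$ (where $\Sx(\xi,\eta) = (S\xi, -\eta)$) restricted to a subset of $\Omega^*$, and to piggy-back on the homeomorphism properties already established for $\bi$ on $\Omega^\infty$. Concretely, the identification $\gamma \mapsto (\gamma_+,\gamma_-) \in \Omega^*$ is tautologically a homeomorphism from $\Rgeo$ onto its image in $\Omega^*$, so it suffices to show that $\bi\circ\Sx$ carries that image bijectively and bicontinuously onto $\BIRED$.

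First I would handle the bijection. The involution $\Sx$ maps $\Omega^*$ onto $\Omega$ and carries pairs with both coordinates having infinite $\lambda$-fractions onto $\Omega^\infty$. By Lemma~\ref{lem:Omega_inf_iff_BIRNCF}, $\bi$ is a bijection $\Omega^\infty \to \BIRNCF$. So $\bi\circ\Sx$ is a bijection from $\Sx^{-1}(\Omega^\infty) \subset \Omega^*$ to $\BIRNCF$. Next I would match the additional ``reduced'' restriction: a reduced geodesic by Definition~\ref{def:reduced_geodesic} is one for which $\gamma_-$ does not have the same $\Fqd$-tail as $-r$. Under $\Sx$ this translates into a condition on the second coordinate $y = -\gamma_-$ (or on the corresponding $\Cd$-code), which is precisely what must be excluded on the ``past'' side of the bi-infinite sequence. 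Because $\Fqx^{-n}$ shifts the past by $n$ and its second-coordinate action is $\Fqd^n$, the condition ``$\gamma_-$ is not eventually $\Fqd$-equivalent to $-r$'' is equivalent to ``$\Fqx^{-n}\bi(\Sx(\gamma_+,\gamma_-)) \neq \bi(\xi,-r)$ for all $\xi$ and $n \ge 0$'', which is exactly the definition of $\BIRED$. This gives the bijection $\Bi : \Rgeo \to \BIRED$.

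For continuity in both directions, I would rely on the topological conjugacy established in Section~\ref{sub:Symbolic-dynamics}: the one-sided coding maps $\Cq : I_q^\infty \to \ZRNCF$ and $\Cd : I_R^\infty \to \ZDRNCF$ are homeomorphisms with respect to the metric $\tilde h$, hence so is their ``product'' $\bi$ on $\Omega^\infty$. Since $\Sx$ is a homeomorphism of $\mathbb{R}^{*2}$ (it is the product of the involutions $S$ and $x\mapsto -x$), the composition $\Bi = \bi \circ \Sx$ and its inverse $\Sx \circ \bi^{-1}$ are both continuous on the respective restricted domains.

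The main obstacle is the bookkeeping in step two, namely verifying that the exclusion of $-r$-tailed $\gamma_-$ on the geometric side really corresponds symbol-for-symbol to the exclusion appearing in $\BIRED$. This uses in an essential way that $\sigma^+ \circ \Cd = \Cd \circ \Fqd$ (Remark~\ref{rem:Fqd_is_generating_map_for_dual_expansion}) and that $\Fqx^{-1}$ acts as $\Fqd$ on the $y$-coordinate; once these are unpacked the equivalence is routine. The remaining minor nuisance — that pairs in $\Omega^*$ with some coordinate having a \emph{finite} $\lambda$-fraction are not caught by the identification $I_q^\infty \times I_R^\infty$ — is handled by noting that finite-expansion points are images of cusps of $G_q$, hence by Lemma~\ref{lem:upper_and_lower_Omega*_equivalent} and the definition of $\Rgeo$ they do not occur as endpoints of reduced geodesics.
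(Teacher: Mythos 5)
Your proposal is correct and follows essentially the same route as the paper's own proof: both factor $\Bi$ as $\bi\circ\Sx$, invoke Lemma \ref{lem:Omega_inf_iff_BIRNCF} together with the uniqueness corollaries for regular and dual regular fractions, and use Lemma \ref{lem:upper_and_lower_Omega*_equivalent} (the $\pm r$ ambiguity) to see that the exclusion of $-r$-tails in $\Rgeo$ matches exactly the passage from $\BIRNCF$ to $\BIRED$. Your treatment is somewhat more explicit about the shift-conjugacy $\sigma^{+}\circ\Cd=\Cd\circ\Fqd$ underlying the tail bookkeeping, but this is a fleshing-out of the same argument rather than a different one.
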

\begin{proof}
$\Rgeo$ is identified with the set of endpoints in $\Omega^{*}$,
the map $\Sx:\Omega^{*}\rightarrow\Omega^{\infty}$ defined in Definition
\ref{def:def_omega_star} is a continuous bijection and by Corollary
\ref{cor:infinite-regular-are-unique} and \ref{cor:dual_regular_unique}
respectively Lemmas \ref{lem:Omega_inf_iff_BIRNCF} and \ref{lem:upper_and_lower_Omega*_equivalent}
it is clear that $\bi:\Omega^{\infty}\rightarrow\BIRED$ is continuous,
onto and one-to-one except for points where $\eta$ is equivalent
to $\pm r$ where it is two-to-one. Thus $\Bi=\bi\circ\Sx:\Rgeo\rightarrow\BIRED$
is a homeomorphism. For the case of $G_{q}=\PSLZ$ see also \cite[p.\ 19]{MR2265011}.
\end{proof}
The set of reduced geodesics contains representatives of all geodesics
on $\H/G_{q}$. This property is an immediate corollary to the following
Lemma which additionally also provides a reduction algorithm. 

\begin{lem}
Let $\gamma$ be a geodesic on the hyperbolic upper half-plane with
endpoints in $\mathbb{R}^{\infty}$ and $\gamma_{-}=\DS{b_{0};b_{1},\ldots}.$
Then there exists an integer $n\ge0$ and $A\in G_{q}$ such that
\[
A\, T^{-b_{n}}S\,\cdots\, T^{-b_{1}}ST^{-b_{0}}\gamma\in\Rgeo.\]
Here $A$ is one of the maps $Id$, $T^{-1}$ and $T^{-1}ST^{-1}$
for even $q$, respectively $T^{-1}\left(ST^{-1}\right)^{h}ST^{-2}$
for odd $q$.
\end{lem}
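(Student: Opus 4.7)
The plan is to combine the two reduction lemmas already in place. Write $y=\gamma_{-}=\DS{b_{0};b_{1},\ldots}$ and $x=\gamma_{+}$. Since $Sz=-1/z$ and $T^{-b}z=z-b\lambda$, one checks directly that the iterates $y_{1}=y-b_{0}\lambda$, $y_{j+1}=\Fqd y_{j}$ satisfy $y_{j+1}=T^{-b_{j}}S\,y_{j}$ for $j\ge1$. Hence the composed element $B_{n}:=T^{-b_{n}}S\,\cdots\,T^{-b_{1}}ST^{-b_{0}}\in G_{q}$ sends $y$ to $y_{n+1}=\Fqd^{\,n}(y_{1})$, and by the explicit formula for $\Fqx^{-1}$ recorded at the end of Section \ref{sub:Symbolic-dynamics} the same element $B_{n}$ simultaneously transports $x$ into the first coordinate of $\Fqx^{-n}(x_{0},y_{1})$. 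In other words $B_{n}$ is precisely the group element built step-by-step in the sketch of Lemma \ref{lem:reduce_uv_to_Omega*}.

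First I would invoke Lemma \ref{lem:reduce_uv_to_Omega*}: since both endpoints lie in $\mathbb{R}^{\infty}$, there exists $n\ge0$ such that $(B_{n}x,B_{n}y)\in\Omega^{*}$. If $B_{n}y$ does not have the same tail as $-r$ in its dual regular expansion, then by Definition \ref{def:reduced_geodesic} the geodesic $B_{n}\gamma$ already lies in $\Rgeo$, and we may take $A=\mathrm{Id}$.

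Otherwise I would apply Lemma \ref{lem:upper_and_lower_Omega*_equivalent}: its proof produces an explicit $A\in G_{q}$, chosen case-by-case from the short list appearing in that proof, namely $T^{-1}$ or $T^{-1}ST^{-1}$ for even $q$, and $T^{-1}$ or $T^{-1}(ST^{-1})^{h}ST^{-2}$ for odd $q$, such that $(AB_{n}x,AB_{n}y)\in\Omega^{*}$ and $AB_{n}y$ has the same tail as $r$ rather than $-r$. In this case $AB_{n}\gamma\in\Rgeo$, again by Definition \ref{def:reduced_geodesic}.

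The main obstacle is the first bookkeeping step: one must carefully match the abstract group element supplied by Lemma \ref{lem:reduce_uv_to_Omega*} with the explicit product $B_{n}$, generator by generator. This is just an unraveling of the definitions of $\Fqd$ and $\Cd$, but some care is needed at the initial index $j=0$, where only $T^{-b_{0}}$ (and not $T^{-b_{0}}S$) is applied, because the leading digit $b_{0}$ encodes the translation bringing $y$ into $I_{R}$ while every later digit encodes the composition of an inversion with a translation.
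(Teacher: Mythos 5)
Your proposal is correct and follows essentially the same route as the paper: the paper's proof likewise combines Lemma \ref{lem:reduce_uv_to_Omega*} with Lemma \ref{lem:upper_and_lower_Omega*_equivalent} and records exactly your bookkeeping identity $\Fqd^{\,n}T^{-b_{0}}\gamma_{-}=T^{-b_{n}}S\cdots T^{-b_{1}}ST^{-b_{0}}\gamma_{-}$. Your extra care about the initial index $j=0$ and about reading off the explicit list of maps $A$ from the case analysis in Lemma \ref{lem:upper_and_lower_Omega*_equivalent} only makes explicit what the paper leaves implicit.
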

\begin{proof}
This is an immediate consequence of Lemmas \ref{lem:reduce_uv_to_Omega*}
and \ref{lem:upper_and_lower_Omega*_equivalent}. Note that $T^{-b_{0}}\gamma_{-}=\DS{b_{1},b_{2},\ldots}$
and $\Fqd\,^{n}T^{-b_{0}}\gamma_{-}=T^{-b_{n}}S\,\cdots\, T^{-b_{1}}ST^{-b_{0}}\gamma_{-}$. 
\end{proof}
\begin{cor}
\label{cor:any_geodesic_eq_to_reduced}If $\gamma^{*}$ is a geodesic
on $\H/G_{q}$ with all lifts having endpoints in $\mathbb{R}^{\infty}$
then $\Rgeo$ contains an element of $\pi^{-1}\left(\gamma^{*}\right)$.
\end{cor}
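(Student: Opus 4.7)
The plan is to reduce this corollary directly to the preceding lemma, which is the substantive content. Given $\gamma^{*}$ on $\H/G_{q}$ with the stated hypothesis, I would first pick any lift $\gamma \in \pi^{-1}(\gamma^{*})$; by assumption both endpoints $\gamma_{\pm}$ lie in $\mathbb{R}^{\infty}$, so the dual regular $\lambda$-fraction $\Cd(\gamma_{-}) = \DS{b_{0};b_{1},\ldots}$ is infinite and the preceding lemma applies.

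Next, I would invoke the preceding lemma to obtain an integer $n \geq 0$ and an element $A \in G_{q}$ (one of $Id$, $T^{-1}$, $T^{-1}ST^{-1}$ for even $q$, or $T^{-1}(ST^{-1})^{h}ST^{-2}$ for odd $q$) such that the geodesic
\[
\gamma' := A\, T^{-b_{n}} S \,\cdots\, T^{-b_{1}} S T^{-b_{0}}\,\gamma
\]
is reduced, i.e.\ $\gamma' \in \Rgeo$. Setting $M := A\, T^{-b_{n}} S \,\cdots\, T^{-b_{1}} S T^{-b_{0}}$, every factor lies in $G_{q}$ (the $T^{-b_{j}}$'s and $S$ are generators of $G_{q}$, and $A \in G_{q}$ by the list above), hence $M \in G_{q}$.

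Finally, since $M \in G_{q}$ acts on $\H$ as an element of the deck group, $\pi(M\gamma) = \pi(\gamma) = \gamma^{*}$, so $\gamma' = M\gamma \in \pi^{-1}(\gamma^{*}) \cap \Rgeo$. This produces the required reduced lift. There is essentially no obstacle beyond correctly citing the previous lemma and observing that the reduction word $M$ lies in $G_{q}$; the whole argument is a one-line deduction with $M$ providing, in fact, an explicit algorithm for finding a reduced representative.
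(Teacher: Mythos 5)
Your argument is correct and is exactly how the paper treats this statement: the text declares the corollary ``an immediate corollary'' of the preceding reduction lemma, and your proof simply makes explicit the observation that the reduction word $M=A\,T^{-b_{n}}S\cdots T^{-b_{1}}ST^{-b_{0}}$ lies in $G_{q}$ and hence sends a lift of $\gamma^{*}$ to another lift of $\gamma^{*}$ that is reduced. Nothing further is needed.
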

\begin{lem}
\label{lem:hyperbolic_are_periodic_fractions}If for $\xi\in I_{q}$
$\Cq\left(\xi\right)=\RS{\overline{a_{1},\ldots,a_{n}}}\in\ZRNCF$,
then $\xi$ is the attractive fixed point of the hyperbolic map $A=ST^{a_{1}}\, ST^{a_{2}}\,\cdots\, ST^{a_{n}}$
with conjugate fixed point $\xi^{*}=\eta^{-1}$, where $\Cd\left(\eta\right)=\DS{\overline{a_{n},a_{n-1},\ldots,a_{1}}}\in\ZDRNCF$.
Conversely, if $\xi$ is an hyperbolic fixed point of $B\in G_{q}$,
then $\Cq\left(\xi\right)$ is eventually periodic. 
\end{lem}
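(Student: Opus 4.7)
The plan is to split the statement into three parts: the fixed-point and hyperbolicity claim for $A$, the identification $\xi^{*}=\eta^{-1}$, and the converse.

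For the first part, since $\Fq$ acts as a shift on $\ZRNCF$ (remark after Lemma \ref{lem:generating_map_Fq_gives_regular_expansion}), periodicity of $\Cq(\xi)$ gives $\Fq^{n}\xi=\xi$. Unfolding $\Fq x=T^{-a}Sx$ iteratively yields $M\xi=\xi$ with $M=T^{-a_{n}}S\cdots T^{-a_{1}}S$; its inverse in $\PSLR$ is exactly $A=ST^{a_{1}}\cdots ST^{a_{n}}$, so $A\xi=\xi$. To see that $A$ is hyperbolic with $\xi$ attracting I would use that $\lambda=2\cos(\pi/q)<2$, so $|x|\le\lambda/2<1$ on $I_{q}$ and $\Fq'(x)=1/x^{2}>4/\lambda^{2}>1$. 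Hence $M$ acts locally at $\xi$ as $\Fq^{n}$ with derivative strictly greater than $1$, so $|A'(\xi)|=1/M'(\xi)<1$; a M\"obius transformation fixing a real point with derivative of modulus less than $1$ there is automatically hyperbolic with that point as its attracting fixed point.

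For the identification of $\xi^{*}$, the same scheme applied to $\eta$ via $\Fqd$ (expansive at all iterates of $\eta$, since $\eta$ has an infinite dual expansion and so stays strictly in the interior of $I_{R}$) shows that $\eta$ is the attracting fixed point of the digit-reversed element $A_{R}=ST^{a_{n}}ST^{a_{n-1}}\cdots ST^{a_{1}}$. To relate $A_{R}$ to $A$ I would use two matrix identities in $\PSLR$: first, $(T^{a})^{T}=ST^{-a}S$ and $S^{T}=S$ give $A^{T}=ST^{-a_{n}}ST^{-a_{n-1}}\cdots ST^{-a_{1}}$; second, with $J\colon z\mapsto-z$ in $\PGLR$ satisfying $JT^{a}J=T^{-a}$ and $JSJ=S$, conjugation by $J$ flips the signs of all $T$-exponents, yielding $A^{T}=JA_{R}J^{-1}$. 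Combined with the standard identity $A^{T}=SA^{-1}S^{-1}$, this gives $A_{R}=(J^{-1}S)A^{-1}(J^{-1}S)^{-1}$, so the attracting fixed point of $A_{R}$ is the image under $J^{-1}S$ of the attracting fixed point of $A^{-1}$, namely $\xi^{*}$. Explicitly, $\eta=J^{-1}S\xi^{*}=-(-1/\xi^{*})=1/\xi^{*}$, i.e.\ $\xi^{*}=\eta^{-1}$.

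For the converse, given $B\xi=\xi$ hyperbolic in $G_{q}$, both $\xi$ and the conjugate $\xi^{*}$ lie in $\mathbb{R}^{\infty}$ (hyperbolic fixed points are never cusps), and Lemma \ref{lem:reduce_uv_to_Omega*} supplies $C\in G_{q}$ with $(C\xi,C\xi^{*})\in\Omega^{*}$, so $C\gamma(\xi,\xi^{*})$ is reduced. Each iterate $\Fq^{k}(SC\xi)$ is the attracting fixed point of a $G_{q}$-conjugate of $CBC^{-1}$, so all iterates share the trace and discriminant of $B$; they are quadratic irrationals over $\mathbb{Q}(\lambda)$ lying in $I_{q}$ with a fixed discriminant, and a Lagrange-type finiteness argument for Rosen fractions (cf.~\cite{MR0065632}) shows that only finitely many such values can occur, so $\Cq(SC\xi)$ is eventually periodic. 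Finally, since $SC\in G_{q}$, the expansions $\Cq(\xi)$ and $\Cq(SC\xi)$ share a common tail: the action of $SC$ amounts to prepending finitely many digits followed by bounded rewriting of forbidden blocks, by Lemma \ref{lem:rewrite_not_affect_left}. Hence $\Cq(\xi)$ is eventually periodic as well. The main technical obstacle is the sign bookkeeping in the middle paragraph, where a slip between $\SLR$, $\PSLR$ and $\PGLR$ would produce $\xi^{*}=-\eta^{-1}$ rather than $\eta^{-1}$; the converse's finiteness argument is standard but invokes the full theory of Rosen.
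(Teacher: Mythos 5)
Your forward direction is correct but argued differently from the paper. The paper verifies directly that $A\xi=\xi$ \emph{and} $A\eta^{-1}=\eta^{-1}$, and gets hyperbolicity from the existence of two distinct real fixed points (since $|\eta^{-1}|\ge 1/R\ge 1>\lambda/2\ge|\xi|$); you instead obtain hyperbolicity from expansiveness of $\Fq$ and recover $\xi^{*}=\eta^{-1}$ from the reversal identity $A_{R}=(JS)\,A^{-1}(JS)^{-1}$. Both routes work, and your identity has the merit of explaining why the conjugate point is governed by the reversed digit sequence. Two small repairs: the sign bookkeeping you worried about is fine, since $JS:z\mapsto 1/z$; but your reason why $\eta$ is the \emph{attracting} fixed point of $A_{R}$ is off --- an infinite dual expansion does not force the orbit into the interior of $I_{R}$ (for even $q$ the endpoints $\pm R=\pm1$ have infinite dual expansions and $|\Fqd\,'|=1$ there). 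Either observe that a purely periodic dual orbit cannot pass through $\pm R$ (their expansions are not purely periodic), or sidestep the issue: $A_{R}=(JS)A^{-1}(JS)^{-1}$ already gives $\eta\in\{1/\xi,\,1/\xi^{*}\}$, and $\eta=1/\xi$ is impossible because $|1/\xi|\ge 2/\lambda>1\ge R\ge|\eta|$.

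The converse contains a genuine gap. The finiteness step ``quadratic irrationals over $\mathbb{Q}(\lambda)$ in $I_{q}$ with a fixed discriminant --- only finitely many'' is false for non-arithmetic $q$: the ring $\mathbb{Z}[\lambda]$ is dense in $\mathbb{R}$, so bounding the discriminant and the archimedean size of the roots does not confine the coefficients to a finite set, and for $q\ne3,4,6$ most such quadratic irrationals are not even fixed points of $G_{q}$. The finiteness you need comes from discreteness of $G_{q}$, not from arithmetic of discriminants: the iterates are fixed points of the conjugates $B_{k}=M_{k}BM_{k}^{-1}\in G_{q}$, which all share the trace of $B$ and, once the pairs of fixed points lie in the compact set $\overline{\Omega^{\infty}}$ (bounded and bounded away from the diagonal), have uniformly bounded matrix entries; a bounded subset of a discrete subgroup of $\PSLR$ is finite, so only finitely many $B_{k}$ and hence finitely many iterates occur, which yields eventual periodicity. (The paper's own route is different again: write $\xi=\lim_{k}B^{k}(z)$ as a periodic, generally non-regular, $\lambda$-fraction in the generators and remove forbidden blocks.) Finally, your transfer of eventual periodicity from $\Cq\left(SC\xi\right)$ back to $\Cq\left(\xi\right)$ needs more than Lemma \ref{lem:rewrite_not_affect_left}, which only controls propagation of rewriting to the \emph{left}; it is cleaner to note that the reduction of Lemma \ref{lem:reduce_uv_to_Omega*} acts on the regular coordinate by prepending digits, so the two expansions already share a tail by construction.
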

\begin{proof}
It is not hard to show, that $A\xi=\xi$ and $A\eta^{-1}=\eta^{-1}$.
Since $\eta\in I_{R},$ with $R\le1<\frac{2}{\lambda}$ it is clear
that $\xi^{*}\ne\xi$ and hence $A$ is hyperbolic. The other statement
in the Lemma is easy to verify by writing $B$ in terms of generators,
rewriting any forbidden blocks and going through all cases of non-allowed
sequences, e.g. if $B$ ends with an $S$. 
\end{proof}
Since the geodesic $\gamma\left(\xi,\eta\right)$ is closed if and
only if $\xi$ and $\eta$ are conjugate hyperbolic fixed points and
since $r$ and $-r$ are $G_{q}$-equivalent we conclude from Lemma
\ref{lem:hyperbolic_are_periodic_fractions} that there is a one-to-one
correspondence between closed geodesics on $\mathcal{M}_{q}=\H/G_{q}$
and the set of purely periodic regular $\lambda$-fractions which
do not have the same tail as $-r$. 

\begin{rem}
\label{rem:only_infinite_exp}Because $\Omega^{\infty}$ only contains
points with infinite $\lambda$-fractions, the set $\Rgeo$ does not
contain lifts of geodesics which \emph{disappear out to infinity},
i.e. with one or both endpoints equivalent to $\infty$. The neglected
set however corresponds to a set in $\SM$ of measure zero with respect
to any probability measure invariant under the geodesic flow. See
e.g. the second paragraph of \cite[p.~1]{MR2223106}.
\end{rem}
The subshift of finite type $\left(\BIRNCF,\sigma\right)$ is also
conjugate to the invertible dynamical system $\left(\Rgeo,\Fqx\right)$.
Here $\Fqx:\Rgeo\rightarrow\Rgeo$ is the map naturally induced by
$\Fqx$ acting on $\Omega^{\infty}$: i.e. if $\gamma=\gamma\left(\xi,\eta\right),$
then $\Fqx^{\pm1}\left(\gamma\right)=\gamma\left(\xi',\eta'\right)$
where $\left(\xi',\eta'\right)=\Sx\circ\Fqx^{\pm1}\circ\Sx^{-1}\left(\xi,\eta\right)$.
Using the same notation for both maps should not lead to any confusion.

\begin{defn}
For an oriented geodesic arc $c$ on $\H$ we let $\overline{c}$
denote the unique geodesic containing $c$ and preserving the orientation,
for instance $\overline{L_{1}}=\left\{ \smash{z=\frac{\lambda}{2}+iy\,\big{|}\, y>0}\right\} $
oriented upwards. Let $c^{\pm}$ denote the forward and backward end
points of $c$ and let $-c$ denote the geodesic arc with endpoints
$-c^{\pm}$. Here $-c$ should not to be confused with the geodesic
$c$ with reversed orientation, denoted by $c^{-1}$. 

For $z,w\in\H\cup\partial\H$ denote by $\left[z,w\right]$ the geodesic
arc oriented from $z$ to $w$ including the endpoints in $\H$. 
\end{defn}

\begin{defn}
Consider the fundamental domain $\mathcal{F}_{q}$ and its boundary
arcs $L_{0}$ and $L_{\pm1}$ as in Definition \ref{def:G_q_and_fundamental_domain}.
For the construction of the cross section we use the arcs $L_{\pm1}'=\left[\rho_{\pm},\pm\frac{\lambda}{2}\right]$,
i.e. $\overline{L}_{\pm1}=L_{\pm1}\cup L_{\pm1}^{'}$ respectively
$L_{\pm2}=\pm\left[\rho,\lambda\right]=T^{\pm1}SL_{\pm1}$ and $L_{\pm3}=\pm\left[\rho,\rho+\lambda\right]=T^{\pm1}L_{0}$
. 
\end{defn}

\section{\label{sec:Construction-of-the}Construction of the cross-section}

As a cross section for the geodesic flow on the unit tangent bundle
$\SM$ of $\mathcal{M}$ which can be identified with $\mathcal{F}_{q}\times S^{1}$
modulo the obvious identification of points on $\partial\mathcal{F}_{q}\times S^{1}$,
we will take a set of vectors with base points on the boundary $\partial\mathcal{F}_{q}$
directed inwards with respect to $\mathcal{F}_{q}$. The precise definition
will be given below. For a different approach to a cross section related
to a subgroup of $G_{q}$ see \cite{MR1424398}. For the sake of completeness
we include the case $q=3$ in our exposition but it is easy to verify
that our results in terms of the cross-section, first return map and
return time agree with the statements in \cite{MR2223106,MR2265011}.

\subsection{Strongly Reduced Geodesics}

\begin{defn}
\label{def:cross_section_sigma}We define the following subsets of
$\SM$: 

\begin{align*}
\Gamma_{r} & =\left\{ \left(z,\theta\right)\in L_{r}\times S^{1}\,|\,\dot{\gamma}_{z,\theta}\left(s\right)\,\mbox{directed inwards at }z\right\} ,\, r=0,\pm1,\\
\Sigma^{j} & =\left\{ \left(z,\theta\right)\in\Gamma_{j}\,|\,\gamma_{z,\theta}=\gamma\left(\xi,\eta\right)\in\Rgeo,\,\left|\xi\right|>\frac{3\lambda}{2}\,\mbox{or}\,\xi\eta<0\right\} ,\,-1\le j\le1,\\
\Sigma^{\pm2} & =\left\{ \left(z,\theta\right)\in\Gamma_{\pm1}\,|\,\gamma_{z,\theta}\notin\Rgeo,\,\gamma=T^{\pm1}S\gamma_{z,\theta}\in\Rgeo,\,\frac{3\lambda}{2}<\gamma_{+}\le\lambda+1\right\} ,\\
\Sigma^{\pm3} & =\left\{ \left(z,\theta\right)\in\Gamma_{0}\,|\,\gamma_{z,\theta}\notin\Rgeo,\,\gamma=T^{\pm1}\gamma_{z,\theta}\in\Rgeo\right\} .\end{align*}

\end{defn}
If $q$ is even let $\Sigma:=\cup_{j=-2}^{2}\Sigma^{j}$ and if $q\ge5$
is odd let $\Sigma:=\cup_{j=-3}^{3}\Sigma^{j}$. If $q=3$ we drop
the restriction on $\gamma_{+}$ in the definition of $\Sigma^{\pm2}$
and set $\Sigma=\cup_{j=-2}^{2}\Sigma^{j}$. 

We will show that there is a one-to-one correspondence between $\Sigma$
and a subset of reduced geodesics, which we call strongly reduced.
For $q=3$ these sets are identical.

\begin{defn}
\label{def:strongly_reduced}A reduced geodesic $\gamma\left(\xi,\eta\right)\in\Rgeo$
is said to be \emph{strongly reduced} if $\left|\xi\right|>\frac{3\lambda}{2}$
or $\xi\eta<0$. Denote by $\Sgeo$ the set of strongly reduced geodesics
and by $\Sred$ the corresponding set of $\left(\xi,\eta\right)\in\Omega^{*}$.
Then $\Sredx:=\Sx\left(\Sred\right)\subseteq\Omega^{\infty}$ and
$\Sredo=\overline{\Sredx}$, the closure of $\Sredx$ in $\mathbb{R}^{2}$,
i.e. $\Sredx=\Sredo\cap\Omega^{\infty}$, and $\SBIRED=\bi\left(\Sgeo\right)\subseteq\BIRED.$
\end{defn}

\begin{rem}
We observe that for odd $q$ by Lemma \ref{lem:expansion_of_1_odd_q}
$\phi_{\np-1}=\RS{2,1^{h}}=\frac{-1}{\lambda+1}$ and thus $-\frac{2}{3\lambda}<\phi_{\np-1}$.
For even $q$ on the other hand $\phi_{\np-1}=\frac{-1}{\lambda}$
and since $\frac{-1}{\lambda}<\frac{-2}{3\lambda}$ we have $\phi_{\np-1}<-\frac{2}{3\lambda}$.
Hence the shape of $\Sredx=\left\{ \left(u,v\right)\in\Omega^{\infty}\,|\,\left|u\right|\le\frac{2}{3\lambda}\,\mbox{or\,}uv<0\right\} $
differs slightly between even and odd $q$. Set $\Lambda_{1}:=\left(-\frac{\lambda}{2},0\right)\times\left[0,R\right]$
and $\Lambda_{2}:=\left(0,\frac{2}{3\lambda}\right)\times\left[0,-r\right]$
for even $q$ respectively $\Lambda_{2}:=\left(0,\phi_{\np-1}\right)\times\left[0,-r\right]$
and $\Lambda_{3}:=\left(\phi_{\np-1},\frac{2}{3\lambda}\right)\times\left[0,r_{\np-1}\right]$
for odd $q$. Then we have: %
\begin{figure}
\caption{\label{fig:Sredx}Domain of Strongly reduced geodesics $\Sredo$}

\includegraphics{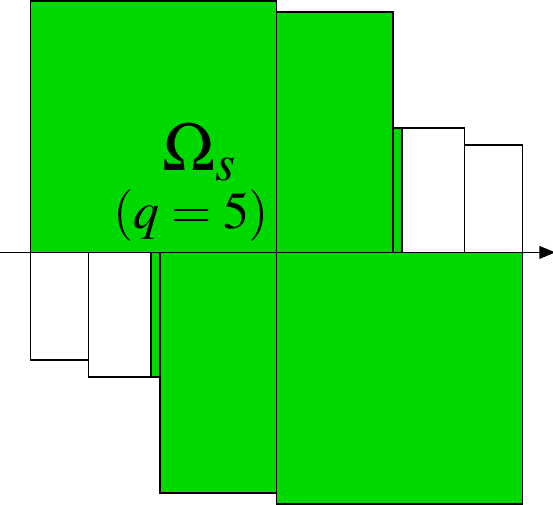}\includegraphics{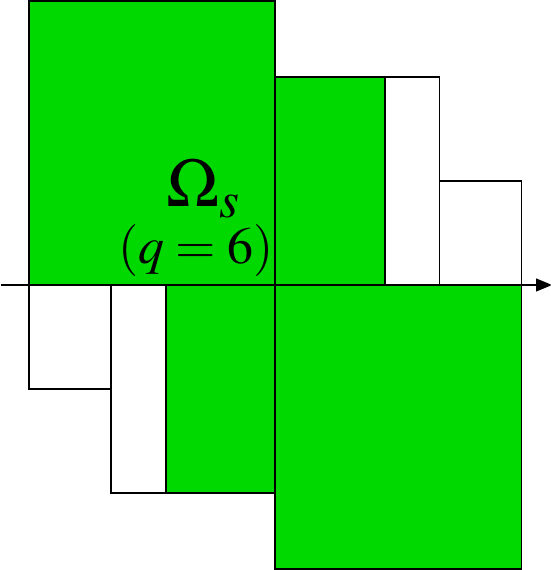}
\end{figure}
 \[
\Sredo=\bigcup_{j=1}^{k}\Lambda_{j}\cup-\Lambda_{j},\]
with $k=2$ for even and $3$ for odd $q$. See also Figure \ref{fig:Sredx}
where $\Sredo$ is displayed for $q=5$ and $q=6$ as a subset of
$\Omega$. An even more convenient description of the set of strongly
reduced geodesics is in terms of the bi-infinite codes of their base
points $\left(\xi,\eta\right)$ \[
\SBIRED=\left\{ \RS{\ldots b_{2},b_{1}\centerdot a_{0},a_{1},\ldots}\in\BIRED\,\big{|}\,\left|a_{0}\right|\ge2,\,\mbox{or}\,\,\, a_{0}b_{1}>0\right\} .\]

\end{rem}
\begin{lem}
\label{lem:If-a0ge2then_strongly_reduced}There exists a bijection
$\PP:\Sgeo\rightarrow\Sigma$ defined through $\PP\left(\gamma\right):=\left(z,\theta\right)\in\Sigma$
with $z=\gamma\left(s\right)\in\partial\mathcal{F}_{q}$ for some
$s\in\mathbb{R}$ and $\theta$ given by $\Arg\dot{\gamma}\left(s\right)=\theta.$ 
\end{lem}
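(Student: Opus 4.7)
The plan is to construct $\PP$ by associating to each strongly reduced geodesic $\gamma=\gamma(\xi,\eta)\in\Sgeo$ its first forward crossing of the boundary of a fundamental tile for $G_q$ containing a segment of $\gamma$. When the tile is $\mathcal{F}_q$ itself, the resulting pair $(z,\theta)$ with $z=\gamma(s)\in\partial\mathcal{F}_q$ and $\theta=\Arg\dot\gamma(s)$ lies directly in $\Sigma^0\cup\Sigma^{\pm 1}$; when the relevant tile is a neighbour $T^{\pm 1}\mathcal{F}_q$, one applies the inverse side-pairing $T^{\mp 1}$ (if $\gamma$ crosses $T^{\pm 1}L_0$) or $ST^{\mp 1}$ (if $\gamma$ crosses $T^{\pm 1}L_{\mp 1}$) to transport the crossing back to an edge of $\mathcal{F}_q$, placing the resulting pair in $\Sigma^{\pm 3}$ or $\Sigma^{\pm 2}$ respectively.

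Existence and uniqueness of this crossing rest on an explicit geometric computation. Writing the semicircle $\gamma(\xi,\eta)$ with $\eta<\xi$ and centre $\frac{\xi+\eta}{2}$, its height at $\Re z=\frac{\lambda}{2}$ is $y=\sqrt{(\xi-\lambda/2)(\lambda/2-\eta)}$, to be compared with $\sin(\pi/q)=\sqrt{1-\lambda^2/4}$; the analogous comparison with $|z|=1$ settles the case of a crossing through $L_0$. The dichotomy of Definition \ref{def:strongly_reduced} --- namely $|\xi|>\frac{3\lambda}{2}$ or $\xi\eta<0$ --- is tailored precisely to this comparison: when $\xi>\frac{3\lambda}{2}$ the right-hand crossing sits above $\rho_+$, so it lies on $L_1$ proper and gives an element of $\Sigma^1$; when $\xi\eta<0$ the semicircle rises above the unit circle and first enters $\mathcal{F}_q$ through $L_0$, giving an element of $\Sigma^0$. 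The complementary range $\frac{2}{\lambda}\le|\xi|\le\frac{3\lambda}{2}$ with $\xi\eta\ge 0$, which is exactly what is excluded from $\Sgeo$, corresponds under $ST^{\mp 1}$ or $T^{\mp 1}$ to geodesics $\gamma_{z,\theta}\notin\Rgeo$ whose reduced forms populate $\Sigma^{\pm 2}$ (for which the bound $\frac{3\lambda}{2}<\gamma_+\le\lambda+1$ specifies the relevant subrange) and $\Sigma^{\pm 3}$.

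Injectivity is then immediate: the datum $(z,\theta)$ together with its $\Sigma^j$-label recovers $\gamma_{z,\theta}$, and the reduced geodesic $\gamma$ is either $\gamma_{z,\theta}$ itself or its image under the fixed group element $T^{\pm 1}$ or $T^{\pm 1}S$ attached to $j$. For surjectivity, given $(z,\theta)\in\Sigma^j$ the associated reduced $\gamma\in\Rgeo$ is produced via Lemmas \ref{lem:Omega_inf_iff_BIRNCF} and \ref{lem:upper_and_lower_Omega*_equivalent}, and one verifies directly that the conditions imposed on $\gamma_+$ in each $\Sigma^j$ force $\gamma$ to satisfy the strongly-reduced inequalities that cut out $\Sredo\subset\Omega^*$ (cf.~Figure~\ref{fig:Sredx}).

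The main obstacle is to carry out the geometric case analysis cleanly across both parities of $q$, and in particular to verify that the upper bound $\gamma_+\le\lambda+1$ in $\Sigma^{\pm 2}$ is sharp: this is what guarantees no double-counting at the joint boundary between $\Sigma^{\pm 1}$ and $\Sigma^{\pm 2}$, since beyond this value the semicircle would already have entered $\mathcal{F}_q$ directly through $L_{\pm 1}$ and hence would have been recorded as an element of $\Sigma^{\pm 1}$ rather than being routed through a neighbouring tile.
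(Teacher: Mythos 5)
Your overall strategy --- locate the first entry of $\gamma$ into a tile of the $G_{q}$-tessellation and transport it back to $\partial\mathcal{F}_{q}$ by a side-pairing --- is also the paper's strategy, but your case analysis assigns the wrong pieces of $\Sigma$ to the two halves of the strong-reducedness dichotomy, and in the main case it produces a point that is not in $\Sigma$ at all. A strongly reduced $\gamma\left(\xi,\eta\right)$ is oriented \emph{from} $\eta$ \emph{towards} $\xi$, so when $\xi>\frac{3\lambda}{2}$ the crossing of $\gamma$ with the vertical line $\Re z=\frac{\lambda}{2}$ is directed \emph{outward} with respect to $\mathcal{F}_{q}$; it therefore does not lie in $\Gamma_{1}$ and cannot give an element of $\Sigma^{1}$, contrary to your claim. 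What actually happens is that such a $\gamma$ either enters $\mathcal{F}_{q}$ through $L_{-1}\cup L_{0}$, giving a point of $\Sigma^{-1}\cup\Sigma^{0}$, or passes below $\rho$ and instead crosses $L_{2}=TSL_{1}$ (giving a point of $\Sigma^{2}$ after applying $ST^{-1}$, when $\frac{3\lambda}{2}<\xi\le\lambda+1$) or, for odd $q$ and $\xi>\lambda+1$, crosses $L_{3}=TL_{0}$ (giving a point of $\Sigma^{3}$ after applying $T^{-1}$; that $\gamma$ meets $L_{3}$ itself and not its extension is exactly Lemma \ref{lem:odd_reduced_does_not_intersect_l}). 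In particular the bound $\gamma_{+}\le\lambda+1$ in the definition of $\Sigma^{\pm2}$ separates $\Sigma^{\pm2}$ from $\Sigma^{\pm3}$, not from $\Sigma^{\pm1}$ as you assert; $\Sigma^{1}$ is populated by the mirror-image geodesics with $\xi<0$. Likewise, in the case $\xi\eta<0$ with $\frac{2}{\lambda}<\xi<\frac{3\lambda}{2}$ the entry need not be through $L_{0}$: the correct argument is that the exit point of $\gamma$ on $\overline{L}_{1}$ lies above $\rho$ (because $S\overline{L}_{-1}$ passes through $\rho$), so $\gamma$ genuinely traverses $\mathcal{F}_{q}$ and must have entered inwards through $L_{-1}$ \emph{or} $L_{0}$.

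Since the entire content of the lemma is precisely this case analysis, these misassignments are not cosmetic: the map you describe does not take values in $\Sigma$. Your discussion of injectivity and surjectivity is essentially sound once the assignment is corrected (the paper simply reads off $\PP^{-1}$ from the definitions of the $\Sigma^{j}$, namely $\PP^{-1}\left(z,\theta\right)=\gamma_{z,\theta}$, $T^{\pm1}S\gamma_{z,\theta}$ or $T^{\pm1}\gamma_{z,\theta}$ according to the label $j$), but as written the existence part of your argument fails.
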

\begin{proof}
Consider Figure \ref{fig:Cross-Section} %
\begin{figure}
\caption{\label{fig:Cross-Section}Cross-Section ($q=5$)}

\includegraphics[scale=0.45]{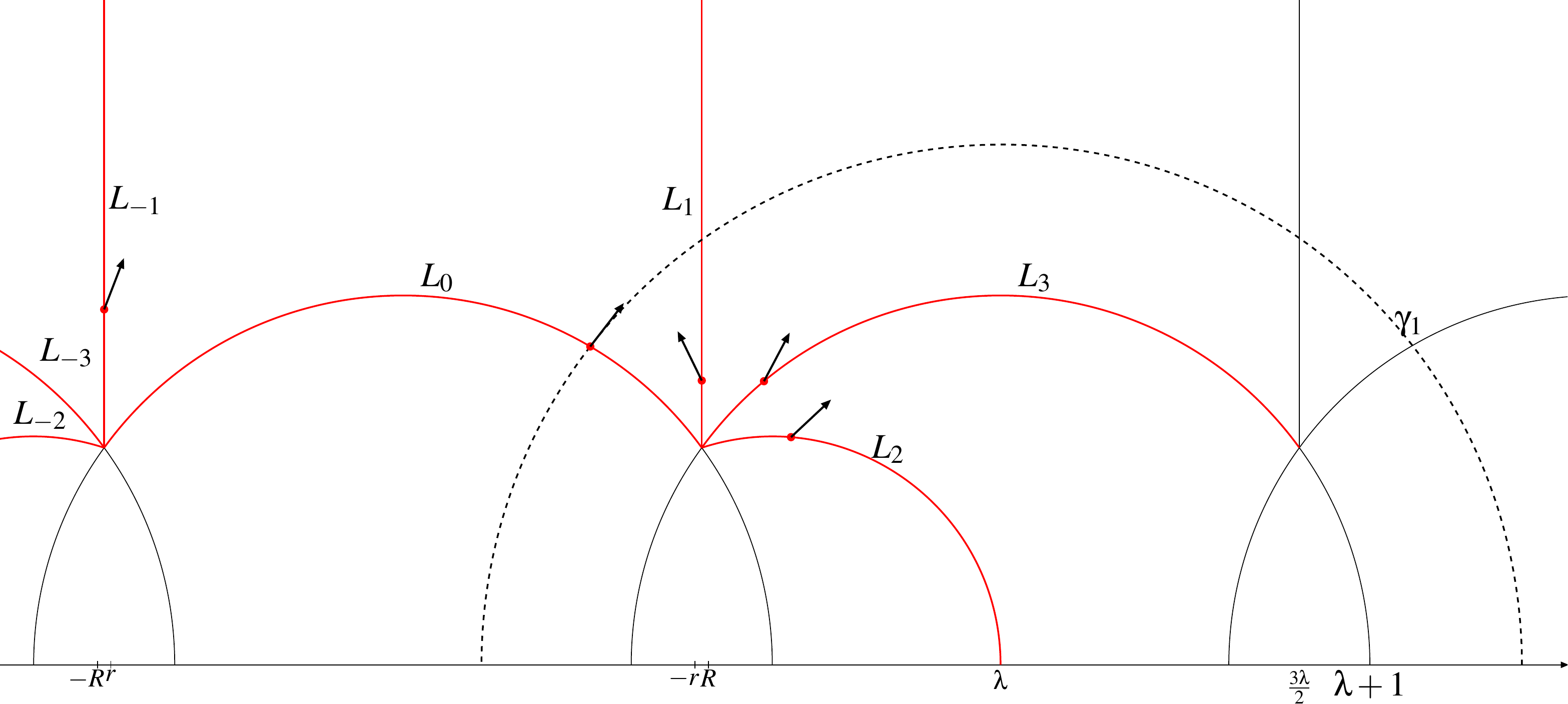}
\end{figure}
 and a geodesic $\gamma_{1}\left(\xi,\eta\right)$ from $\eta\in\left[-R,-r\right)$
to $\xi>\frac{3\lambda}{2}$. It is clear that either $\gamma_{1}$
intersects $L_{-1}\cup L_{0}$ inwards or $L_{2}$ from the left to
the right. Let $z=\gamma\left(s\right)$ be this intersection and
set $\theta=\Arg\dot{\gamma}\left(s\right)$. In the first case we
get $\PP\gamma=\left(z,\theta\right)\in\Sigma^{-1}\cup\Sigma^{0}$.
In the second case we either get $\PP\gamma=\left(z,\theta\right)\in\Sigma^{2}$
if $\xi\in\left(\frac{3\lambda}{2},\lambda+1\right)$ or, if $q$
is odd and $\xi>\lambda+1,$ we get $\PP\gamma=\left(z',\theta'\right)\in\Sigma^{3}$,
where $z'=\gamma\left(s'\right)\in L_{3}$ and $\theta'=\Arg\dot{\gamma}\left(s'\right)$,
since by Lemma \ref{lem:odd_reduced_does_not_intersect_l} $\gamma$
must intersect $L_{3}$. Remember, that for $q$ even, $r=1-\lambda$,
so $\gamma$ can not intersect $\overline{L}_{3}$ to the right of
$\frac{\lambda}{2}$. Consider next a geodesic $\gamma\left(\xi,\eta\right)$
with $\eta\in\left[-R,0\right)$ and $\xi\in\left(\frac{2}{\lambda},\frac{3\lambda}{2}\right)$.
Since the geodesic $S\overline{L}_{-1}$ from $0$ to $\frac{2}{\lambda}$
intersects $\rho$, the intersection point of $\gamma$ and $\overline{L}_{1}$
must lie above $\rho$. Hence $\gamma$ intersects either $L_{-1}$
or $L_{0}$ inwards, i.e. $\PP\gamma\in\Sigma^{-1}\cup\Sigma^{0}$. 

The case $\xi<0$ is analogous and the inverse map $\PP^{-1}:\Sigma\rightarrow\Sgeo$
is clearly given by $\PP^{-1}\left(z,\theta\right)=\gamma_{z,\theta}$
if $\left(z,\theta\right)\in\Sigma^{j},$ $\left|j\right|\le1,$ respectively
$\PP^{-1}\left(z,\theta\right)=T^{\pm1}S\gamma_{z,\theta}$ if $\left(z,\theta\right)\in\Sigma^{\pm2}$
and $\PP^{-1}\left(z,\theta\right)=T^{\pm1}\gamma_{z,\theta}$ if
$\left(z,\theta\right)\in\Sigma^{\pm3}$. 
\end{proof}
\begin{defn}
For $\left(\xi,\eta\right)\in\Sred$ we define $\PT:\Sred\rightarrow\Sigma$
by,  $\PT\left(\xi,\eta\right):=\PP\gamma\left(\xi,\eta\right)$.
\end{defn}
\begin{lem}
\label{lem:gamma_reduced_FkTa_gamma-strongly_reduced}If $\gamma$
is a reduced geodesic on $\H$ then there exists an integer $k\ge0$
such that $\Fqx^{k}\gamma$ is strongly reduced.
\end{lem}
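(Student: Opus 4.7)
First I would dispose of the case when $\gamma$ is already strongly reduced by taking $k=0$, so I may assume $\gamma=\gamma(\xi,\eta)$ is reduced but \emph{not} strongly reduced. By Definition~\ref{def:strongly_reduced} this forces $|\xi|\le 3\lambda/2$ and $\xi\eta>0$. Using the $(\xi,\eta)\mapsto(-\xi,-\eta)$ symmetry of $\Omega^*$ and $\Fqx$, I would further assume $\xi>0$ and $\eta>0$. Since $(\xi,\eta)\in\Omega^*$ forces $|\xi|\ge 2/\lambda>\lambda/2$, the inequality $\xi\le 3\lambda/2$ places $\xi\in(\lambda/2,3\lambda/2]$, so the first regular digit $a_1:=\nlm{\xi}$ equals $+1$.

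Next I would use that $\Fqx$ acts on $\Omega^*$ by $(\xi,\eta)\mapsto(ST^{-a}\xi,ST^{-a}\eta)$ with $a=\nlm{\xi}$. Setting $(\xi_k,\eta_k):=\Fqx^k(\xi,\eta)$ and $a_{k+1}:=\nlm{\xi_k}$, the sequence $(a_1,a_2,\ldots)$ consists of the digits of the regular expansion of $\xi$ after its leading digit. A direct sign analysis --- using $\eta_k\in(0,R]\subset(0,\lambda)$ so that $\eta_k-\lambda<0$, hence $\eta_{k+1}=-1/(\eta_k-\lambda)>0$ whenever $a_{k+1}=+1$, while $\xi_{k+1}=-1/(\xi_k-\lambda)$ becomes negative as soon as $\xi_k>\lambda$ --- shows that $\Fqx^k\gamma$ is strongly reduced iff $|a_{k+1}|\ge 2$ or $a_{k+1}$ is of sign opposite to $a_k$, matching the symbolic description of $\SBIRED$ in the remark after Definition~\ref{def:strongly_reduced}. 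Under our assumption this fails precisely when $a_1=a_2=\cdots=a_{k+1}=+1$.

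It therefore suffices to bound the length of a maximal run of $+1$'s in a regular $\lambda$-fraction expansion. For $q=3$ the digit $\pm 1$ never appears in a regular expansion (Lemma~\ref{lem:forbidden-block-odd}), so the non-strongly-reduced case does not arise and $k=0$ already works. For even $q\ge 4$ the block $(+1)^h,+1$ is forbidden by Lemma~\ref{lem:forbidden-block-even} with $m=1$, and for odd $q\ge 5$ the block $(+1)^{h+1}$ is directly forbidden by Lemma~\ref{lem:forbidden-block-odd}. Hence any run of consecutive $+1$'s has length at most $h$, and consequently $\Fqx^k\gamma$ is strongly reduced for some $k\le h$. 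The principal obstacle is the sign bookkeeping in the second step: converting the geometric conditions $|\xi_k|>3\lambda/2$ and $\xi_k\eta_k<0$ into the symbolic conditions on $a_{k+1}$ and $a_k$ requires careful tracking of how $\operatorname{sign}(\xi_k)$ and $\operatorname{sign}(\eta_k)$ evolve under the recursion, but once that correspondence is established, the conclusion is a short forbidden-block argument.
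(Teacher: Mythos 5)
Your proposal is correct and follows essentially the same route as the paper's own (much terser) proof: both observe that a leading regular digit of absolute value $\ge2$ forces $|\xi_k|>\frac{3\lambda}{2}$, that a sign change between consecutive digits forces $\xi_k\eta_k<0$, and that an infinite run of same-signed $1$'s is excluded by the forbidden blocks; you merely add the explicit sign bookkeeping and the bound $k\le h$. (One cosmetic point: your ``opposite sign'' criterion agrees with the condition $a_{k-1}a_k<0$ actually used in the paper's proof, whereas the displayed description of $\SBIRED$ in the remark after Definition~\ref{def:strongly_reduced} carries the opposite inequality, so the two do not literally ``match'' --- the discrepancy lies in the paper's remark, not in your argument.)
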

\begin{proof}
For $k\ge0$ let $\gamma^{k}:=\Fqx^{k}\gamma$, $\Cq\left(\gamma_{+}^{k}\right)=\RS{a_{k};a_{k+1},\ldots}$
and $\Cd\left(\gamma_{-}^{k}\right)=\DS{a_{k-1},\ldots,a_{0},b_{1},\ldots}$.
If $\left|a_{k}\right|\ge2$ then $\left|\gamma_{+}^{k}\right|>\frac{3\lambda}{2}$
and if $a_{k-1}a_{k}<0$ then $\gamma_{+}^{k}\gamma_{-}^{k}<0$. In
both cases by definition $\Fqx^{k}\gamma\in\Sgeo$. Since an infinite
sequence of $1$'s or $-1$'s is forbidden, it is clear that there
exists a $k\ge0$ such that one of these conditions apply.
\end{proof}
Combining the above Lemma with Lemma \ref{lem:reduce_uv_to_Omega*}
we have shown, that every reduced geodesic is $G_{q}$-equivalent
to a strongly reduced geodesic. 

A consequence of Lemma \ref{lem:gamma_reduced_FkTa_gamma-strongly_reduced}
is that for any strongly reduced geodesic we can find an infinite
number of strongly reduced geodesics in its forward and backward $\Fqx$-orbit
(with infinite repetitions if the geodesic is closed). Furthermore,
since the base-arcs $L_{\pm1}$ and $L_{0}$ of $\Sigma$ consist
of geodesics, none of whose extensions are in $\Sgeo,$ it is clear
that any strongly reduced geodesic intersects $\Sigma$ transversally.
The set $\Sigma$ thus fulfills the requirements (P1) and (P2) of
Definition \ref{def:poincare_section} and is a Poincaré (or cross-)
section with respect to $\Sgeo$. Since any geodesic $\gamma^{*}$
on $\mathcal{M}_{q}$ which does not go into infinity has a strongly
reduced lift we also have the following lemma. 

\begin{lem}
\label{lem:Sigma_is_Poincare_section}$\pi^{*}\left(\Sigma\right)$
is a Poincaré section for the part of the geodesic flow on $\SM$
which does not disappear into infinity. 
\end{lem}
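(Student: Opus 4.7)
The plan is to verify the two conditions (P1) and (P2) of Definition \ref{def:poincare_section} for the restricted flow. The preceding lemmas essentially do the work; what remains is to assemble them coherently.

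First I would lift $\gamma^{*}$ to a geodesic $\gamma$ on $\H$ with both endpoints in $\mathbb{R}^{\infty} = \mathbb{R}\setminus G_{q}(\infty)$, which is possible precisely because $\gamma^{*}$ does not disappear into infinity. Lemma \ref{lem:reduce_uv_to_Omega*} (with Lemma \ref{lem:upper_and_lower_Omega*_equivalent}) produces $B \in G_{q}$ such that $B\gamma \in \Rgeo$, and Lemma \ref{lem:gamma_reduced_FkTa_gamma-strongly_reduced} supplies a non-negative integer $k$ with $\tilde{\gamma}_{0} := \Fqx^{k}(B\gamma) \in \Sgeo$. The bijection $\PP$ of Lemma \ref{lem:If-a0ge2then_strongly_reduced} attaches to $\tilde{\gamma}_{0}$ a point $(z_{0},\theta_{0}) \in \Sigma$ whose image under $\pi^{*}$ lies on $\gamma^{*}$, establishing that $\pi^{*}(\Sigma)$ meets $\gamma^{*}$ at least once.

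For (P1) I would argue that the supporting geodesics $\overline{L_{0}}, \overline{L_{\pm 1}}$ of the base arcs either have an endpoint at $\infty$ (a cusp of $G_{q}$) or have endpoints at $\pm 1, \pm\lambda/2$, all of which carry finite $\lambda$-fractions by Lemmas \ref{lem:lambda_half_finite_expansion} and \ref{lem:expansion_of_1_odd_q}; by Remark \ref{rem:only_infinite_exp} and Definition \ref{def:reduced_geodesic}, none of these supporting geodesics can belong to $\Rgeo$, let alone to $\Sgeo$. Consequently, a strongly reduced geodesic never coincides with, nor is tangent to, a base arc, and its intersection with $L_{0}\cup L_{\pm 1}$ is transversal; the same conclusion transfers to $\Sigma^{\pm 2}$ and $\Sigma^{\pm 3}$ by applying the transports $T^{\pm 1}S$ and $T^{\pm 1}$ respectively, which are isometries. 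For (P2) I would iterate $\Fqx$ on $\tilde{\gamma}_{0}$ in both directions. Since an infinite run of $\pm 1$'s is a forbidden block in $\BIRNCF$, the criterion used in the proof of Lemma \ref{lem:gamma_reduced_FkTa_gamma-strongly_reduced} applies to $\Fqx^{n}\tilde{\gamma}_{0}$ for all $n \in \mathbb{Z}$, yielding a bi-infinite increasing sequence of indices $k_{j}$, $j \in \mathbb{Z}$, with $\Fqx^{k_{j}}\tilde{\gamma}_{0} \in \Sgeo$. Each such iterate is a distinct $G_{q}$-translate of $\tilde{\gamma}_{0}$, so the images $\pi^{*}\PP(\Fqx^{k_{j}}\tilde{\gamma}_{0})$ yield a sequence of intersections of $\gamma^{*}$ with $\pi^{*}(\Sigma)$ at parameter times $t_{k_{j}} \to \pm\infty$.

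The step I expect to be the main obstacle is the verification in (P2) that the algebraic returns coming from distinct shifts of the coding really correspond to geometrically distinct times on $\gamma^{*}$. For a non-closed geodesic the different $\Fqx$-iterates are lifts by genuinely different elements of $G_{q}$, so their base points on $\partial\mathcal{F}_{q}$ project to distinct parameters along $\gamma^{*}$; for a closed $\gamma^{*}$, the coding is periodic, and the claim reduces to noting that periodic recurrence on a compact orbit already gives times tending to $\pm\infty$. A cleaner way to dispatch both cases is to use that $\Fqx$ is topologically conjugate to the shift on $\BIRED$, which has no finite orbits apart from periodic ones, and that periodic orbits correspond to closed geodesics on which returns occur at equally spaced parameters.
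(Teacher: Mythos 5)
Your argument follows the paper's (very terse) proof essentially verbatim: reduce a lift of $\gamma^{*}$ to a strongly reduced geodesic via Lemmas \ref{lem:reduce_uv_to_Omega*} and \ref{lem:gamma_reduced_FkTa_gamma-strongly_reduced}, get (P1) from the fact that the supporting geodesics of the base arcs are not in $\Sgeo$, and get (P2) from the forbidden-block argument applied to the bi-infinite code in both directions; your extra care about distinct return times going to $\pm\infty$ is a point the paper silently omits, and your treatment of it is sound. One small slip: for even $q$ the endpoint $1$ of $\overline{L_{0}}$ does \emph{not} have a finite $\lambda$-fraction (Lemma \ref{lem:expansion_of_1_odd_q} is only for odd $q$; by Lemmas \ref{lem:R-is-given-by-lambda-fraction} and \ref{lem:properties_of_R_even} one has $1=R=\RS{1;\overline{1^{h-1},2}}$, which is infinite), so your cited justification fails there — but the conclusion $\overline{L_{0}}\notin\Rgeo$ still holds simply because $\left|\pm1\right|<\frac{2}{\lambda}$, i.e. $S(\pm1)\notin I_{q}$, so $(\pm1,\mp1)\notin\Omega^{*}$.
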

From the identification of $\Sigma$ and $\Sred$ via the map $\PT$
we see that the natural extension $\Fqx$ of the continued fraction
map $\Fq$ induces a \emph{return map} for $\Sigma$, i.e. if $\mathbf{z}=\left(z,\theta\right)\in\Sigma$
then $\PT\circ\Fqx^{n}\circ\PT^{-1}\mathbf{z}\in\Sigma$ for an infinite
number of $n\ne0$. We give a geometric description of the first return
map for $\Sigma$ and we will later see that this map is in fact also
induced by $\Fqx$.

\begin{defn}
\label{def:First_return_map}The \emph{first return map} $\RET:\Sigma\rightarrow\Sigma$
is defined as follows (cf.~Figure \ref{fig:Return-map}): %
\begin{figure}
\caption{\label{fig:Return-map}Illustration of the first return map}

\includegraphics[scale=0.45]{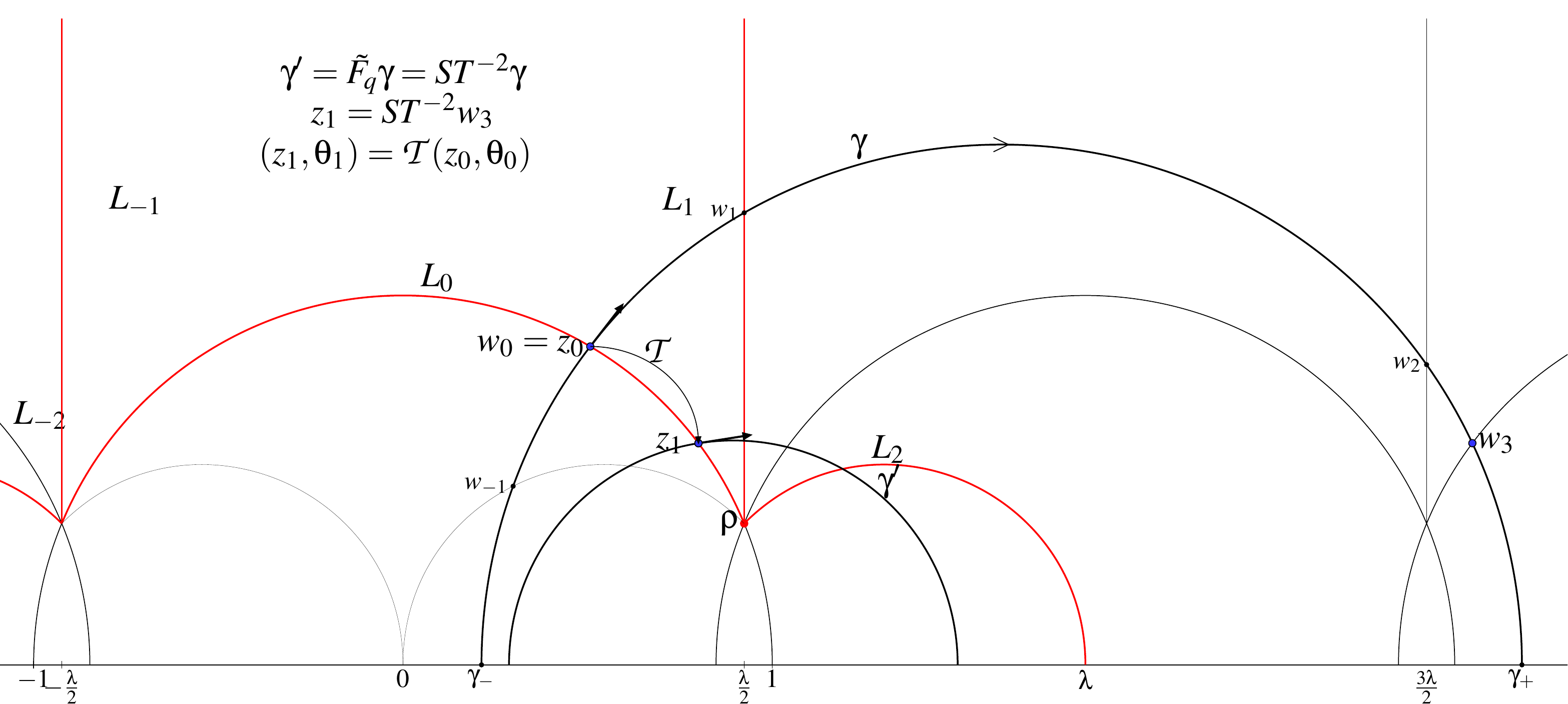}
\end{figure}
If $\mathbf{z}_{0}\in\Sigma$ and $\gamma=\PP^{-1}\mathbf{z}_{0}\in\Sgeo$
let $\left\{ w_{n}\right\} _{n\in\mathbb{Z}}$ be the ordered sequence
of intersections in the direction from $\gamma_{-}$ to $\gamma_{+}$
between $\gamma$ and the $G_{q}$-translates of $\partial\mathcal{F}$
with $w_{0}$ given by $z_{0}$. Since $\gamma_{+}$ and $\gamma_{-}$
have infinite $\lambda$-fractions they are not cusps of $G_{q}$
and the sequence $w_{n}$ is bi-infinite. For each $w_{n}$ let $A_{n}\in G_{q}$
be the unique map such that $w_{n}'=A_{n}w_{n}\in\partial\mathcal{F}$
and $\gamma'=A_{n}\gamma$ intersects $\partial\mathcal{F}$ at $w_{n}'$
in the inwards direction. 

If $\gamma'\in\Sgeo$ and $\PP\gamma'=\mathbf{z}'$ we say that $\mathbf{z}'\in\Sigma$
is a \emph{return} of $\gamma$ to $\Sigma$. If $n_{0}>0$ is the
smallest integer such that $w_{n_{0}}$ gives a return to $\Sigma$
we say that the corresponding point $\PP A_{n_{0}}\gamma=\mathbf{z}_{1}\in\Sigma$
is the \emph{first return} and the \emph{first return map} $\RET:\Sigma\rightarrow\Sigma$
is defined by $\RET\mathbf{z}_{0}=\mathbf{z}_{1}$ where $\mathbf{z}_{1}$
is the first return after $\mathbf{z}_{0}$. Sometimes $\RETx:\Sred\rightarrow\Sred$
given by $\RETx=\PT^{-1}\circ\RET\circ\PT$ is also called the \emph{first
return map.} 
\end{defn}
After proving some useful geometric lemmas in the next section we
will show in Section \ref{sub:First-return-map} that the first return
map $\RET$ is given explicitly by powers of $\Fqx.$

\subsection{\label{sub:Geometric-Lemmas}Geometric Lemmas}

\begin{lem}
\label{lem:z_theta->gamma_z_theta_unique_cont}The map $\mathbf{z}=\left(z,\theta\right)\mapsto\left(\gamma_{z,\theta},s\right)\cong\left(\xi,\eta,s\right)$
where $\gamma_{z,\theta}=\gamma\left(\xi,\eta\right),$ $\gamma_{z,\theta}\left(s\right)=z$
and $\Arg\dot{\gamma}_{z,\theta}\left(s\right)=\theta$ is a diffeomorphism
for $\theta\ne\pm\frac{\pi}{2}$. 
\end{lem}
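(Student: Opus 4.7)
My plan is to prove this lemma by writing down explicit smooth formulas in both directions. The map in question is essentially the standard identification of the non-vertical part of $\SH$ with the space of non-vertical oriented geodesics equipped with an arc-length parameter. Both sides are open $3$-manifolds: the domain $\{(z,\theta)\in\SH : \theta\ne\pm\pi/2\}$ is $\H\times(S^{1}\setminus\{\pm\pi/2\})$, while the codomain is contained in $\{(\xi,\eta,s) : \xi\ne\eta\in\mathbb{R},\, s\in\mathbb{R}\}$, an open subset of $\mathbb{R}^{3}$. Once mutually inverse smooth expressions are exhibited, the diffeomorphism claim is immediate.

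The first step is to write $\gamma_{z,\theta}$ in closed form. Because $\theta\ne\pm\pi/2$, the geodesic is a half-circle orthogonal to $\mathbb{R}$, and orthogonality together with tangency to $e^{i\theta}$ at $z=x+iy$ forces its center to be $c = x + y\tan\theta$ and its radius to be $r = y/|\cos\theta|$. Hence $\{\xi,\eta\}=\{c+r,\,c-r\}$, with the assignment of $\gamma_{+}$ versus $\gamma_{-}$ determined by $\sign(\cos\theta)$, which distinguishes rightward from leftward geodesics.

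Second, I use the standard arc-length parametrization $\gamma(s) = c + r\tanh s + ir\,\mathrm{sech}\,s$ for the rightward geodesic (and its mirror image for the leftward one), verifying $|\dot\gamma(s)|/\Im\gamma(s)\equiv 1$ and that $\gamma(0)=c+ir$ is the top of the half-circle in accordance with the convention of Section~\ref{sub:Hyperbolic-geometry-and}. Matching $\gamma(s) = z$ and $\Arg\dot\gamma(s)=\theta$ then yields the explicit closed-form relations $\tanh s = -\sin\theta$ and $\mathrm{sech}\,s = \cos\theta$, so that $s = -\mathrm{arctanh}(\sin\theta)$ up to the orientation sign. All three output coordinates $(\xi,\eta,s)$ are thus real-analytic functions of $(x,y,\theta)$ on the restricted domain.

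Finally, the inverse is the tautological reversal: given $(\xi,\eta,s)$ with $\xi\ne\eta$ both finite, set $c=(\xi+\eta)/2$, $r=|\xi-\eta|/2$, fix the orientation by $\sign(\xi-\eta)$, and read off $z=\gamma(s)$ and $\theta=\Arg\dot\gamma(s)$ from the same parametrization. All denominators ($\cos\theta$ and $\xi-\eta$) are non-vanishing on the respective open sets, so both maps are $C^{\infty}$ and, by construction, two-sided inverses of each other. I do not anticipate a substantive obstacle; the only care required is the orientation bookkeeping for the two open half-circles making up $S^{1}\setminus\{\pm\pi/2\}$, together with the observation that the exclusion $\theta=\pm\pi/2$ on the domain corresponds exactly to the vertical-geodesic locus (i.e.\ $\xi$ or $\eta$ at $\infty$) on the codomain.
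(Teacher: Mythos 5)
Your proposal is correct and follows essentially the same route as the paper: exhibit the explicit smooth formulas $c=x+y\tan\theta$, $r=y/\left|\cos\theta\right|$, solve for $s$ from the tangency condition, and observe that the inverse is equally explicit, with the excluded locus $\theta=\pm\frac{\pi}{2}$ corresponding to vertical geodesics. The only (immaterial) difference is that you use the unit-speed parametrization $c+r\tanh s+ir\,\mathrm{sech}\,s$ directly, whereas the paper parametrizes by the angle $t$ and recovers the arc length via an isometry onto $i\mathbb{R}^{+}$, obtaining $s=\ln\tan\left(\frac{\theta}{2}+\frac{\pi}{4}\right)=\mathrm{arctanh}\left(\sin\theta\right)$, which agrees with your expression up to the orientation sign you flag.
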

\begin{proof}
Let $z=x+iy,$ $y>0,$ and $\theta\in\left[-\pi,\pi\right)$ be given.
First we want to show that there exist $\xi,\eta\in\mathbb{R}^{*}$
and $s\in\mathbb{R}$ such that for the geodesic $\gamma=\gamma\left(\xi,\eta\right)$
one finds $\gamma\left(s\right)=z$ and $\Arg\dot{\gamma}\left(s\right)=\theta$.
Without loss of generality we may assume that $\theta\in\left(-\frac{\pi}{2},\frac{\pi}{2}\right)$
so that $\eta<\xi$. Set $c=\frac{1}{2}\left(\eta+\xi\right),$ $r=\frac{1}{2}\left(\xi-\eta\right)$
and parametrize $\gamma$ as $\gamma\left(t\right)=c+re^{it},$ $0<t<\pi$.
It is easy to verify that if $c=x+y\tan\theta$, $r=\frac{y}{\cos\theta}$
and $t_{0}=\theta+\frac{\pi}{2}$ then $\gamma\left(t_{0}\right)=z$
and $\Arg\dot{\gamma}\left(t_{0}\right)=\theta$. See Figure \ref{fig:gamma_z,theta}.
To find the arc length parameter $s$ we use the isometry $A:z\mapsto\frac{-z+\left(c-r\right)}{z-\left(c+r\right)}$
mapping $\gamma$ to $i\mathbb{R}^{+},$ $A\gamma\left(t\right)=i\left(\tan\frac{t}{2}\right)^{-1}$
and $A\left(c+ir\right)=i$. It is then an easy computation to see
that $s\left(\theta\right)=d\left(z,c+ir\right)=d\left(i/\tan\frac{t}{2},i\right)=\ln\tan\frac{t}{2}=\ln\tan\left(\frac{\theta}{2}+\frac{\pi}{4}\right)$.
From the above formulas one can easily deduce differentiability of
the map $\left(z,\theta\right)\rightarrow\left(\xi,\eta,s\right)$
as well as its inverse away from $\theta=\pm\frac{\pi}{2}$.

\begin{figure}
\caption{\label{fig:gamma_z,theta}}
\includegraphics[scale=0.75]{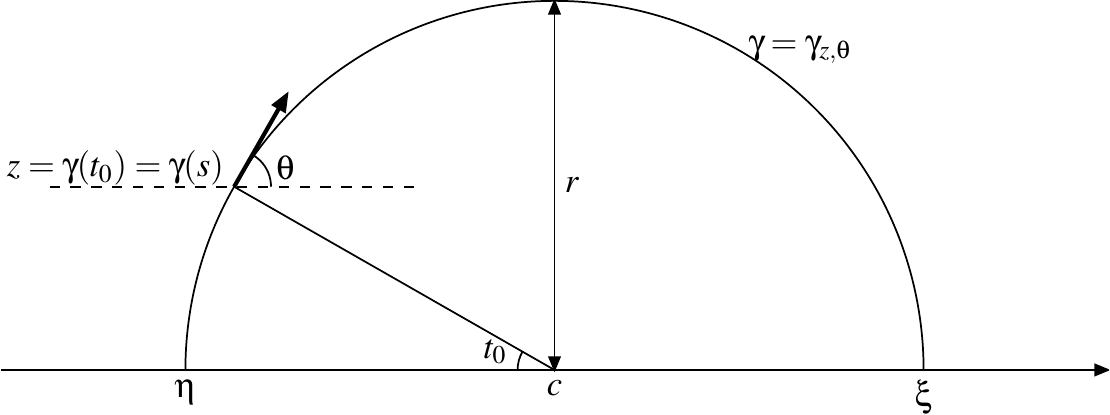}
\end{figure}

\end{proof}
\begin{cor}
\label{cor:change_var_x-y-theta->xi-eta-s}The map $\left(x,y,\theta\right)\rightarrow\left(\xi,\eta,s\right)$
of Lemma \ref{lem:z_theta->gamma_z_theta_unique_cont} gives a change
of variables on $\SH$ which is diffeomorphic away from $\theta=\pm\frac{\pi}{2}$.
Explicitly, $\xi=x+y\tan\theta+\frac{y}{\left|\cos\theta\right|}$,
$\eta=x+y\tan\theta-\frac{y}{\left|\cos\theta\right|},$ $s=\ln\tan\left(\frac{\theta}{2}+\frac{\pi}{4}\right)$
and the corresponding Jacobian is $\left|\frac{\partial\left(x,y,\theta\right)}{\partial\left(\xi,\eta,s\right)}\right|=\frac{1}{2}\cos^{2}\theta$. 
\end{cor}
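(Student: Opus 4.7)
The plan is to extract the explicit formulas directly from the constructive proof of Lemma \ref{lem:z_theta->gamma_z_theta_unique_cont} and then evaluate a $3\times 3$ Jacobian determinant. From that proof, the unique geodesic $\gamma_{z,\theta}$ is the half-circle with center $c = x + y\tan\theta$ and radius $r = y/\cos\theta$, so that $\xi = c + r = x + y\tan\theta + y\sec\theta$ and $\eta = c - r = x + y\tan\theta - y\sec\theta$; the absolute value in the statement handles the case $\theta \in (\pi/2, 3\pi/2)$ in which the forward endpoint lies to the left of $z$ (this only flips signs in $r$ and does not affect $\cos^2\theta$). The formula $s(\theta) = \ln\tan(\theta/2 + \pi/4)$ was already derived in the lemma via the isometry sending $\gamma_{z,\theta}$ to the positive imaginary axis.

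For the Jacobian, I would compute $J = \partial(\xi,\eta,s)/\partial(x,y,\theta)$ entrywise. Because $s$ depends on $\theta$ alone, the bottom row of $J$ equals $(0, 0, s'(\theta))$, so the determinant reduces to $s'(\theta)$ times the $2\times 2$ minor coming from the $(x,y)$-derivatives of $(\xi,\eta)$. Setting $\phi = \theta/2 + \pi/4$ and using $2\sin\phi\cos\phi = \sin(\theta + \pi/2) = \cos\theta$ gives $s'(\theta) = 1/\cos\theta$. The remaining minor is
\[
\det\begin{pmatrix} 1 & \tan\theta + \sec\theta \\ 1 & \tan\theta - \sec\theta \end{pmatrix} = -2\sec\theta,
\]
so $\det J = -2\sec^2\theta$ and $|\det J^{-1}| = \tfrac{1}{2}\cos^2\theta$, matching the claim. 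Smoothness of $(\xi,\eta,s)$ in $(x,y,\theta)$ together with non-vanishing of $\det J$ away from $\theta = \pm\frac{\pi}{2}$ then yield the diffeomorphism assertion via the inverse function theorem.

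I do not expect any serious obstacle. The only delicate bookkeeping is the orientation of $\gamma_{z,\theta}$ when $\theta \in (\pi/2, 3\pi/2)$, which is precisely why $|\cos\theta|$ (rather than $\cos\theta$) appears in the formulas for $\xi$ and $\eta$; since only $\cos^2\theta$ enters the final Jacobian, this ambiguity is harmless. The singularity at $\theta = \pm\frac{\pi}{2}$ corresponds to vertical geodesics, which have no finite second endpoint in $\mathbb{R}$ and are therefore correctly excluded from the claimed diffeomorphism.
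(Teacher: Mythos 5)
Your proposal is correct and follows exactly the route the paper intends: it reads off $c=x+y\tan\theta$, $r=y/\cos\theta$, and $s=\ln\tan(\tfrac{\theta}{2}+\tfrac{\pi}{4})$ from the proof of Lemma \ref{lem:z_theta->gamma_z_theta_unique_cont} and then carries out the ``trivial computation'' of the Jacobian that the paper leaves implicit, with the block-triangular expansion and the identity $s'(\theta)=1/\cos\theta$ both checked correctly. The remark on the sign/orientation bookkeeping for $\theta\in(\tfrac{\pi}{2},\tfrac{3\pi}{2})$ is apt and does not affect the value $\tfrac{1}{2}\cos^{2}\theta$.
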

\begin{proof}
This follows from the proof of Lemma \ref{lem:z_theta->gamma_z_theta_unique_cont}
and a trivial computation.
\end{proof}
\begin{defn}
\label{def:g(z,x)}For $z\in\H$ and $\xi\in\mathbb{R}$ define \[
g\left(z,\xi\right)=\frac{\left|z-\xi\right|^{2}}{\Im z}\]
and for $\gamma$ a geodesic with endpoints $\gamma_{+}$ and $\gamma_{-}$
set $g\left(z,\gamma\right)=g\left(z,\gamma_{+}\right).$
\end{defn}
\begin{lem}
\label{lem:g(Az,Aw)=00003D}If $A\in\PSLR$ then $g\left(Az,A\xi\right)=g\left(z,\xi\right)A'\left(\xi\right).$ 
\end{lem}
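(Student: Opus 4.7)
The plan is to compute both sides of the asserted identity directly from the explicit formula for a Möbius action and the two standard derivative identities that come with it.

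First, I write $A=\left(\begin{smallmatrix}a & b\\ c & d\end{smallmatrix}\right)\in\PSLR$ with $ad-bc=1$, so that $Aw=\frac{aw+b}{cw+d}$ for any $w\in\mathbb{C}\setminus\{-d/c\}$. The single computational fact I need is the standard difference identity
\[
Az-A\xi \;=\; \frac{(az+b)(c\xi+d)-(a\xi+b)(cz+d)}{(cz+d)(c\xi+d)} \;=\; \frac{z-\xi}{(cz+d)(c\xi+d)},
\]
where the simplification uses $ad-bc=1$. Taking squared modulus (and using that $c\xi+d\in\mathbb{R}$ since $\xi\in\mathbb{R}$), this gives $|Az-A\xi|^{2}=|z-\xi|^{2}/\bigl(|cz+d|^{2}(c\xi+d)^{2}\bigr)$.

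Next I invoke the well-known formula $\Im(Az)=\Im z /|cz+d|^{2}$, which follows from the same algebra applied with $\xi$ replaced by $\bar z$. Plugging both of these into the definition of $g$,
\[
g(Az,A\xi)\;=\;\frac{|Az-A\xi|^{2}}{\Im(Az)}\;=\;\frac{|z-\xi|^{2}/[|cz+d|^{2}(c\xi+d)^{2}]}{\Im z /|cz+d|^{2}}\;=\;\frac{|z-\xi|^{2}}{\Im z\,(c\xi+d)^{2}}\;=\;\frac{g(z,\xi)}{(c\xi+d)^{2}}.
\]
Finally I note that, by a direct differentiation (and again using $ad-bc=1$), $A'(\xi)=(c\xi+d)^{-2}$, which matches the factor above and yields $g(Az,A\xi)=g(z,\xi)\,A'(\xi)$ as claimed.

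There is no real obstacle: the lemma is a three-line verification once the Möbius difference identity, the imaginary-part transformation law, and the derivative formula are in place. The only mild subtlety is being careful that $c\xi+d$ is real (because $\xi\in\mathbb{R}$) so that $(c\xi+d)^{2}=|c\xi+d|^{2}$, which is what allows the cancellation with the $|cz+d|^{2}$ from $\Im(Az)$ to leave exactly $(c\xi+d)^{-2}=A'(\xi)$. The result extends unambiguously to $\PSLR$ since the right-hand side is invariant under $A\mapsto -A$.
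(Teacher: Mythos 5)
Your proof is correct and is essentially identical to the paper's own argument: both use the Möbius difference identity $Az-A\xi=(z-\xi)/[(cz+d)(c\xi+d)]$, the transformation law $\Im(Az)=\Im z/|cz+d|^{2}$, and the derivative formula $A'(\xi)=(c\xi+d)^{-2}$. Nothing further is needed.
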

\begin{proof}
Let $A=\left(\begin{smallmatrix}a & b\\
c & d\end{smallmatrix}\right),$ then $\Im Az=\frac{\Im z}{\left|cz+d\right|^{2}}$, $\left|Az-A\xi\right|^{2}=\frac{\left|z-\xi\right|^{2}}{\left|cz+d\right|^{2}\left|c\xi+d\right|^{2}}$
and since $\xi\in\mathbb{R}$ \[
g\left(Az,Ax\right)=g\left(z,\xi\right)\left(c\xi+d\right)^{-2}=g\left(z,\xi\right)A'\left(\xi\right).\]

\end{proof}
\begin{lem}
\label{lem:hyp_dist_points_on_geod}Let $\gamma=\gamma\left(\xi,\eta\right)$
be a geodesic with $\xi,\eta\in\mathbb{R}$, $\eta<\xi$ and suppose
that $z_{j}$, $j=1,2,$ with $\eta\le\Re z_{1}<\Re z_{2}\le\xi$
are two points on $\gamma$. Then \[
d\left(z_{1},z_{2}\right)=\ln g\left(z_{1},\xi\right)-\ln g\left(z_{2},\xi\right)=\ln\left[\frac{\Im z_{2}}{\Im z_{1}}\left|\frac{z_{1}-\xi}{z_{2}-\xi}\right|^{2}\right].\]

\end{lem}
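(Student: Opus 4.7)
The plan is to move $\gamma$ to the positive imaginary axis by an explicit isometry, where the hyperbolic distance reduces to a logarithm of a ratio of imaginary parts, and then read off the stated formula using Lemma \ref{lem:g(Az,Aw)=00003D}.

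First I would consider the Möbius transformation
\[
A(z) = \frac{z - \eta}{\xi - z},
\]
whose matrix representation has real entries and determinant $\xi - \eta > 0$, so (after rescaling) $A \in \PSLR$ acts as an isometry of $\H$. By direct evaluation $A(\eta) = 0$ and $A(\xi) = \infty$, so $A$ sends the geodesic $\gamma(\xi,\eta)$ onto the positive imaginary axis. Using the standard formula $\Im A(z) = (\det A)\,\Im z/|cz+d|^2$, one computes
\[
\Im A(z_j) = \frac{(\xi-\eta)\Im z_j}{|z_j - \xi|^2} = \frac{\xi-\eta}{g(z_j,\xi)}, \qquad j=1,2.
\]

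Next I would exploit the fact that $A$ is an isometry and that the hyperbolic distance on the imaginary axis is simply $d(iy_1,iy_2) = |\ln(y_2/y_1)|$. Therefore
\[
d(z_1,z_2) = d(A z_1, A z_2) = \left|\ln\frac{\Im A(z_2)}{\Im A(z_1)}\right| = \left|\ln\frac{g(z_1,\xi)}{g(z_2,\xi)}\right|.
\]
To remove the absolute value I would observe that $A$ is orientation-preserving on $\gamma$: as the real part of a point on $\gamma$ moves from $\eta$ toward $\xi$, its image on the imaginary axis moves from $0$ toward $\infty$. Thus the hypothesis $\eta \le \Re z_1 < \Re z_2 \le \xi$ gives $\Im A(z_1) < \Im A(z_2)$, equivalently $g(z_1,\xi) > g(z_2,\xi)$, so the logarithm is already positive and equals $\ln g(z_1,\xi) - \ln g(z_2,\xi)$. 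Substituting the definition of $g$ yields the bracketed expression on the right-hand side of the lemma.

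This argument is essentially self-contained, but note that Lemma \ref{lem:g(Az,Aw)=00003D} offers an alternative bookkeeping: applying an isometry $A$ that normalizes one of the points (say $Az_1 = i$) and using $g(Az,A\xi) = g(z,\xi) A'(\xi)$ shows that the ratio $g(z_1,\xi)/g(z_2,\xi)$ is an isometry invariant, reducing the problem to the special case of two points on the imaginary axis with $\xi = \infty$, where $g(iy,\infty)$ must be interpreted appropriately or replaced by a finite surrogate. The only step requiring a moment of care is the sign/monotonicity check in the final paragraph above; beyond that the proof is a routine computation, so I do not anticipate a real obstacle.
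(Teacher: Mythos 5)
Your proposal is correct and uses essentially the same argument as the paper: the paper's proof applies the very same isometry $B(w)=\frac{w-\eta}{\xi-w}$ to map $\gamma$ onto $i\mathbb{R}^{+}$ and then reads off the distance as $\ln(b/a)$ on the imaginary axis. Your explicit verification of the monotonicity (hence the sign of the logarithm) is a detail the paper leaves implicit, but the route is identical.
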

\begin{proof}
It is easy to verify, that the hyperbolic isometry $B:w\mapsto\frac{w-\eta}{\xi-w}$
maps $\gamma$ to $i\mathbb{R}^{+}$ and if $a<b$ then $d\left(ia,ib\right)=\int_{a}^{b}\frac{dy}{y}=\ln\left(\frac{b}{a}\right)$.
Thus if $\Im z_{j}=y_{j}$ we get \[
d\left(z_{1},z_{2}\right)=d\left(Bz_{1},Bz_{2}\right)=d\left(i\Im Bz_{1},i\Im Bz_{2}\right)=\ln\left[\frac{y_{2}}{y_{1}}\left|\frac{z_{1}-\xi}{z_{2}-\xi}\right|^{2}\right].\]

\end{proof}
\begin{lem}
\label{lem:intersection_gamma_iR}Let $\gamma=\gamma\left(\xi,\eta\right)$
be a geodesic with $\xi,\eta\in\mathbb{R}$, $\eta<0<\xi$ and let
$z=\gamma\cap i\mathbb{R}$ be the intersection of $\gamma$ with
the imaginary axis. Then \[
z=i\sqrt{-\xi\eta}.\]

\end{lem}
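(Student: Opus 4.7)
The claim is an elementary coordinate computation on a semicircular geodesic. Since $\xi,\eta\in\mathbb{R}$ with $\eta<0<\xi$ the geodesic $\gamma(\xi,\eta)$ is a Euclidean half-circle orthogonal to $\mathbb{R}$. The plan is to write this half-circle in Cartesian form, substitute $x=0$ to locate the intersection with the imaginary axis, and simplify.

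More precisely, I would first identify the Euclidean center and radius of $\gamma$. Orthogonality of $\gamma$ to $\mathbb{R}$ forces the center to lie on $\mathbb{R}$ equidistant from the endpoints, so $c=\tfrac12(\xi+\eta)$ and $r=\tfrac12(\xi-\eta)>0$. Thus $\gamma$ is the set of $z=x+iy\in\H$ with $(x-c)^{2}+y^{2}=r^{2}$.

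Next, to locate $z=\gamma\cap i\mathbb{R}$, set $x=0$ in this equation, obtaining $y^{2}=r^{2}-c^{2}$. Expanding,
\begin{equation*}
r^{2}-c^{2}=\frac{(\xi-\eta)^{2}-(\xi+\eta)^{2}}{4}=-\xi\eta,
\end{equation*}
which is strictly positive by the sign assumption $\eta<0<\xi$. Since $y>0$ on $\H$, this gives $y=\sqrt{-\xi\eta}$, hence $z=i\sqrt{-\xi\eta}$ as claimed.

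There is essentially no obstacle here: the argument is three lines of Euclidean geometry once one uses that geodesics in $\H$ with finite endpoints are half-circles orthogonal to $\mathbb{R}$. The only thing to be careful about is recording why the square root is real and why we take the positive branch, both of which follow directly from $\eta<0<\xi$ and $z\in\H$. One could equivalently deduce the result by applying the isometry $B\colon w\mapsto(w-\eta)/(\xi-w)$ used in Lemma~\ref{lem:hyp_dist_points_on_geod}, noting that $B(z)$ lies on $i\mathbb{R}^{+}$ and computing $|B(z)|$, but the direct Euclidean computation is shorter.
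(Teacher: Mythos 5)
Your proof is correct and follows essentially the same route as the paper: both identify the geodesic as the half-circle with center $c=\tfrac12(\xi+\eta)$ and radius $r=\tfrac12(\xi-\eta)$ and compute the height of the intersection with $i\mathbb{R}$ as $\sqrt{r^{2}-c^{2}}=\sqrt{-\xi\eta}$ (the paper uses the parametrization $c+re^{it}$ rather than the Cartesian equation, an immaterial difference). Nothing is missing.
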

\begin{proof}
With $r=\frac{1}{2}\left(\xi-\eta\right)$ and $c=\frac{1}{2}\left(\xi+\eta\right)$
any point on $\gamma$ is given by $\gamma\left(t\right)=c+re^{it}$
for some $0\le t\le\pi$. Suppose $z=\gamma\left(t_{0}\right)$, then
$\Re\gamma\left(t_{0}\right)=c+r\cos t_{0}=0$ and hence $\cos t_{0}=-\frac{c}{r}$.
But then $\sin^{2}t_{0}=1-\frac{c^{2}}{r^{2}}$ and therefore $z=ir\sin t_{0}=i\sqrt{r^{2}-c^{2}}=i\sqrt{-\xi\eta}$.
\end{proof}
\begin{lem}
Let $\gamma=\gamma\left(\xi,\eta\right)$ be a geodesic with $\xi,\eta\in\mathbb{R}$,
$\eta<\xi$. For $\omega$ a geodesic intersecting $\gamma$ at $w\in\H$
let $A=\left(\begin{smallmatrix}a & b\\
c & d\end{smallmatrix}\right)\in\PSLR$ be such that $A\omega=i\mathbb{R}^{+}$. Then \begin{align*}
w=w\left(\xi,\eta\right) & =\frac{1}{ad+bc+ac\left(\xi+\eta\right)}\left[ac\xi\eta-bd+\epsilon i\sqrt{-l_{A}\left(\xi\right)l_{A}\left(\eta\right)}\right],\,\mbox{and}\\
g\left(w,\gamma\right) & =\frac{\left|w-\xi\right|^{2}}{\Im w}=\left(\xi-\eta\right)\sqrt{-\frac{l_{A}\left(\xi\right)}{l_{A}\left(\eta\right)}}\end{align*}
where $l_{A}\left(\xi\right)=\left(a\xi+b\right)\left(c\xi+d\right)$
and $\epsilon=\sign\left(ad+bc+ac\left(\xi+\eta\right)\right)$. 
\end{lem}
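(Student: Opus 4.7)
The plan is to reduce to the case $\omega = i\mathbb{R}^{+}$ by applying the isometry $A$, use Lemma \ref{lem:intersection_gamma_iR} there, and then transport the answer back by $A^{-1}$, keeping track of signs.

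Under $A$ the geodesic $\gamma$ is sent to the geodesic with endpoints $A\xi, A\eta$, which intersects $i\mathbb{R}^{+} = A\omega$ at $Aw$. Since $Aw \in i\mathbb{R}^{+}$, the two numbers $A\xi$ and $A\eta$ must have opposite signs. Lemma \ref{lem:intersection_gamma_iR} thus gives
\[
Aw = i T, \qquad T = \sqrt{-A\xi \cdot A\eta} \ge 0.
\]
Writing $A\xi = (a\xi+b)/(c\xi+d)$ (and similarly for $\eta$), one obtains
\[
T^{2} = -\frac{(a\xi+b)(a\eta+b)}{(c\xi+d)(c\eta+d)} = -\frac{l_{A}(\xi)\,l_{A}(\eta)}{(c\xi+d)^{2}(c\eta+d)^{2}}.
\]
The first formula will then fall out of computing $w = A^{-1}(iT)$ with $A^{-1} = \bigl(\begin{smallmatrix} d & -b \\ -c & a\end{smallmatrix}\bigr)$; concretely, multiplying numerator and denominator of $(diT-b)/(-ciT+a)$ by the conjugate $a + icT$ and using $ad-bc=1$ yields
\[
w = \frac{-ab - cd\,T^{2}}{a^{2} + c^{2}T^{2}} + i\,\frac{T}{a^{2} + c^{2}T^{2}}.
\]
Substituting the expression for $T^{2}$ and clearing the common denominator $(c\xi+d)(c\eta+d)$, the denominator simplifies, using $ad-bc=1$ twice, to $ad+bc+ac(\xi+\eta)$, while the numerator of the real part collapses to $ac\xi\eta - bd$. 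The imaginary-part numerator becomes $T(c\xi+d)(c\eta+d) = \epsilon \sqrt{-l_{A}(\xi)l_{A}(\eta)}$, where the sign $\epsilon = \operatorname{sign}\bigl((c\xi+d)(c\eta+d)\bigr)$ is forced to equal $\operatorname{sign}\bigl(ad+bc+ac(\xi+\eta)\bigr)$ by the requirement $\Im w > 0$. This is exactly the stated formula.

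For the second formula, invoke Lemma \ref{lem:g(Az,Aw)=00003D} to get $g(w,\xi) = g(Aw, A\xi)/A'(\xi)$ with $A'(\xi) = (c\xi+d)^{-2}$. Since $Aw = iT$ with $T^{2} = -A\xi\cdot A\eta$,
\[
g(Aw, A\xi) = \frac{T^{2} + (A\xi)^{2}}{T} = \frac{A\xi\,(A\xi - A\eta)}{T}.
\]
A direct computation (again using $ad-bc=1$) gives the useful identity $A\xi - A\eta = (\xi-\eta)/[(c\xi+d)(c\eta+d)]$. Plugging everything in and simplifying,
\[
g(w,\xi) = \frac{(a\xi+b)(\xi-\eta)}{(c\eta+d)\,T} = (\xi-\eta)\sqrt{-\frac{l_{A}(\xi)}{l_{A}(\eta)}},
\]
which is the claim.

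The main obstacle is purely bookkeeping: the denominator $ad+bc+ac(\xi+\eta)$ and the sign $\epsilon$ emerge only after two applications of $ad-bc=1$, and one must justify that $T(c\xi+d)(c\eta+d)$ equals $\epsilon\sqrt{-l_{A}(\xi)l_{A}(\eta)}$ (not just up to sign) by appealing to $\Im w > 0$. Once the sign convention is fixed, the rest is routine polynomial identities.
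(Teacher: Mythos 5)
Your proof is correct and takes essentially the same route as the paper's: conjugate by $A$ to reduce to the imaginary-axis case of Lemma \ref{lem:intersection_gamma_iR}, pull the intersection point back with $A^{-1}$, and use Lemma \ref{lem:g(Az,Aw)=00003D} together with $A'\left(\xi\right)=\left(c\xi+d\right)^{-2}$ for the formula for $g$. The only (cosmetic) difference is that you determine the sign $\epsilon$ at the end from $\Im w>0$, whereas the paper fixes the orientation of $A$ up front so that $A\eta<0<A\xi$ and tracks absolute values throughout; your algebraic identities (the collapse of the real-part numerator to $ac\xi\eta-bd$, of the denominator to $ad+bc+ac\left(\xi+\eta\right)$, and the identity $A\xi-A\eta=\left(\xi-\eta\right)/\left[\left(c\xi+d\right)\left(c\eta+d\right)\right]$) all check out.
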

\begin{proof}
According to Lemma \ref{lem:g(Az,Aw)=00003D} $g\left(w,\gamma\right)=g\left(A^{-1}z,A^{-1}\gamma'\right)=g\left(z,\gamma'\right)A^{-1}\,'\left(A\xi\right)$
where $\gamma'=A\gamma$ and $z=Aw\in i\mathbb{R}.$ By Lemma \ref{lem:intersection_gamma_iR}
$z=i\sqrt{-\xi'\eta'}$ with $\xi'=A\xi$ and $\eta'=A\eta$. We choose
$A$, i.e. the orientation of $\gamma'$ such that $\eta'<0<\xi'$
and hence $\sgn\left(a\xi+b\right)=\sgn\left(c\xi+d\right)$ and $\sgn\left(a\eta+b\right)=-\sgn\left(c\eta+d\right)$.
Then \begin{align*}
w & =A^{-1}\left(i\sqrt{-A\xi A\eta}\right)=\left(di\sqrt{\left|\frac{a\xi+b}{c\xi+d}\frac{a\eta+b}{c\eta+d}\right|}-b\right)\left(a-ci\sqrt{\left|\frac{a\xi+b}{c\xi+d}\frac{a\eta+b}{c\eta+d}\right|}\right)^{-1}\\
 & =\left[i\sqrt{\left|\frac{a\xi+b}{c\xi+d}\frac{a\eta+b}{c\eta+d}\right|}-\left(ab+dc\left|\frac{a\xi+b}{c\xi+d}\frac{a\eta+b}{c\eta+d}\right|\right)\right]\left(c^{2}\left|\frac{a\xi+b}{c\xi+d}\frac{a\eta+b}{c\eta+d}\right|+a^{2}\right)^{-1}\\
 & =\frac{-\left(ab\left|c\xi+d\right|\left|c\eta+d\right|+dc\left|a\xi+b\right|\left|a\eta+b\right|\right)+i\sqrt{-l_{A}\left(\xi\right)l_{A}\left(\eta\right)}}{c^{2}\left|\left(a\xi+b\right)\left(a\eta+b\right)\right|+a^{2}\left|c\xi+d\right|\left|c\eta+d\right|}\\
 & =\frac{\epsilon\left(ab\left(c\xi+d\right)\left(c\eta+d\right)-dc\left(a\xi+b\right)\left(a\eta+b\right)\right)+i\sqrt{-l_{A}\left(\xi\right)l_{A}\left(\eta\right)}}{c^{2}\epsilon\left(a\xi+b\right)\left(a\eta+b\right)-a^{2}\epsilon\left(c\xi+d\right)\left(c\eta+d\right)}\\
 & =\frac{i\sqrt{-l_{A}\left(\xi\right)l_{A}\left(\eta\right)}-\epsilon\left[\xi\eta ca-bd\right]}{-\epsilon\left[\left(\xi+\eta\right)ca+\left(cb+ad\right)\right]}=\frac{ac\xi\eta-bd-i\epsilon\sqrt{-l_{A}\left(\xi\right)l_{A}\left(\eta\right)}}{ac\left(\xi+\eta\right)+\left(ad+bc\right)}\end{align*}
where $\epsilon=\sign\left(\left(a\xi+b\right)\left(a\eta+b\right)\right)=-\sign\left(ac\left(\xi+\eta\right)+ad+bc\right)$
since $w\in\H.$ For the function $g$ we now have \[
g\left(z,\gamma'\right)=\frac{\left|z-\xi'\right|^{2}}{\Im z}=\frac{\xi'^{2}-\xi'\eta'}{\sqrt{-\xi'\eta'}}=\left(\xi'-\eta'\right)\sqrt{\frac{\xi'}{-\eta'}}.\]
Since $A^{-1}\,'\left(A\xi\right)=\left(-cA\xi+a\right)^{-2}=\left(c\xi+d\right)^{2}$
Lemma \ref{lem:g(Az,Aw)=00003D} implies that \begin{align*}
g\left(w,\gamma\right) & =\left(\xi'-\eta'\right)\sqrt{\frac{\xi'}{-\eta'}}\left(c\xi+d\right)^{2}=\left(\frac{a\xi+b}{c\xi+d}-\frac{a\eta+b}{c\eta+d}\right)\sqrt{\frac{\frac{a\xi+b}{c\xi+d}}{\frac{a\eta+b}{c\eta+d}}}\left(c\xi+d\right)^{2}\\
 & =\left(\left(a\xi+b\right)\left(c\eta+d\right)-\left(a\eta+b\right)\left(c\xi+d\right)\right)\sqrt{\frac{\left(a\xi+b\right)\left(c\xi+d\right)}{\left(a\eta+b\right)\left(c\eta+d\right)}}\\
 & =\left(\xi-\eta\right)\sqrt{\frac{\left(a\xi+b\right)\left(c\xi+d\right)}{\left(a\eta+b\right)\left(c\eta+d\right)}}=\left(\xi-\eta\right)\sqrt{-\frac{l_{A}\left(\xi\right)}{l_{A}\left(\eta\right)}}.\end{align*}
~
\end{proof}
Application of the previous Lemma to vertical or circular geodesics
yields the following corollaries:

\begin{cor}
\label{lem:intersection_vertical_and_circular}Let $\gamma\left(\xi,\eta\right)$
be a geodesic with $\eta<\xi$. Then

a) if $\eta<a<\xi$ for some $a\in\mathbb{R}$, then $\gamma$ and
the vertical geodesic $\omega_{v}=a+i\mathbb{R}^{+}$ intersect at
\begin{align*}
\mathcal{Z}_{v}\left(\xi,\eta\right) & =a+i\sqrt{\left(\xi-a\right)\left(-\eta+a\right)}\in\H,\,\mbox{and}\\
g_{v}\left(\xi,\eta\right) & =g\left(\mathcal{Z}_{v},\gamma\right)=\left(\xi-\eta\right)\sqrt{-\frac{\xi-a}{\eta-a}};\end{align*}
b) if $\eta<c-\rho<\xi<c+\rho$ for some $c\in\mathbb{R}$ and $\rho\in\mathbb{R}^{+}$,
then $\gamma$ and the circular geodesic $\omega_{c}$ with center
$c$ and radius $\rho$ intersect at \begin{align*}
\mathcal{Z}_{c}\left(\xi,\eta\right) & =\frac{\xi\eta+\rho^{2}-c^{2}}{\xi+\eta-2c}+\frac{i}{\left|\xi+\eta-2c\right|}\sqrt{\left(\left(\xi-c\right)^{2}-\rho^{2}\right)\left(\rho^{2}-\left(\eta-c\right)^{2}\right)}\in\H,\,\mbox{and}\\
g_{c}\left(\xi,\eta\right) & =g\left(\mathcal{Z}_{c},\gamma\right)=\left(\xi-\eta\right)\sqrt{-\frac{\left(\xi-c\right)^{2}-\rho^{2}}{\left(\eta-c\right)^{2}-\rho^{2}}}.\end{align*}
The subscripts ,,$v${}`` and ,,$c${}`` above refer to intersections
with \emph{vertical} and \emph{circular} geodesics respectively.
\end{cor}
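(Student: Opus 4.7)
The plan is to derive both parts as direct specializations of the preceding lemma: for each type of geodesic $\omega$ we exhibit an explicit matrix $A\in\PSLR$ with $A\omega=i\mathbb{R}^{+}$, compute the polynomial $l_{A}(\xi)=(a\xi+b)(c\xi+d)$, and then substitute into the two displayed formulas
\[
w=\frac{ac\xi\eta-bd-i\epsilon\sqrt{-l_{A}(\xi)l_{A}(\eta)}}{ac(\xi+\eta)+(ad+bc)},\qquad g(w,\gamma)=(\xi-\eta)\sqrt{-l_{A}(\xi)/l_{A}(\eta)}.
\]
The sign $\epsilon$ is then pinned down by the single requirement $\Im w>0$, which always determines it uniquely.

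For part (a), the vertical line $\omega_{v}=a+i\mathbb{R}^{+}$ is mapped to the imaginary axis by the translation $A=\bigl(\begin{smallmatrix}1 & -a\\ 0 & 1\end{smallmatrix}\bigr)$. Here $l_{A}(\xi)=\xi-a$, and the formula for $g$ collapses at once to $(\xi-\eta)\sqrt{-(\xi-a)/(\eta-a)}$. In the expression for $w$ one has $ac=0$, $bd=-a$, $ad+bc=1$, so $w=a-i\epsilon\sqrt{-(\xi-a)(\eta-a)}$; since $\eta<a<\xi$ the radicand is positive, and $\Im w>0$ forces $\epsilon=-1$, giving the claimed $\mathcal{Z}_{v}$.

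For part (b), the semicircle of radius $\rho$ centered at $c$ is sent to $i\mathbb{R}^{+}$ by
\[
A=\frac{1}{\sqrt{2\rho}}\begin{pmatrix}1 & -(c-\rho)\\ -1 & c+\rho\end{pmatrix}\in\PSLR,
\]
for which a short calculation gives $l_{A}(\xi)=-\tfrac{1}{2\rho}\bigl((\xi-c)^{2}-\rho^{2}\bigr)$. The factor $-1/(2\rho)$ cancels in the ratio $-l_{A}(\xi)/l_{A}(\eta)$, yielding the stated expression for $g_{c}$. Substituting the same $a,b,c,d$ into the formula for $w$ and simplifying using the identities $ac(\xi+\eta)+ad+bc=\tfrac{1}{2\rho}(2c-\xi-\eta)$ and $ac\xi\eta-bd=\tfrac{1}{2\rho}(\xi\eta+\rho^{2}-c^{2})$ produces the displayed $\mathcal{Z}_{c}$, with $\epsilon=\sign(\xi+\eta-2c)$ chosen so that the imaginary part is positive; this is consistent with the sign prescription $\epsilon=\sign(ad+bc+ac(\xi+\eta))$ from the lemma.

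The only nonroutine point is bookkeeping of signs in part (b): one must verify that the absolute value $|\xi+\eta-2c|$ in the denominator and the sign of $\epsilon$ from the lemma combine to give the correct sign of the real part of $\mathcal{Z}_{c}$, and that the quantity under the square root, $((\xi-c)^{2}-\rho^{2})(\rho^{2}-(\eta-c)^{2})$, is indeed non-negative under the hypothesis $\eta<c-\rho<\xi<c+\rho$. Both checks are elementary. No new geometric input is required beyond the previous lemma.
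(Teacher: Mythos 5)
Your route is exactly the paper's: the corollary is obtained by specializing the preceding lemma to explicit matrices $A$ with $A\omega=i\mathbb{R}^{+}$ (the paper gives no separate proof here, and records the analogous matrices in the proof of the corollary that follows), and your choice of $A$ in each case, the computation of $l_{A}$, and the determination of $\epsilon$ from $\Im w>0$ all match. Part (a) is correct as written. In part (b) there is one sign slip: with $A=\frac{1}{\sqrt{2\rho}}\left(\begin{smallmatrix}1 & -(c-\rho)\\ -1 & c+\rho\end{smallmatrix}\right)$ one has $ac\xi\eta-bd=-\frac{1}{2\rho}\left(\xi\eta+\rho^{2}-c^{2}\right)$, not $+\frac{1}{2\rho}\left(\xi\eta+\rho^{2}-c^{2}\right)$; paired with your (correct) identity $ac(\xi+\eta)+ad+bc=\frac{1}{2\rho}\left(2c-\xi-\eta\right)$, the identities as stated would yield $\Re w=\frac{\xi\eta+\rho^{2}-c^{2}}{2c-\xi-\eta}$, the negative of the displayed value. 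In the correct computation both numerator and denominator carry the factor $\frac{1}{2\rho}$ with opposite signs that cancel in the quotient, so the claimed $\mathcal{Z}_{c}$ does follow once this is repaired; the rest of part (b), including $g_{c}$, the imaginary part, and the positivity of the radicand, is fine.
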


\begin{cor}
\label{cor:_explicit_formulas_for_g023}Let $\gamma=\gamma\left(\xi,\eta\right)$
be an arbitrary geodesic on $\H$ with $\xi,\eta\in\mathbb{R}.$ For
$\mathcal{Z}_{j}\left(\xi,\eta\right)=\gamma\cap\overline{L_{j}}$
set $g_{j}\left(\xi,\eta\right)=g\left(\mathcal{Z}_{j},\gamma\right)$,
$-2\le j\le2$. If $\mathcal{Z}_{j}\left(\xi,\eta\right)$ exists,
the following formulas hold: \begin{align*}
\mathcal{Z}_{0}\left(\xi,\eta\right) & =\frac{1}{\xi+\eta}\left(1+\xi\eta+\epsilon i\sqrt{\left(\xi^{2}-1\right)\left(1-\eta^{2}\right)}\right),\,\epsilon=\sgn\left(\xi+\eta\right),\\
\mathcal{Z}_{\pm1}\left(\xi,\eta\right) & =\mp\frac{\lambda}{2}+i\sqrt{\left(\xi\mp\frac{\lambda}{2}\right)\left(-\eta\pm\frac{\lambda}{2}\right)},\\
\mathcal{Z}_{\pm2,\pm3}\left(\xi,\eta\right) & =\frac{1}{\xi+\eta\mp2c}\left(\xi\eta+\rho^{2}-c^{2}+\epsilon i\sqrt{\left(\left(\xi\mp c\right)^{2}-\rho^{2}\right)\left(\rho^{2}-\left(\eta\mp c\right)^{2}\right)}\right)\\
 & \,\mbox{where }\epsilon=\sgn\left(x+y\mp2c\right).\end{align*}
Furthermore\begin{align*}
g_{0}\left(\xi,\eta\right) & =\left(\xi-\eta\right)\sqrt{\frac{\xi^{2}-1}{1-\eta^{2}}},\,\\
g_{\pm1}\left(\xi,\eta\right) & =\left(\xi-\eta\right)\sqrt{-\frac{\xi\mp\frac{\lambda}{2}}{\eta\mp\frac{\lambda}{2}}},\\
g_{\pm j}\left(\xi,\eta\right) & =\left(\xi-\eta\right)\sqrt{-\frac{\left(\xi\mp c\right)^{2}-\rho^{2}}{\left(\eta\mp c\right)^{2}-\rho^{2}}},\, j=2,3.\end{align*}
Here \textup{$\left(\rho,c\right)=\left(\lambda-\frac{1}{\lambda},\frac{1}{\lambda}\right)$
for $j=2$ and $\left(1,\lambda\right)$ for $j=3$. }
\end{cor}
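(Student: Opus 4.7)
The plan is to specialize Corollary~\ref{lem:intersection_vertical_and_circular} to each of the seven boundary arcs $\overline{L_j}$, $-3\le j\le 3$. All of the actual work is in identifying the parameters $a$ (for the two vertical cases $j=\pm 1$) or center--radius $(c,\rho)$ (for the five circular cases $j=0,\pm 2,\pm 3$); once this is done, the formulas for $\mathcal{Z}_j(\xi,\eta)$ and $g_j(\xi,\eta)$ follow by direct substitution.

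First I would read off from Definition~\ref{def:G_q_and_fundamental_domain} and the definitions of $L_{\pm 2}$, $L_{\pm 3}$ that $\overline{L_0}$ lies on the unit circle (center $0$, radius $1$), and that $\overline{L_{\pm 1}}$ are the vertical lines $\Re z=\pm\lambda/2$. For $\overline{L_{\pm 2}}=T^{\pm 1}S\overline{L_{\pm 1}}$ a short computation shows that $z\mapsto -1/z$ sends the line $\Re z=\lambda/2$ to the circle through $0$ with center $-1/\lambda$ and radius $1/\lambda$; translation by $\lambda$ then gives $\overline{L_2}$ as the circle with center $\lambda-1/\lambda$ and radius $1/\lambda$, and $\overline{L_{-2}}$ is its mirror image across the imaginary axis. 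For $\overline{L_{\pm 3}}=T^{\pm 1}\overline{L_0}$ the parameters are simply center $\pm\lambda$, radius $1$.

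Plugging $a=\pm\lambda/2$ into part (a) of Corollary~\ref{lem:intersection_vertical_and_circular} yields the expressions for $\mathcal{Z}_{\pm 1}$ and $g_{\pm 1}$. Plugging the circular parameters above into part (b) yields the formulas for $\mathcal{Z}_0$, $\mathcal{Z}_{\pm 2}$, $\mathcal{Z}_{\pm 3}$ and their associated $g$-values. The sign $\epsilon$ in these formulas is the one forced by $\mathcal{Z}_j\in\H$, namely $\epsilon=\sgn(\xi+\eta-2c')$ for a circle centered at $c'$; this reduces to $\sgn(\xi+\eta)$ when $c'=0$ (the case $j=0$) and to $\sgn(\xi+\eta\mp 2c)$ for the reflected pairs $j=\pm 2,\pm 3$.

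There is no genuine obstacle; this proof is essentially a bookkeeping exercise. The only care needed is to track $\epsilon$ consistently with the positivity of $\Im\mathcal{Z}_j$, and to remember the caveat ``if $\mathcal{Z}_j(\xi,\eta)$ exists'' in the statement --- i.e., that the arguments of the square roots are automatically nonnegative exactly when $\gamma(\xi,\eta)$ actually meets $\overline{L_j}$ in $\H$, which amounts to the standard requirement $\eta<a<\xi$ in the vertical case and $\eta<c'-\rho<\xi<c'+\rho$ (after relabelling $c'$ with the appropriate sign) in the circular case.
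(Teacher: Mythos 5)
Your proposal is correct and matches the paper's proof in substance: the paper simply exhibits matrices $A_{j}$ with $A_{j}\overline{L}_{j}=i\mathbb{R}^{+}$ and invokes the preceding Lemma, which is exactly the computation you carry out via the vertical/circular specialization of Corollary~\ref{lem:intersection_vertical_and_circular} after identifying each arc's line or center--radius data. (Incidentally, your identification of $\overline{L}_{2}$ as the circle with center $\lambda-\frac{1}{\lambda}$ and radius $\frac{1}{\lambda}$ is the correct one; the corollary's closing line lists the pair in the opposite order.)
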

\begin{proof}
Taking $A_{0}=\frac{1}{\sqrt{2}}\left(\begin{smallmatrix}1 & -1\\
1 & 1\end{smallmatrix}\right),$ $A_{\pm1}=\left(\begin{smallmatrix}1 & \mp\frac{\lambda}{2}\\
0 & 1\end{smallmatrix}\right)$ and $A_{\pm2}=\frac{1}{\sqrt{2\rho}}\left(\begin{smallmatrix}-1 & \pm c-\rho\\
1 & \mp c-\rho\end{smallmatrix}\right)$ it is easy to verify that $A_{j}\overline{L}_{j}=i\mathbb{R}^{+}$
preserving the orientation.
\end{proof}
\begin{lem}
\label{lem:intersect_L1}Let $\gamma=\gamma\left(\xi,\eta\right)$
be a geodesic with $\xi,\eta\in\mathbb{R}$. Then $\gamma$ intersects
the vertical arc $L_{1}$ if and only if $\eta<\frac{\lambda}{2}<\xi$
and $\delta\left(\xi,\eta\right)<0$ where\[
\delta\left(\xi,\eta\right)=\eta-\frac{\xi\lambda-2}{2\xi-\lambda}.\]
For even $q$ we have in particular $\delta\left(\xi,\eta\right)=\eta-\left(TS\right)^{h+1}\xi.$
\end{lem}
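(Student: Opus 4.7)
The plan is to reduce the problem to a direct computation using the explicit formula for $\mathcal{Z}_1(\xi,\eta)$ given in Corollary \ref{cor:_explicit_formulas_for_g023}, and then to verify the equivalent form of $\delta$ for even $q$ by applying the explicit matrix formula \eqref{eq:TS-explicit}.

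First I would observe that $\overline{L_1}$ is the vertical geodesic $\{z\in\H\,|\,\Re z=\lambda/2\}$, and that $L_1\subset\overline{L_1}$ is the subarc with $\Im z\ge\sin(\pi/q)$ (since $\rho_+=e^{\pi i/q}$ and $\lambda/2=\cos(\pi/q)$). The geodesic $\gamma(\xi,\eta)$ intersects $\overline{L_1}$ if and only if $\eta<\lambda/2<\xi$, and in that case Corollary \ref{cor:_explicit_formulas_for_g023} gives the intersection point explicitly as
\[
\mathcal{Z}_1(\xi,\eta)=\tfrac{\lambda}{2}+i\sqrt{\bigl(\xi-\tfrac{\lambda}{2}\bigr)\bigl(\tfrac{\lambda}{2}-\eta\bigr)}.
\]
Thus $\gamma$ meets $L_1$ itself exactly when the additional condition $\Im\mathcal{Z}_1>\sin(\pi/q)$ holds, i.e.\ when
\[
\bigl(\xi-\tfrac{\lambda}{2}\bigr)\bigl(\tfrac{\lambda}{2}-\eta\bigr)>\sin^2(\pi/q).
\]

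Next I would use the identity $\sin^2(\pi/q)=1-\cos^2(\pi/q)=1-\lambda^2/4$ and expand the left-hand side. After cancellation the inequality becomes $(\xi+\eta)\lambda-2\xi\eta>2$. Since $\xi>\lambda/2$ implies $2\xi-\lambda>0$, we may divide by $2\xi-\lambda$ without flipping the inequality to obtain
\[
\eta<\frac{\xi\lambda-2}{2\xi-\lambda},
\]
which is exactly $\delta(\xi,\eta)<0$. This proves the first assertion.

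For the second assertion, the main point is a direct matrix computation using \eqref{eq:TS-explicit}. For even $q$ we have $h+1=q/2$, so
\[
B_{h+2}=\sin\!\tfrac{(q/2+1)\pi}{q}=\cos\!\tfrac{\pi}{q}=\tfrac{\lambda}{2},\quad B_{h+1}=\sin\!\tfrac{\pi}{2}=1,\quad B_h=\sin\!\tfrac{(q/2-1)\pi}{q}=\tfrac{\lambda}{2},
\]
so that, up to the overall scalar which cancels in the Möbius action,
\[
(TS)^{h+1}\xi=\frac{(\lambda/2)\xi-1}{\xi-\lambda/2}=\frac{\xi\lambda-2}{2\xi-\lambda},
\]
giving $\delta(\xi,\eta)=\eta-(TS)^{h+1}\xi$. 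No real obstacle arises: the only care needed is to verify the sign of $2\xi-\lambda$ when manipulating the inequality, which is guaranteed by the necessary condition $\xi>\lambda/2$.
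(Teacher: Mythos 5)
Your proof is correct and follows essentially the same route as the paper: both reduce to the height condition $\Im\mathcal{Z}_{1}>\sin\frac{\pi}{q}$ via the explicit intersection formula, rearrange the resulting inequality into $\eta<\frac{\xi\lambda-2}{2\xi-\lambda}$ using $\frac{\lambda}{2}=\cos\frac{\pi}{q}$, and verify $A=(TS)^{h+1}$ for even $q$ from formula (\ref{eq:TS-explicit}). Your matrix computation of $B_{h},B_{h+1},B_{h+2}$ is in fact spelled out in more detail than in the paper, which merely asserts the verification.
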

\begin{proof}
It is clear, that $\gamma$ intersects $L_{1}$ if and only if the
intersection with $\overline{L}_{1}=\frac{\lambda}{2}+i\mathbb{\mathbb{R}}^{+}$
is at a height above $\sin\frac{\pi}{q}=\Im\rho$. By Lemma \ref{lem:intersection_vertical_and_circular}
the point of intersection is given by $w\left(\xi,\eta\right)=\frac{\lambda}{2}+i\sqrt{\left(\xi-\frac{\lambda}{2}\right)\left(-\eta+\frac{\lambda}{2}\right)}$.
We thus need to check the inequality $\left(\xi-\frac{\lambda}{2}\right)\left(-\eta+\frac{\lambda}{2}\right)>\sin^{2}\frac{\pi}{q}$.
With $\eta<\frac{\lambda}{2}<\xi$ it is clear that $\Im w$ decreases
as $\eta$ increases for $\xi$ fixed. Using $\frac{\lambda}{2}=\cos\frac{\pi}{q}$
we see, that \begin{align*}
\left(\xi-\frac{\lambda}{2}\right)\left(\frac{\lambda}{2}-\eta\right) & =\sin^{2}\frac{\pi}{q}=1-\frac{\lambda^{2}}{4}\Leftrightarrow\\
\lambda-2\eta & =\frac{4-\lambda^{2}}{2\xi-\lambda}\Leftrightarrow\eta=\frac{\lambda\xi-2}{2\xi-\lambda}=A\xi\end{align*}
where $A=\frac{1}{4-\lambda^{2}}\left(\begin{smallmatrix}\lambda & -2\\
2 & -\lambda\end{smallmatrix}\right)\in\SLR.$ Hence $\Im w>\sin\frac{\pi}{q}$ $\Leftrightarrow$ $\eta<A\xi$
$\Leftrightarrow$ $\delta\left(\xi,\eta\right)<0$. 

Observe, that $A\rho=\rho$ and $A^{2}=Id$, i.e. $A$ is elliptic
of order $2$. The stabilizer $G_{q,\rho}$ of $\rho$ in $G_{q}$
is a cyclic group with $q$ elements generated by $TS$. For even
$q=2h+2$ one can use the explicit formula (\ref{eq:TS-explicit})
to verify that $A=\left(TS\right)^{h+1}\in G_{q}$. For odd $q$ on
the other hand there is no element of order $2$ in $G_{q,\rho}$,
so $A\notin G_{q}$. 
\end{proof}
\begin{cor}
\label{cor:intersects_L1_translate}Let $\gamma=\gamma\left(\xi,\eta\right)$
be a geodesic with $\xi,\eta\in\mathbb{R}$. Set $\delta_{n}\left(\xi,\eta\right):=\delta\left(\xi-n\lambda,\eta-n\lambda\right)$.
Then $\gamma$ intersects the line $T^{n}\overline{L}_{1}$ if and
only if $\eta<\left(n+\frac{1}{2}\right)\lambda<\xi$ and $\delta_{n}\left(\xi,\eta\right)<0$. 
\end{cor}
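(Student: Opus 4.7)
The plan is to reduce the statement immediately to Lemma~\ref{lem:intersect_L1} by exploiting the $T$-equivariance of the setup. Since $T^n \in G_q \subseteq \PSLR$ is an isometry of $\H$, the geodesic $\gamma$ intersects the arc $T^n \overline{L}_1$ if and only if the translated geodesic $T^{-n}\gamma$ intersects $\overline{L}_1$. The only thing one needs to know about $T^{-n}$ is how it acts on the endpoints of $\gamma$: since $T^{-n}: z \mapsto z - n\lambda$ is a real translation, it sends the geodesic $\gamma(\xi,\eta)$ to $\gamma(\xi - n\lambda, \eta - n\lambda)$.

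The execution is then mechanical. Applying Lemma~\ref{lem:intersect_L1} to the translated geodesic $T^{-n}\gamma = \gamma(\xi - n\lambda,\, \eta - n\lambda)$, one obtains that intersection with $\overline{L}_1$ occurs if and only if
\[
  \eta - n\lambda < \tfrac{\lambda}{2} < \xi - n\lambda
  \quad\text{and}\quad
  \delta(\xi - n\lambda,\, \eta - n\lambda) < 0.
\]
The first inequality is visibly equivalent to $\eta < (n + \tfrac{1}{2})\lambda < \xi$, and the second is exactly $\delta_n(\xi,\eta) < 0$ by the definition $\delta_n(\xi,\eta) := \delta(\xi - n\lambda,\, \eta - n\lambda)$ in the statement. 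Translating back, $\gamma$ itself intersects $T^n \overline{L}_1$ under precisely these conditions.

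There is essentially no obstacle: once one recognizes that the hypothesis and conclusion of Lemma~\ref{lem:intersect_L1} are both compatible with conjugation by $T^n$, the corollary is a one-line consequence. The only care needed is to verify that the action of $T^{-n}$ on the endpoints at $\partial\H$ is simply real translation by $-n\lambda$, which is immediate from the formula $T(z) = z + \lambda$ extended continuously to $\mathbb{R}^\ast$.
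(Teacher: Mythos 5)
Your proof is correct and is precisely the argument the paper intends: the corollary is stated without proof as an immediate consequence of Lemma \ref{lem:intersect_L1}, obtained by applying the isometry $T^{-n}$ (which translates both endpoints by $-n\lambda$) and invoking the lemma for the translated geodesic. The only cosmetic wrinkle, inherited from the paper's own wording, is that Lemma \ref{lem:intersect_L1} concerns the arc $L_1$ above $\rho$ rather than the full line $\overline{L}_1$, which is why the condition $\delta_n(\xi,\eta)<0$ appears at all; your argument transfers this correctly under the translation.
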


\subsection{\label{sub:First-return-map}The first return map}

Our aim in this section is to obtain an explicit expression for the
first return map $\RET:\Sigma\rightarrow\Sigma$. The notation is
as in Definition \ref{def:First_return_map}, see also Figure \ref{fig:Return-map}.
The main idea is to use geometric arguments to identify possible sequences
of intersections $\left\{ w_{n}\right\} $ and then use arguments
involving regular and dual regular $\lambda$-fractions to determine
whether a particular $w_{n}$ corresponds to a return to $\Sigma$
or not. 

\begin{lem}
If $\xi=\RS{1;1^{j},a_{j+1},a_{j+2}}$ with $\Cq\left(S\xi\right)=\RS{1^{j+1},a_{j+1},\ldots}\in\ZRNCF$
then $\xi\in\left(\left(TS\right)^{j}\lambda,\left(TS\right)^{j}\frac{3\lambda}{2}\right)$
if $a_{j+1}\le-1$ respectively $\xi\in\left(\left(TS\right)^{j+1}\frac{3\lambda}{2},\left(TS\right)^{j}\lambda\right)$
if $a_{j+1}\ge2$. 
\end{lem}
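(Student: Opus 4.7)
The plan is to argue by induction on $j$, using the map $(TS)^{-1}$ to reduce the case $j$ to the case $j-1$ and invoking the monotonicity of $TS$ on the positive half-line at each step.

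For the base case $j = 0$, I would write $\xi = \lambda + \eta$ with $\eta \in I_q$ and $\Cq(\eta) = \RS{0; a_{1}, \ldots}$. The relation $a_{1} = \nlm{-1/\eta}$ forces $\eta \in (0, \lambda/2)$ when $a_{1} \le -1$, giving $\xi \in (\lambda, \tfrac{3\lambda}{2})$, and $\eta \in (-\tfrac{2}{3\lambda}, 0)$ when $a_{1} \ge 2$, giving $\xi \in (TS(\tfrac{3\lambda}{2}), \lambda) = ((TS)^{1}\tfrac{3\lambda}{2}, (TS)^{0}\lambda)$; both match the statement.

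For the inductive step, I would introduce $\xi' := (TS)^{-1}\xi = -1/(\xi - \lambda)$ and check that it has the form required by the lemma at index $j-1$. Setting $x_{1} = \xi - \lambda$, so that $\Cq(x_{1}) = \RS{0; 1^{j}, a_{j+1}, \ldots}$, the algorithm of Lemma \ref{lem:generating_map_Fq_gives_regular_expansion} gives $-1/x_{1} = \lambda + x_{2}$ with $x_{2} = \Fq(x_{1}) \in I_q$ and $\Cq(x_{2}) = \RS{0; 1^{j-1}, a_{j+1}, \ldots}$, so $\xi' = \RS{1; 1^{j-1}, a_{j+1}, \ldots}$. Moreover $S\xi' = T^{-1}\xi = x_{1}$, so $\Cq(S\xi') = \RS{0; 1^{j}, a_{j+1}, \ldots} \in \ZRNCF$, precisely the hypothesis for index $j-1$. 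The inductive hypothesis then places $\xi'$ in the corresponding interval for $j-1$. To close the induction I would apply $TS$: since $\xi' \in [\tfrac{\lambda}{2}, \tfrac{3\lambda}{2}] \subset (0,\infty)$ and $TS(w) = \lambda - 1/w$ has derivative $1/w^{2} > 0$ on the positive half-line, $TS$ is strictly increasing there, and the inequality for $\xi'$ translates via $\xi = TS(\xi')$ into the desired inequality for $\xi$ with each exponent of $TS$ raised by one.

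The main obstacle is controlling the endpoints of the intervals: one must know that $(TS)^{j-1}\lambda$ and $(TS)^{j-1}\tfrac{3\lambda}{2}$ are themselves positive and appear in the correct order, so that applying $TS$ preserves the orientation of the interval. This is handled by a parallel induction, using the forbidden-block restrictions of Lemmas \ref{lem:forbidden-block-even}--\ref{lem:forbidden-block-odd} to bound the admissible $j$ in terms of $q$ and verifying that the orbits of $\lambda$ and $\tfrac{3\lambda}{2}$ under $TS$ remain in $[\tfrac{\lambda}{2}, \tfrac{3\lambda}{2}]$ throughout that range.
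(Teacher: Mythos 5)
Your proof is correct and takes essentially the same route as the paper: the paper writes $\xi=(TS)^{j}\xi'$ with $\Cq(\xi')=\RS{1;a_{j+1},\ldots}$, locates $\xi'$ in $(\lambda,\tfrac{3\lambda}{2})$ or pulls back once more to $(\tfrac{3\lambda}{2},\infty)$, and applies monotonicity of $TS$ all at once, whereas you unroll the same argument as an induction peeling off one factor of $TS$ per step. Your closing worry about the intermediate intervals staying in the positive half-line is legitimate but applies equally to the paper's one-shot application of $(TS)^{j}$, and is settled exactly as you indicate by the bound $j\le h-1$ forced by the forbidden blocks.
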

\begin{proof}
Note that $\xi=\left(TS\right)^{j}\xi'$ where $\Cq\left(\xi'\right)=\RS{1;a_{j+1},a_{j+1},\ldots}.$
If $a_{j+1}\le-1$ then $\xi'\in\left(\lambda,\frac{3\lambda}{2}\right)$
and since $TSx=\lambda-\frac{1}{x}$ is strictly increasing there
$\xi=\left(TS\right)^{j}\xi'\in\left(TS\right)^{j}\left(\lambda,\frac{3\lambda}{2}\right)=\left(\left(TS\right)^{j}\lambda,\left(TS\right)^{j}\frac{3\lambda}{2}\right)$.
If on the other hand $a_{j+1}\ge2$ then $\xi''=ST^{-1}\xi'\in\left(\frac{3\lambda}{2},\infty\right)$
and $\xi=\left(TS\right)^{j}\xi'=\left(TS\right)^{j+1}\xi''$ and
therefore $\xi\in\left(TS\right)^{j+1}\left(\frac{3\lambda}{2},\infty\right)=\left(\left(TS\right)^{j+1}\frac{3\lambda}{2},\left(TS\right)^{j}\lambda\right)$.
\end{proof}
\begin{defn}
Define the geodesic arcs \[
\chi_{j}:=\left(TS\right)^{j}L_{2}=\left(TS\right)^{j+1}TL_{-1},\,0\le j\le h,\quad\omega_{j}:=\left(TS\right)^{j}L_{3}=\left(TS\right)^{j+1}L_{0},\,0\le j\le h+1\]
and set $\alpha_{j}:=\left(TS\right)^{j}\lambda$, $\beta_{j}:=\left(TS\right)^{j}\frac{3\lambda}{2}$
and $\delta_{j}:=\left(TS\right)^{j}\left(\lambda+1\right)$. Then
$\chi_{j}=\left[\rho,\alpha_{j}\right],$ $\omega_{j}=\left[\rho,\left(TS\right)^{j}\left(\rho+\lambda\right)\right]\subset\left[\rho,\delta_{j}\right]$
and $\alpha_{j}<\beta_{j}<\delta_{j}$, $0\le j\le h+1$. Note that
$\alpha_{h}=\frac{\lambda}{2}$ and $\chi_{h}=L_{1}'$ for even $q$
while $\delta_{h+1}=\frac{\lambda}{2}$ and $\omega_{h+1}\subseteq L_{1}'$
for odd $q$ (see Figure \ref{fig:Geodesics-from-rho}).%
\begin{figure}
\caption{\label{fig:Geodesics-from-rho}Geodesics through the point $\rho$
for $q=7$ and $8$ ($h=2$ and $3$)}

\includegraphics[scale=0.5]{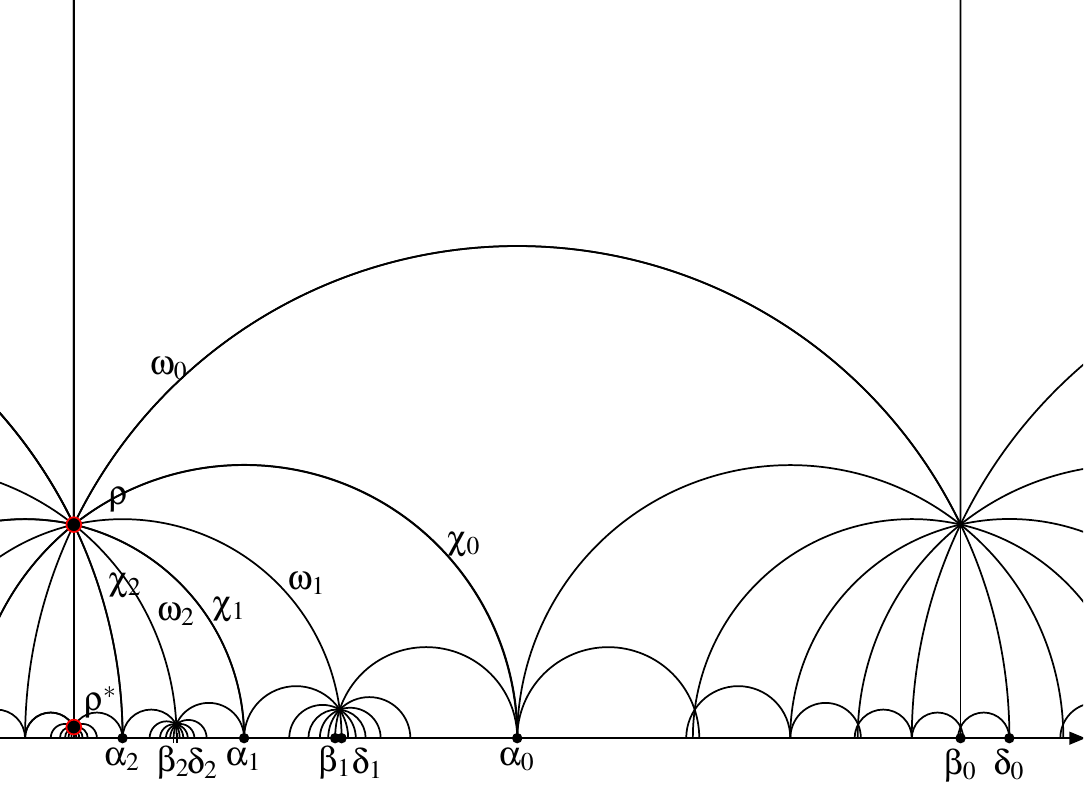}~~\hfill~\includegraphics[scale=0.5]{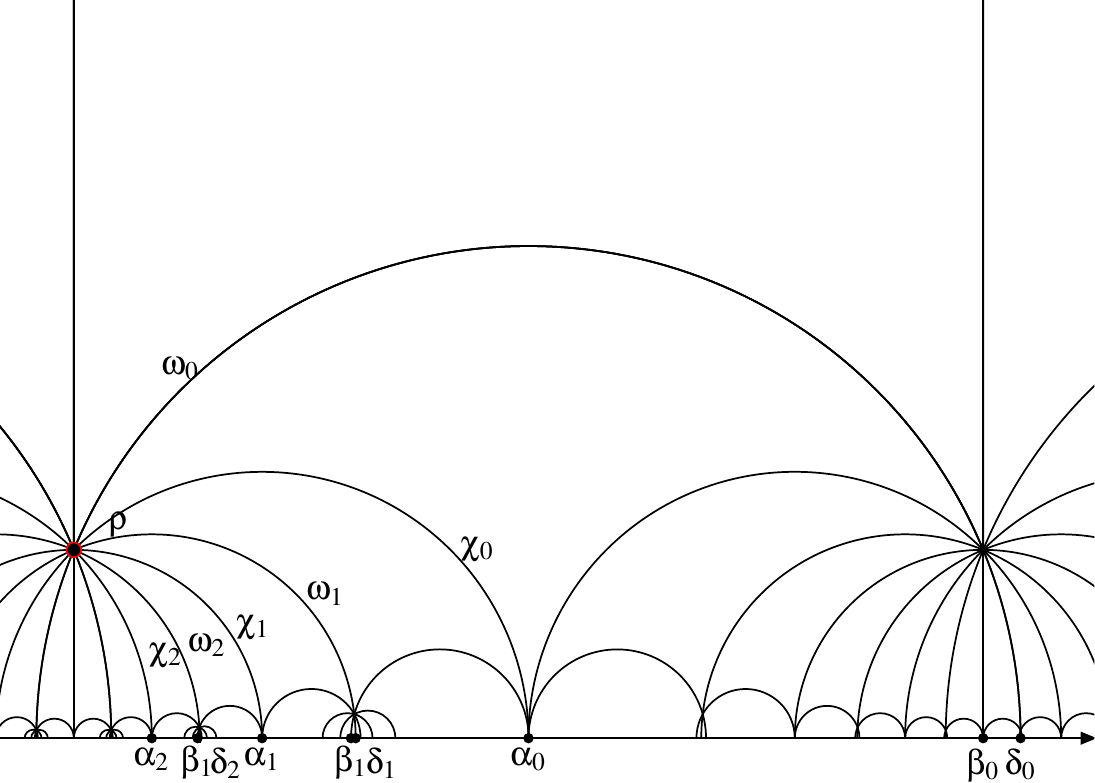}
\end{figure}
 
\end{defn}

\begin{lem}
\label{lem:Following_intersections}If $\gamma=\gamma\left(\xi,\eta\right)\in\Rgeo$
with $\Cq\left(\xi\right)=\RS{1;1^{j},a_{j+1},\ldots}$ then $\gamma$
has the following sequence of intersections with $G_{q}\partial\mathcal{F}$
after passing $L_{1}$: $\omega_{0},\chi_{0},\ldots,\omega_{j-1},\chi_{j-1},\omega_{j}$
if $\xi\in\left(\alpha_{j},\beta_{j}\right)$ ($a_{j+1}\le-1$) respectively
$\omega_{0},\chi_{0},\ldots,\omega_{j},\chi_{j}$ if $\xi\in\left(\beta_{j+1},\alpha_{j}\right)$
($a_{j+1}\ge2$).
\end{lem}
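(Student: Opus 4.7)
The key geometric fact is that $TS$ fixes $\rho$ and acts on the tangent space there as clockwise rotation by $2\pi/q$, as one verifies from $(TS)'(\rho)=\rho^{-2}=e^{-2\pi i/q}$. Since $L_0$ passes through $\rho_-=T^{-1}\rho$ and $L_1$ through $\rho$, the arcs $\omega_j=(TS)^{j+1}L_0$ and $\chi_j=(TS)^{j+1}L_1$ all emanate from $\rho$. Listing them in clockwise order from the upward tangent of $L_1$, the spokes at $\rho$ are $L_1,\omega_0,\chi_0,\omega_1,\chi_1,\ldots$, with adjacent spokes separated by angle $\pi/q$; the wedge between $\omega_j$ and $\chi_j$ coincides with the interior of $(TS)^{j+1}\mathcal{F}$, and the wedge between $\chi_j$ and $\omega_{j+1}$ with $(TS)^{j+1}T\mathcal{F}$.

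Because $\theta\mapsto(\textrm{forward ideal endpoint of the geodesic through }\rho\textrm{ with tangent angle }\theta)$ is an orientation-preserving homeomorphism $S^1\to\mathbb{R}^*$, the forward real endpoints of the spokes appear in the same clockwise cyclic order on $\mathbb{R}^*$: $\infty,\delta_0,\alpha_0,\delta_1,\alpha_1,\ldots$. The inequality $\lambda<3\lambda/2<\lambda+1$ together with the monotonicity of $TS$ then gives $\alpha_j<\beta_j<\delta_j$, so the two intervals in the statement lie respectively in $(\alpha_j,\delta_j)$ and $(\delta_{j+1},\alpha_j)$, i.e.\ in the real boundaries of the wedges $(TS)^{j+1}\mathcal{F}$ and $(TS)^{j+1}T\mathcal{F}$.

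The core claim is then that $\gamma$ traverses the fan of wedges around $\rho$ in clockwise order, crossing spokes sequentially. This rests on three observations: $\gamma$ does not pass through the interior point $\rho$; two geodesics intersect iff their ideal endpoints separate on $\mathbb{R}^*$, so the cyclic position of $\xi$ dictates which extended spokes $\overline{\omega_k},\overline{\chi_k}$ are crossed; and each such crossing occurs on the arc $\omega_k$ (resp.\ $\chi_k$) rather than on the complementary arc through $\rho$, because $\xi$ sits on the forward-endpoint side of $\rho$. Applying the separation criterion to the range $\xi\in(\alpha_j,\beta_j)$ gives crossings precisely with $\overline{\omega_k}$ for $0\le k\le j$ and $\overline{\chi_k}$ for $0\le k\le j-1$; the range $\xi\in(\beta_{j+1},\alpha_j)$ picks up $\overline{\chi_j}$ in addition.

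The main obstacle will be ordering the crossings along $\gamma$ to match the clockwise angular order $\omega_0,\chi_0,\omega_1,\chi_1,\ldots$ of the spokes. I would handle this by induction on $j$: the base case $j=0$ is a direct geometric check in the wedges $T\mathcal{F}$ and $TS\mathcal{F}$; for the inductive step, applying $(ST)=(TS)^{-1}$ to the pair $(\xi,\eta)$ produces a geodesic with $\xi'\in(\alpha_{j-1},\beta_{j-1})$, and by the inductive hypothesis the tail of its crossing sequence, pushed forward by $TS$, supplies the tail $\chi_0,\omega_1,\chi_1,\ldots,\omega_j$ of $\gamma$'s sequence, while the initial $\omega_0$ is supplied by the monotone ordering of forward endpoints already established.
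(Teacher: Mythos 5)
Your geometric setup --- the fan of wedges $(TS)^{k+1}\mathcal{F}$ and $(TS)^{k+1}T\mathcal{F}$ around $\rho$, plus the separation criterion for deciding which complete geodesics through $\rho$ are crossed --- is the same as the paper's, and the paper's proof is likewise a wedge-by-wedge traversal. But your core-claim paragraph has a gap at exactly the point where the paper's proof does its real work. For $\chi_k=\left[\rho,\alpha_k\right]$ the arc is an entire ray from $\rho$ to an ideal point, so ``not on the complementary arc through $\rho$'' does imply ``on $\chi_k$''. For $\omega_k=\left[\rho,(TS)^kT\rho\right]$, however, both endpoints lie in $\H$: the ray of the complete geodesic emanating from $\rho$ on the forward-endpoint side is $\omega_k$ \emph{together with} the extension $\left[(TS)^kT\rho,\delta_k\right]$. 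Knowing that $\xi$ sits on the $\delta_k$-side of $\rho$ therefore does not exclude that $\gamma$ meets this geodesic only beyond the far vertex $(TS)^kT\rho$, and for the terminal arc $\omega_j$ in the case $\xi\in\left(\alpha_j,\beta_j\right)$ this is a live possibility. The paper kills it by noting that to reach the extension $\gamma$ must first cross $\left[(TS)^jT\rho,\alpha_{j-1}\right]$, whose completion is $(TS)^jT\overline{L}_1=\left[\beta_j,\alpha_{j-1}\right]$, which is impossible since $\xi<\beta_j$. Your dichotomy ``arc versus complementary arc through $\rho$'' is a false dichotomy here, so the assertion is not proved as stated.

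Your closing induction could in principle repair this, since $TS$ maps the arc $\omega_{j-1}$ onto the arc $\omega_j$, so a correctly executed base case (exit of $T\mathcal{F}$ through the side $TL_0=\omega_0$, forced because $\gamma$ cannot cross $T\overline{L}_1$ when $\xi<\frac{3\lambda}{2}$) would propagate the ``on the arc, not on the extension'' statement. But you present the induction as handling only the \emph{ordering} of crossings, not this issue, and before invoking the inductive hypothesis ``after passing $L_1$'' you must also verify that $\gamma'=\Fqx\gamma$ really does cross $L_1$ above $\rho$ (equivalently, that $\gamma$ crosses the arc $\chi_0$, which is again the wedge-exit argument). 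Two smaller slips: $ST\ne(TS)^{-1}$ --- the map you want is $ST^{-1}=(TS)^{-1}$, i.e. $\Fqx$ for $a_0=1$; and since $\beta_{j+1}<\delta_{j+1}$, the interval $\left(\beta_{j+1},\alpha_j\right)$ \emph{contains} $\left(\delta_{j+1},\alpha_j\right)$ rather than being contained in it, so in the second case $\xi$ need not lie in the real sector between the completions of $\chi_j$ and $\omega_{j+1}$.
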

\begin{proof}
See Figure \ref{fig:Geodesics-from-rho}. Since all arcs involved
are hyperbolic geodesics it is clear that $\gamma$ does not intersect
any other $\chi_{i}$'s or $\omega_{i}$'s than those mentioned. Suppose
that $\xi\in\left(\alpha_{j},\beta_{j}\right),$  then after $\chi_{j-1}$
the geodesic $\gamma$ may intersect either $\omega_{j}$ or its extension,
i.e. $\left[\left(TS\right)^{j}\left(\rho+\lambda\right),\delta_{j}\right]$.
If it intersects this extension it has to pass first through the arc
$\left[\left(TS\right)^{j}T\rho,\alpha_{j-1}\right]$. But the completion
of this arc is clearly $\left(TS\right)^{j}TL_{1}=\left[\beta_{j},\alpha_{j-1}\right]$
and hence $\gamma$ can not intersect this arc and must pass through
$\omega_{j}.$ The second case is analogous, except that we do not
care about whether the next intersection is at $\omega_{j+1}$ or
$\left(TS\right)^{j+1}TL_{1}$. 
\end{proof}
\begin{lem}
\label{lem:first_return_map_as_Fqx^j+1_as_a0=00003D1}If $\gamma\left(\xi,\eta\right)\in\Sgeo$
and $\Cq\left(\xi\right)=\RS{1;1^{j},a_{j+1},\ldots}$ then $\RETx\left(\xi,\eta\right)=\Fqx^{j+1}\left(\xi,\eta\right).$ 
\end{lem}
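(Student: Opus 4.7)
The plan is to combine the geometric sequence of intersections from Lemma~\ref{lem:Following_intersections} with the algebraic action of $\Fqx$ on geodesics, in order to identify $\Fqx^{j+1}\gamma$ as the first return. The key input from Section~\ref{sub:Symbolic-dynamics} is that on $\Sred$ the natural extension acts by $\Fqx\gamma(\xi,\eta)=(ST^{-k})\gamma$, where $k=\nlm{\xi}$ is the leading entry of $\Cq(\xi)$. Under our hypothesis this leading entry equals $1$ for each of the first $j+1$ iterations, so it will follow that $\Fqx^{j+1}\gamma=(ST^{-1})^{j+1}\gamma$ and that the new first endpoint has expansion $\RS{a_{j+1};a_{j+2},\ldots}$.

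First I would verify directly that $\Fqx^{j+1}\gamma\in\Sgeo$ while $\Fqx^k\gamma\notin\Sgeo$ for $1\le k\le j$. For intermediate $k$, $(ST^{-1})^k\xi$ has leading entry $1$ and therefore lies in $(\lambda/2,3\lambda/2]$; an easy induction using $|\eta|\le R<\lambda$ shows that $(ST^{-1})^k\eta>0$ for every $k\ge 1$, so both endpoints are positive with $|\xi|\le 3\lambda/2$, and the strong-reducedness condition fails. For $k=j+1$: if $a_{j+1}\ge 2$ then $(ST^{-1})^{j+1}\xi$ exceeds $3\lambda/2$ in absolute value, and if $a_{j+1}\le-1$ it becomes negative while $(ST^{-1})^{j+1}\eta$ remains positive, so the sign condition holds.

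Next I would tie this to the geometric first return via Lemma~\ref{lem:Following_intersections}. An intersection at $\chi_i=(TS)^{i+1}L_1$ is mapped back to $L_1\subset\partial\mathcal{F}$ by $A_n=(ST^{-1})^{i+1}$, exactly matching $\Fqx^{i+1}$. In the case $a_{j+1}\ge 2$ the final listed intersection is $\chi_j$, yielding directly $A_{n_0}\gamma=\Fqx^{j+1}\gamma$. In the case $a_{j+1}\le -1$ the listed sequence ends at $\omega_j$ with $A_n=T^{-1}(ST^{-1})^j$, but then $T^{-1}\Fqx^j\gamma$ is not even reduced (one checks $|ST^{-1}\Fqx^j\xi|>\lambda/2$, so the pair is not in $\Omega^\infty$), and $\omega_j$ therefore fails to be a return. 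After $\omega_j$ the geodesic enters the translate $(TS)^{j+1}\mathcal{F}$ and must exit through either $(TS)^{j+1}L_0$ or $(TS)^{j+1}L_{-1}$, since $(TS)^{j+1}L_1=\chi_j$ is forbidden by Lemma~\ref{lem:Following_intersections}; in both subcases the element returning the intersection to $\partial\mathcal{F}$ is again $(ST^{-1})^{j+1}$, giving $\Fqx^{j+1}\gamma\in\Sgeo$ as the first return.

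The main obstacle will be to rule out returns at the intermediate intersections via the extended classes $\Sigma^{\pm 2}$ and $\Sigma^{\pm 3}$: one must verify that neither the relevant $T^{\pm 1}S$-translates (candidates for $\Sigma^{\pm 2}$ at the $\chi_i$-intersections) nor the $T^{\pm 1}$-translates (candidates for $\Sigma^{\pm 3}$ at the $\omega_i$-intersections) of $\Fqx^k\gamma$ for $1\le k\le j$ produce strongly reduced geodesics. Because the leading regular $\lambda$-fraction entry remains $1$ throughout the first $j$ shifts, each such candidate either fails the $\Omega^\infty$-membership condition or lies outside the geometric window $(3\lambda/2,\lambda+1]$ demanded by the definitions of $\Sigma^{\pm 2}$ and $\Sigma^{\pm 3}$; this is a case-by-case verification driven by the explicit descriptions in Remarks~\ref{rem:For-even-q-definitions} and~\ref{rem:For-odd-q-definitions}.
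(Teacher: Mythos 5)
Your analytic half --- showing that $\Fqx^{k}\gamma\notin\Sgeo$ for $1\le k\le j$ while $\Fqx^{j+1}\gamma\in\Sgeo$ --- is essentially the computation the paper makes (the paper reads the signs off the expansions $\Cq\left(\xi_{2i}\right)=\RS{1;1^{j-i},a_{j+1},\ldots}$ and $\Cd\left(\eta_{2i}\right)=\DS{\left(-1\right)^{i},\ldots}$ instead of running your induction; note also that your claim that $\left(ST^{-1}\right)^{k}\eta>0$ for \emph{every} $k\ge1$ cannot be literally true, since $ST^{-1}=\left(TS\right)^{-1}$ is elliptic of order $q$ --- you need, and have, the statement only for $k\le j+1$). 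The problem lies in the geometric half: you attach to each crossing the \emph{outward} pull-back (the element sending the cell the geodesic is leaving back to $\mathcal{F}$), whereas Definition~\ref{def:First_return_map} requires the \emph{inward} one, i.e.\ the unique $A_{n}$ for which $A_{n}\gamma$ crosses $\partial\mathcal{F}$ in the inward direction, which is the pull-back of the cell being \emph{entered}. Crossing $\omega_{j}=\left(TS\right)^{j+1}L_{0}$ the geodesic enters $\left(TS\right)^{j+1}\mathcal{F}$, so the correct map there is $\left(ST^{-1}\right)^{j+1}=\Fqx^{j+1}$, not $T^{-1}\left(ST^{-1}\right)^{j}$; crossing $\chi_{j}=\left(TS\right)^{j+1}L_{1}$ it enters $\left(TS\right)^{j+1}T\mathcal{F}$, so the correct map is $T^{-1}\left(ST^{-1}\right)^{j+1}$, not $\left(ST^{-1}\right)^{j+1}$.

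As a consequence both of your case analyses locate the first return at the wrong crossing. For $a_{j+1}\le-1$ the crossing of $\omega_{j}$ \emph{is} the first return (its inward map is $\Fqx^{j+1}$, and you have already shown $\Fqx^{j+1}\gamma\in\Sgeo$); your dismissal of it via ``$T^{-1}\Fqx^{j}\gamma$ is not reduced'' tests the wrong geodesic. For $a_{j+1}\ge2$ the crossing of $\chi_{j}$ carries the inward map $T^{-1}\Fqx^{j+1}$, whose image is \emph{not} reduced by Lemma~\ref{lem:translates_not_reduced}; the return is again at $\omega_{j}$, one crossing earlier. You arrive at the correct formula $\RETx=\Fqx^{j+1}$ only because the two off-by-one errors cancel; as written the argument assigns the return to the wrong point of the geodesic (which would corrupt the return time in Lemma~\ref{lem:d=00003Dlng0-lng1+2lnF}) and does not let a reader verify that no earlier crossing returns. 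With the maps corrected, the list of geodesics to test collapses to the paper's: $A_{2i}\gamma=\Fqx^{i}\gamma$ (reduced but, by your own computation, not strongly reduced for $1\le i\le j$) and $A_{2i+1}\gamma=T^{-1}\Fqx^{i}\gamma$ (not reduced), with the first return at $w_{2j+2}\in\omega_{j}$. Your closing remark about excluding returns through $\Sigma^{\pm2}$ and $\Sigma^{\pm3}$ is a fair point --- the paper defers exactly that kind of case check to Section~\ref{sec:Lemmas-on-continued} --- but in your write-up it remains a plan rather than a proof.
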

\begin{proof}
Let $\left(z,\theta\right)=\PT\left(\xi,\eta\right),$ then $z\in L_{-1}\cup L_{0}$
and by Lemma \ref{lem:Following_intersections} the subsequent intersections
are $\left\{ w_{0},w_{1},w_{2},\ldots,w_{2j+1},w_{2j+2}\right\} $
if $a_{j+1}\le-1$ and $\left\{ w_{0},w_{1},w_{2},\ldots,w_{2j+2},w_{2j+3}\right\} $
if $a_{j+1}\ge2$. It is also easy to verify, that the corresponding
maps are $A_{2i+1}=T^{-1}\left(ST^{-1}\right)^{i}$ and $A_{2i}=\left(ST^{-1}\right)^{i}$.
Note that $A_{2i+1}=T^{-1}A_{2i}$ and that $A_{2i}\gamma=\Fqx\,^{i}\gamma,\,0\le i\le j+1$.
It is thus clear, that $A_{2i}\gamma\in\Rgeo$ and $A_{2i+1}\gamma\notin\Rgeo$
for $1\le i\le j+1.$ If $\left(\xi_{i},\eta_{i}\right)=\left(A_{i}\xi,A_{i}\eta\right),$
then $\Cq\left(\xi_{2i}\right)=\RS{1;1^{j-i},a_{j+1},\ldots}$ and
$\Cd\left(\eta_{2i}\right)=\DS{\left(-1\right)^{i},b_{1},\ldots}$
and hence $A_{2i}\gamma\notin\Sgeo$ for $1\le i\le j$ but $A_{2j+2}\gamma\in\Sgeo.$
Thus in both cases the first return is given by $w_{2j+2}$ and the
return map is $\RETx=\Fqx\,^{j+1}.$
\end{proof}
\begin{defn}
\label{def:Define_K}Define $\K:\mathbb{R\rightarrow\mathbb{N}}$
and $\n:\mathbb{R\rightarrow\mathbb{Z}}$ as follows: if $\xi=\RS{a_{0};\left(\epsilon\right)^{k-1},a_{k},\ldots}$
with $k\ge1$, and $\epsilon=\sign a_{0}$, then $\K\left(\xi\right):=k$
and \[
\n\left(\xi\right):=\begin{cases}
\epsilon\cdot3, & k=h+1,\, q\,\mbox{odd},\\
\epsilon\cdot2, & k=h,\, a_{h}\ge2,\, q\,\mbox{even},\\
\epsilon\cdot1, & k=h+1,\, q\,\mbox{even},\\
0, & \mbox{else}.\end{cases}\]

\end{defn}
We also have to consider the return map for the second type of strongly
reduced geodesics. 

\begin{lem}
\label{lem:first_return_map_as_Fqxk}For $\mathbf{z}\in\Sigma$ with
$\PT^{-1}\mathbf{z}=\left(\xi,\eta\right)\in\Sred$ and $\left|\xi\right|>\frac{3\lambda}{2}$
one has $\RET\mathbf{z}=\PP\circ\Fqx^{k}\circ\PP^{-1}\mathbf{z}\in\Sigma^{n}$
where $k=\K\left(\xi\right)$ and $n=\n\left(\xi\right)$. 
\end{lem}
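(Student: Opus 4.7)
The plan is to combine the geometric tracking of geodesic intersections used in Lemma \ref{lem:first_return_map_as_Fqx^j+1_as_a0=00003D1} with the structure of the regular $\lambda$-fraction $\Cq(\xi) = \RS{a_0;\epsilon^{k-1},a_k,\ldots}$. I would first invoke the reflection symmetry $J:z\mapsto -\bar z$, which sends $\Sigma^n$ to $\Sigma^{-n}$, preserves $\Sgeo$, and conjugates $\Fqx$ compatibly, to reduce to the case $\epsilon=+1$, so $\xi>\tfrac{3\lambda}{2}$ and $a_0\ge 2$. By the proof of Lemma \ref{lem:If-a0ge2then_strongly_reduced}, $\mathbf{z}=\PT(\xi,\eta)$ then lies in $\Sigma^2$ if $\xi\in(\tfrac{3\lambda}{2},\lambda+1]$ and in $\Sigma^3$ if $q$ is odd and $\xi>\lambda+1$.

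The central observation is that each application of $\Fqx$ advances the geodesic by one intersection with $G_q\cdot\partial\mathcal{F}_q$, and that since $\Fqx$ acts as a shift on the code $\Bi$, the geodesic $\gamma^{(i)}:=\Fqx^i\gamma$ has forward endpoint with regular $\lambda$-fraction $\RS{a_i;a_{i+1},\ldots}$, where $a_1=\cdots=a_{k-1}=1$ and $a_k\ne 1$. For $1\le i\le k-1$ this places $|\xi^{(i)}|$ in $(\tfrac{\lambda}{2},\tfrac{3\lambda}{2}]$, violating $|\xi^{(i)}|>\tfrac{3\lambda}{2}$, and the backward endpoint of $\gamma^{(i)}$ acquires a leading run of $-1$'s in its dual fraction, which keeps $\xi^{(i)}\eta^{(i)}>0$. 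Hence none of $\gamma^{(1)},\ldots,\gamma^{(k-1)}$ is strongly reduced, so none produces a return to $\Sigma$, while $\gamma^{(k)}$ is strongly reduced and $\PP\gamma^{(k)}$ is the first return. This essentially reproduces the logic of Lemma \ref{lem:first_return_map_as_Fqx^j+1_as_a0=00003D1}, only starting at $a_0\ge 2$ instead of $a_0=1$.

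It then remains to identify the piece $\Sigma^n$ containing $\PP\gamma^{(k)}$ and match with Definition \ref{def:Define_K}. For generic $a_k$ the geodesic $\gamma^{(k)}$ meets $L_{-1}\cup L_0$ inwards, so $\PP\gamma^{(k)}\in\Sigma^{-1}\cup\Sigma^0$ and $n=0$. The three exceptional configurations correspond to the $\epsilon^{k-1}$ prefix saturating a forbidden-block threshold of Lemmas \ref{lem:forbidden-block-even}--\ref{lem:forbidden-block-odd}: when $q$ is even with $k=h$ and $a_h\ge 2$, the forward endpoint $\xi^{(k)}$ is forced into $(\tfrac{3\lambda}{2},\lambda+1]$ so the relevant intersection moves onto $L_2$ and $n=\epsilon\cdot 2$; when $q$ is even with $k=h+1$ (necessarily $a_{h+1}\le -1$) the intersection lies on $L_1$ and $n=\epsilon$; and when $q$ is odd with $k=h+1$ the intersection lies on $L_3$ and $n=\epsilon\cdot 3$. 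These identifications rely on the concrete descriptions of $\mathcal{I}_j$ in Remarks \ref{rem:For-even-q-definitions} and \ref{rem:For-odd-q-definitions} together with Lemma \ref{lem:Following_intersections}.

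I expect the main obstacle to lie in this last step: the combinatorial case analysis at the threshold values $k=h$ and $k=h+1$, and the verification that in each case the endpoints $(\xi^{(k)},\eta^{(k)})$ land precisely in the range characterizing the asserted $\Sigma^n$ rather than in some neighbouring piece. This will demand careful bookkeeping with the arcs $\chi_j,\omega_j$ through the criterion of Lemma \ref{lem:intersect_L1} and Corollary \ref{cor:intersects_L1_translate}, together with vigilance in threading the sign $\epsilon=\pm 1$ back through the initial $J$-symmetrization.
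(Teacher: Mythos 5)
Your outline misidentifies where the work in this lemma actually lies, and the central claim on which it rests is false. You assert that ``each application of $\Fqx$ advances the geodesic by one intersection with $G_q\cdot\partial\mathcal{F}_q$,'' and you then conclude that because none of $\gamma^{(1)},\ldots,\gamma^{(k-1)}=\Fqx^{k-1}\gamma$ is strongly reduced, no return occurs before $\gamma^{(k)}$. But the first return map of Definition \ref{def:First_return_map} is tested at \emph{every} intersection $w_n$ of $\gamma$ with the tessellation $G_q\,\partial\mathcal{F}$, each with its own uniquely determined map $A_n\in G_q$, and these $A_n$ are in general not powers of $\Fqx$. A single application of $\Fqx$ (i.e.\ of $ST^{-a_0}$ with $a_0\ge 2$) corresponds to many boundary crossings: the geodesic first traverses the translates $T^nL_{-1}$, $1\le n\le k-1$, with $A_n=T^{-n}$ (ruled out as returns only via Lemma \ref{lem:translates_not_reduced}), and then, depending on whether it exits through $T^{a_0}L_0$ or through $T^{a_0-1}L_0$, may pass through the entire fan of arcs $\overline{\omega}_j,\overline{\chi}_j$ around the elliptic point $T^{a_0-1}\rho$, whose associated maps are $(TS)^{j+1}T^{1-a_0}$ and $(ST)^jST^{1-a_0}$. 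Showing that none of \emph{those} crossings produces a return is precisely the content of the technical Lemmas \ref{lem:even_crossing_dj_gives_no_return} and \ref{lem:odd_crossing_dj_gives_no_return} (which in turn need Lemma \ref{lem:if_intersects_a0minus1} and the explicit orbit data of Remarks \ref{rem:For-even-q-definitions} and \ref{rem:For-odd-q-definitions}); your proposal never engages with this dichotomy, and restricting attention to the $\Fqx$-orbit silently assumes the conclusion of those lemmas.

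Your reduction to $\epsilon=+1$, the observation that $|a_i|=1$ and $a_{i-1}a_i>0$ force $\gamma^{(i)}\notin\Sgeo$ for $1\le i\le k-1$, and the rough matching of the threshold cases $k=h$, $k=h+1$ with $\Sigma^{\pm 2}$, $\Sigma^{\pm 1}$, $\Sigma^{\pm 3}$ are all consistent with the paper's argument. But you place ``the main obstacle'' in identifying which piece $\Sigma^n$ receives the return, when in fact that identification falls out of the same computation; the genuine obstacle is verifying non-return at the intermediate crossings whose reducing maps lie outside the $\Fqx$-orbit. As written, the proposal has a gap exactly there.
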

\begin{proof}
Consider $\mathbf{z}_{0}=\PP\gamma\in$$\Sigma$ with $\gamma=\gamma\left(\xi,\eta\right)\in\Sgeo$
and assume without loss of generality that $\xi>0$ with $\Cq\left(\xi\right)=\RS{a_{0};1^{j},a_{j+1},\ldots}$
for some $j\ge0$, $a_{j+1}\ne1$ and $a_{j+1}\ne\pm1$ if $j=0$
(the case of $-1$'s is analogous). Recall the notation in Definition
\ref{def:First_return_map}, in particular the sequence $\left\{ w_{n}\right\} _{n\in\mathbb{Z}}$
and the corresponding maps $A_{n}\in G_{q}$. It is clear, that $w_{n}$
gives a return if and only if $A_{n}\gamma\in\Sgeo$.

There are two cases to consider: Either $\mathbf{z}_{0}\in\Sigma^{-1}\cup\Sigma^{0}$
respectively $\mathbf{z}_{0}\in\Sigma^{-1}\cup\Sigma^{0}\cup\Sigma^{3}$
in the case of odd $q$ or $\mathbf{z}_{0}\in\Sigma^{2}$. In Figure
\ref{fig:Geodesics-leaving} these different possibilities are displayed,
$\PP\gamma_{A}\in\Sigma^{-1}$, $\PP\gamma_{B}\in\Sigma^{0},$$\PP\gamma_{C}\in\Sigma^{2}$
and $\PP\gamma_{D}\in\Sigma^{3}$. It is clear, that if $\mathbf{z}_{0}\in\Sigma^{2}$
then $w_{0}=TSz_{0}\in L_{2}$ and the sequence of $\left\{ w_{n}\right\} $
is essentially different from the case $\mathbf{z}_{0}\not\in\Sigma^{2}$
when $w_{0}=z_{0}$.%
\begin{figure}
\caption{\label{fig:Geodesics-leaving}Geodesics leaving the Poincaré section
($q$=7)}
\includegraphics[scale=0.5]{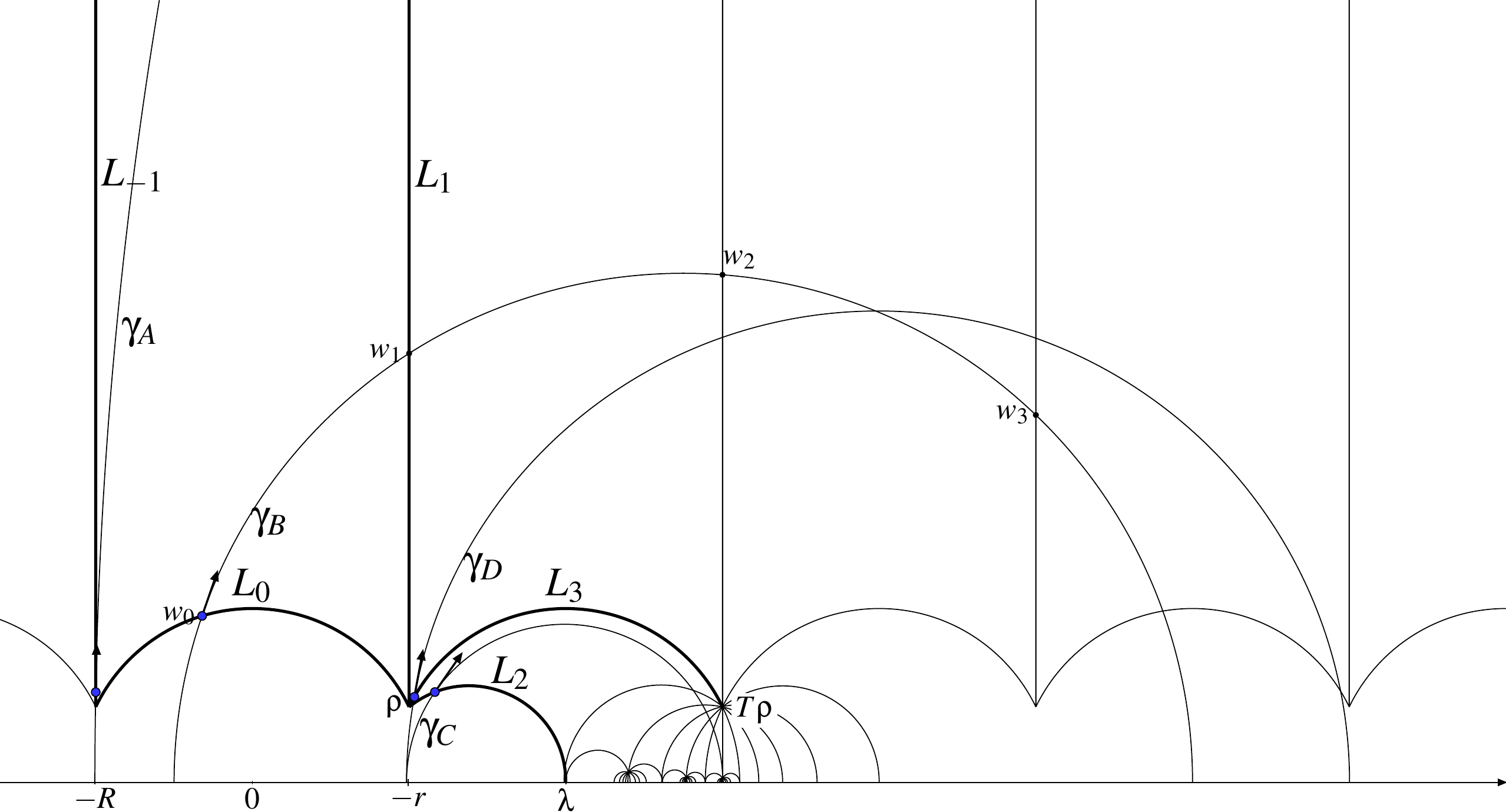}
\end{figure}
\begin{figure}
\caption{\label{fig:Geodesics-returning-even}Geodesics returning to the Poincaré
section ($q=8$)}
\includegraphics[scale=0.5]{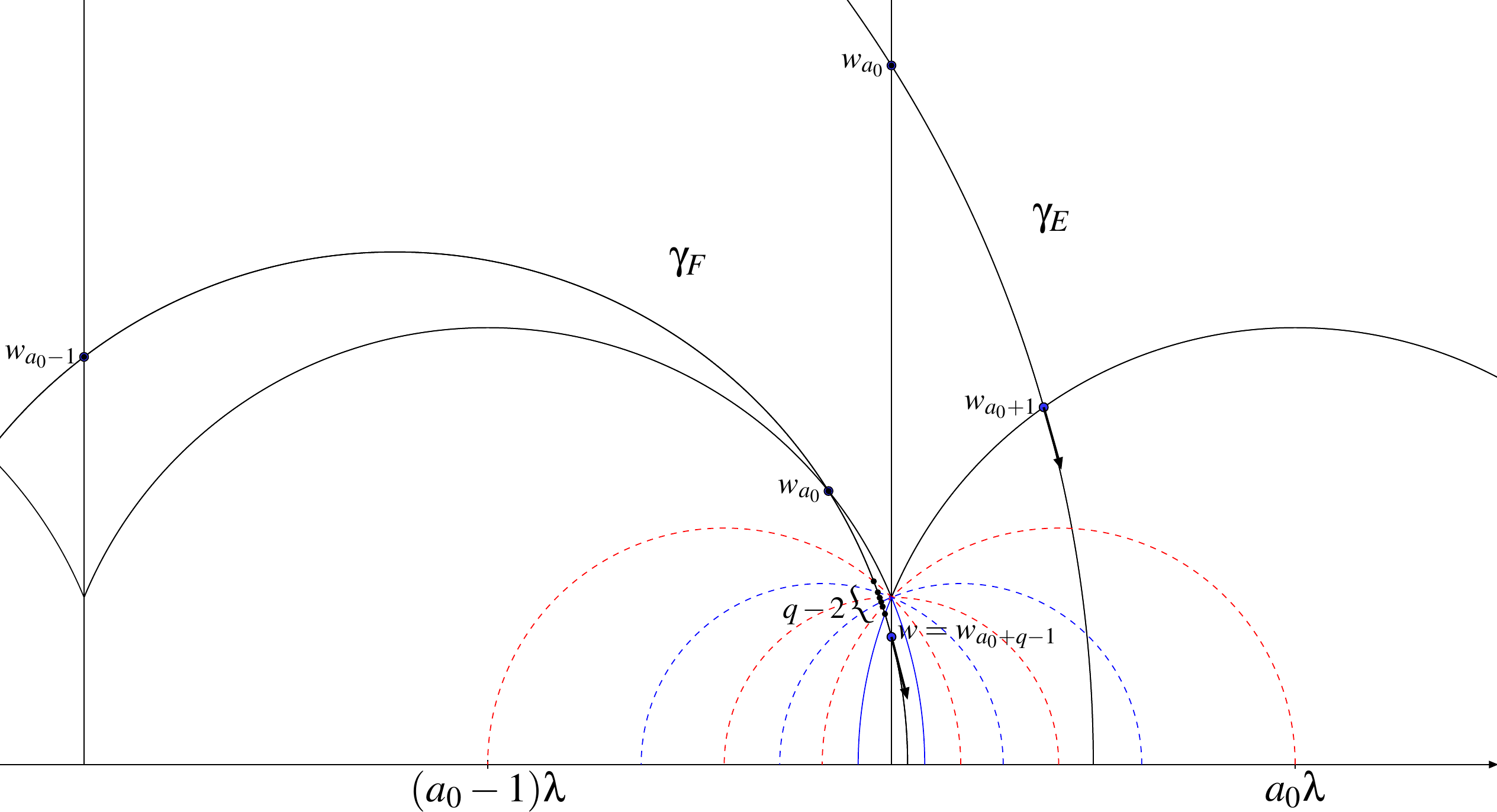}
\end{figure}

\textbf{Case 1:} If $\mathbf{z}_{0}\not\in\Sigma^{2}$ (see geodesics
$\gamma_{A}$, $\gamma_{B}$ and $\gamma_{D}$ in Figure \ref{fig:Geodesics-leaving}),
then $w_{n}\in T^{n}L_{-1}$ for $1\le n\le k-1$ and $k=a_{0}-1,a_{0}$
or $a_{0}+1$ depending on whether $\mathbf{z}_{0}\in\Sigma^{0,1}$
or $\Sigma^{3}$ and whether the next intersection is on $T^{a_{0}}L_{0}$
or $T^{a_{0}-1}L_{0}$. Then either $w_{k}\in T^{a_{0}}L_{0}$ or
$w_{k}\in T^{a_{0}-1}L_{0}$ (see geodesics $\gamma_{E}$ and $\gamma_{F}$
in Figure \ref{fig:Geodesics-returning-even}). Since $A_{n}=T^{-n}$
for $w_{n}\in T^{n}L_{-1}$ and, as we will show in Lemma \ref{lem:translates_not_reduced}
$T^{-n}\gamma\notin\Rgeo$ none of the $w_{n}\in T^{n}L_{-1}$ for
$1\le n\le k-1$ gives a return to $\Sigma$. There are now two possibilities:
\begin{lyxlist}{00.00.0000}
\item [{(i)}] If $w_{k}\in T^{a_{0}}L_{0}$, then $A_{k}=ST^{-a_{0}}$
and $\gamma_{k}=A_{k}\gamma=\Fqx\gamma$. If $j=0$ it is clear that
$\gamma'\in\Sgeo$ and $\RET\mathbf{z}_{0}=\PP\circ\Fqx\gamma\in\Sigma^{0}$.
If $j\ge1$, by Lemma \ref{lem:first_return_map_as_Fqx^j+1_as_a0=00003D1}
applied to $\gamma'$ we get $\RET\mathbf{z}_{0}=\PP\circ\Fqx^{j}\gamma'=\PP\circ\Fqx^{j+1}\gamma\in\Sigma^{\n\left(\xi\right)}$.

\item [{(ii)}] If $w_{k}\in T^{a_{0}-1}L_{0}$, then we will show in Lemma
\ref{lem:even_crossing_dj_gives_no_return} and \ref{lem:odd_crossing_dj_gives_no_return}
that none of the arcs emanating from $T^{a_{0}-1}\rho$ gives a return
(cf.~Figures \ref{fig:Geodesics-returning-even} and \ref{fig:Geodesics-from-rho})
except for the next return at $T^{a_{0}-1}\left(ST^{-1}\right)^{h+1}L_{1}$.
Furthermore it follows, that $\RET\mathbf{z}_{0}=\PP\circ\Fqx^{k}\circ\PP^{-1}\mathbf{z}_{0}\in\Sigma^{n}$
with $k=\K\left(\xi\right)$ (here $h$ or $h+1$) and $n=\n\left(\xi\right)$. 
\end{lyxlist}
\textbf{Case 2: }If $\mathbf{z}_{0}\in\Sigma^{2}$ then $\frac{3\lambda}{2}<\xi<\lambda+1$
and $\gamma$ must intersect $T\overline{L}_{1}$ below $T\rho$.
By the same arguments as in \textbf{Case 1} we conclude that the first
return is given by $w_{q-1}\in T\left(ST^{-1}\right)^{h+1}L_{1}$
and $\RET\mathbf{z}_{0}=\PP\circ\Fqx^{k}\circ\PP^{-1}\mathbf{z}_{0}\in\Sigma^{n}$
where $k=\K\left(\xi\right)$ and $n=\n\left(\xi\right)$ as in \textbf{Case
1} (ii). In all cases we see, that the first return map $\RET:\Sigma\rightarrow\Sigma$
is given by $\RET=\PP\circ\Fqx^{k}\circ\PP^{-1}$ or alternatively
by $\RETx=\Fqx^{k}$ where $k=1,$ $h$ or $h+1$ depending on $\xi$. 
\end{proof}
By combining Lemma \ref{lem:first_return_map_as_Fqx^j+1_as_a0=00003D1}
and \ref{lem:first_return_map_as_Fqxk} it is easy to see, that the
first return map $\PT$ is determined completely in terms of the coordinate
$\xi$: 

\begin{prop}
\label{pro:even_The-return-map-is-Fxk-in_domain}If $\mathbf{z}\in\Sigma$
with $\PT^{-1}\mathbf{z}=\left(\xi,\eta\right)\in\Sred$ then $\RET\mathbf{z}=\PT\circ\Fqx\,^{k}\,\circ\PT^{-1}\mathbf{z}\in\Sigma^{n}$
where $k=\K\left(\xi\right)$ and $n=\n\left(\xi\right)$.  
\end{prop}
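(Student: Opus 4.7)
The plan is to reduce the proposition to the two preceding lemmas, which together already cover the two regimes distinguished in the definition of strongly reduced geodesics. First I would split $\Sred$ according to Definition \ref{def:strongly_reduced}: either $|\xi|>\tfrac{3\lambda}{2}$ (equivalently $|a_{0}|\ge 2$ in the regular expansion of $\xi$), or $\xi\eta<0$.

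In the first regime I would simply invoke Lemma \ref{lem:first_return_map_as_Fqxk}, whose conclusion is precisely what the proposition asserts with $k=\K(\xi)$ and $n=\n(\xi)$ read off Definition \ref{def:Define_K}. In the second regime, since $(\xi,\eta)\in\Omega^{*}=\Sx(\Omega^{\infty})$ forces $|\xi|\ge 2/\lambda$, I would first note that necessarily $|a_{0}|=1$, so that after possibly replacing $(\xi,\eta)$ by $(-\xi,-\eta)$ (an involution that commutes with $\Fqx$ and with $\RETx$ thanks to oddness of $\Fq$ and $\Fqd$) I may assume $a_{0}=+1$. Writing $\Cq(\xi)=\RS{1;1^{j},a_{j+1},\ldots}$ with $a_{j+1}\ne 1$, Lemma \ref{lem:first_return_map_as_Fqx^j+1_as_a0=00003D1} then gives $\RETx(\xi,\eta)=\Fqx^{j+1}(\xi,\eta)$, which matches $\K(\xi)=j+1$ while $\n(\xi)=0$ falls into the \emph{else} branch of Definition \ref{def:Define_K}, and the return lands in $\Sigma^{0}$, exactly as required.

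Putting the two cases together yields the proposition. I expect the only real difficulty not already folded into the two lemmas to be the purely combinatorial chore of matching the exceptional values $\n(\xi)\in\{\pm 1,\pm 2,\pm 3\}$ in Definition \ref{def:Define_K} with the corresponding geometric outcomes in the proof of Lemma \ref{lem:first_return_map_as_Fqxk} --- i.e.\ identifying whether the limiting intersection lies on $L_{\pm 1}'$, on $L_{\pm 2}$, or on $L_{\pm 3}$, and checking this against the forbidden-block constraints from Lemmas \ref{lem:forbidden-block-even} and \ref{lem:forbidden-block-odd}. This is bookkeeping rather than new content, and the genuinely hard part of the whole argument is already carried out inside Lemma \ref{lem:first_return_map_as_Fqxk} (via the sequences of intersections controlled in Lemma \ref{lem:Following_intersections} and the geometric auxiliary lemmas of Section \ref{sub:Geometric-Lemmas}).
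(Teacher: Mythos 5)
Your proposal is correct and follows essentially the same route as the paper, whose own proof is precisely the observation that Lemmas \ref{lem:first_return_map_as_Fqxk} and \ref{lem:first_return_map_as_Fqx^j+1_as_a0=00003D1} together cover the two regimes $\left|\xi\right|>\frac{3\lambda}{2}$ and $\xi\eta<0$ of Definition \ref{def:strongly_reduced}. The one assertion you leave implicit is that $\n\left(\xi\right)=0$ whenever $\left|a_{0}\right|=1$ and $\left|\xi\right|\ge\frac{2}{\lambda}$; this does hold, since the exceptional branches $k=h$ (with $a_{h}\ge2$) and $k=h+1$ of Definition \ref{def:Define_K} would place $\xi$ below $\frac{2}{\lambda}$, but it deserves the one-line check.
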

Having derived explicit expressions for the first return map, in a
next step we want to get explicit formulas for the first return time,
i.e. the hyperbolic length between the successive returns to $\Sigma$.

\subsection{The first return time }

\begin{lem}
\label{lem:d=00003Dlng0-lng1+2lnF}Let $\gamma=\gamma\left(\xi,\eta\right)\in\Sgeo$
with $\xi=\RS{a_{0};\left(\epsilon\right)^{k-1},a_{k},\ldots}$ ($\epsilon=\sign a_{0}$)
and $\PP\gamma=\mathbf{z}_{0}=\left(z_{0},\theta_{0}\right)$ and
let $\mathbf{z}_{1}=\RET\mathbf{z}_{0}$. For $w_{0}\in\gamma$ the
point corresponding to $\mathbf{z}_{1}$, i.e. $w_{0}\in G_{q}z_{1},$
one has \[
d\left(z_{0},w_{0}\right)=\ln g\left(z_{0},\gamma\right)-\ln g\left(z_{1},\RETx\gamma\right)+2\ln F\left(\gamma\right)\]
where $F\left(\gamma\right)=\prod_{j=1}^{k}\left|\xi_{j}\right|$
with $k=\K\left(\xi\right)$ as in Definition \ref{def:Define_K}
and $\xi_{j}=S\Fq^{j}S\xi$. 
\end{lem}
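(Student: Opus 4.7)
The plan is to combine the hyperbolic distance formula along $\gamma$ with the conformality of the element $A\in G_q$ realizing $\RETx\gamma = A\gamma$, and then to identify $A'(\xi)$ as a product over the $k$ factors of the form $ST^{-a}$ that make up the iteration $\RETx=\Fqx^k$.

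First, Lemma~\ref{lem:hyp_dist_points_on_geod} applied to the two points $z_0,w_0\in\gamma$ (with $w_0$ further along toward $\xi=\gamma_+$) gives
\[
d(z_0,w_0) \;=\; \ln g(z_0,\gamma) - \ln g(w_0,\gamma).
\]
The definition of the first return yields $z_1=Aw_0$ and $\RETx\gamma = A\gamma$, so by Lemma~\ref{lem:g(Az,Aw)=00003D} we have $g(z_1,\RETx\gamma)=A'(\xi)\,g(w_0,\gamma)$, and therefore
\[
d(z_0,w_0) \;=\; \ln g(z_0,\gamma) - \ln g(z_1,\RETx\gamma) + \ln A'(\xi).
\]
The lemma thus reduces to showing $\ln A'(\xi)=2\ln F(\gamma)$.

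To identify $A$, I would invoke Proposition~\ref{pro:even_The-return-map-is-Fxk-in_domain} to write $\RETx=\Fqx^k$ on $\Sred$. Conjugating the explicit formula for $\Fqx$ on $\Omega$ by the involution $\Sx$ of Definition~\ref{def:def_omega_star} shows that one $\Fqx$-step on $\Sred$ acts on the endpoint pair $(\xi,\eta)$ simultaneously by the single Möbius transformation $ST^{-a_0}$, where $a_0=\nlm{\xi}$ is the leading coefficient of the regular $\lambda$-fraction of $\xi$. Since each step shifts the expansion of $\xi$ by one slot, iterating yields
\[
A \;=\; ST^{-a_{k-1}}\,ST^{-a_{k-2}}\cdots ST^{-a_1}\,ST^{-a_0}.
\]

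Finally I would compute $A'(\xi)$ by the chain rule, using $(ST^{-a})'(x)=(x-a\lambda)^{-2}$. Let $\tilde\xi_j$ denote the $j$-th tail produced by the algorithm of Lemma~\ref{lem:generating_map_Fq_gives_regular_expansion}, so $\tilde\xi_1=\xi-a_0\lambda$ and $\tilde\xi_{j+1}=\Fq\tilde\xi_j$. Writing $\xi^{(0)}=\xi$ and $\xi^{(j)}=ST^{-a_{j-1}}\xi^{(j-1)}$, a short induction gives $\xi^{(j-1)}-a_{j-1}\lambda = \tilde\xi_j$, so $A'(\xi)=\prod_{j=1}^k \tilde\xi_j^{-2}$. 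Since the expansion of $S\xi$ begins $\RS{0;a_0,a_1,\ldots}$, one checks $\Fq^j S\xi=\tilde\xi_j$ for $j\ge 1$, whence the quantity $\xi_j=S\Fq^j S\xi$ appearing in $F(\gamma)$ equals $-1/\tilde\xi_j$, and therefore $F(\gamma)^2=\prod_{j=1}^k\xi_j^2=\prod_{j=1}^k\tilde\xi_j^{-2}=A'(\xi)$. The main obstacle is the correct identification of $A$ through the conjugation of $\Fqx$ by $\Sx$ and the careful bookkeeping of which tail $\tilde\xi_j$ appears at each stage of the chain rule; once this is in place, everything else is a routine algebraic manipulation.
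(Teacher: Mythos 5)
Your proof is correct and follows essentially the same route as the paper's: both reduce the claim to Lemma \ref{lem:hyp_dist_points_on_geod} plus the transformation rule of Lemma \ref{lem:g(Az,Aw)=00003D} applied along the decomposition $\RETx=\Fqx^{k}$ from Proposition \ref{pro:even_The-return-map-is-Fxk-in_domain}, with each elementary factor $ST^{-a_{j-1}}$ contributing $\tilde\xi_{j}^{-2}=\xi_{j}^{2}$ to the derivative. The only (cosmetic) difference is that the paper telescopes the identity one $\Fqx$-step at a time, whereas you apply it once to the composite map and then expand $A'(\xi)$ by the chain rule.
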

\begin{proof}
Set $\gamma_{j}:=\Fqx^{j}\gamma=B_{j}\gamma$ and $w_{j}:=B_{j}w_{0}$.
By Lemma \ref{lem:g(Az,Aw)=00003D} $g\left(w_{0},\gamma\right)=g\left(B_{1}^{-1}w_{1},B_{1}^{-1}\gamma_{1}\right)=g\left(w_{1},\gamma_{1}\right)\xi_{1}^{-2}$.
Applying the same formula to $g\left(w_{j},\gamma_{j}\right)$ for
$j=1,\ldots,k$ we get $\ln g\left(w_{0},\gamma\right)=\ln g\left(z_{1},\RETx\gamma\right)-2\ln\prod_{j=1}^{k}\left|\xi_{j}\right|$.
The statement then follows by Proposition \ref{pro:even_The-return-map-is-Fxk-in_domain}
and Lemma \ref{lem:hyp_dist_points_on_geod}.
\end{proof}
\begin{lem}
If $\gamma=\gamma\left(\xi,\eta\right)$ is a strongly reduced closed
geodesic with $\Cq\left(S\xi\right)=\RS{\overline{a_{1},\ldots,a_{n}}}$
of minimal period $n$ and $\xi_{j}=S\Fq^{j}S\xi$, then the hyperbolic
length of $\gamma$ is given by \begin{equation}
l\left(\gamma\right)=2\sum_{j=1}^{n}\ln\left|\xi_{j}\right|=-\ln\prod_{j=1}^{n}\left|\RS{\overline{a_{j+1},\ldots,a_{n},a_{1},\ldots,a_{j}}}\right|^{2}.\label{eq:length_of_closed_geodesic}\end{equation}

\end{lem}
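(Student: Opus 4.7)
The plan is to identify the hyperbolic element of $G_{q}$ whose attracting fixed point is $S\xi$, then relate its multiplier to the derivative of $\Fq^{n}$ and finally evaluate that derivative by the chain rule. Concretely, since $\Cq\left(S\xi\right)=\RS{\overline{a_{1},\ldots,a_{n}}}$ with minimal period $n$, Lemma \ref{lem:hyperbolic_are_periodic_fractions} furnishes a hyperbolic map $A=ST^{a_{1}}ST^{a_{2}}\cdots ST^{a_{n}}\in G_{q}$ having $S\xi$ as its \emph{attracting} fixed point; the other fixed point is conjugate to $S\xi$ and primitivity of the closed geodesic $\gamma$ follows from the minimality of $n$. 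Using $S^{2}=Id$, the inverse equals $A^{-1}=T^{-a_{n}}S\cdots T^{-a_{1}}S$. On a small neighborhood of $S\xi$ the floor function in the definition of $\Fq$ is constant equal to $a_{j}$ at the $j$-th step, so $\Fq^{n}$ acts as the M\"obius transformation $T^{-a_{n}}S\cdots T^{-a_{1}}S$, i.e.\ $\Fq^{n}\equiv A^{-1}$ locally near $S\xi$.

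Next, I would invoke the standard fact from hyperbolic geometry that if $B\in\PSLR$ is hyperbolic with attracting fixed point $p$, then the length of the associated closed geodesic on $\Gamma\backslash\H$ is $l=-\ln B'\left(p\right)$ (conjugate $B$ to $z\mapsto e^{-l}z$ to verify). Applying this to $A$ and using $A^{-1}\equiv\Fq^{n}$ near $S\xi$ gives
\[
l\left(\gamma\right)=-\ln A'\left(S\xi\right)=\ln\left(A^{-1}\right)'\left(S\xi\right)=\ln\left(\Fq^{n}\right)'\left(S\xi\right).
\]
This quantity is positive since $\Fq$ is expanding at the periodic point $S\xi$, consistent with $l\left(\gamma\right)>0$.

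The chain rule, combined with the computation $\Fq'\left(x\right)=\frac{d}{dx}\left(-\frac{1}{x}-a\lambda\right)=\frac{1}{x^{2}}$ valid wherever $\Fq$ is smooth, then yields
\[
\left(\Fq^{n}\right)'\left(S\xi\right)=\prod_{j=0}^{n-1}\Fq'\left(\Fq^{j}S\xi\right)=\prod_{j=0}^{n-1}\frac{1}{\left(\Fq^{j}S\xi\right)^{2}}=\prod_{j=1}^{n}\frac{1}{\left(\Fq^{j}S\xi\right)^{2}},
\]
where the last equality uses the periodicity $\Fq^{n}S\xi=S\xi$. Since $\xi_{j}=S\Fq^{j}S\xi=-1/\Fq^{j}S\xi$, one has $\ln\left|\xi_{j}\right|=-\ln\left|\Fq^{j}S\xi\right|$, which gives $l\left(\gamma\right)=2\sum_{j=1}^{n}\ln\left|\xi_{j}\right|$. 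Finally, because $\Fq$ acts as a shift on regular $\lambda$-fractions (cf.\ the remark after Lemma \ref{lem:generating_map_Fq_gives_regular_expansion}), $\Fq^{j}S\xi$ is the point whose regular $\lambda$-fraction is $\RS{\overline{a_{j+1},\ldots,a_{n},a_{1},\ldots,a_{j}}}$, and the second equality in \eqref{eq:length_of_closed_geodesic} follows immediately.

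The only genuine subtleties are (i) the identification $\Fq^{n}\equiv A^{-1}$ as M\"obius transformations, which requires checking that none of the iterates $\Fq^{j}S\xi$ hits a discontinuity of the floor function (guaranteed by the purely periodic, hence $G_{q}$-irrational, nature of $S\xi$), and (ii) the verification that $S\xi$ really is attracting for $A$---either cited from Lemma \ref{lem:hyperbolic_are_periodic_fractions} or deduced from the expanding nature of $\Fq$. Neither point presents a serious obstacle and the remaining computations are purely formal.
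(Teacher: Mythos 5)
Your proof is correct, but it takes a genuinely different route from the one in the paper --- in fact it is precisely the alternative derivation that the authors themselves sketch in the remark immediately following the lemma (relating the length to the axis of the hyperbolic matrix fixing $\xi$ and observing that this matrix is given by $\Fq^{n}$). The paper instead argues dynamically: it enumerates the successive returns $\left(z_{j},\theta_{j}\right)$ of $\gamma$ to the cross-section $\Sigma$, applies the return-time formula of Lemma \ref{lem:d=00003Dlng0-lng1+2lnF} to each piece, and notes that the boundary terms $\ln g\left(z_{j},\RETx^{j}\gamma\right)$ telescope to zero around the closed orbit, leaving $l\left(\gamma\right)=2\sum_{j}\ln F\left(\RETx^{j}\gamma\right)=2\ln\prod_{i=1}^{n}\left|S\Fq^{i}S\xi\right|$. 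Your argument --- identify $A=ST^{a_{1}}\cdots ST^{a_{n}}$ via Lemma \ref{lem:hyperbolic_are_periodic_fractions}, observe that $\Fq^{n}$ coincides with the M\"obius map $A^{-1}$ in a neighbourhood of $S\xi$, and evaluate $l\left(\gamma\right)=\ln\left(\Fq^{n}\right)'\left(S\xi\right)$ by the chain rule with $\Fq'\left(x\right)=x^{-2}$ --- is more elementary and bypasses the cross-section machinery entirely; what it does not exhibit (and the paper's proof does) is the decomposition of $l\left(\gamma\right)$ as a sum of first-return times, which is the form the authors need for the return-time analysis of Section \ref{sub:First-return-map}. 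One point you should make fully explicit, in either approach: the minimality of $n$ is needed to guarantee that $A$ is \emph{primitive} (equivalently, that the cycle of returns closes only after the full period), since otherwise the right-hand side of (\ref{eq:length_of_closed_geodesic}) computes an integer multiple of the geodesic length; you flag this but do not prove it, and the paper is similarly terse on the point.
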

\begin{proof}
Denote by $\left(z_{j},\theta_{j}\right)\in\Sigma$ the successive
returns of $\gamma$ to $\Sigma$ and let $w_{j-1}\in\gamma$ be the
point on $\gamma$ corresponding to $z_{j}$. If $\gamma$ is closed,
the set $\left\{ z_{j}\right\} _{j\ge0}$ is finite with $N+1$ elements
for some $N+1\le n$, i.e. $z_{N+1}=z_{0}$. It is clear that the
length of $\gamma$ is given by adding up the lengths of all pieces
between the successive returns to $\Sigma$ and a repeated application
of Lemma \ref{lem:d=00003Dlng0-lng1+2lnF} gives us \begin{align*}
l\left(\gamma\right) & =\sum_{j=0}^{N}d\left(z_{j},w_{j}\right)=\sum_{j=0}^{N}\left(\ln g\left(z_{j},\RETx^{j}\gamma\right)-\ln g\left(w_{j},\RETx^{j}\gamma\right)\right)\\
 & =\sum_{j=0}^{N}\left(\ln g\left(z_{j},\RETx^{j}\gamma\right)-\ln g\left(z_{j+1},\RETx^{j+1}\gamma\right)+2\ln F\left(\RETx^{j}\gamma\right)\right)\\
 & =2\sum_{j=0}^{N}\ln F\left(\RETx^{j}\gamma\right)=2\ln\prod_{i=1}^{n}\left|S\Fq^{i}S\xi\right|.\end{align*}

\end{proof}
\begin{rem}
Formula (\ref{eq:length_of_closed_geodesic}) can also be obtained
by relating the length of $\gamma$$\left(\xi,\eta\right)$ to the
axis of the hyperbolic matrix fixing $\xi$ and observing that this
matrix must be given by the map $\Fq^{n}$ acting on $\xi$.

In the case of $\PSLZ$ and the Gauss (regular) continued fractions
formula (\ref{eq:length_of_closed_geodesic}) is well-known.
\end{rem}

We are now in a position to discuss the first return time. By Lemma
\ref{lem:hyp_dist_points_on_geod} it is clear that we need to calculate
the function $g_{j}\left(\xi,\eta\right)=\frac{\left|w_{j}-\xi\right|^{2}}{v_{j}},$
where $w_{j}=u_{j}+iv_{j}=\mathcal{Z}_{j}\left(\xi,\eta\right)$ for
all the intersection points in Corollary \ref{cor:_explicit_formulas_for_g023}.

\begin{defn}
\label{def:Xi_k}Let $B\in\PSLR$ be given by $Bz=\frac{2-\lambda z}{\lambda-2z}$.
Set $\delta_{n}\left(\xi,\eta\right):=\eta-T^{n}BT^{-n}\xi$ and $\Xi_{+}:=\left\{ \left(\xi,\eta\right)\in\Sred\,\bigg{|}\,\delta_{\nlm{x}-1}\left(\xi,\eta\right)\ge0\right\} $,
$\Xi_{-}:=\left\{ \left(\xi,\eta\right)\in\Sred\,\bigg{|}\,\delta_{\nlm{x}-1}\left(\xi,\eta\right)<0\right\} $. 
\end{defn}
\begin{prop}
\label{pro:return_time}The first return time $r$ for the geodesic
$\gamma\left(\xi,\eta\right)$ is given by the function \begin{align*}
r\left(\xi,\eta\right) & =\ln g_{A}\left(\xi,\eta\right)-\ln g_{\n\left(\xi\right)}\left(\Fqx^{\K\left(\xi\right)}\left(\xi,\eta\right)\right)+2\ln F\gamma\left(\xi,\eta\right)\,\,\mbox{for}\,\,\xi>0,\\
\text{respectively}\\
r\left(\xi,\eta\right) & =r\left(-\xi,-\eta\right),\,\mbox{for}\,\xi<0.\end{align*}
Thereby\[
A=A\left(\xi,\eta\right)=\begin{cases}
-1, & -R\le\eta<-\frac{\lambda}{2},\,\xi>-B\left(-\eta\right),\\
0, & -R\le\eta<-\frac{\lambda}{2},\,\xi<-B\left(-\eta\right),\,\mbox{or}-\frac{\lambda}{2}\le\eta<-r,\,\xi\ge B\left(\eta\right),\\
2, & \frac{3}{4}\lambda-\frac{1}{\lambda}<\eta<-r,\,\frac{3\lambda}{2}<\xi<B\left(\eta\right)<\lambda+1,\\
3, & \lambda-1<\eta<-r,\,\lambda+1<\xi<B\left(\eta\right),\end{cases}\]
$\K\left(\xi\right)$ and $\n\left(\xi\right)$ are defined as in
Definition \ref{def:Define_K}, whereas the functions $g_{j}\left(\xi,\eta\right)=g\left(z_{j},\gamma\right)$
for $z_{j}\in L_{j}$ are given as in Corollary \ref{cor:_explicit_formulas_for_g023}
and $F\left(\gamma\right)$ is given as in Lemma \ref{lem:d=00003Dlng0-lng1+2lnF}. 
\end{prop}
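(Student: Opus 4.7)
The plan is to assemble the formula from ingredients already in place and then justify the case split defining $A(\xi,\eta)$. By Lemma \ref{lem:d=00003Dlng0-lng1+2lnF}, the first return time is
\[
r(\xi,\eta) = d(z_0,w_0) = \ln g(z_0,\gamma) - \ln g(z_1,\RETx\gamma) + 2\ln F(\gamma),
\]
where $\mathbf{z}_0 = \PT(\xi,\eta) = (z_0,\theta_0)$, $\mathbf{z}_1 = \RET\mathbf{z}_0$, and $w_0\in\gamma$ is the point on $\gamma$ corresponding to $z_1$. Proposition \ref{pro:even_The-return-map-is-Fxk-in_domain} identifies $\RETx(\xi,\eta)$ with $\Fqx^{\K(\xi)}(\xi,\eta)$ and locates $\mathbf{z}_1\in\Sigma^{\n(\xi)}$, so Corollary \ref{cor:_explicit_formulas_for_g023} gives $\ln g(z_1,\RETx\gamma) = \ln g_{\n(\xi)}(\Fqx^{\K(\xi)}(\xi,\eta))$. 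The symmetry $r(\xi,\eta) = r(-\xi,-\eta)$ follows from the involution $J:z\mapsto -\bar z$, which preserves $\mathcal{F}_q$ (swapping $L_{\pm 1}$), the hyperbolic metric, and the whole construction of $\Sigma$; so the main work concerns $\xi>0$.

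For $\xi>0$ I must match $\ln g(z_0,\gamma)$ with $\ln g_A(\xi,\eta)$ for the appropriate $A\in\{-1,0,2,3\}$. From the proof of Lemma \ref{lem:If-a0ge2then_strongly_reduced} I already know $\mathbf{z}_0\in\Sigma^A$ for some such $A$; unpacking the definitions, $w_0\in L_A$ with $A=-1$ or $0$ when $\mathbf{z}_0\in\Sigma^{-1}\cup\Sigma^0$ and $w_0 = T S z_0 \in L_2$ or $w_0 = T z_0 \in L_3$ when $\mathbf{z}_0\in\Sigma^2$ or $\Sigma^3$ respectively. In all four cases Corollary \ref{cor:_explicit_formulas_for_g023} then yields $g(w_0,\gamma) = g_A(\xi,\eta)$ directly, since $g_j$ is defined intrinsically in terms of the point on the geodesic.

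To pin down which value of $A$ applies on each subregion, the decisive tool is the M\"obius involution $B:z\mapsto (2-\lambda z)/(\lambda-2z)$ of Definition \ref{def:Xi_k}, which coincides with the order-$2$ elliptic element fixing $\rho$ from Lemma \ref{lem:intersect_L1}. A direct computation (the same as in the proof of that lemma, with $\rho$ replaced by $\rho_-$ when needed) shows that $\xi=B(\eta)$ is precisely the locus of geodesics $\gamma(\xi,\eta)$ passing through $\rho$, and $\xi=-B(-\eta)$ the locus passing through $\rho_-$. The case $A=-1$ corresponds to $\gamma$ entering $\mathcal{F}_q$ through $L_{-1}$, i.e.\ crossing $\overline{L}_{-1}$ above $\rho_-$, which gives $\eta<-\lambda/2$ and $\xi>-B(-\eta)$. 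The case $A=0$ is the complementary regime where $\gamma$ enters through $L_0$. Finally $A=2$, $A=3$ correspond to $\gamma$ crossing the translates $L_2$, $L_3$; the stated boundaries arise by solving $B(\eta)=3\lambda/2$ to get $\eta = 3\lambda/4 - 1/\lambda$ and $B(\eta)=\lambda+1$ to get $\eta=\lambda-1$, which demarcate the strips $3\lambda/2<\xi<B(\eta)<\lambda+1$ for $A=2$ and $\lambda+1<\xi<B(\eta)$ for $A=3$. Combining gives the stated formula.

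The main obstacle is the careful bookkeeping in the case split: verifying that the four regions listed for $A$ cover $\Sred\cap\{\xi>0\}$ without overlap and align precisely with the definitions of $\Sigma^{-1},\Sigma^0,\Sigma^2,\Sigma^3$. All geometric ingredients (intersections with vertical and circular geodesics, the action of the elliptic element $B$) are already available through Lemma \ref{lem:intersect_L1} and Corollaries \ref{cor:intersects_L1_translate} and \ref{cor:_explicit_formulas_for_g023}, so no new conceptual input is needed beyond the case analysis itself.
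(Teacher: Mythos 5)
Your proposal is correct and follows essentially the same route as the paper: both assemble the formula from Lemma \ref{lem:d=00003Dlng0-lng1+2lnF}, Proposition \ref{pro:even_The-return-map-is-Fxk-in_domain} and Corollary \ref{cor:_explicit_formulas_for_g023}, identify the regions defining $A(\xi,\eta)$ with the entry arcs $L_{-1},L_0,L_2,L_3$ via the elliptic involution $B$ (equivalently Corollary \ref{cor:intersects_L1_translate}), check $B\left(\tfrac{3}{4}\lambda-\tfrac{1}{\lambda}\right)=\tfrac{3\lambda}{2}$ and $B\left(\lambda-1\right)=\lambda+1$, and dispose of $\xi<0$ by the reflection symmetry of the cross-section.
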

\begin{proof}
Consider $\gamma=\gamma\left(\xi,\eta\right)\in\Sgeo$ with $\xi>0$
and suppose that $\PP\gamma=\mathbf{z}_{0}\in\Sigma$ and $\RET\mathbf{z}_{0}=\mathbf{z}_{1}\in\Sigma$
with $w\in\gamma$ corresponding to $z_{1}.$ Since geodesics are
parametrized by arc length the first return time is simply the hyperbolic
length between $z_{0}$ and $w$, i.e. \[
r\left(\xi,\eta\right)=d\left(z_{0},w\right)=\ln g\left(z_{0},\gamma\right)-\ln g\left(w,\gamma\right)=\ln g\left(z_{0},\gamma\right)-\ln g\left(z_{1},\Fqx\gamma\right)+2\ln F\left(\gamma\right)\]
by Lemma \ref{lem:d=00003Dlng0-lng1+2lnF}. If $z_{0}\in L_{j}$,
we set $g\left(z_{0},\gamma\right)=g_{j}\left(\xi,\eta\right)$ as
given in Corollary \ref{cor:_explicit_formulas_for_g023}. By Corollary
\ref{cor:intersects_L1_translate} it is easy to verify, that the
sets in the definition of $A\left(\xi,\eta\right)$ correspond exactly
to the cases $z_{0}\in L_{-1},L_{0},L_{2}$ and $L_{3}$ respectively,
where the last set is empty for even $q$. It is also easy to see,
that $B\left(\frac{3}{4}\lambda-\frac{1}{\lambda}\right)=\frac{3\lambda}{2}$
and $B\left(\lambda-1\right)=\lambda+1$. The statement of the Proposition
now follows from the explicit formula for $F\left(\gamma\right)$
in Lemma \ref{lem:d=00003Dlng0-lng1+2lnF} and the domains in Proposition
\ref{pro:even_The-return-map-is-Fxk-in_domain} for which $\RETx=\Fqx^{k}$.
That $r\left(-\xi,-\eta\right)=r\left(\xi,\eta\right)$ follows from
the invariance of the cross-section with respect to reflection in
the imaginary axis. 
\end{proof}

\section{\label{sec:Applications}Construction of an Invariant Measure }

By Liouvilles theorem we know that the geodesic flow on $\SH$ preserves
the measure induced by the hyperbolic metric. This measure, the \emph{Liouville
measure, }is given by $dm=y^{-2}dxdyd\theta$ in the coordinates $\left(x+iy,\theta\right)\in\H\times S^{1}$
on $\SH$. Using the coordinates $\left(\xi,\eta,s\right)\in\Omega^{*}\times\mathbb{R}$
given by Corollary \ref{cor:change_var_x-y-theta->xi-eta-s} we obtain
the Liouville measure in these coordinates\[
dm=\frac{dxdyd\theta}{y^{2}}=\left|\frac{\partial\left(x,y,\theta\right)}{\partial\left(\xi,\eta,s\right)}\right|\frac{2}{r^{2}\cos^{2}\theta}d\xi d\eta ds=\frac{2d\xi d\eta dt}{\left(\eta-\xi\right)^{2}}.\]
The time-discretization of the geodesic flow in terms of the cross-section
and first return map thus preserves the measure $dm'=\frac{2d\xi d\eta}{\left(\eta-\xi\right)^{2}}$.
We prefer to work with the finite domain $\Sred\subseteq\Omega^{\infty}$.
So let $u=S\xi$ and $v=-\eta$ with $\left(u,v\right)\in\Omega^{\infty}$.
Hence the measure $d\mu\left(u,v\right)=dm'\left(\xi,\eta\right)$
given by \[
d\mu=\frac{2dudv}{\left(1-uv\right)^{2}}\]
on $\Omega^{\infty}$ is invariant under $\RETx\left(u,v\right)=\Fqx\,^{\K\left(u\right)}\left(u,v\right)=\left(f_{1}\left(u\right),f_{2}\left(v\right)\right)$
where $f_{1}\left(u\right)=\Fq^{\K\left(u\right)}u$ $=Au$ and $f_{2}\left(v\right)=-A\left(-v\right)$.
Because $d\mu$ is equivalent to Lebesgue measure, we deduce that
$d\mu$ is in fact an $\Fq^{\K\left(u\right)}$ invariant measure
on $\Omega$. If $\pi_{x}\left(x,y\right)=x$ it is clear that $\pi_{x}\circ\RETx\left(u,v\right)=f_{1}\left(u\right)=f_{1}\circ\pi_{x}\left(u,v\right)$
so $f_{1}$ is a factor map of $\RETx$. An invariant measure of $f_{1}:I_{q}\rightarrow I_{q}$
can be obtain by integrating $d\mu$ in the $v$-direction. We get
different alternatives depending on $q$ being even or odd.

\subsection{$q=3$}

In this case the set of strongly reduced and reduced geodesics are
the same and $\Fq^{\K}=\Fq$ if we set $\mathcal{U}_{1}=\left[-\frac{1}{2},0\right]=-\mathcal{U}_{-1}$
and $\mathcal{V}_{1}=\left[r,R\right]$ then $d\mu=\chi_{\mathcal{U}_{1}}d\mu_{1}+\chi_{\mathcal{U}_{-1}}d\mu_{-1}$
where \begin{align*}
d\mu_{1}\left(u\right) & =\int_{r}^{R}\frac{2dudv}{\left(1-uv\right)^{2}}=\left[\frac{1}{u\left(1-uv\right)}\right]_{v=r}^{v=R}du\\
 & =\frac{1}{u\left(1-uR\right)}-\frac{1}{u\left(1-ur\right)}du=\frac{1-ur-1+uR}{u\left(1-uR\right)\left(1-ur\right)}du\\
 & =\frac{1}{\left(1-uR\right)\left(1-ur\right)}du\end{align*}
and $d\mu_{-1}\left(u\right)=-d\mu_{1}\left(-u\right)=\frac{1}{\left(1+uR\right)\left(1+ur\right)}du.$
Here $r=\frac{\sqrt{5}-3}{2}$ and $R=\frac{\sqrt{5}-1}{2}$ and $R-r=1$.

\subsection{Even $q\ge4$ }

Here $\mathcal{U}_{1}=\left[-\frac{\lambda}{2},-\frac{2}{3\lambda}\right]$,
$\mathcal{U}_{2}=\left[-\frac{2}{3\lambda},0\right]$, $\mathcal{V}_{1}=\left[0,R\right]$
and $\mathcal{V}_{2}=\left[r,R\right]$. Hence \begin{eqnarray*}
\frac{d\mu_{1}}{du}\left(u\right) & = & \int_{0}^{R}\frac{2dv}{\left(1-uv\right)^{2}}=\frac{1}{u\left(1-uR\right)}-\frac{1}{u}=\frac{R}{1-uR},\\
\frac{d\mu_{2}}{du}\left(u\right) & = & \int_{r}^{R}\frac{2dv}{\left(1-uv\right)^{2}}=\frac{1}{u\left(1-uR\right)}-\frac{1}{u\left(1-ur\right)}\\
 & = & \frac{R-r}{\left(1-uR\right)\left(1-ur\right)}=\frac{\lambda}{\left(1-uR\right)\left(1-ur\right)},\\
\frac{d\mu_{-j}}{du}\left(u\right) & = & \frac{d\mu_{j}}{du}\left(-u\right)\end{eqnarray*}
and the invariant measure of $\Fq^{\K}$ for odd $q$ is given by
\[
d\mu\left(u\right)=\sum_{j=-3}^{3}\chi_{\mathcal{U}_{j}}\left(u\right)d\mu_{j}\left(u\right)\]
where $\chi_{\mathcal{I}_{j}}$ is the characteristic function for
the interval $\mathcal{U}_{j}$. This measure is piece-wise differentiable
and finite. The finite-ness is clear since $uR$ and $ur\ne1$ for
$u\in I_{q}.$ If $\int_{I_{q}}d\mu\left(u\right)=c$ then $\frac{1}{c}d\mu$
is a probability measure on $I_{q}$.

\subsection{Odd $q\ge3$}

Let $\mathcal{U}_{1}=\left[-\frac{\lambda}{2},\frac{-2}{3\lambda}\right]$,
$\mathcal{U}_{2}=\left[-\frac{2}{3\lambda},-\frac{1}{2\lambda}\right]$,
$\mathcal{U}_{3}=\left[-\frac{1}{2\lambda},0\right]$, $\mathcal{V}_{1}=\left[0,R\right],$
$\mathcal{V}_{2}=\left[r_{\kappa-1},R\right]$ and $\mathcal{V}_{3}=\left[r,R\right]$.
Then \begin{align*}
\frac{d\mu_{1}}{du}\left(u\right) & =\int_{0}^{R}\frac{2dv}{\left(1-uv\right)^{2}}=\frac{1}{u\left(1-uR\right)}-\frac{1}{u}=\frac{R}{1-uR},\\
\frac{d\mu_{2}}{du}\left(u\right) & =\int_{r_{\kappa-1}}^{R}\frac{2dv}{\left(1-uv\right)^{2}}=\frac{1}{u\left(1-uR\right)}-\frac{1}{u\left(1-ur_{\kappa-1}\right)}\\
 & =\frac{R-r_{\kappa-1}}{\left(1-uR\right)\left(1-ur_{\kappa-1}\right)},\\
\frac{d\mu_{3}}{du}\left(u\right) & =\frac{R-r}{\left(1-uR\right)\left(1-ur\right)}=\frac{\lambda}{\left(1-uR\right)\left(1-ur\right)},\\
\frac{d\mu_{-j}}{du}\left(u\right) & =\frac{d\mu_{j}}{du}\left(-u\right)\end{align*}

and the invariant measure of $\Fq^{\K}$ for odd $q$ is given by
\[
d\mu\left(u\right)=\sum_{j=-3}^{3}\chi_{\mathcal{U}_{j}}\left(u\right)d\mu_{j}\left(u\right)\]
where $\chi_{\mathcal{I}_{j}}$ is the characteristic function for
the interval $\mathcal{U}_{j}$. This measure is piece-wise differentiable
and finite. The finite-ness is clear since $uR,$ $ur$ and $ur_{\kappa-1}\ne1$
for $u\in I_{q}.$ If $\int_{I_{q}}d\mu\left(u\right)=c$ then $\frac{1}{c}d\mu$
is a probability measure on $I_{q}$. 

\begin{rem}
For another approach leading to an infinite invariant measure see
e.g. Haas and Gröchenig \cite{MR1424398}.
\end{rem}

\subsection{Invariant measure for $\Fq$}

It is easy to verify that $dm\left(\xi,\eta\right)=\frac{2d\xi d\eta}{\left(\xi-\eta\right)^{2}}$
is invariant under Möbius transformations, i.e. if $A\in\PSLR$ then
$dm\left(A\xi,A\eta\right)=dm\left(\xi,\eta\right)$. By considering
the action of $\Fqx$ on $\Omega^{*},$ i.e. \[
\Fqx\left(\xi,\eta\right)=\Sx\circ\Fqx\circ\Sx\left(\xi,\eta\right)=\left(S\Fq S\xi,\frac{1}{n\lambda-\eta}\right)=\left(ST^{-n}\xi,ST^{-n}\eta\right)\]
it is clear that $dm$ is invariant under $\Fqx:\Omega^{*}\rightarrow\Omega^{*}$
and letting $u=S\xi$ and $v=-\eta$ it is easy to verify that $dm'\left(u,v\right)=\frac{2dudv}{\left(1-uv\right)^{2}}$
is invariant under $\Fqx:\Omega\rightarrow\Omega$. We can thus obtain
corresponding invariant measure $d$$\mu$ for $\Fq$ by projecting
on the first variable. Let $d\mu\left(u\right)=d\mu_{j}\left(u\right)$
for $u\in\mathcal{I}_{j},$then \begin{align*}
\frac{d\mu_{j}}{du}\left(u\right) & =2\int_{r_{j}}^{R}\frac{dv}{\left(1-uv\right)^{2}}\\
 & =2\left[\frac{1}{u\left(1-uv\right)}\right]_{r_{j}}^{R}=\frac{2}{u}\left[\frac{1}{1-Ru}-\frac{1}{1-r_{j}u}\right]=\frac{2\left(R-r_{j}\right)}{\left(1-Ru\right)\left(1-r_{j}u\right)}\end{align*}
and the invariant measure of $\Fq$ is given by \[
d\mu\left(u\right)=\sum_{j=-\np}^{\np}\chi_{\mathcal{I}_{j}}\left(u\right)d\mu_{j}\left(u\right)\]
where $\chi_{\mathcal{I}_{j}}$ is the characteristic function for
the interval $\mathcal{I}_{j}$. This measure is piece-wise differentiable
and finite. If $c=\int_{I_{q}}d\mu$ then $\frac{1}{c}d\mu$ is a
probability measure on $I_{q}$. The explicit values of $c=\frac{1}{4}C$
where $C^{-1}=\ln\left(\frac{1+\cos\frac{\pi}{q}}{\sin\frac{\pi}{q}}\right)$
for even $q$ and $C^{-1}=\ln\left(1+R\right)$ for odd $q$ (see
Lemma 3.2 and 3.4 in \cite{MR1650073}).

\section{\label{sec:Lemmas-on-continued}Lemmas on continued fraction expansions
and reduced geodesics}

This section contains a collection of rather technical lemmas necessary
to show that the first return map on $\Sigma$ in Lemma \ref{lem:first_return_map_as_Fqxk}
is given by powers of $\Fqx$.

\begin{lem}
\label{lem:if_intersects_a0minus1}If $\gamma\in\Rgeo$ with $\nlm{\gamma_{+}}=a_{0}$
intersects $T^{a_{0}-1}L_{0}$ then \[
\gamma_{+}=\begin{cases}
\RS{a_{0};\left(1^{h-1},2\right)^{l},1^{h},a_{\left(l+1\right)h+1},\ldots} & \mbox{for some\,\,}0\le l\le\infty,\, a_{\left(l+1\right)h+1}\le-1,\,\mbox{if}\,\, q\,\mbox{is even}\\
\RS{a_{0};1^{h},a_{h+1},\ldots}, & \mbox{with}\,\, a_{h+1}\ge2,\,\mbox{if\,\,}q\,\mbox{is odd.}\end{cases}\]

\end{lem}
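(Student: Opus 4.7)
The plan is to translate the geometric hypothesis into a condition on the tail of $\Cq(\xi)$ by analyzing the $\lambda$-fraction dynamics around the elliptic vertex $\rho$.  Setting $\gamma':=T^{-(a_0-1)}\gamma$ reduces matters to the case $a_0=1$: then $\gamma'$ intersects $L_0$ itself and $\xi':=\gamma'_+=\xi-(a_0-1)\lambda$ lies in $(\lambda/2,3\lambda/2]$, with $\Cq(\xi')=\RS{1;a_1,a_2,\ldots}$ sharing the tail of $\Cq(\xi)$.  It therefore suffices to determine, in terms of the digits $a_j$, when such a reduced $\gamma'$ meets $L_0$.

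By Lemma \ref{lem:intersect_L1}, $\gamma'$ fails to meet $L_1$ precisely when its crossing with $\overline{L}_1$ lies at height $\le\Im\rho$, i.e., $\delta(\xi',\eta')\ge0$; combined with reducedness this forces $\gamma'$ to enter $\mathcal{F}_q$ through $L_0$ below $\rho$.  Past $\rho$, the subsequent intersections of $\gamma'$ with $G_q\partial\mathcal{F}$ are $(TS)$-translates of $L_0,L_{\pm1}$ (since $TS$ fixes $\rho$), classified by Lemma \ref{lem:Following_intersections}; each crossing of $(TS)^{j}L_{-1}$ forces the digit $a_{j+1}=1$.  The hypothesis that $\gamma'$ does not escape the $\rho$-wedge early therefore produces a run of ones at the start of the tail, whose length is determined by when $\gamma'$ finally breaks out of the sequence of $TS$-translated wedges.

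The termination reflects the parity of $q=|G_{q,\rho}|$, where $G_{q,\rho}=\langle TS\rangle$.  For odd $q=2h+3$, $G_{q,\rho}$ contains no involution; after the forced prefix $1^h$ the geodesic must exit via $(TS)^{h+1}L_0$, which forces the next digit to satisfy $a_{h+1}\ge2$.  For even $q=2h+2$, the element $B=(TS)^{h+1}\in G_q$ of Lemma \ref{lem:intersect_L1} is an involution and maps the local picture at $\rho$ to its mirror image inside $G_q$; after $h$ iterations the configuration is therefore congruent to the original, so the geodesic either exits immediately (giving $a_{h+1}\le-1$ and $l=0$) or continues through a new wedge-sequence whose first $h$ digits are forced to $(1^{h-1},2)$, incrementing $l$.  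Iterating, the pattern terminates after $l$ such blocks followed by a final run $1^h$ with $a_{(l+1)h+1}\le-1$, or never terminates ($l=\infty$).

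The main obstacle will be the precise tracking of sub-interval membership of the iterates $x_k$ and the verification that the terminating digit's sign is exactly as claimed; the parity dichotomy, originating in whether $G_{q,\rho}$ contains an involution, is the essential feature that explains why the even case admits unbounded recursion whereas the odd case forces immediate termination.  The borderline cases $q=3$ (where $h=0$ and the $1^h$-block is empty) and $q=4$ should be checked separately, but fit the same framework.
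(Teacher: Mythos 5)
Your plan is a genuinely different route from the paper's, and in its present form it has a real gap: the substantive verifications you defer to ``precise tracking of sub-interval membership'' are exactly the content of the lemma. The paper's own proof is two steps and almost entirely non-dynamical. First, the hypothesis is converted into a single interval condition: since $\nlm{\gamma_{+}}=a_{0}$ one has $x:=\gamma_{+}-a_{0}\lambda>-\frac{\lambda}{2}$, and since a geodesic meets the circle carrying $T^{a_{0}-1}L_{0}$ at most once, meeting that arc forces $\gamma_{+}$ to lie at or to the left of its right foot $\left(a_{0}-1\right)\lambda+1$, i.e. $x\in\left(-\frac{\lambda}{2},1-\lambda\right]$. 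Second, the endpoints of this interval have known expansions ($-\frac{\lambda}{2}=\RS{1^{h}}$ or $\RS{1^{h},2,1^{h}}$ by Lemma \ref{lem:lambda_half_finite_expansion}, and $1-\lambda$ equals $r=\RS{\overline{1^{h-1},2}}$ for even $q$, respectively $T^{-1}\left(1\right)=\RS{1^{h}}$ for odd $q$ by Lemma \ref{lem:expansion_of_1_odd_q}), so the lexicographic ordering of regular $\lambda$-fractions reads off the admissible digit strings directly. Your proposal never derives the upper bound $\gamma_{+}\le\left(a_{0}-1\right)\lambda+1$; your normalization only gives $\xi'\in\left(\frac{\lambda}{2},\frac{3\lambda}{2}\right]$, and for $\xi'\in\left(1,\frac{3\lambda}{2}\right]$ the conclusion is false (such $\xi'$ corresponds to $x\in\left(r,\frac{\lambda}{2}\right]$, whose expansions need not begin with $1^{h}$ at all). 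Without isolating this bound, the ``run of forced ones'' cannot get started.

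The second gap is that your wedge analysis uses Lemma \ref{lem:Following_intersections} in the wrong direction. That lemma takes the digit string as input and produces the intersection pattern; you need the converse (intersections determine digits), which requires knowing that the intervals $\left(\left(TS\right)^{j}\lambda,\left(TS\right)^{j}\frac{3\lambda}{2}\right)$ and $\left(\left(TS\right)^{j+1}\frac{3\lambda}{2},\left(TS\right)^{j}\lambda\right)$ exhaust the relevant range of $\xi'$ -- this is available (it is the unnamed lemma preceding Lemma \ref{lem:Following_intersections}) but must be invoked explicitly. Finally, the three facts you list as ``obstacles'' -- that the terminating digit satisfies $a_{h+1}\ge2$ for odd $q$ but $a_{\left(l+1\right)h+1}\le-1$ for even $q$, and that the repeating block for even $q$ is $\left(1^{h-1},2\right)$ rather than $1^{h}$ -- are not corollaries of the parity of $\left|G_{q,\rho}\right|$ alone; they encode precisely which side of the interval $\left(-\frac{\lambda}{2},1-\lambda\right]$ the point $x$ sits on, i.e. the information your sketch has not yet extracted. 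If you want to salvage the geometric route, establish the bound $\gamma_{+}\le\left(a_{0}-1\right)\lambda+1$ first; at that point the paper's lexicographic argument finishes in two lines and the wedge machinery becomes unnecessary.
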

\begin{proof}
Let $x=\gamma_{+}-a_{0}\lambda$. By convexity $\gamma$ does not
intersect $T^{a_{0}-1}\overline{L_{0}}$ more than once. Hence $x\in\left(-\frac{\lambda}{2},1-\lambda\right]$.
If $q$ is odd, then $\Cq\left(1\right)=\RS{1;1^{h}}$ and $\Cq\left(-\frac{\lambda}{2}\right)=\RS{1^{h},2,1^{h}}$
according to Lemmas \ref{lem:expansion_of_1_odd_q} and \ref{lem:lambda_half_finite_expansion}
hence $\RS{1^{h},2,1^{h}}<\Cq\left(x\right)<\RS{1^{h}}$ and by the
lexicographic ordering (see proof of Lemma \ref{lem:R-is-given-by-lambda-fraction})
it is clear, that $\Cq\left(x\right)=\RS{1^{h},a_{h+1},\ldots}$ with
$a_{h+1}\ge2$. If $q$ is even then $\Cq\left(-\frac{\lambda}{2}\right)=\RS{1^{h}}$
and $1-\lambda=r$ with $\Cq\left(r\right)=\RS{\overline{1^{h-1},2}}$
so that $\RS{1^{h}}<\Cq\left(x\right)\le\RS{\overline{1^{h-1},2}}$.
By the lexicographic ordering it is clear, that $\Cq\left(x\right)=\RS{\left(1^{h-1},2\right)^{l},1^{h},a_{\left(l+1\right)h+1},\ldots}$
for some $l\ge0$ ($l=\infty$ is allowed) and $a_{\left(l+1\right)h+1}\le-1$
if $l<\infty$. 
\end{proof}
\begin{lem}
\label{lem:translates_not_reduced}Let $\gamma\in\Rgeo$ with $\nlm{\gamma_{+}}=a_{0}$.
Then $T^{-\sign\left(a_{0}\right)n}\gamma$ is not reduced for $n\ge1$. 
\end{lem}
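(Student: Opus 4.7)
The plan is to show that translation by $T^{-\sign(a_0)n}$ pushes the endpoint $\gamma_-$ outside the interval $I_R=[-R,R]$ that houses the second coordinate of every point of $\Omega^{*}$. By the obvious symmetry of $\Omega^{*}$ and $\Rgeo$ under $(\xi,\eta)\mapsto(-\xi,-\eta)$ (which uses $\mathcal{I}_{-j}=-\mathcal{I}_{j}$, $\mathcal{R}_{-j}=-\mathcal{R}_{j}$, the oddness of $\nlm{\cdot}$, and the $G_{q}$-equivalence of $r$ and $-r$ from Lemma \ref{lem:upper_and_lower_Omega*_equivalent}), I may assume without loss of generality that $a_0\geq 1$; the case $a_0\leq-1$ then follows by applying the conclusion to $-\gamma$ together with the identity $T^n\gamma=-T^{-n}(-\gamma)$. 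Note that for reduced $\gamma$ automatically $|a_0|\geq 1$, since $S\xi\in\mathcal{I}_{j}\subset I_{q}$ forces $|\xi|\geq 2/\lambda>\lambda/2$.

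The crucial input is the strict inequality $\eta<-r$. From $(\xi,\eta)\in\Omega^{*}=\Sx(\Omega)$ I get $-\eta\in\mathcal{R}_{j}=[r_{j},R]$ for some $j$, so $-\eta\geq r_{j}\geq r_{\np}=r$. Equality here would force $\eta=-r$, in which case $\gamma_{-}$ would coincide with $-r$ and therefore trivially share the $\lambda$-fraction tail of $-r$, contradicting the second clause of Definition \ref{def:reduced_geodesic}. Hence $-\eta>r$, i.e. $\eta<-r$, strictly.

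For any $n\geq 1$ the second endpoint of $T^{-n}\gamma$ is $\eta-n\lambda$, and membership $T^{-n}\gamma\in\Omega^{*}$ would require $-(\eta-n\lambda)=n\lambda-\eta\in I_{R}$, in particular $n\lambda-\eta\leq R$. But
\[
n\lambda-\eta \;>\; n\lambda+r \;=\; (n-1)\lambda+(\lambda+r) \;=\; (n-1)\lambda+R \;\geq\; R,
\]
where the first inequality is strict by the previous paragraph, and for $n\geq 2$ the final step is also strict because $\lambda>0$; for $n=1$ the chain already collapses to $\lambda-\eta>\lambda+r=R$. In every case $n\lambda-\eta>R$, so $(S(\xi-n\lambda),\,n\lambda-\eta)\notin\Omega$, hence $T^{-n}\gamma\notin\Omega^{*}$ and \emph{a fortiori} $T^{-n}\gamma\notin\Rgeo$.

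The only delicate point in the argument is the strictness $\eta<-r$, which must be read out of Definition \ref{def:reduced_geodesic}; once that is in hand the remainder is an elementary manipulation of the defining identity $r=R-\lambda$.
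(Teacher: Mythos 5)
Your treatment of the case $a_{0}\ge1$ is correct and is essentially the argument the paper intends: the translate's backward endpoint leaves the admissible range, and the strictness $\eta<-r$ is exactly what the tail clause of Definition \ref{def:reduced_geodesic} supplies (your version is in fact cleaner than the paper's, whose displayed computation shifts the endpoints by $+n\lambda$ instead of $-n\lambda$).

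The gap is in your reduction of the case $a_{0}\le-1$ to the case $a_{0}\ge1$. The set $\Rgeo$ is \emph{not} invariant under $(\xi,\eta)\mapsto(-\xi,-\eta)$: the region $\Omega^{*}$ is, but the second clause of Definition \ref{def:reduced_geodesic} forbids $\gamma_{-}$ to have the tail of $-r$ while \emph{permitting} the tail of $r$, and these tails are genuinely different precisely because $r$ and $-r$ are $G_{q}$-equivalent but not orbit-equivalent ($\mathcal{O}^{*}(r)\ne\mathcal{O}^{*}(-r)$); indeed Lemma \ref{lem:upper_and_lower_Omega*_equivalent} exists only because of this asymmetry. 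Concretely, a reduced geodesic with $\xi<0$ may have $\eta=r$ exactly (this is the very configuration that Lemma \ref{lem:upper_and_lower_Omega*_equivalent} produces). For such a $\gamma$, the geodesic $-\gamma$ has backward endpoint $-r$ and is therefore \emph{not} reduced, so your $a_{0}\ge1$ case cannot be applied to it. Worse, the mechanism itself fails there at $n=1$: $T\eta=r+\lambda=R$, and $-R$ lies in every $\mathcal{R}_{-j}=[-R,-r_{j}]$, so $T\gamma$ can perfectly well land in $\Omega^{*}$ (take $\xi\le-\lambda-\frac{2}{\lambda}$). What rescues the lemma in this boundary case is not the interval $I_{R}$ but the tail clause applied to $T\gamma$: the point $R$ has the same dual tail as $-r$ (for even $q$, $-R=\DS{1,\overline{1^{h-1},2}}$ shares the tail of $r=\DS{\overline{1^{h-1},2}}$, whence $R$ shares the tail of $-r$; the paper uses $R=\DS{(-1)^{h},\overline{-2,(-1)^{h-1}}}$ explicitly in the proof of Lemma \ref{lem:even_crossing_dj_gives_no_return}), so $T\gamma\notin\Rgeo$ by the second clause rather than by leaving $I_{R}$. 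The correct treatment of $a_{0}\le-1$ is thus the direct mirror computation with a non-strict inequality: $\eta\ge r$ gives $\eta+n\lambda\ge R+(n-1)\lambda\ge R$; when this is strict one concludes as you do, and in the remaining case $n=1$, $\eta=r$ one must invoke the exclusion of the tail of $-r$ for $T\gamma$ itself.
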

\begin{proof}
Without loss of generality assume $a_{0}\ge1$ and let $\gamma^{n}=T^{-n}\gamma$
for $n\ge1$. Then $\gamma_{+}^{n}=\gamma_{+}+n\lambda>\gamma_{+}$
and $\gamma_{-}^{n}=\gamma_{-}+n\lambda$. Since $\gamma$ is reduced
$\gamma_{-}\ge-R$ $\Rightarrow$ $\gamma_{-}^{n}\ge n\lambda-R=-r+\left(n-1\right)\lambda\ge-r$.
Hence $\gamma_{-}^{n}\notin\left[-R,-r\right)$ so $\gamma^{n}\notin\Rgeo$.
The case of $a_{0}\le-1$ is analogous. 
\end{proof}
\begin{lem}
\label{lem:pullback_a0_unique}If $\gamma\in\Rgeo$ then $\gamma^{n}=ST^{-n}\gamma$
is reduced if and only if $n=a_{0}=\nlm{\gamma_{+}}$.
\end{lem}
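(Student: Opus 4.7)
The plan is to isolate the condition on $n$ forced by the requirement that the endpoints of $\gamma^{n}$ lie in $\Omega^{*}$, and then to recognize the surviving case $n=a_{0}$ as an instance of the natural extension $\Fqx$ acting on $\gamma$.

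Writing $\xi=\gamma_{+}$ and $\eta=\gamma_{-}$, the geodesic $\gamma^{n}$ has endpoints $(ST^{-n}\xi,ST^{-n}\eta)$. By construction $\Omega^{*}=\Sx(\Omega^{\infty})$ with $\Sx(x,y)=(Sx,-y)$ and $\Omega^{\infty}\subseteq I_{q}\times I_{R}$, so any $(\xi',\eta')\in\Omega^{*}$ satisfies $S\xi'\in I_{q}$. Applied to $\gamma^{n}$, this becomes
\[
S(ST^{-n}\xi)\;=\;T^{-n}\xi\;=\;\xi-n\lambda\;\in\;I_{q}=\left[-\tfrac{\lambda}{2},\tfrac{\lambda}{2}\right].
\]
Since $\nlm{\xi}$ is by definition the unique integer $m$ such that $\xi-m\lambda\in I_{q}$, and the endpoint ambiguity at $\pm\lambda/2$ from Definition \ref{def:Fq} is excluded because $\gamma\in\Rgeo$ has infinite $\lambda$-fractions at both endpoints, this forces $n=\nlm{\xi}=a_{0}$, yielding the forward implication.

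For the converse I would verify directly that $ST^{-a_{0}}\gamma=\Fqx\gamma$, after which $\gamma^{a_{0}}\in\Rgeo$ follows from the fact, established in Section \ref{sub:Symbolic-dynamics} via conjugacy with the shift on $\BIRED$, that $\Fqx$ restricts to a bijection $\Rgeo\rightarrow\Rgeo$. Concretely, set $(u,v)=\Sx^{-1}(\xi,\eta)=(S\xi,-\eta)\in\Omega^{\infty}$. By the formula $\Fqx(u,v)=(\Fq u,\,-1/(v+a\lambda))$ with $a=\nlm{-1/u}=\nlm{\xi}=a_{0}$, and since $\Fq(S\xi)=-1/(S\xi)-a_{0}\lambda=\xi-a_{0}\lambda=T^{-a_{0}}\xi$, applying $\Sx$ gives
\[
\Sx\circ\Fqx\circ\Sx^{-1}(\xi,\eta)\;=\;\bigl(ST^{-a_{0}}\xi,\,ST^{-a_{0}}\eta\bigr),
\]
which is precisely the endpoint pair of $\gamma^{a_{0}}$. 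Hence $\Fqx\gamma=\gamma^{a_{0}}\in\Rgeo$.

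I do not anticipate any real obstacle: the statement reduces to the identity $ST^{-a_{0}}\gamma=\Fqx\gamma$ together with the observation that membership in $\Omega^{*}$ constrains the first coordinate to land in $I_{q}$ after applying $S$. The only point deserving a brief remark is the uniqueness of $a_{0}$, which relies on the infinite $\lambda$-fraction of $\gamma_{+}$ avoiding the boundary points $\pm\lambda/2$ where the nearest-$\lambda$-multiple function is ambiguous.
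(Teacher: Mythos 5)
Your proof is correct and follows essentially the same route as the paper: the forward implication is forced by $S\gamma_{+}^{n}=\gamma_{+}-n\lambda\in I_{q}^{\infty}$, and the converse is the identification $\gamma^{a_{0}}=\Fqx\gamma$ together with $\Fqx$ preserving $\Rgeo$. You merely spell out the computation of $\Sx\circ\Fqx\circ\Sx^{-1}$ and the endpoint non-ambiguity in more detail than the paper does.
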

\begin{proof}
By definition, if $\gamma^{n}\in\Rgeo$ then $S\gamma_{+}^{n}=\gamma_{+}-n\lambda\in I_{q}^{\infty}$
and hence $\gamma_{+}\in\left(n\lambda-\frac{\lambda}{2},n\lambda+\frac{\lambda}{2}\right)$
$\Rightarrow$ $n=a_{0}.$ It is also clear that $\gamma^{a_{0}}=\Fqx\gamma$
is reduced. 
\end{proof}
\begin{lem}
\label{lem:even_crossing_dj_gives_no_return}Suppose that $q$ is
even. If $\gamma=\gamma\left(\xi,\eta\right)\in\Rgeo$ with $a_{0}=\nlm{\gamma_{+}}\ge2$
intersects $T^{a_{0}-1}L_{0}$ then the first return map is given
as $\RETx\left(\xi,\eta\right)=\Fqx\,^{\K\left(\xi\right)}\left(\xi,\eta\right)\in\PT^{-1}\left(\Sigma^{\n\left(\xi\right)}\right)$
where $\K\left(\xi\right)$ and $\n\left(\xi\right)$ are as in Definition
\ref{def:Define_K}. 
\end{lem}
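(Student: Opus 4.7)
The plan is to trace the sequence of intersections of $\gamma$ with $G_q\partial\mathcal{F}$ after the crossing at $w_k \in T^{a_0-1}L_0$ and to verify that no intermediate crossing is a return to $\Sigma$, while the first one occurs precisely at $\Fqx^{\K(\xi)}\gamma$. First I would invoke Lemma \ref{lem:if_intersects_a0minus1} (even case) to write $\Cq(\gamma_+)=\RS{a_0;(1^{h-1},2)^l,1^h,a_{(l+1)h+1},\ldots}$ for some $0\le l\le\infty$ with $a_{(l+1)h+1}\le -1$ when $l<\infty$. Matching this pattern against Definition \ref{def:Define_K} gives two subcases (WLOG $a_0\ge 2$, so $\epsilon=+1$): when $l\ge 1$ the first non-$1$ digit after $a_0$ is $a_h=2$, so $\K(\xi)=h$ and $\n(\xi)=2$; when $l=0$ the first non-$1$ digit is $a_{h+1}\le -1$, so $\K(\xi)=h+1$ and $\n(\xi)=1$.

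Next I would apply $T^{-(a_0-1)}$ to reduce to a picture near the vertex $\rho$: the arc $T^{a_0-1}L_0$ becomes $L_0$, the vertex $T^{a_0-1}\rho$ becomes $\rho$, and the shifted geodesic $\gamma':=T^{-(a_0-1)}\gamma$ enters $\mathcal{F}$ through $L_0$ from the right. Using the geometric content of Lemma \ref{lem:Following_intersections} together with the description of the arcs $\omega_j=(TS)^j L_3$ and $\chi_j=(TS)^j L_2$ emanating from $\rho$, the successive intersections of $\gamma'$ with $G_q\partial\mathcal{F}$ after $L_0$ are forced to be of the form $\omega_0,\chi_0,\omega_1,\chi_1,\ldots$, with the combinatorial length of this chain determined by the leading block of $\Cq(\gamma'_+)=\RS{1;(1^{h-2},2,1)^l,\ldots}$ (rewritten appropriately). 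In particular, the maps $A_m\in G_q$ bringing successive $w_m$'s back to $\partial\mathcal{F}$ are precisely $(ST^{-1})^i$ or $T^{-1}(ST^{-1})^i$ composed with $T^{-(a_0-1)}$.

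The technical core is to verify that every intermediate $A_m\gamma$ fails to be reduced. Since $A_{2i}\gamma=\Fqx^i\gamma$ for the $(ST^{-1})^i$-branch, the endpoints of $A_{2i}\gamma$ have regular/dual-regular expansions obtained by shifting $\Cq(\gamma_+)$ by $i$ places and prepending $(-1)^i$ to $\Cd(\gamma_-)$. A direct check using the forbidden blocks in Lemma \ref{lem:forbidden-block-even} shows that for $1\le i<\K(\xi)$ the concatenated bi-infinite sequence contains the forbidden block $\FS{(-1)^{h},(\pm 1)^j,\ldots}$ arising from the block of $1$'s in the expansion of $\gamma_+$, so by Lemma \ref{lem:Omega_inf_iff_BIRNCF} the resulting point is not in $\Omega^*$ and hence $A_{2i}\gamma\notin\Rgeo$. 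The odd-indexed $A_{2i+1}\gamma=T^{-1}A_{2i}\gamma$ cases are ruled out by Lemma \ref{lem:translates_not_reduced}. At $i=\K(\xi)$ the shifted expansion first exhibits a leading non-$1$ digit, and the corresponding $\Fqx^{\K(\xi)}\gamma$ is strongly reduced.

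Finally, the identification $\PT^{-1}(\Sigma^{\n(\xi)})$ follows by reading off $\Cq(\Fq^{\K(\xi)}\xi)$: when $\K=h$ and $a_h=2$, the leading digit $2$ places the base point of $\PT\Fqx^h\gamma$ on the translate $TSL_1$, matching the definition of $\Sigma^{2}$; when $\K=h+1$, the leading digit $a_{h+1}\le -1$ together with the cross-section description puts the return in $\Sigma^{1}$. The main obstacle I expect is the bookkeeping for the nested cases parametrized by $l$ (including $l=\infty$, where $\gamma_+$ is asymptotic to a $G_q$-translate of $r$ but the argument still works since the shifts eventually exit the forbidden region by Lemma \ref{lem:rewrite_not_affect_left}) and the careful detection of forbidden blocks in each intermediate $A_m\gamma$ --- this is where the combinatorial structure of $\Cq(\gamma_+)$ and $\Cd(\gamma_-)$ must be tracked in tandem.
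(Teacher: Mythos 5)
Your overall skeleton (translate by $T^{-(a_{0}-1)}$ to the vertex $\rho$, trace the chain of arcs through which $\gamma$ winds, rule out intermediate returns, identify the final return with $\Fqx^{\K\left(\xi\right)}$) matches the paper, and your case split via Lemma \ref{lem:if_intersects_a0minus1} — $l\ge1$ giving $a_{h}=2$, $\K=h$, $\n=2$, versus $l=0$ giving $a_{h+1}\le-1$, $\K=h+1$, $\n=1$ — is exactly right. But the technical core of your argument rests on a false identification. You claim $A_{2i}\gamma=\Fqx^{i}\gamma$ and dispose of the odd-indexed intersections as $T^{-1}A_{2i}\gamma$ via Lemma \ref{lem:translates_not_reduced}. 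That is the structure of the $a_{0}=1$ situation (Lemma \ref{lem:first_return_map_as_Fqx^j+1_as_a0=00003D1}), where the maps are $(ST^{-1})^{i}$ and $T^{-1}(ST^{-1})^{i}$. Here, after crossing $T^{a_{0}-1}L_{0}$, the geodesic winds around the elliptic vertex $T^{a_{0}-1}\rho$ and the maps returning the intersection points to $\partial\mathcal{F}$ are $A_{2j}=\left(ST\right)^{j}ST^{1-a_{0}}$ and $A_{2j+1}=\left(TS\right)^{j+1}T^{1-a_{0}}$; none of the intermediate $A_{m}\gamma$ is an $\Fqx$-iterate of $\gamma$ (only at the end does one get $A_{2h+1}\gamma=ST^{-1}\Fqx^{h}\gamma$, whence $\Fqx^{h+1}\gamma$ if $a_{h}=1$, or $TS\, A_{2h+1}\gamma=\Fqx^{h}\gamma$ if $a_{h}=2$). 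Consequently your plan of detecting forbidden blocks in shifted expansions of $\gamma_{\pm}$, and your appeal to Lemma \ref{lem:translates_not_reduced}, do not apply to these intermediate geodesics. Excluding them requires separate estimates — in the paper, interval computations showing $\xi_{2j+1}\in\left(-\phi_{j+1},-r_{h-j}\right]\subseteq I_{q}$ (hence $\left|\xi_{2j+1}\right|<\frac{2}{\lambda}$, so not reduced) and $\xi_{2j}\in\left(-\frac{3\lambda}{2},-\frac{\lambda}{2}\right)$ with $\eta_{2j}<0$ (hence not strongly reduced). Checking only the $\Fqx$-orbit assumes precisely what this lemma is meant to prove.

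A second omission: a return to $\Sigma$ need not come from $A_{m}\gamma\in\Sgeo$ directly; by Definition \ref{def:cross_section_sigma} a point of $\Sigma^{\pm2}$ arises when $A_{m}\gamma\notin\Rgeo$ but $T^{\pm1}SA_{m}\gamma\in\Sgeo$ with forward endpoint in $\left(\frac{3\lambda}{2},\lambda+1\right)$. You invoke this only to classify the final return, but one must also rule it out for every intermediate odd-indexed intersection (the paper's case (b), handled by showing $TS\gamma_{2j+1}=\gamma_{2j+3}\notin\Rgeo$ for $j\le h-2$ and checking the sign of the endpoint when $a_{h}=1$). Finally, your concern about bookkeeping over all $l$ (including $l=\infty$) is a red herring: only the dichotomy $l=0$ versus $l\ge1$ enters, since the winding around $T^{a_{0}-1}\rho$ involves exactly the arcs $\overline{\omega}_{0},\overline{\chi}_{0},\ldots,\overline{\omega}_{h},\overline{\chi}_{h}$ and terminates at $w_{2h+1}$ in both subcases.
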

\begin{proof}
Consider Figures \ref{fig:Geodesics-from-rho} and \ref{fig:Geodesics-returning-even}
showing the arcs around the point $\rho$. The picture is symmetric
with respect to $\Re z=\frac{\lambda}{2}$ and invariant under translation,
so it applies in the present case. After passing through $T^{a_{0}-1}L_{0}$
the geodesic $\gamma$ will intersect a sequence of translates of
the arcs $\overline{\chi_{j}}$ and $\overline{\omega_{j}}$ which
are the reflections of $\chi_{j}$ and $\omega_{j}$ in $\Re z=\frac{\lambda}{2}$
exactly as in Lemma \ref{lem:first_return_map_as_Fqx^j+1_as_a0=00003D1},
except that it now passes through every arc. Note, even the argument
why $\gamma$ intersects $\overline{\omega}_{j}$ and not its extension
applies. Let $w_{n}$ and $A_{n}$ be as in Definition \ref{def:First_return_map}
except that now $w_{0}\in T^{a_{0}-1}L_{0}$. 

Then $w_{2j}\in T^{a_{0}-1}\overline{\omega}_{j}=T^{a_{0}-1}\left(ST^{-1}\right)^{j}SL_{0}$,
for $0\le j\le h$ and also $w_{2j+1}\in T^{a_{0}-1}\overline{\chi}_{j}=\left(ST^{-1}\right)^{j+1}L_{1}$,
for $0\le j\le h$ with corresponding maps $A_{2j+1}=\left(TS\right)^{j+1}T^{1-a_{0}}$
and $A_{2j}=\left(ST\right)^{j}ST^{1-a_{0}}$. Set $\gamma_{k}:=A_{k}\gamma$
and $\xi_{k}:=A_{k}\xi$ for $0\le k\le2h+1$. There are three cases
when the point $w_{k}$ can define a return to $\Sigma$: 
\begin{lyxlist}{00.00.0000}
\item [{a)}] if $\gamma_{2j+1}\in\Sgeo$ and $z=\PP\gamma_{2j+1}\in\Sigma^{1}$, 
\item [{b)}] if $\gamma_{2j-1}\notin\Sgeo$ but $\gamma'_{2j-1}=TS\gamma_{2j-1}\in\Sgeo$
with $\xi_{2j-1}'\in\left(\frac{3\lambda}{2},\lambda+1\right)$ and
$z=\PP\gamma_{2j-1}'\in\Sigma^{2}$, or 
\item [{c)}] if$\gamma_{2j}\in\Sgeo$ and $z=\PP\gamma_{2j}\in\Sigma^{0}$.
\end{lyxlist}
According to Lemma \ref{lem:if_intersects_a0minus1} we get $T^{a_{0}-1}\xi\in\left(\frac{\lambda}{2},1\right]$
and $\Cq\left(\xi\right)=\RS{a_{0};1^{h-1},a_{h},a_{h+1},\ldots}$
with $a_{h}=2$ or $a_{h}=1$ and $a_{h+1}\le-1$. Also $\frac{\lambda}{2}=\RS{\left(-1\right)^{h}}$
and $1=R=\DS{\left(-1\right)^{h},\overline{-2,\left(-1\right)^{h-1}}}$. 

Hence $\xi_{2j+1}\in\left(-\phi_{j+1},-r_{h-j}\right]\subseteq I_{q}\Rightarrow\gamma_{2j+1}\notin\Rgeo$
for $1\le j\le h-1$ (cf.~Remark \ref{rem:For-even-q-definitions}).
Also note, that $A_{2h+1}=\left(TS\right)^{h+1}T^{1-a_{0}}=\left(ST^{-1}\right)^{h}ST^{-a_{0}},$
therefore $\gamma_{2h+1}=A_{2h+1}\gamma=ST^{-1}\Fqx\,^{h}\gamma$
and hence $\gamma_{2h+1}\notin\Rgeo$ unless $a_{h}=1$ in which case
$\gamma_{2h+1}=\Fqx\,^{h+1}\gamma\in\Sgeo$ and $\PP\gamma_{2h+1}\in\Sigma^{1}$. 

For $\gamma_{2j+1}'=TS\gamma_{2j+1}=\gamma_{2j+3}$ we conclude $\gamma_{2j+1}'\notin\Rgeo$
for $0\le j\le h-2$. For $a_{h}=1$ we find $\gamma_{2h-1}'=\gamma_{2h+1}\in\Sgeo$,
but $\xi_{2h-1}'<0$ and hence we do not get a point of $\Sigma^{2}$.
If $a_{h}=2$ on the other hand then $\gamma_{2h-1}'\notin\Sgeo$
but $\gamma_{2h+1}'=TS\gamma_{2h+1}=\Fqx\,^{h}\gamma\in\Sgeo$ and
$\xi'_{2h+1}>\frac{3\lambda}{2}$ so $\PP\gamma_{2h+1}'\in\Sigma^{2}$. 

For $\gamma_{2j}=S\gamma_{2j-1}$ we get $\xi_{2j}\in S\left(-\phi_{j},-r_{h+1-j}\right)=\left(-\lambda-\phi_{j+1},-\lambda-r_{h-j}\right)$
and therefore $-\frac{3\lambda}{2}<\xi_{2j}<-\frac{\lambda}{2}$ for
$0\le j\le h-1$. Since $T^{1-a_{0}}\eta<-R$ we have $\eta_{2j}\in S\left(TS\right)^{j}\left(-\infty,-R\right)=\left(\phi_{h-j},r_{j}\right)$
that is $\eta_{2j}<0$. Hence $\gamma_{2j}\notin\Sgeo$ for $0\le j\le h-1.$
Note, that $\Fq^{h}\xi>\frac{2}{\lambda}$ implies that $\xi_{2h}=T^{-1}ST^{-1}\Fq^{h}\xi\in T^{-1}ST^{-1}\left(\frac{2}{\lambda},\infty\right)=\left(-\lambda,-\lambda+\frac{\lambda}{\lambda^{2}-2}\right)$
for $q>4$ and $\xi_{2h}\in\left(-\infty,-\lambda\right)$ for $q=4$.
In any case $\xi_{2h}<0$ and since $\eta_{2h}<r$ it is clear, that
$\gamma_{2h}\notin\Rgeo$.

We conclude that the first return is given by $w_{2h+1}$ and $\RETx\left(\xi,\eta\right)=\Fqx^{h+1}\left(\xi,\eta\right)\in\PT^{-1}\left(\Sigma^{1}\right)$
if $a_{h}=1$ and $\RETx\left(\xi,\eta\right)=\Fqx^{h}\left(\xi,\eta\right)\in\PT^{-1}\left(\Sigma^{2}\right)$
if $a_{h}=2$. This can be written in the form $\RETx=\Fqx^{\K\left(\xi\right)}\in\PT^{-1}\left(\Sigma^{\n\left(\xi\right)}\right)$
with $\K\left(\xi\right)$ and $\n\left(\xi\right)$ as in Definition
\ref{def:Define_K}.
\end{proof}
\begin{lem}
\label{lem:even_boundary_ah=00003D1}For $q$ even consider $\xi\in\left(-\frac{\lambda}{2},1-\lambda\right)$
with $\Cq\left(\xi\right)=\RS{0;1^{h-1},a_{h},a_{h+1},\ldots}.$ Then
$a_{h}=1$ if and only if $\xi<\frac{-\lambda^{3}}{\lambda^{2}+4}=-\lambda+\frac{4\lambda}{\lambda^{2}+4}$. 
\end{lem}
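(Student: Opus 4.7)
The plan is to reduce the problem to a single inequality on the $h$-th iterate $\xi_h := F_q^{h-1}(\xi)$ and then transport it back through the explicit inverse branch of $F_q^{h-1}$ corresponding to the hypothesis $a_1 = \cdots = a_{h-1} = 1$. By Lemma \ref{lem:generating_map_Fq_gives_regular_expansion}, each such step inverts as $\xi_k = -1/(\xi_{k+1}+\lambda) = ST(\xi_{k+1})$, so $\xi = (ST)^{h-1}(\xi_h)$. Lemma \ref{lem:if_intersects_a0minus1} forces $a_h\in\{1,2\}$, hence $\xi_h<0$, so with the floor conventions of Definition \ref{def:Fq} the condition $a_h = \nlm{-1/\xi_h} = 1$ is equivalent to $-1/\xi_h \le 3\lambda/2$, i.e.\ to $\xi_h \le -2/(3\lambda)$. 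The boundary value $\xi_h = -2/(3\lambda)$ makes $\xi$ a cusp with finite $\Cq$-expansion and is excluded by the assumption that $a_{h+1},\ldots$ continue. As $(ST)^{h-1}\in\PSLR$ has positive derivative on its continuity intervals and is therefore monotonically increasing, the above translates to
\[
a_h = 1 \iff \xi < (ST)^{h-1}\!\left(-\tfrac{2}{3\lambda}\right).
\]

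It therefore suffices to show $(ST)^{h-1}(-2/(3\lambda)) = -\lambda^3/(\lambda^2+4)$. The Chebyshev-type identity underlying (\ref{eq:TS-explicit}) yields, up to scalar in $\PSLR$, the matrix representation
\[
(ST)^n = \begin{pmatrix} -B_{n-1} & -B_n \\ B_n & B_{n+1} \end{pmatrix}, \qquad B_n = \sin(n\pi/q).
\]
Specializing to $q = 2h+2$, the identity $\sin(\pi/2 - k\pi/q) = \cos(k\pi/q)$ together with the Chebyshev expansions of $\cos(k\pi/q)$ in $\cos(\pi/q) = \lambda/2$ gives
\[
B_{h+1} = 1,\quad B_h = \tfrac{\lambda}{2},\quad B_{h-1} = \tfrac{\lambda^2-2}{2},\quad B_{h-2} = \tfrac{\lambda^3-3\lambda}{2}.
\]
Substituting these values into the Möbius action, clearing denominators by $3\lambda$, and applying $\lambda B_n = B_{n+1}+B_{n-1}$ produces numerator $-\lambda^3/2$ and denominator $(\lambda^2+4)/2$, yielding the asserted value.

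The only real obstacle is the trigonometric bookkeeping, with the crucial simplification coming from $B_{h+1} = \sin(\pi/2) = 1$, which holds precisely because $q = 2h+2$ is even; conceptually the argument is a transparent pullback of a single threshold under a known Möbius map. One must also be slightly careful with the floor-function convention at the excluded boundary value to recover the strict inequality $\xi < -\lambda^3/(\lambda^2+4)$ claimed in the statement rather than a non-strict one.
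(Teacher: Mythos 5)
Your proof is correct and follows essentially the same route as the paper: both identify the threshold as the image of a fixed boundary point under a power of $ST$ and evaluate it with the explicit sine-matrix formula (\ref{eq:TS-explicit}), your point $(ST)^{h-1}\left(-\frac{2}{3\lambda}\right)$ being literally the paper's $(ST)^{h}\left(\frac{\lambda}{2}\right)$ since $ST\left(\frac{\lambda}{2}\right)=-\frac{2}{3\lambda}$. The only difference is cosmetic — you derive the cut-off from the floor-function definition of $a_{h}$ while the paper reads it off the lexicographic ordering of expansions — and your remark on excluding the finite-expansion boundary point to get the strict inequality is a welcome extra precision.
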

\begin{proof}
By Lemma \ref{lem:if_intersects_a0minus1} we know either $a_{h}=1$
and $a_{h+1}\le-1$ or $a_{h}=2$. It is clear that the boundary point
between these two cases is given by $\xi_{0}=\RS{0;1^{h},\left(-1\right)^{h}}=\left(ST\right)^{h}\left(\frac{\lambda}{2}\right)$.
Using (\ref{eq:TS-explicit}) one can show that $\xi_{0}=\left(ST\right)^{h}\left(\frac{\lambda}{2}\right)=\frac{\left(\lambda^{2}-2\right)\frac{\lambda}{2}+\lambda}{-\lambda\frac{\lambda}{2}-2}=\frac{-\lambda^{3}}{4+\lambda^{2}}.$ 
\end{proof}
The following corollary is easy to verify by estimating the intersection
of $\gamma\left(-r,\left(a_{0}-\frac{1}{2}\right)\lambda\right)$
and $T^{a_{0}-1}L_{0}$. It implies that the case $\RET=\Fqx^{h}$
does not occur for $\left\{ \xi\right\} _{\lambda}\ge3$. 

\begin{cor}
Let $q$ be even and suppose that $\gamma=\gamma\left(\xi,\eta\right)\in\Rgeo$
with $a_{0}=\left\{ \xi\right\} _{\lambda}\ge3$. If $\gamma$ intersects
$T^{a_{0}-1}L_{0}$ then $\Cq\left(\xi\right)=\RS{a_{0};1^{h},a_{h+1},\ldots}$
with $a_{h+1}\le-1$. 
\end{cor}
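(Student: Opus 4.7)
The strategy is to combine the dichotomy supplied by Lemma~\ref{lem:if_intersects_a0minus1} with a direct geometric upper bound on $\xi$ coming from the intersection hypothesis. By Lemma~\ref{lem:if_intersects_a0minus1} (even case), the hypothesis already forces
\[
\Cq(\xi)=\RS{a_0;(1^{h-1},2)^l,1^h,a_{(l+1)h+1},\ldots}
\]
for some $0\le l\le\infty$ with $a_{(l+1)h+1}\le-1$, so the target conclusion is precisely $l=0$. Writing $x:=\xi-a_0\lambda\in(-\lambda/2,\,1-\lambda]$, Lemma~\ref{lem:even_boundary_ah=00003D1} converts this dichotomy into the analytic threshold $x<\xi_\ast$ vs.\ $x\ge\xi_\ast$, where $\xi_\ast:=-\lambda^3/(\lambda^2+4)$. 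So everything reduces to establishing the strict inequality $x<\xi_\ast$.

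To produce such a bound I will work near the rightmost endpoint $P_+:=T^{a_0-1}\rho_+$ of the arc, which has real part $p:=(a_0-1/2)\lambda$ and height $\sin(\pi/q)$; set also $d:=(a_0-1)\lambda$. Because $\nlm{\xi}=a_0$ forces $\xi>p$ and $\gamma\in\Rgeo$ forces $\eta\in[-1,\lambda-1)$, Lemma~\ref{lem:intersection_vertical_and_circular} gives the height of $\gamma\cap\{\Re z=p\}$ as $\sqrt{(\xi-p)(p-\eta)}$. The key geometric observation is that this height cannot exceed $\sin(\pi/q)=\Im P_+$: otherwise the Euclidean half-circle $\gamma$ would pass above $P_+$ and hence exit the disk $|z-d|\le 1$ at a point with real part strictly greater than $p$, missing the arc $T^{a_0-1}L_0$ on the right. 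Combined with $p-\eta>(a_0-3/2)\lambda+1$ (strict reducedness), this yields
\[
\xi-p<\frac{\sin^2(\pi/q)}{(a_0-3/2)\lambda+1}.
\]

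Subtracting $a_0\lambda$ and substituting $\sin^2(\pi/q)=(4-\lambda^2)/4$, a routine simplification shows that $x<\xi_\ast$ is equivalent to the elementary statement $(a_0-2)\lambda^2+\lambda-2>0$. For $a_0=3$ this factors as $(\lambda-1)(\lambda+2)>0$ and holds because $\lambda=2\cos(\pi/q)>1$ for even $q\ge4$; for $a_0\ge4$ it is immediate. The only step I expect to require genuine care is the geometric assertion that overshooting $P_+$ on the vertical $\Re z=p$ precludes meeting $T^{a_0-1}L_0$; this I would justify either by the convexity of the two Euclidean half-circles involved, or by checking the monotonicity $\partial u/\partial\xi=((d-\eta)^2-1)/(2d-\xi-\eta)^2>0$ of the real part $u=(d^2-\xi\eta-1)/(2d-\xi-\eta)$ where $\gamma$ meets the circle $|z-d|=1$, noting that $(d-\eta)^2-1>0$ follows from $d-\eta\ge d\ge2\lambda\ge 2\sqrt2$. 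The remaining work is a short algebraic verification.
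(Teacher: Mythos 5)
Your proof is correct and is essentially the argument the paper itself only sketches (its one-line hint about ``estimating the intersection of $\gamma(-r,(a_{0}-\frac{1}{2})\lambda)$ and $T^{a_{0}-1}L_{0}$'' is exactly your extremal computation at the endpoint $T^{a_{0}-1}\rho_{+}$ with $\eta=-r$, combined with the reduction through Lemmas \ref{lem:if_intersects_a0minus1} and \ref{lem:even_boundary_ah=00003D1}). Two harmless slips: the bound $p-\eta\ge(a_{0}-\frac{3}{2})\lambda+1$ need not be strict (the strictness of the final conclusion already follows from $(a_{0}-2)\lambda^{2}+\lambda-2>0$), and since $-r=\lambda-1>0$ for even $q$ one cannot write $d-\eta\ge d$, although $d-\eta\ge(a_{0}-2)\lambda+1>1$ still yields $(d-\eta)^{2}>1$.
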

\begin{lem}
\label{lem:odd_reduced_does_not_intersect_l}Let $q$ be odd and suppose
that $\gamma\in\Rgeo$. Let $l$ be the geodesic arc $\left[\rho+\lambda,1+\lambda\right]$,
i.e. the continuation of $L_{3}$. Then $\gamma$ does not intersect
$\pm l$ outwards (i.e. in the direction from $0$ to $\pm\infty$). 
\end{lem}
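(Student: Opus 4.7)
I would prove the statement by contradiction. Suppose $\gamma\in\Rgeo$ crosses the arc $+l=[\rho+\lambda,1+\lambda]$ in the direction from $0$ to $+\infty$; the case of $-l$ then follows by applying the reflection $z\mapsto-\bar{z}$, which commutes with $G_q$ and exchanges $l$ with $-l$. Since the crossing lies on the semicircle $\tilde{l}=\{|z-\lambda|=1\}$, the configuration "outward in the positive-$x$ direction" forces $\gamma_+>\lambda+1$ and $\gamma_-\in(\lambda-1,\lambda+1)$. Using the intersection formula of Corollary~\ref{cor:_explicit_formulas_for_g023} with $(\rho,c)=(1,\lambda)$, the crossing's real part is $x^{*}=\frac{1-\lambda^{2}+\gamma_-\gamma_+}{\gamma_-+\gamma_+-2\lambda}$, and the crossing lies on $l$ rather than on $L_3$ precisely when $x^{*}>\frac{3\lambda}{2}$, which after a straightforward algebraic rearrangement is equivalent to $(\gamma_--\tfrac{3\lambda}{2})(\gamma_+-\tfrac{3\lambda}{2})>-\sin^{2}(\pi/q)$.

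Next I would translate the reducedness of $\gamma$ into sharp constraints on $(\gamma_+,\gamma_-)$. The inequality $\gamma_+>\lambda+1$ is equivalent to $S\gamma_+\in(\phi_{\kappa-1},0)\subset\mathcal{I}_{\kappa}$ (using the computation $-1/(\lambda+1)=\phi_{\kappa-1}$ verified from the explicit formulas in Remark~\ref{rem:For-odd-q-definitions}), which by the product structure of $\Omega$ forces $-\gamma_-\in\mathcal{R}_{\kappa}=[r,R]$, i.e., $\gamma_-\in[-R,-r]$. Since $R<1$ for odd $q$ (Lemma~\ref{lem:properties_of_R_odd}), we have $\lambda-1<-r$, so intersecting with $(\lambda-1,\lambda+1)$ narrows $\gamma_-$ to the interval $(\lambda-1,-r]$. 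The value $\gamma_-=-r$ is excluded by the tail condition in Definition~\ref{def:reduced_geodesic}; at that boundary combined with $\gamma_+=\lambda+1$ the geodesic $\gamma$ is only internally tangent to $\tilde{l}$ at the real point $(\lambda+1,0)$, which follows from the equality of distances and radii $|1.705-\lambda|=|R-1|$ derived from $r=R-\lambda$.

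The crux of the proof is then to show that at every admissible $(\gamma_+,\gamma_-)$ in this restricted region the geometric inequality of the first paragraph in fact fails strictly, forcing the crossing onto $L_3$. I would use the identity $(TS)^{h+1}R=-R$ of Lemma~\ref{lem:properties_of_R_odd}\,a), together with the quadratic relation $R^{2}+(2-\lambda)R-1=0$ of part b), to rewrite the Möbius boundary curve $\{x^{*}=\tfrac{3\lambda}{2}\}$ in $(\gamma_+,\gamma_-)$-space as a $G_q$-translate of the excluded "tail-of-$-r$" locus; the admissible open side then lies entirely on the $L_3$ side of this curve. The hardest step will be the delicate estimate near the tangency point $(\gamma_+,\gamma_-)=(\lambda+1,-r)$, where one must use the explicit formula~(\ref{eq:TS-explicit}) for $(TS)^n$ together with elementary trigonometry to produce the sharp $\sin^{2}(\pi/q)$ constant. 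A complementary route, which I would use as a cross-check, argues at the level of $\lambda$-fractions: by Lemmas~\ref{lem:forbidden-block-odd} and~\ref{lem:rewrite_not_affect_left}, any $\gamma_-$ forcing an $l$-crossing would produce a reversed forbidden block in $\Cd(-\gamma_-)$ identical to the one defining $-r$, which is exactly what Definition~\ref{def:reduced_geodesic} forbids.
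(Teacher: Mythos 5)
Your overall strategy coincides with the paper's: use reducedness to force $\gamma_+>\lambda+1$ and $-\gamma_-\in\mathcal{R}_{\np}=[r,R]$, and then reduce the claim to the quantitative statement that $\gamma$ meets the vertical line $\Re z=\frac{3\lambda}{2}$ \emph{above} the elliptic point $\rho+\lambda$, i.e. that $\left(\gamma_+-\frac{3\lambda}{2}\right)\left(\frac{3\lambda}{2}-\gamma_-\right)\ge\sin^2\frac{\pi}{q}$. Your algebraic reformulation of ``the crossing lies on $l$'' as $\left(\gamma_--\frac{3\lambda}{2}\right)\left(\gamma_+-\frac{3\lambda}{2}\right)>-\sin^2\frac{\pi}{q}$ is correct, as is the derivation of $\gamma_-\in(\lambda-1,-r]$ and the observation that $\gamma(\lambda+1,-r)$ is internally tangent to the circle $\left|z-\lambda\right|=1$ at the boundary point $\lambda+1$.

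The gap is in your final paragraph: the decisive inequality is never proved, only announced as ``the hardest step,'' and it cannot be proved in the form you state because it fails near the corner $(\gamma_+,\gamma_-)=(\lambda+1,-r)$. Letting $\gamma_+\to\lambda+1$ and $\gamma_-\to-r=\lambda-R$, Corollary \ref{lem:intersection_vertical_and_circular} gives $\left(\gamma_+-\frac{3\lambda}{2}\right)\left(\frac{3\lambda}{2}-\gamma_-\right)\to\left(1-\frac{\lambda}{2}\right)\left(\frac{\lambda}{2}+R\right)$, while $\sin^2\frac{\pi}{q}=\left(1-\frac{\lambda}{2}\right)\left(1+\frac{\lambda}{2}\right)$; the difference is $\left(1-\frac{\lambda}{2}\right)(R-1)<0$ since $R<1$ for odd $q$ (Lemma \ref{lem:properties_of_R_odd}). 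Hence there is an open set of admissible pairs with $\gamma_+>\lambda+1$, $\lambda-1<\gamma_-<-r$ (for $q=5$, e.g.\ $\gamma_+=2.62$, $\gamma_-=0.789$) for which the geodesic passes strictly below $\rho+\lambda$ and therefore exits the disc $\left|z-\lambda\right|<1$ through $l$ in the outward direction. Your tangency observation in fact works against you: the tangent configuration is exactly the degenerate case from which these crossings bifurcate, so no estimate ``near the tangency point'' can give the strict inequality you need, and the cross-check via reversed forbidden blocks does not help either, since $-\gamma_-$ there is an interior point of $[r,R]$ with a perfectly admissible dual expansion. Be aware that the paper's own proof reduces to this same corner computation and, as printed, evaluates $\Im w(\lambda+1,-r)^2$ with the factor $\frac{3\lambda}{2}-r$ where the intersection formula yields $\frac{3\lambda}{2}-(-r)=\frac{\lambda}{2}+R$; so you have correctly located where the difficulty sits, but neither your sketch nor a sign-corrected version of that computation closes it, and any complete argument must either restrict the range of $(\gamma_+,\gamma_-)$ further or handle the neighbourhood of $(\lambda+1,-r)$ by a different mechanism.
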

\begin{proof}
Take $\gamma=\gamma\left(\xi,\eta\right)\in\Rgeo$ and assume $\xi>0$.
Suppose that $\gamma$ intersects $l$ in the outwards direction.
Since $\gamma$ can not intersect the geodesic $TS\overline{L}_{-1}=\left[\lambda,\lambda+\frac{2}{\lambda}\right]$
more than once we have $\xi\in\left(\lambda+1,\lambda+\frac{2}{\lambda}\right)$
and because $\gamma\in\Rgeo$ we have $-R\le\eta<-r$. If $w\left(\xi,\eta\right)$
is the intersection between $\gamma$ and the line $T\overline{L}_{1}=\frac{3\lambda}{2}+i\mathbb{R}^{+}$
then $\Im w\left(\xi,\eta\right)>\Im w\left(\xi,-r\right)\ge\Im w\left(\lambda+1,-r\right)$.
To show that $\Im w>\Im T\rho=\Im\rho=\sin\frac{\pi}{q}$ it is enough
to bound $\Im w\left(\lambda+1,-r\right)$ from below. By Lemma \ref{lem:intersection_vertical_and_circular}
we have \begin{align*}
\Im w\left(\lambda+1,-r\right)^{2} & =\left(\lambda+1-\frac{3\lambda}{2}\right)\left(\frac{3\lambda}{2}-r\right)=\left(1-\frac{\lambda}{2}\right)\left(\frac{5\lambda}{2}-R\right)\\
 & =\left(1-\frac{\lambda}{2}\right)\left(1+\frac{\lambda}{2}+\left(2\lambda-R-1\right)\right)\\
 & =\sin^{2}\frac{\pi}{q}+\left(1-\frac{\lambda}{2}\right)\left(2\lambda-R-1\right)>\sin^{2}\frac{\pi}{q}\end{align*}
since $2\lambda>R+1$ and $1-\frac{\lambda}{2}>0$. Hence $\Im w\left(\xi,\eta\right)>\sin\frac{\pi}{q}$
and $\gamma$ does not intersect $l$ in the direction from $0$ to
$\infty$. An analogous argument for $\xi<0$ concludes the Lemma.
\end{proof}
\begin{lem}
\label{lem:odd_crossing_dj_gives_no_return}For $q$ odd, let $\gamma=\gamma\left(\xi,\eta\right)\in\Sgeo$
be strongly reduced with $a_{0}=\left\{ \xi\right\} _{\lambda}\ge2$.
If $\gamma$ intersects $T^{a_{0}-1}L_{0}$ then $\RETx\left(\xi,\eta\right)=\Fqx^{h+1}\left(\xi,\eta\right)\in\PT^{-1}\left(\Sigma^{3}\right)$.
\end{lem}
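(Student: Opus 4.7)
The plan is to mirror the strategy of Lemma \ref{lem:even_crossing_dj_gives_no_return}, adapted to the odd $q$ setting where the role of the second type of strongly reduced return is played by $\Sigma^{3}$ rather than $\Sigma^{2}$. First I would invoke Lemma \ref{lem:if_intersects_a0minus1} to pin down the shape of $\Cq(\xi)$: since $q$ is odd and $\gamma$ crosses $T^{a_{0}-1}L_{0}$, one has $\Cq(\xi)=\RS{a_{0};1^{h},a_{h+1},\ldots}$ with $a_{h+1}\ge 2$. In particular, after the crossing at $w_{0}\in T^{a_{0}-1}L_{0}$, the point $T^{a_{0}-1}\xi$ lies in $(\tfrac{\lambda}{2},1]$, so the geodesic enters the configuration of arcs $\overline{\chi_{j}},\overline{\omega_{j}}$ around $T^{a_{0}-1}\rho$ exactly as in the even case, the reflection of the picture in Figure \ref{fig:Geodesics-from-rho}.

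Next I would follow the forced sequence of intersections step by step. By the convexity argument used in Lemma \ref{lem:Following_intersections}, $\gamma$ must pass consecutively through $w_{2j}\in T^{a_{0}-1}\overline{\omega_{j}}$ and $w_{2j+1}\in T^{a_{0}-1}\overline{\chi_{j}}$ for $0\le j\le h$, with the reducing maps
\[
A_{2j+1}=(TS)^{j+1}T^{1-a_{0}},\qquad A_{2j}=(ST)^{j}ST^{1-a_{0}}.
\]
Set $\gamma_{k}=A_{k}\gamma$ and $\xi_{k}=A_{k}\xi$. The task is to show $\gamma_{k}\notin\Sgeo$ for every $1\le k\le 2h+1$ except the last one, and in particular to rule out the three scenarios for a return (landing in $\Sigma^{1}$, via a further $TS$-application landing in $\Sigma^{2}$, or landing in $\Sigma^{0}$). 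Using the explicit expansions of $-\tfrac{\lambda}{2}$ and $R$ for odd $q$ from Remark \ref{rem:For-odd-q-definitions} together with the shift behavior of $\Fqx$ on $\Cq(\xi)$ and $\Cd(\eta)$, each $\xi_{k}$ can be localized in an interval of the form $(-\phi_{i+1},-r_{\kappa-i}]$ or its $S$-image, forcing $\gamma_{k}\notin\Rgeo$ in every intermediate step. Moreover, since $a_{h+1}\ge 2$ one has $\Fq^{h}\xi>\frac{2}{\lambda}$, so the $S$-image $\xi_{2h}$ is negative and $\gamma_{2h}\notin\Sgeo$; and the extra structure of odd $q$ (presence of the arc $l=[\rho+\lambda,1+\lambda]$) is handled by Lemma \ref{lem:odd_reduced_does_not_intersect_l}, which prevents any spurious return on the outward side of $L_{3}$.

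Finally, for $k=2h+1$ I would compute $A_{2h+1}=(TS)^{h+1}T^{1-a_{0}}=(ST^{-1})^{h}ST^{-a_{0}}$, so that $\gamma_{2h+1}=\Fqx^{h+1}\gamma$. Since $\Cq(\xi)=\RS{a_{0};1^{h},a_{h+1},\ldots}$ with $a_{h+1}\ge 2$, iterating the shift identity $\sigma^{+}=\Cq\circ\Fq\circ\Cq^{-1}$ gives $\Cq(\xi_{2h+1}^{+})$ starting with a digit of absolute value $\ge 2$, so $(\xi_{2h+1},\eta_{2h+1})\in\Sred$ with $|\xi_{2h+1}|>\tfrac{3\lambda}{2}$. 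A direct check, using that $\gamma$ had to cross the arc $\omega_{h+1}\subseteq L_{1}'$ of Definition above (odd $q$ case), shows that $\xi_{2h+1}>\lambda+1$, so the natural base point of $\PP\gamma_{2h+1}$ is on $L_{3}$ rather than $L_{2}$, placing $\PT\Fqx^{h+1}(\xi,\eta)$ in $\Sigma^{3}$ as claimed.

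The main obstacle will be the careful bookkeeping in the middle paragraph: verifying that \emph{none} of the $2h+1$ intermediate candidate positions produces a return, and in particular handling the $\Sigma^{2}$-type candidate (where one pre-composes with $TS$) to show that for odd $q$ the resulting $\xi_{2j-1}'$ never falls in the correct interval $(\tfrac{3\lambda}{2},\lambda+1)$ until the very last step, where it overshoots $\lambda+1$ and gives a $\Sigma^{3}$-return instead. This is essentially a finite but delicate case analysis driven by the lexicographic ordering and the explicit Markov data of Remark \ref{rem:For-odd-q-definitions}, together with Lemma \ref{lem:odd_reduced_does_not_intersect_l} to handle the arc $l$ unique to odd $q$.
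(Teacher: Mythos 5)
Your overall strategy is the right one and matches the paper's (enumerate the forced intersections with the arcs $\overline{\chi_{j}},\overline{\omega_{j}}$ around $T^{a_{0}-1}\rho$, localize the endpoints of each $\gamma_{k}=A_{k}\gamma$ using the Markov data, and rule out all intermediate returns), but the execution contains two concrete errors at exactly the points where the odd case differs from the even one. First, your group identity at the last step is false for odd $q$: since $\left(TS\right)^{2h+3}=Id$ here, one has $A_{2h+1}=\left(TS\right)^{h+1}T^{1-a_{0}}=\left(ST^{-1}\right)^{h+1}ST^{-a_{0}}=ST^{-1}\Fqx^{h+1}$ applied to $\gamma$, \emph{not} $\left(ST^{-1}\right)^{h}ST^{-a_{0}}$; the identity you wrote is the even-$q$ one. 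Consequently $\gamma_{2h+1}$ is not the return: its forward endpoint lies in $\left(TS\right)^{h+1}\left(\frac{\lambda}{2},1\right)=\left(-1,0\right)$, so $\left|\xi_{2h+1}\right|<\frac{2}{\lambda}$ and $\gamma_{2h+1}\notin\Rgeo$ --- which directly contradicts your claim that $\xi_{2h+1}>\lambda+1$. The first return is instead $\gamma_{2h+3}=TS\gamma_{2h+1}=\left(TS\right)^{h+2}T^{1-a_{0}}\gamma=\left(ST^{-1}\right)^{h}ST^{-a_{0}}\gamma=\Fqx^{h+1}\gamma$, i.e.\ one extra half-step beyond where you stop; this off-by-one is precisely why the odd case lands in $\Sigma^{3}$ rather than $\Sigma^{2}$, and your argument as written does not establish it.

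Second, your list of return mechanisms is incomplete for odd $q$: besides landing in $\Sigma^{1}$, $\Sigma^{0}$, or $\Sigma^{2}$ (via $TS$), an intermediate point $w_{2j}$ could a priori give a $\Sigma^{\pm3}$-return, namely when $\gamma_{2j}\notin\Rgeo$ but $T^{\pm1}\gamma_{2j}\in\Sgeo$. This possibility must be excluded explicitly, which the paper does by checking $T^{-1}\eta_{2j}<r_{2j+1}-\lambda<-R$ while $T\xi_{2j}\in I_{q}$. Your appeal to Lemma \ref{lem:odd_reduced_does_not_intersect_l} does not cover this: that lemma only says a reduced geodesic cannot cross the extension $l=\left[\rho+\lambda,1+\lambda\right]$ of $L_{3}$ outwards, and in the paper it is used in the proof of Lemma \ref{lem:If-a0ge2then_strongly_reduced} to guarantee that a strongly reduced geodesic with $\xi>\lambda+1$ actually meets $L_{3}$; it says nothing about whether some $T^{\pm1}\gamma_{2j}$ is reduced. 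Minor further slips: the $\omega$-arcs run to index $h+1$ in the odd case (there are $2h+3$ intermediate points, not $2h+1$), and the correct localization is $\xi_{2j+1}\in\left(-\phi_{2j+2},-\phi_{2j+1}\right)$ in the odd-$q$ Markov partition, not the even-case intervals $\left(-\phi_{j+1},-r_{\kappa-j}\right]$ you quote.
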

\begin{proof}
Consider once more Figure \ref{fig:Geodesics-from-rho} showing the
arcs around $\rho$. Analogous to the proof of Lemma \ref{lem:even_crossing_dj_gives_no_return}
we have $w_{2j}\in T^{a_{0}-1}\overline{\omega}_{j}=T^{a_{0}-1}\left(ST^{-1}\right)^{j}SL_{0}$,
for $0\le j\le h+1$ and $w_{2j+1}\in T^{a_{0}-1}\overline{\chi}_{j}=T^{a_{0}-1}\left(ST^{-1}\right)^{j+1}L_{1}$,
for $0\le j\le h$ with the corresponding maps $A_{2j+1}=\left(TS\right)^{j+1}T^{1-a_{0}}$
and $A_{2j}=\left(ST\right)^{j}ST^{1-a_{0}}$. Set $\gamma_{j}:=A_{j}\gamma$
and $\xi_{j}:=A_{j}\xi$. There are now four possibilities to produce
a return to $\Sigma$: 
\begin{lyxlist}{00.00.0000}
\item [{a)}] if $\gamma_{2j+1}\in\Sgeo$ and $\mathbf{z}=\PP\gamma_{2j+1}\in\Sigma^{1}$,
\item [{b)}] if $\gamma_{2j-1}\notin\Rgeo$ but $TS\gamma_{2j+1}\in\Sgeo$
and $\mathbf{z}=\PP TS\gamma_{2j+1}\in\Sigma^{2}$,
\item [{c)}] if $\gamma_{2j}\in\Sgeo$ and $\mathbf{z}=\PP\gamma_{2j}\in\Sigma^{0}$,
\item [{d)}] if $\gamma_{2j}\notin\Rgeo$ but $T^{\pm1}\gamma_{2j}\in\Sgeo$
and $\mathbf{z}=\PP\gamma_{2j}\in\Sigma^{\pm3}$. 
\end{lyxlist}
We will see that most of these cases do not give a return. Since $T^{1-a_{0}}\xi\in\left(\frac{\lambda}{2},1\right)$
Lemma \ref{lem:if_intersects_a0minus1} shows that $\Cq\left(\xi\right)=\RS{a_{0};1^{h},a_{h+1},\ldots}$
with $a_{h+1}\ge2$. Suppose also, that $\Cd\left(\eta\right)=\DS{0;b_{1},b_{2},\ldots}$.
For the following arguments it is important to remember that the action
of $TS$ on $\partial\H\cong\mathbb{R}^{*}\cong S^{1}$ is monotone
as a rotation around $\rho$. 

Since $\gamma_{2j+1}=\left(TS\right)^{j+1}T^{1-a_{0}}\gamma$ we have
$\xi_{2j+1}\in\left(TS\right)^{j+1}\left(\frac{\lambda}{2},1\right)=\left(-\phi_{2j+2},-\phi_{2j+1}\right)\subseteq I_{q}$
and hence $\gamma_{2j+1}\notin\Rgeo$ for $0\le j\le h-1$. Furthermore
$\xi_{2h+1}\in TS\left(-\phi_{2h},-\phi_{2h+1}\right)=\left(-1,0\right)$
and therefore $\left|\xi_{2h+1}\right|<\frac{2}{\lambda}$ so that
also $\gamma_{2h+1}\notin\Rgeo$.

If $\gamma'_{2j+1}=TS\gamma_{2j+1}=\gamma_{2j+3}$ then $\gamma'_{2j+1}\notin\Rgeo$
for $1\le j\le h-1$ and since we have $A_{2h+3}=\left(TS\right)^{h+2}T^{1-a_{0}}=\left(ST^{-1}\right)^{h}ST^{-a_{0}}$
it is clear that $\gamma_{2h+3}=A_{2h+3}\gamma=\Fqx^{h+1}\gamma\in\Sgeo$
and we have a return at $w_{2h+1}$ with $\mathbf{z}_{1}\in L_{2}$! 

Since $\gamma_{2j}=\left(ST\right)^{j}ST^{1-a_{0}}\gamma$ obviously
$\xi_{2j}\in S\left(TS\right)^{j}\left(\frac{\lambda}{2},1\right)=\left(S\left(-\phi_{2j}\right),S\left(-\phi_{2j-1}\right)\right)=-\lambda-\left(\phi_{2j+2},\phi_{2j+1}\right)$
and hence $-\frac{3\lambda}{2}<\xi_{2j}<-\frac{\lambda}{2}$. But
$T^{1-a_{0}}\eta<-r-\lambda=-R$ and hence $\eta_{2j}\in\left(ST\right)^{j}S\left(-\infty,-R\right)=\left(\phi_{2\left(h-j\right)+1},r_{2j+1}\right)$
for $0\le j\le h$ (cf.~Remark \ref{rem:For-odd-q-definitions})
respectively $\eta_{2h+2}\in ST\left(\phi_{1},r_{2h+1}\right)=ST\left(1-\lambda,r\right)=S\left(1,R\right)=\left(-1,-1/R\right)$
and therefore $\eta_{2j}<0$ and $\gamma_{2j}\notin\Sgeo$ for $0\le j\le h+1$.

Finally, since $T^{-1}\eta_{2j}<r_{2j+1}-\lambda<-R$ and $T\xi_{2j}\in I_{q}$
it is clear that $T^{\pm1}\gamma_{2j}\notin\Rgeo$ for $0\le j\le h+1$.

\end{proof}

\bibliographystyle{plain}
\bibliography{/home/fredrik/Documents/matematik/refs}

\begin{thebibliography}{10}

\bibitem{MR670077}
R.~L. Adler and L.~Flatto.
\newblock Cross section maps for geodesic flows. {I}. {T}he modular surface.
\newblock In {\em Ergodic theory and dynamical systems, II (College Park, Md.,
  1979/1980)}, volume~21 of {\em Progr. Math.}, pages 103--161. Birkh\"auser
  Boston, Mass., 1982.

\bibitem{MR779707}
R.~L. Adler and L.~Flatto.
\newblock The backward continued fraction map and geodesic flow.
\newblock {\em Ergodic Theory Dynam. Systems}, 4(4):487--492, 1984.

\bibitem{MR737384}
R.~L. Adler and L.~Flatto.
\newblock Cross section map for the geodesic flow on the modular surface.
\newblock In {\em Conference in modern analysis and probability (New Haven,
  Conn., 1982)}, volume~26 of {\em Contemp. Math.}, pages 9--24. Amer. Math.
  Soc., Providence, RI, 1984.

\bibitem{MR1085823}
R.~L. Adler and L.~Flatto.
\newblock Geodesic flows, interval maps, and symbolic dynamics.
\newblock {\em Bull. Amer. Math. Soc. (N.S.)}, 25(2):229--334, 1991.

\bibitem{50.0677.11}
E.~Artin.
\newblock {Ein mechanisches System mit quasiergodischen Bahnen.}
\newblock {\em Hamb. Math. Abh.}, 3:170--177, 1924.

\bibitem{MR1504693}
D.~Birkhoff.
\newblock Quelques th\'eor\`emes sur le mouvement des syst\`emes dynamiques.
\newblock {\em Bull. Soc. Math. France}, 40:305--323, 1912.

\bibitem{MR556585}
R.~Bowen and C.~Series.
\newblock Markov maps associated with {F}uchsian groups.
\newblock {\em Inst. Hautes \'Etudes Sci. Publ. Math.}, (50):153--170, 1979.

\bibitem{MR1650073}
R.~M. Burton, C.~Kraaikamp, and T.~A. Schmidt.
\newblock Natural extensions for the {R}osen fractions.
\newblock {\em Trans. Amer. Math. Soc.}, 352(3):1277--1298, 2000.

\bibitem{MR1075639}
Paula Cohen and J.~Wolfart.
\newblock Modular embeddings for some nonarithmetic {F}uchsian groups.
\newblock {\em Acta Arith.}, 56(2):93--110, 1990.

\bibitem{metrical-alpha-rosen}
K.~Dajani, C.~Kraaikamp, and W.~Steiner.
\newblock Metrical theory for $\alpha$-{R}osen fractions.
\newblock Arxiv:math.NT/0702516v1, Feb 2007.

\bibitem{einseidler}
M.~Einsiedler and T.~Ward.
\newblock Ergodic theory: with a view towards number theory.
\newblock In preparation.

\bibitem{MR2114673}
D.~Fried.
\newblock Reduction theory over quadratic imaginary fields.
\newblock {\em J. Number Theory}, 110(1):44--74, 2005.

\bibitem{MR1400315}
David Fried.
\newblock Symbolic dynamics for triangle groups.
\newblock {\em Invent. Math.}, 125(3):487--521, 1996.

\bibitem{MR1424398}
K.~Gr{\"o}chenig and A.~Haas.
\newblock Backward continued fractions, {H}ecke groups and invariant measures
  for transformations of the interval.
\newblock {\em Ergodic Theory Dynam. Systems}, 16(6):1241--1274, 1996.

\bibitem{29.0522.01}
{Hadamard, J.}
\newblock {Les surfaces \`a courbures oppos\'ees et leurs lignes
  g\'eod\'esiques.}
\newblock {\em J. Math. Pures et Appl.}, 5(4):27--73, 1898.

\bibitem{0006.36503}
G.~A. Hedlund.
\newblock {On the measure of the non-special geodesics on a surface constant
  negative curvature.}
\newblock {\em Proc. Natl. Acad. Sci. USA}, 19:345--348, 1933.

\bibitem{MR1503464}
G.~A. Hedlund.
\newblock Fuchsian groups and mixtures.
\newblock {\em Ann. of Math. (2)}, 40(2):370--383, 1939.

\bibitem{MR1503197}
Gustav~A. Hedlund.
\newblock On the metrical transitivity of the geodesics on closed surfaces of
  constant negative curvature.
\newblock {\em Ann. of Math. (2)}, 35(4):787--808, 1934.

\bibitem{MR1503297}
Gustav~A. Hedlund.
\newblock Two-dimensional manifolds and transitivity.
\newblock {\em Ann. of Math. (2)}, 37(3):534--542, 1936.

\bibitem{MR0001464}
E.~Hopf.
\newblock Statistik der geod\"atischen {L}inien in {M}annigfaltigkeiten
  negativer {K}r\"ummung.
\newblock {\em Ber. Verh. S\"achs. Akad. Wiss. Leipzig}, 91:261--304, 1939.

\bibitem{MR1501848}
Eberhard Hopf.
\newblock Fuchsian groups and ergodic theory.
\newblock {\em Trans. Amer. Math. Soc.}, 39(2):299--314, 1936.

\bibitem{hurwitz}
A.~Hurwitz.
\newblock \"uber eine besondere {A}rt der {K}ettenbruch-{E}ntwickelung reeller
  {G}r\"ossen.
\newblock {\em Acta Math.}, 12:367--405, 1889.

\bibitem{katok}
S.~Katok.
\newblock {\em Fuchsian {G}roups}.
\newblock The University of Chicago Press, 1992.

\bibitem{MR2223106}
S.~Katok and I.~Ugarcovici.
\newblock Arithmetic coding of geodesics on the modular surface via continued
  fractions.
\newblock In {\em European women in mathematics---Marseille 2003}, volume 135
  of {\em CWI Tract}, pages 59--77. Centrum Wisk. Inform., Amsterdam, 2005.

\bibitem{MR2265011}
S.~Katok and I.~Ugarcovici.
\newblock Symbolic dynamics for the modular surface and beyond.
\newblock {\em Bull. Amer. Math. Soc. (N.S.)}, 44(1):87--132 (electronic),
  2007.

\bibitem{55.0958.03}
P.~Koebe.
\newblock {Riemannsche Mannigfaltigkeiten und nichteuklidische Raumformen. IV:
  Verlauf geod\"atischer Linien.}
\newblock {\em Sitzungsberichte Akad. Berlin}, 1929:414--457, 1929.

\bibitem{MR0164033}
J.~Lehner.
\newblock {\em Discontinuous groups and automorphic functions}.
\newblock Mathematical Surveys, No. VIII. American Mathematical Society,
  Providence, R.I., 1964.

\bibitem{MR1369092}
D.~Lind and B.~Marcus.
\newblock {\em An introduction to symbolic dynamics and coding}.
\newblock Cambridge University Press, Cambridge, 1995.

\bibitem{MR1545081}
F.~L{\"o}bell.
\newblock \"{U}ber die geod\"atischen {L}inien der {C}lifford-{K}leinschen
  {F}l\"achen.
\newblock {\em Math. Z.}, 30(1):572--607, 1929.

\bibitem{cf_main}
D.~Mayer, T.~M\"uehlenbruch, and F.~Str\"omberg.
\newblock On nakada continued fraction expansions.
\newblock In preparation.

\bibitem{MR1507944}
G.~A. Morse, M.~Hedlund.
\newblock Symbolic {D}ynamics.
\newblock {\em Amer. J. Math.}, 60(4):815--866, 1938.

\bibitem{MR1506428}
H.~M. Morse.
\newblock A {O}ne-to-{O}ne {R}epresentation of {G}eodesics on a {S}urface of
  {N}egative {C}urvature.
\newblock {\em Amer. J. Math.}, 43(1):33--51, 1921.

\bibitem{MR1501161}
H.~M. Morse.
\newblock Recurrent geodesics on a surface of negative curvature.
\newblock {\em Trans. Amer. Math. Soc.}, 22(1):84--100, 1921.

\bibitem{MR1555338}
P.~J. Myrberg.
\newblock Ein {A}pproximationssatz f\"ur die {F}uchsschen {G}ruppen.
\newblock {\em Acta Math.}, 57(1):389--409, 1931.

\bibitem{MR646050}
H.~Nakada.
\newblock Metrical theory for a class of continued fraction transformations and
  their natural extensions.
\newblock {\em Tokyo J. Math.}, 4(2):399--426, 1981.

\bibitem{MR1402490}
H.~Nakada.
\newblock Continued fractions, geodesic flows and {F}ord circles.
\newblock In {\em Algorithms, fractals, and dynamics (Okayama/Kyoto, 1992)},
  pages 179--191. Plenum, New York, 1995.

\bibitem{51.0549.01}
J.~Nielsen.
\newblock Om geod\ae tiske linier i lukkede mangfoldigheder med konstant
  negativ krumning.
\newblock {\em Mat. Tidsskrift B}, 1925:37--44, 1925.

\bibitem{14.0666.01}
H.~Poincar\'e.
\newblock {M\'emoire sur les courbes d\'efinies par une \'equation
  differentielle.}
\newblock {\em J. Math. Pures Appl.}, 3(8):251--296, 1882.
\newblock Ch. V-IX.

\bibitem{ratcliffe:Foundations}
J.~G. Ratcliffe.
\newblock {\em Foundations of {H}yperbolic {M}anifolds}.
\newblock Springer-Verlag, 1994.

\bibitem{MR0065632}
D.~Rosen.
\newblock A class of continued fractions associated with certain properly
  discontinuous groups.
\newblock {\em Duke Math. J.}, 21:549--563, 1954.

\bibitem{MR1853543}
D.~Rosen and C.~Towse.
\newblock Continued fraction representations of units associated with certain
  {H}ecke groups.
\newblock {\em Arch. Math. (Basel)}, 77(4):294--302, 2001.

\bibitem{MR1190349}
David Rosen and Thomas~A. Schmidt.
\newblock Hecke groups and continued fractions.
\newblock {\em Bull. Austral. Math. Soc.}, 46(3):459--474, 1992.

\bibitem{MR1219337}
T.~A. Schmidt.
\newblock Remarks on the {R}osen {$\lambda$}-continued fractions.
\newblock In {\em Number theory with an emphasis on the Markoff spectrum
  (Provo, UT, 1991)}, volume 147 of {\em Lecture Notes in Pure and Appl.
  Math.}, pages 227--238. Dekker, New York, 1993.

\bibitem{MR1745404}
P.~Schmutz~Schaller and J.~Wolfart.
\newblock Semi-arithmetic {F}uchsian groups and modular embeddings.
\newblock {\em J. London Math. Soc. (2)}, 61(1):13--24, 2000.

\bibitem{MR594628}
C.~Series.
\newblock Symbolic dynamics for geodesic flows.
\newblock {\em Acta Math.}, 146(1-2):103--128, 1981.

\bibitem{MR810563}
C.~Series.
\newblock The modular surface and continued fractions.
\newblock {\em J. London Math. Soc. (2)}, 31(1):69--80, 1985.

\end{thebibliography}

\end{document}